\documentclass[12pt]{article}
\usepackage{amssymb,amsmath,amsthm,tikz,multirow}
\usepackage{enumerate} %ENUMERATE
\usepackage{hyperref} %HYPERLINK
\usepackage [autostyle, english = british]{csquotes}
\usepackage [english]{babel}
\usepackage{mathtools}
\usepackage{bm}

\MakeOuterQuote{"}

\newtheorem{theorem}{Theorem}

\newtheorem{proposition}[theorem]{Proposition}
\newtheorem*{theorem*}{Theorem}

\theoremstyle{definition}
\newtheorem*{definition*}{Definition}
\newtheorem*{case*}{Case}
\newtheorem*{subcase*}{Subcase}
\newtheorem*{subsubcase*}{Subsubcase}

\theoremstyle{plain}
\newtheorem{thm}{Theorem}[section]
\newtheorem{lem}[thm]{Lemma}

\newtheorem*{cor}{Corollary}

\theoremstyle{definition}

\theoremstyle{remark}

\numberwithin{equation}{section}

\setcounter{secnumdepth}{5} % Note that part is -1 level !
\setcounter{tocdepth}{5}

\newcommand{\bvert}{ \vrule width 2pt }

\newcommand{\AVC}{\text{AVC}}

%Floor Function: \floor[\Big]{\frac{1}{2}} or \floor[\bigg]{\frac{1}{2}} or \floor*{\frac{1}{2}} 
\DeclarePairedDelimiter{\floor}{\lfloor}{\rfloor}

\title{Tilings of the Sphere by Congruent Quadrilaterals with Exactly Two Equal Edges}
\author{Ho Man CHEUNG \\ email: hmcheungae@connect.ust.hk \\[2ex] Hoi Ping LUK \\ email: hoi@connect.ust.hk}

\begin{document}
\maketitle

\begin{abstract} In this paper we give a classification of tilings of the sphere by congruent quadrilaterals with exactly two equal edges. The tilings are the earth map tilings, $(p,q)$-earth map tilings and their flip modifications, and quadrilateral subdivisions of the cube and the triangular prism. We described the ranges of values of the edges and angles for the tile to be geometrically realisable through extrinsic parameters. The symmetry groups of the tilings are also determined.  \\

\textit{Keywords}: Spherical tilings, Quadrilateral, Quadrangle, Classification.
\end{abstract}

\tableofcontents

\section{Introduction}

The study of edge-to-edge tilings of the sphere by congruent polygons ($n$-gons) have seen fruition in the recent decades. For simplicity, by \textit{tilings} we mean tilings of such kind throughout this paper. As an immediate result from Euler's formula and Dehn-Sommerville formulae, we know that to have tilings $n$ is $3,4,5$, namely the polygons are exactly triangles, quadrilaterals and pentagons. Starting from the work by Sommerville \cite{so} in 1923 on the classification of edge-to-edge tilings of the sphere by congruent triangles, collective effort have been made in the subject. Notably, Ueno and Agaoka gave a complete classification for tilings by triangles in \cite{ua}. Further efforts have been put into study of tilings by quadrilaterals by Ueno and Agaoka in \cite{ua2}, by Akama et al. in \cite{ak}, \cite{ak2}, \cite{as}, \cite{as2}, \cite{avc}, which include quadrilaterals of the types that are equilateral or can be divided into two triangles. On the other hand, Yan et al. are on course to give a complete classification of tilings by pentagons in \cite{gsy}, \cite{ay1}, \cite{wy}, \cite{wy2}, \cite{awy}, \cite{ly}, \cite{wy3}, \cite{yan}, \cite{yan2}. The problems remain open are the tilings by quadrilaterals with exactly two equal edges and those with exactly three equal edges. 

This paper is to give a classification on the tilings by congruent quadrilateral with exactly two equal edges ($\bm{a^2bc}$). For the reason of efficiency and compactness, most of the notations in this paper are inherited from Yan's papers. For instance, we use the product notation for combinations of edges and angles. The notation $\bm{a^2bc}$ denotes two $\bm{a}$-edges, one $\bm{b}$-edge and one $\bm{c}$-edge. Interested readers may refer to \cite{ly} for more details nevertheless the notations adopted in this paper will be explained in due course. We remark that the tilings obtained by Akama in \cite{ak2} can be derived in this paper.

The main result of this paper is stated as follows,
\begin{theorem*} The tilings of the sphere by congruent quadrilaterals with exactly two equal edges are the Earth Map Tilings, the $(\frac{f}{4},4)$-Earth Map Tilings and their flip modifications, and the quadrilateral subdivisions of the cube (or equivalently the octahedron) and of the triangular prism. 
\end{theorem*}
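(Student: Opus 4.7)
The plan is to proceed by a combination of combinatorial analysis of vertex configurations and a geometric realizability check for each surviving combinatorial type. First I would fix notation: let $\alpha,\beta,\gamma,\delta$ denote the four angles of the tile arranged cyclically, with the edges labelled so that the pattern $\bm{a^2bc}$ is respected. Up to relabelling, there are exactly two inequivalent edge arrangements: the two $\bm{a}$-edges are \emph{adjacent} (a kite-like pattern) or \emph{opposite} (a trapezoidal pattern). Each case fixes which angles are incident to which edges and therefore dictates the allowed edge-matchings when two tiles share an edge.

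Next I would exploit the global combinatorics. Let $f$ denote the number of tiles. From Euler's formula combined with the incidence counts $4f = 2E$ (and the fact that every vertex has degree at least $3$), one obtains $V = f+2$, $E = 2f$, and the quadrilateral angle sum identity
\[
\alpha+\beta+\gamma+\delta \;=\; 2\pi + \tfrac{4\pi}{f}.
\]
Together with the requirement that the total angle at every vertex is exactly $2\pi$ and that edge lengths match across every edge, this produces the finite list of admissible \emph{anglewise vertex combinations (AVC)}. I would enumerate the AVC in each of the two edge-arrangement cases, treating separately the subcases in which various pairs among $\alpha,\beta,\gamma,\delta$ coincide or satisfy integer linear relations forced by low-degree vertices.

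The heart of the argument is the case analysis arising from the AVC. For each admissible vertex type I would propagate the tiling outward from a seed tile, repeatedly applying the edge-matching rules and the vertex-angle constraint to force the identity of each neighbouring tile. Configurations which close up consistently on the whole sphere yield a combinatorial tiling; those leading to a contradiction at some later vertex, or to an angle sum incompatible with $\tfrac{4\pi}{f}$, are discarded. I expect this exhaustive propagation to produce exactly the families claimed in the theorem, namely the earth map tilings, the $(f/4,4)$-earth map tilings together with their flip modifications, and the quadrilateral subdivisions of the cube and the triangular prism.

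Finally, each surviving combinatorial tiling must be shown to be geometrically realisable. Here I would introduce extrinsic parameters (for instance spherical coordinates for the vertices, or dihedral tilts along the symmetry axis of an earth map tiling) and reduce the closing conditions to a system of spherical trigonometric identities. Solving this system yields the admissible ranges for $\bm{a},\bm{b},\bm{c}$ and $\alpha,\beta,\gamma,\delta$, and the symmetry groups can then be read off from the explicit parametrisations. The principal obstacle is the combinatorial explosion in the AVC analysis: excluding every spurious vertex combination without missing any genuine tiling is delicate, in particular for the flip modifications of the $(f/4,4)$-earth map tilings, where local swaps can reorganise the tiling globally and must be tracked with care.
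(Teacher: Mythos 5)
Your proposal follows essentially the same route as the paper: it discards the edge arrangement with opposite $\bm{a}$-edges, derives the angle-sum identity and the parity/edge-matching constraints, enumerates the admissible anglewise vertex combinations, propagates the tiling outward from seed vertices, and verifies geometric realisability by reducing the closing conditions to spherical trigonometric identities. Be aware, though, that the entire substance of the classification lies in the case analysis you defer to ``exhaustive propagation'' --- in particular the counting lemmas that force specific degree $3$ and $4$ vertices to exist, the parity and balance lemmas that prune the vertex list, and the verification that no AVC other than the claimed ones survives --- so as written this is an accurate plan of the paper's argument rather than a proof of the theorem.
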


Acknowledgement: we like express our gratitude to Henry Kam Hang Cheng for the discussions and suggestions on this paper.

\section{Basic Techniques}

To understand this paper, we highlight the basic notations about the angles, edges, vertices and the combinatorics in this secion.

\begin{figure}[htp] 
\centering
\begin{tikzpicture}

\draw
	(0,0) -- (1.2,0) -- (1.2,1.2) -- (0,1.2) -- cycle;

\draw[double, line width=0.6]
	(1.2,0) -- (1.2,1.2);

\draw[line width=2]
	(0,0) -- (1.2,0);

\node at (-0.2, -0.2) {$D$};
\node at (-0.2, 1.4) {$A$};
\node at (1.4, 1.4) {$B$};
\node at (1.4, -0.2) {$C$};

\node at (0.2,1) {\small $\alpha$};
\node at (1,0.95) {\small $\beta$};
\node at (1,0.25) {\small $\gamma$};
\node at (0.2,0.25) {\small $\delta$};

\end{tikzpicture}
\caption{Default Quadrilateral}
\label{DefaultQuad}
\end{figure}
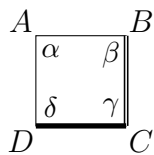

The quadrilateral in this paper, as shown in Figure \ref{DefaultQuad}, has edge combination $\bm{a^2bc}$ and clockwise angle configuration $\alpha,  \beta, \gamma, \delta$ from the top left corner to the bottom left has angles $\beta, \gamma$ next to $\bm{b}$-edge and $\gamma, \delta$ next to $\bm{c}$-edge. Graphically, $\Vert$ denotes the $\bm{b}$-edge whereas $\bvert$ denotes $\bm{c}$-edge. The edges are geodesic arcs, namely parts of the great circles. By Lemma 1 in \cite{gsy}, the quadrilateral is simple. This implies $\bm{a}$ and one of $\bm{b}, \bm{c}$ is $<\pi$. A priori, the four angles $\alpha, \beta, \gamma, \delta$ are not assumed to have distinct values. They will be determined accordingly. The tilings are always assumed to be edge-to-edge and all vertices are assumed to have degree $\ge 3$. All these are implicitly assumed, in particular in the statements of all lemmas and propositions in this paper. We remark that there is another edge arrangement in Figure \ref{FailQuad} for $\bm{a^2bc}$, which however cannot be a tile for any tilings by Proposition \ref{FailQuadProp}. The quadrilateral in Figure \ref{DefaultQuad} is indeed the only tile we need to study. Notice that this tile is symmetric up to the exchange of $\beta, \delta$ and $\bm{b}, \bm{c}$.

\subsection{Notations and Conventions}

\subsubsection*{Angles at a vertex}

A vertex is denoted by $\alpha^a\beta^b\gamma^c\delta^d$ which means it consists of $a$ copies of $\alpha$, $b$ copies of $\beta$, etc. The \textit{angle sum of the vertex} is 
\begin{align*}
a \alpha + b \beta + c \gamma + d \delta = 2\pi,
\end{align*} 
where $a, b, c, d \ge 0$. The vertex angle sums are frequently used so that we simply say, for example, \enquote{by $\beta\gamma\delta$} to mean \enquote{by the angle sum $\beta + \gamma + \delta = 2\pi$ of $\beta\gamma\delta$}. In practice, we only write the angles which appear at a vertex. For example, $\alpha^a\beta^b\gamma^c$ means $a,b,c>0$ and $a+b+c \ge 3$. We remark this slight difference in the use of the notation from that in Yan's papers. 

We use $\alpha\beta\cdots$ to denote a vertex containing one $\alpha$ and one $\beta$. In other words, $\alpha\beta\cdots = \alpha^a\beta^b\gamma^c\delta^d$ with $a\ge1$ and $b\ge1$. The angle combination in $\cdots$ is called the \textit{remainder}. To indicate a vertex without a specific angle, say $\alpha$, we denote it by $\hat{\alpha}\cdots$. We also use $\hat{\bm{a}}\cdots$ or $\hat{\bm{a}}$-vertex to denote a vertex without $\bm{a}$-edge, etc.

Even though we do not assume that the four angles to have distinct values, each of them can be distinguished by its adjacent edges as indicated in Figure \ref{DefaultQuad}. For example, $\alpha$ is the only angle sandwiched by two $\bm{a}$-edges, $\beta$ is the only angle between an $\bm{a}$-edge and a $\bm{b}$-edge, etc. When we mention each angle, its unique position in the default quadrilateral and its unique edge information are implicitly taken into consideration. 

Let $f$ denote the number of tiles. By (12) in \cite{ak}, the angle sum for quadrilateral is 
\begin{align}\label{QuadSum}
 \alpha +  \beta +  \gamma + \delta = (2 + \frac{4}{f})\pi.
\end{align}
The equality means that the area of the quadrilateral is the surface area of $4\pi$ of the (unit) sphere divided by $f$ many quadrilateral tiles.  

The $\AVC$ (\textit{anglewise vertex combination}) is the collection of all vertices in a tiling. For example,
\begin{align*} 
\AVC = \{ \alpha\beta^2, \alpha^2\delta^2, \gamma^4, \alpha\delta^{d}, \beta^2\delta^{d},  \delta^{d} \}.
\end{align*}
The generic notations $d$ in $\alpha\delta^{d}$, $\beta^2\delta^{d}$, and $\delta^{d}$ may or may not be different in value. Indeed we allow the generic notations to take different values at different vertices. Their values will be distinguished accordingly whenever necessary. For instance, when we use the $\AVC$ above to construct tilings as in \eqref{ab2avcf} of Proposition \ref{pqEMTProp}, the values of the generic notations are determined below,
\begin{align*} 
\AVC = \{ \alpha\beta^2,  \alpha^2\delta^2,  \gamma^4, \alpha\delta^{\frac{f+8}{8}},  \beta^2\delta^{ \frac{f-8}{8}}, \delta^{\frac{f}{4}} \}.
\end{align*}

We denote by $\AVC_k$ all the degree $k$ vertices in a tiling. Take the above $\AVC$ as an example, we have $\AVC_3 = \{ \alpha\beta^2 \}$, $\AVC_4 = \{ \alpha^2\delta^2, \gamma^4 \}$ when $f>16$. 

Note that the $\AVC$ lists all the \textit{possible} vertices. As each angle appears at some vertices in a tiling, to ensure this to happen, some vertices must appear, which we call \textit{necessary}, whereas some others may or may not appear, which we call \textit{optional}. The following $\AVC$ encodes such information
\begin{align*} 
\AVC = \{ \alpha\beta^2, \alpha^2\delta^2, \gamma^4 \ \vert \ \alpha\delta^{d}, \beta^2\delta^{d},  \delta^{d} \},
\end{align*}
where the divider $\vert$ separates the necessary vertices before it and the optional vertices after it. Further details of the notation can be seen in \cite{awy}. 

Lastly, $\equiv$ is used instead of $=$ to indicate the exact list of vertices in a tiling. In other words, every vertex listed is necessary. For example, the $(\frac{f}{4}, 4)$-earth map tiling constructed has vertices exactly as follows,
\begin{align*} 
\AVC \equiv \{ \alpha\beta^2,  \alpha^2\delta^2,  \gamma^4, \delta^{\frac{f}{4}} \}.
\end{align*}
Correspondingly, we have $\AVC_3 \equiv \{ \alpha\beta^2 \}$.

Given an $\AVC$, implicitly each vertex has its vertex angle sum equation and hence the $\AVC$ implies a system of simultaneous linear equations in terms of the angles. We also remark that whenever necessary the angle sum of quadrilateral can be included into this system without specifying it in the discussion. For a tiling to exists, there must be a solution to the system. 

\subsubsection*{Counting}

We use $\#$ to denote various total numbers. In particular, $\# \alpha$ denotes the total number of $\alpha$ in a tiling. Since each angle appears exactly once in each tile, we have
\begin{align*}
\# \alpha = \# \beta = \# \gamma = \# \delta = f.
\end{align*}
For $\AVC = \{ \alpha\beta^2,  \alpha^2\delta^2,  \gamma^4, \alpha\delta^{\frac{f+8}{8}},  \beta^2\delta^{ \frac{f-8}{8}}, \delta^{\frac{f}{4}} \}$, we have
\begin{align*}
f &= \# \alpha = \# \alpha\beta^2 + 2\# \alpha^2\delta^2 + \# \alpha\delta^{\frac{f+8}{8}},  \\
f &= \# \beta = 2\# \alpha\beta^2 + 2 \# \beta^2\delta^{ \frac{f-8}{8}}, \\
f &= \# \gamma = 4\# \gamma^4, \\
f &= \# \delta =2 \# \alpha^2\delta^2 + \frac{f+8}{8} \# \alpha\delta^{\frac{f+8}{8}} + \frac{f-8}{8} \#\beta^2\delta^{ \frac{f-8}{8}} + \frac{f}{4} \# \delta^{\frac{f}{4}}, 
\end{align*}
where $\# \alpha\beta^2$ denotes the total number of vertex $\alpha\beta^2$, etc. 

We also use $\#_{\nu} \alpha$ to denote the number of $\alpha$ at a vertex $\nu$. Meanwhile, we use $\#_k \alpha$ to denote the number of $\alpha$ at degree $k$ vertices. For example, in the $\AVC$ above, $\#_3 \alpha =  \# \alpha\beta^2$.

\subsubsection*{$\bm{b}$-Edge and $\bm{c}$-Edge}

The distinguished edges in the default quadrilateral are $\bm{b}, \bm{c}$. The angles adjacent to $\bm{b}$-edge are $\beta, \gamma$ which we call \textit{$\bm{b}$-angles}. The angles adjacent to $\bm{c}$-edge are $\gamma, \delta$ which we call \textit{$\bm{c}$-angles}. A \textit{$\bm{b}$-vertex} has angles $\beta, \gamma$, and a \textit{$\bm{c}$-vertex} has angles $\gamma, \delta$. Meanwhile, a \textit{$\hat{\bm{b}}$-vertex} does not have $\beta, \gamma$ and has only $\alpha, \delta$, and a \textit{$\hat{\bm{c}}$-vertex} does not have angles $\gamma, \delta$ and has only $\alpha, \beta$.

By knowing the \textit{arrangement} of angles at a vertex, we can determine the vertex to a significant extent. For example, the $\bm{b}$-edge $\Vert$ can only appear at a vertex as $\beta \Vert \beta, \beta \Vert \gamma, \gamma \Vert \gamma$. Meanwhile, the $\bm{c}$-edge $\bvert$ can only appear at a vertex as $\gamma \, \bvert \, \gamma, \gamma \, \bvert \, \delta, \delta \, \bvert \, \delta$. This observation leads to the Parity Lemma (Lemma \ref{PaLem}) whereby we establish the list of forbidden vertices and possible vertices in Lemma \ref{ForbVerLem}.

The arrangement of angles at a vertex is essentially circular and hence there are several equivalent presentations. For instance, $\vert \beta \Vert \gamma \, \bvert \, \delta \vert$, $\bvert \, \delta \vert \beta \Vert \gamma \, \bvert$ (rotation), $\vert \delta \, \bvert \, \gamma \Vert \beta \vert$ (reversion) are the same arrangements of vertex $\beta\gamma\delta$.

\subsubsection*{Adjacent angle deduction}

The tiles containing the angles at a vertex can be arranged in various ways. The AAD (\textit{adjacent angle deduction}) is the notation that encodes both the angle arrangement and the tile arrangement at a vertex. For example, all three pictures in Figure \ref{AADEg} are AADs of the arrangement $\bvert \, \delta \vert \alpha \vert \alpha \vert \delta \, \bvert$. These three pictures can be denoted by $\bvert^{\,\gamma} \delta^{\alpha} \vert^{\delta} \alpha^{\beta} \vert^{\beta} \alpha^{\delta} \vert^{\alpha} \delta^{\gamma} \, \bvert$, $\bvert^{\,\gamma} \delta^{\alpha} \vert^{\delta} \alpha^{\beta} \vert^{\delta} \alpha^{\beta} \vert^{\alpha} \delta^{\gamma} \, \bvert$, and $\bvert^{\,\gamma} \delta^{\alpha} \vert^{\beta} \alpha^{\delta} \vert^{\delta} \alpha^{\beta} \vert^{\alpha} \delta^{\gamma} \, \bvert$ respectively.

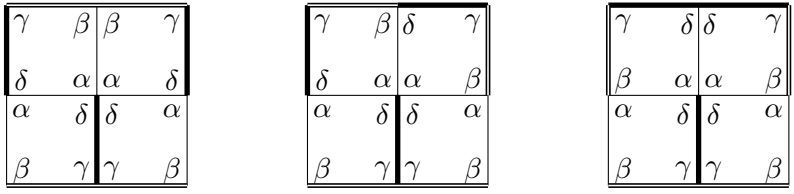
\begin{figure}[htp]
\centering
\begin{tikzpicture}[>=latex,scale=1]

\begin{scope}
\draw
	(1.2,0) -- (1.2, 1.2)-- (0, 1.2) -- (-1.2, 1.2) -- (-1.2, 0)
	(-1.2,0) -- (0,0) -- (0, 1.2)
	(0,0) -- (1.2,0)
	(0, -1.2) -- (1.2,-1.2)
	(1.2,0) -- (1.2, -1.2)
	(-1.2,0) -- (-1.2, -1.2);

\draw[line width=2]
	(0,0) -- (0,-1.2)
	(-1.2,0) -- (-1.2, 1.2)
	(1.2,0) -- (1.2,1.2);

\draw[double, line width=0.6]
	(0,1.2) -- (1.2, 1.2)
	(0,1.2) -- (-1.2,1.2)
	(-1.2,-1.2) -- (0, -1.2)
	(0, -1.2) -- (1.2,-1.2);

\node at (0.2,0.92) {\small $\beta$}; %T1
\node at (0.2,0.2) {\small $\alpha$}; 
\node at (1,0.2) {\small $\delta$}; 
\node at (1,0.95) {\small $\gamma$}; 

\node at (-0.2,0.2) {\small $\alpha$}; %T2
\node at (-0.2,0.92) {\small $\beta$}; 
\node at (-1,0.95) {\small $\gamma$}; 
\node at (-1,0.2) {\small $\delta$}; 

\node at (-0.2,-0.25) {\small $\delta$}; %T3
\node at (-0.2,-1) {\small $\gamma$}; 
\node at (-1,-1) {\small $\beta$}; 
\node at (-1,-0.2) {\small $\alpha$}; 

\node at (0.2,-0.25) {\small $\delta$}; %T4
\node at (0.2,-1) {\small $\gamma$}; 
\node at (1,-1) {\small $\beta$}; 
\node at (1,-0.2) {\small $\alpha$}; 
\end{scope}

\begin{scope}[xshift = 4 cm]
\draw
	(1.2,0) -- (1.2, 1.2)-- (0, 1.2) -- (-1.2, 1.2) -- (-1.2, 0)
	(-1.2,0) -- (0,0) -- (0, 1.2)
	(0,0) -- (1.2,0)
	(0, -1.2) -- (1.2,-1.2)
	(1.2,0) -- (1.2, -1.2)
	(-1.2,0) -- (-1.2, -1.2);

\draw[line width=2]
	(0,0) -- (0,-1.2)
	(-1.2,0) -- (-1.2, 1.2)
	(0,1.2) -- (1.2, 1.2);

\draw[double, line width=0.6]
	(1.2,0) -- (1.2,1.2)
	(0,1.2) -- (-1.2,1.2)
	(-1.2,-1.2) -- (0, -1.2)
	(0, -1.2) -- (1.2,-1.2);

\node at (0.2,0.2) {\small $\alpha$}; %T1
\node at (1,0.2) {\small $\beta$}; 
\node at (1,0.95) {\small $\gamma$}; 
\node at (0.15,0.95) {\small $\delta$}; 

\node at (-0.2,0.2) {\small $\alpha$}; %T2
\node at (-0.2,0.92) {\small $\beta$}; 
\node at (-1,0.95) {\small $\gamma$}; 
\node at (-1,0.2) {\small $\delta$}; 

\node at (-0.2,-0.25) {\small $\delta$}; %T3
\node at (-0.2,-1) {\small $\gamma$}; 
\node at (-1,-1) {\small $\beta$}; 
\node at (-1,-0.2) {\small $\alpha$}; 

\node at (0.2,-0.25) {\small $\delta$}; %T4
\node at (0.2,-1) {\small $\gamma$}; 
\node at (1,-1) {\small $\beta$}; 
\node at (1,-0.2) {\small $\alpha$}; 
\end{scope}

\begin{scope}[xshift = 8 cm]
\draw
	(1.2,0) -- (1.2, 1.2)-- (0, 1.2) -- (-1.2, 1.2) -- (-1.2, 0)
	(-1.2,0) -- (0,0) -- (0, 1.2)
	(0,0) -- (1.2,0)
	(0, -1.2) -- (1.2,-1.2)
	(1.2,0) -- (1.2, -1.2)
	(-1.2,0) -- (-1.2, -1.2);

\draw[line width=2]
	(0,0) -- (0,-1.2)
	(0,1.2) -- (-1.2,1.2)
	(0,1.2) -- (1.2, 1.2);

\draw[double, line width=0.6]
	(1.2,0) -- (1.2,1.2)
	(-1.2,0) -- (-1.2, 1.2)
	(-1.2,-1.2) -- (0, -1.2)
	(0, -1.2) -- (1.2,-1.2);

\node at (0.15,0.95) {\small $\delta$}; %T1
\node at (0.2,0.2) {\small $\alpha$}; 
\node at (1,0.2) {\small $\beta$}; 
\node at (1,0.95) {\small $\gamma$}; 

\node at (-0.2,0.2) {\small $\alpha$}; %T2
\node at (-0.15,0.95) {\small $\delta$}; 
\node at (-1,0.95) {\small $\gamma$}; 
\node at (-1,0.2) {\small $\beta$}; 

\node at (-0.2,-0.25) {\small $\delta$}; %T3
\node at (-0.2,-1) {\small $\gamma$}; 
\node at (-1,-1) {\small $\beta$}; 
\node at (-1,-0.2) {\small $\alpha$}; 

\node at (0.2,-0.25) {\small $\delta$}; %T4
\node at (0.2,-1) {\small $\gamma$}; 
\node at (1,-1) {\small $\beta$}; 
\node at (1,-0.2) {\small $\alpha$}; 
\end{scope}

\end{tikzpicture}
\caption{Adjacent angle deduction}
\label{AADEg}
\end{figure}

Similar to the arrangement of angles at a vertex, an AAD can be rotated and reversed. For example, the AAD of the  second picture in Figure \ref{AADEg} can be written as $\vert^{\delta} \alpha^{\beta} \vert^{\delta} \alpha^{\beta} \vert^{\alpha} \delta^{\gamma} \, \bvert^{\,\gamma} \delta^{\alpha} \vert$ (rotation) and $\bvert^{\,\gamma} \delta^{\alpha} \vert^{\beta} \alpha^{\delta} \vert^{\beta} \alpha^{\delta} \vert^{\alpha} \delta^{\gamma} \, \bvert$ (reversion). 

The use of AAD as a notation can be flexible. For example, we use $\bvert^{} \, \delta^{\alpha} \vert^{} \alpha \vert$ to denote $\bvert^{ \, \gamma} \delta^{\alpha} \vert^{\beta} \alpha^{\delta} \vert$ or $\bvert^{ \, \gamma} \delta^{\alpha} \vert^{\delta} \alpha^{\beta} \vert$. In particular, when the AAD along a $\bm{b}$-edge or $\bm{c}$-edge is uniquely determined and irrelevant to the discussion, for simplicity we may drop the \enquote{upper angle} ($\gamma$) in this example. We may even use $^{\beta} \vert^{} \alpha \vert$ to denote $^{\beta} \vert^{\beta} \alpha \vert$ or $^{\beta} \vert^{\delta} \alpha \vert$. We also use $\alpha \vert \beta \cdots $ to indicate $\vert \alpha \vert \beta \vert $ at a vertex.

From the above examples, we can see that an AAD leads to new AAD and vertices. For instance, $\bvert^{ \, \gamma} \delta^{\alpha} \vert^{\beta} \alpha^{\delta} \vert$ is an AAD at vertex $\alpha\delta\cdots$. This AAD implies an AAD at $\vert^{\beta} \alpha^{\delta} \vert^{\alpha} \beta^{\gamma} \Vert$ at $\alpha\beta\cdots$ (the right half of the third picture in Figure \ref{AADEg}). In general, we have the following \textit{reciprocity property}: an AAD $\lambda^{\theta} \vert ^{\rho} \mu$ at $\lambda\mu \cdots$ implies an AAD at $\theta^{\lambda} \vert^{\mu}\rho$ at $\theta\rho\cdots$ and vice versa.

Though AAD can get complicated, often when it is specified we can streamline the argument. For example, the AAD of $\alpha \vert \alpha$ can be $\alpha^{\beta} \vert^{\beta} \alpha$, $\alpha ^{\beta} \vert^{\delta} \alpha$, $\alpha ^{\delta} \vert^{\delta} \alpha$. If we know that $\beta^2\cdots, \delta^2\cdots$ are not vertices, the AAD of $\alpha \vert \alpha$ must be $\alpha ^{\beta} \vert^{\delta} \alpha$, and hence the AAD is \textit{unique} (up to rotation and reversion). As per the edge configuration in Figure \ref{DefaultQuad}, $\beta\vert\beta, \delta\vert\delta$ indeed have unique AADs $\beta^{\alpha}\vert^{\alpha}\beta, \delta^{\alpha}\vert^{\alpha}\delta$.

The typical applications of AAD are listed below:
\begin{itemize}
\item If $\delta \vert \delta \cdots$ is not a vertex, then $\alpha^{\delta}\vert^{\delta}\alpha\cdots$ is also not a vertex.
\item If $\beta\vert \beta \cdots, \delta \vert \delta \cdots$ are not vertices, then $\alpha \vert \alpha$ has unique AAD $\alpha^{\beta} \vert ^{\delta} \alpha$.
\item If $\beta\vert \beta \cdots, \beta \vert \delta \cdots$ are not vertices, then $\alpha \vert \alpha \vert \alpha\cdots$ cannot be a vertex. In other words, there cannot be an arrangement of three consecutive $\alpha$'s at a vertex.
\end{itemize}
We remark that $\beta^2\cdots$ is not a vertex implies $\beta\vert \beta \cdots$ is not a vertex, etc. The application of AAD sometimes depends on which piece of information we can obtain. 

The advantage of adopting AAD is seen in the above demonstration that the discussion of angle and tile arrangements can be more efficiently and concisely conducted in place of drawing pictures. Luk and Yan invented the notation to substitute tens of pictures in the studies of pentagonal tilings. In this paper, we closely follow the notation used in their papers.

\subsection{Combinatorics}

From Euler formula, two Dehn-Sommerville formulae and a vertex equation,
\begin{align}
&v - e + f = 2,& \
&nf = 2e,& \
&2e = \sum_{k\ge3} kv_k,& \
&v = \sum_{k\ge3} v_k,& 
\end{align}
for $n=4$, we get 
\begin{align}
&v_3 = 8 + \sum_{h\ge4} (h - 4) v_h,& \
&f = 6 + \sum_{h\ge4} (h-3)v_h,& 
&f = 2e_b.&
\end{align}
The last equation which comes from counting the number of $\bm{b}$-edges $e_b$ shows that the number of faces $f$ must be even.

\begin{proposition} \label{CubeProp} In a tiling of the sphere by congruent quadrilaterals, $f=6$ if and only if all vertices are of degree $3$. 
\end{proposition}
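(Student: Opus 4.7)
The plan is to read off both directions directly from the Dehn--Sommerville type identity $f = 6 + \sum_{h\ge 4}(h-3)v_h$ displayed just above the proposition. Since $v_h \ge 0$ and the coefficient $h-3 \ge 1$ for every $h \ge 4$, every term in this sum is non-negative, so the identity expresses $f - 6$ as a weighted count of high-degree vertices.

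For the forward implication, I would assume $f = 6$. Then $\sum_{h\ge 4}(h-3)v_h = 0$, and by non-negativity of each summand I conclude $v_h = 0$ for all $h \ge 4$. This means the tiling has no vertex of degree $\ge 4$, so since every vertex has degree $\ge 3$ by standing assumption, every vertex has degree exactly $3$.

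For the converse, I would assume all vertices have degree $3$, i.e.\ $v_h = 0$ for every $h \ge 4$. Plugging this into the same identity immediately yields $f = 6 + 0 = 6$.

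There is essentially no obstacle: the formula $f = 6 + \sum_{h\ge 4}(h-3)v_h$ has already been derived in the excerpt from the Euler and Dehn--Sommerville relations, and both directions are a single substitution. The only thing to be mildly careful about is to invoke the standing assumption that every vertex has degree $\ge 3$ when concluding that $v_h = 0$ for $h \ge 4$ forces all vertices to be degree-$3$ (rather than allowing degree-$2$ or lower vertices).
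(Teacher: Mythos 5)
Your proposal is correct and follows exactly the same route as the paper, which also reads both directions off the identity $f = 6 + \sum_{h\ge4}(h-3)v_h$ using the non-negativity of each summand. Your added remark about invoking the standing assumption that all vertices have degree $\ge 3$ is a sensible clarification but does not change the argument.
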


\begin{proof} By $f = 6 + \sum_{h\ge4} (h-3)v_h$, $f = 6$ if and only if $\sum_{h\ge4} (h-3)v_h = 0$ for every $h \ge 4$, which is equivalent to all vertices are of degree $3$. 
\end{proof}

\begin{lem} \label{PaLem} (Parity Lemma) If a vertex has a distinguishable edge and $\gamma, \delta$ are the corresponding adjacent angles, then at a vertex $\nu$,
\begin{align*}
\#_{\nu} \gamma + \#_{\nu} \delta \equiv 0 \mod 2.
\end{align*}
In the default quadrilateral in Figure \ref{DefaultQuad}, the $\bm{b}$-angles are $\beta, \gamma$ and the $\bm{c}$-angles are $\gamma, \delta$, then at a vertex $\alpha^a\beta^b\gamma^c\delta^d$,
\begin{align*}
b + c \equiv 0 \mod 2, \\
c  + d \equiv 0 \mod 2,
\end{align*}
and $b, c, d$ must be all even or all odd. 
\end{lem}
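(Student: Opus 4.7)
The plan is to prove the parity statement by a simple double-counting argument: at any vertex, count the $\bm{b}$-angles (resp.\ $\bm{c}$-angles) appearing there by counting the $\bm{b}$-edges (resp.\ $\bm{c}$-edges) incident to it, and observe that each such edge contributes exactly two such angles.

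More precisely, fix a vertex $\nu$ in the tiling and consider the cyclic sequence of edges meeting $\nu$. Each edge is shared by two tiles, one on each side, and at $\nu$ the two tiles contribute two angles flanking that edge. By the edge labels in Figure~\ref{DefaultQuad}, a $\bm{b}$-edge can only be flanked (at either of its endpoints) by the $\bm{b}$-angles $\beta$ or $\gamma$, since those are the only angles adjacent to $\bm{b}$ in the tile. Therefore, if $e_b(\nu)$ denotes the number of $\bm{b}$-edges ending at $\nu$, we obtain
\begin{align*}
\#_{\nu}\beta + \#_{\nu}\gamma \;=\; 2\,e_b(\nu),
\end{align*}
and similarly $\#_{\nu}\gamma + \#_{\nu}\delta = 2\,e_c(\nu)$, where $e_c(\nu)$ counts the $\bm{c}$-edges at $\nu$. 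Writing $\nu = \alpha^a\beta^b\gamma^c\delta^d$, this yields $b+c\equiv 0$ and $c+d\equiv 0\pmod{2}$.

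Finally, combining the two congruences gives $b\equiv c\pmod 2$ and $d\equiv c\pmod 2$, so $b,c,d$ have the same parity, i.e.\ they are all even or all odd. The general statement in the first displayed line of the lemma is the same counting applied to any edge type whose two endpoints are distinguishable from the other edges of the tile; the argument only uses that each such edge is uniquely flanked by a prescribed pair of angles.

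There is no real obstacle here: the whole lemma is a consequence of the observation that every edge of the tiling is shared by exactly two tiles, contributing two angles at each endpoint. The only thing to be careful about is that the $\bm{b}$-edge and $\bm{c}$-edge are genuinely distinguishable from the $\bm{a}$-edges in the tile, so that the flanking angles really are restricted to $\{\beta,\gamma\}$ and $\{\gamma,\delta\}$ respectively; this is exactly the content of the edge decoration in Figure~\ref{DefaultQuad} and the fact that the tiling is edge-to-edge.
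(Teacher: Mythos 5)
Your proof is correct and follows essentially the same route as the paper: both arguments rest on the observation that each distinguishable edge at a vertex is flanked by exactly two of its designated adjacent angles, which forces $b+c$ and $c+d$ to be even. Your final step (chaining $b\equiv c\equiv d \pmod 2$ directly) is a cleaner way to reach the "all even or all odd" conclusion than the paper's case analysis on which of $b,c,d$ vanish, but the substance is identical.
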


\begin{proof} Suppose $\beta, \gamma$ are $\bm{b}$-angles and $\gamma, \delta$ are $\bm{c}$-angles, the case when a vertex without a $\bm{c}$-edge is trivial. When a vertex with a $\bm{b}$-edge or $\bm{c}$-edge, 
\begin{align*}
&\cdots \, \Vert \cdots = \beta  \Vert  \beta, \,\, \beta  \Vert  \gamma, \,\, \gamma  \Vert  \gamma,   \\
&\cdots \, \bvert \cdots = \gamma \, \bvert \, \gamma, \,\, \gamma \, \bvert \, \delta, \,\, \delta \, \bvert \, \delta,
\end{align*}
which implies
\begin{align*}
&b + c \equiv 0 \mod 2,& 
&c + d \equiv 0 \mod 2.&
\end{align*}
When $b, c> 0$, $d=0$ at a vertex $\alpha^a\beta^b\gamma^c\delta^d$, 
\begin{align*}
&b + c \equiv 0 \mod 2, & 
&c \equiv 0 \mod 2, &
\end{align*}
which implies $b \equiv c \equiv 0 \mod 2$. When $c, d > 0$, $b=0$ at a vertex $\alpha^a\beta^b\gamma^c\delta^d$, 
\begin{align*}
&c \equiv 0 \mod 2, &
&c +d \equiv 0 \mod 2,&
\end{align*}
which implies $c \equiv d \equiv 0 \mod 2$. When $b, d > 0$, $c=0$ at a vertex $\alpha^a\beta^b\gamma^c\delta^d$, 
\begin{align*}
&b \equiv 0 \mod 2,& 
&d \equiv 0 \mod 2,&
\end{align*}
which implies $b \equiv d \equiv 0 \mod 2$. When $b, c, d > 0$ at a vertex $\alpha^a\beta^b\gamma^c\delta^d$, (we have $a=0$ by quadrilateral angle sum),
\begin{align*}
&b+c \equiv 0 \mod 2,& 
&c +d \equiv 0 \mod 2,&
\end{align*}
which implies $b \equiv c \equiv d \mod 2$. As $c$ odd implies $b$, $d$ both odd and $c$ even implies $b$, $d$ both even. 
\end{proof}

\begin{lem} \label{BaLem} (Balance Lemma) In a monohedral tiling with each tile having a distinguishable edge $\bm{l}$ and $\theta$, $\varphi$ being the only $\bm{l}$-angles,
\begin{enumerate}
\item $\theta \, \bvert \, \theta \cdots$ is a vertex if and only if $\varphi \, \bvert \, \varphi \cdots$ is also a vertex and when one of them is a vertex then $\theta, \varphi < \pi$. 
\item if $\theta^2\cdots$ is not a vertex, then $\varphi^2\cdots$ is also not a vertex and hence each vertex with $\bm{l}$ has exactly one $\bm{l}$ with one $\theta$ and one $\varphi$.
\item if $\theta\varphi\cdots$ is not a vertex, then every $\bm{l}$-vertex has AAD either $ \theta \, \bvert \, \theta\cdots$ or $ \varphi \, \bvert \, \varphi \cdots$ and both must appear.
\end{enumerate}
\end{lem}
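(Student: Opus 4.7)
The plan is to pivot on a simple counting identity: every tile has exactly one $\bm{l}$-edge and hence contributes exactly one $\theta$ and one $\varphi$, so $\#\theta = \#\varphi = f$. I will classify each occurrence of an $\bm{l}$-edge at a vertex (equivalently, each $\bm{l}$-edge endpoint) by its AAD across that edge, which must be one of $\theta \bvert \theta$, $\theta \bvert \varphi$, or $\varphi \bvert \varphi$, and then tally these globally.

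Writing $N_{\theta\theta}, N_{\theta\varphi}, N_{\varphi\varphi}$ for the totals across the whole tiling, each $\theta$ sits next to exactly one $\bm{l}$-edge at its corner, so
\begin{align*}
\#\theta = 2N_{\theta\theta} + N_{\theta\varphi}, \qquad \#\varphi = N_{\theta\varphi} + 2N_{\varphi\varphi}.
\end{align*}
Setting these equal gives $N_{\theta\theta} = N_{\varphi\varphi}$, which establishes the equivalence in (1). Since every vertex has degree $\ge 3$, any $\theta \bvert \theta \cdots$ vertex carries at least one further positive angle in its sum $2\pi$, forcing $\theta < \pi$; the bound $\varphi < \pi$ then follows from the guaranteed existence of a $\varphi \bvert \varphi \cdots$ vertex.

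For (2), suppose $\theta^2 \cdots$ is absent. Then $\theta \bvert \theta \cdots$ is absent, so by (1) so is $\varphi \bvert \varphi \cdots$. If some vertex $\nu$ were $\varphi^2 \cdots$, the two $\varphi$'s cannot share an $\bm{l}$-edge (that would be $\varphi \bvert \varphi$), so $\nu$ must carry two distinct $\bm{l}$-edges. But with both $\theta \bvert \theta$ and $\varphi \bvert \varphi$ forbidden, every $\bm{l}$-edge at $\nu$ has AAD $\theta \bvert \varphi$, producing at least two $\theta$'s at $\nu$ and contradicting the hypothesis. The same bookkeeping shows each $\bm{l}$-vertex has exactly one $\bm{l}$-edge and that edge has AAD $\theta \bvert \varphi$. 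For (3), if no vertex is $\theta \varphi \cdots$ then no $\bm{l}$-edge has AAD $\theta \bvert \varphi$, leaving only $\theta \bvert \theta$ and $\varphi \bvert \varphi$; since $\theta$ actually occurs somewhere in the tiling adjacent to its $\bm{l}$-edge, some vertex is $\theta \bvert \theta \cdots$, and (1) delivers a $\varphi \bvert \varphi \cdots$ vertex as well.

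The heart of the argument is the identity $N_{\theta\theta} = N_{\varphi\varphi}$ in (1); the rest follows by contrapositive and a short case split. The place deserving most care is (2), since the hypothesis rules out only the same-$\bm{l}$-edge configuration of two $\varphi$'s, and one must separately handle the possibility that the two $\varphi$'s at a putative $\varphi^2 \cdots$ vertex live on two different $\bm{l}$-edges; overlooking this case would leave a gap.
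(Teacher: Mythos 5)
Your proof is correct, and parts (2) and (3) coincide with the paper's argument (two $\varphi$'s at a putative $\varphi^2\cdots$ vertex must sit on two distinct $\bm{l}$-edges, each then of type $\theta\,\bvert\,\varphi$, forcing $\theta^2\cdots$; and with $\theta\,\bvert\,\varphi$ excluded only the two homogeneous types remain). The genuine difference is in part (1). The paper argues locally via AAD reciprocity: at a vertex $\theta\,\bvert\,\theta\cdots$ the shared $\bm{l}$-edge has AAD $\theta^{\varphi}\,\bvert\,^{\varphi}\theta$, so its \emph{other} endpoint is a $\varphi\,\bvert\,\varphi\cdots$ vertex — a one-step, constructive deduction that exhibits the witnessing vertex. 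You instead prove the global identity $N_{\theta\theta}=N_{\varphi\varphi}$ from $\#\theta=\#\varphi=f$ and the endpoint tally $\#\theta=2N_{\theta\theta}+N_{\theta\varphi}$, $\#\varphi=2N_{\varphi\varphi}+N_{\theta\varphi}$; this is correct and slightly stronger (it is the quantitative "balance" that gives the lemma its name, and it yields the equivalence without locating the partner vertex), but it is also less elementary than necessary here, since the edge-local observation already pairs each $\theta\,\bvert\,\theta$ endpoint with a $\varphi\,\bvert\,\varphi$ endpoint of the same $\bm{l}$-edge. Both routes are standard in this literature; nothing in your write-up is missing, and you correctly flagged and handled the only delicate point, namely the two-$\bm{l}$-edge configuration in part (2).
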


\begin{proof} Suppose $\theta \, \bvert \, \theta\cdots$ is a vertex, then the AAD $\theta^{\varphi} \, \bvert \, ^{\varphi}\theta$ implies that $\varphi \, \bvert \, \varphi \cdots$ is a vertex. Up to the symmetry between $\theta, \varphi$, we have the equivalence. The vertx angle sums imply $2\theta, 2\varphi < 2\pi$ so we get $\theta, \varphi < \pi$. 

When $\theta^2\cdots$ is not a vertex, then $\theta \, \bvert \, \theta \cdots$ is not a vertex and hence $\varphi \, \bvert \, \varphi \cdots$ is also not a vertex. The AAD of $\varphi^2\cdots$ can only be $\cdots \theta\, \bvert \, \varphi \vert \cdots \vert  \varphi \, \bvert \, \theta \cdots $ which contradicts $\theta^2\cdots$ not being a vertex. So $\varphi^2\cdots$ cannot be a vertex and every $\bm{l}$-vertex must be $\theta \, \bvert \, \varphi\cdots$ with exactly one $\theta$ and one $\varphi$. 

When $\theta\varphi\cdots$ is not a vertex, then the $\bm{l}$-vertices have either $\theta$ or $\varphi$ and hence a $\bm{l}$-vertex is either $ \theta \, \bvert \, \theta\cdots$ or $ \varphi \, \bvert \, \varphi \cdots$ is a vertex and both of them must appear.
\end{proof}

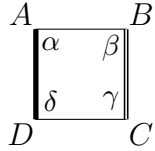
\begin{figure}[htp] 
\centering
\begin{tikzpicture}

\draw
	(0,0) -- (1.2,0) -- (1.2,1.2) -- (0,1.2) -- cycle;

\draw[double, line width=0.6]
	(1.2,0) -- (1.2,1.2);

\draw[line width=2]
	(0,0) -- (0,1.2);

\node at (-0.2, -0.2) {$D$};
\node at (-0.2, 1.4) {$A$};
\node at (1.4, 1.4) {$B$};
\node at (1.4, -0.2) {$C$};

\node at (0.2,1) {\small $\alpha$};
\node at (1,0.95) {\small $\beta$};
\node at (1,0.25) {\small $\gamma$};
\node at (0.2,0.25) {\small $\delta$};

\end{tikzpicture}
\caption{Impossible Quadrilateral}
\label{FailQuad}
\end{figure}

\begin{proposition} \label{FailQuadProp} The quadrilateral in Figure \ref{FailQuad} does not form a tiling. 
\end{proposition}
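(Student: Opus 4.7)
The plan is to exploit the fact that in the alternative quadrilateral of Figure \ref{FailQuad}, both distinguished edges $\bm{b}$ and $\bm{c}$ now lie opposite each other rather than sharing a vertex. This means the Parity Lemma can be applied to each of them independently, and the two resulting congruences together with the Dehn-Sommerville lower bound on $v_3$ should give an immediate contradiction.

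First I would record the edge-angle incidences in Figure \ref{FailQuad}: the $\bm{b}$-edge $BC$ is flanked by $\beta$ and $\gamma$, whereas the $\bm{c}$-edge $AD$ is flanked by $\alpha$ and $\delta$, so in contrast to the default tile, $\bm{b}$ and $\bm{c}$ share no common adjacent angle. Since each of $\bm{b}$ and $\bm{c}$ is a distinguishable edge of the tile, Lemma \ref{PaLem} applies to both. Thus at every vertex $\alpha^a\beta^b\gamma^c\delta^d$ of any hypothetical tiling,
\[
b+c \equiv 0 \pmod 2, \qquad a+d \equiv 0 \pmod 2.
\]
Adding these two congruences yields $a+b+c+d \equiv 0 \pmod 2$, so every vertex of the tiling must have even degree.

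Finally, I would invoke the combinatorial identity $v_3 = 8 + \sum_{h \ge 4}(h-4)v_h$ derived from Euler's formula and the Dehn-Sommerville relations, which forces $v_3 \ge 8$ in any edge-to-edge quadrilateral tiling of the sphere. Since the parity argument rules out every vertex of degree $3$, this is a contradiction, so the quadrilateral of Figure \ref{FailQuad} cannot tile the sphere. There is no real obstacle in this argument; the only thing to set up carefully is the observation that the two distinguished edges of Figure \ref{FailQuad} have disjoint sets of adjacent angles, so the two parity relations together cover all four angles and force the degree to be even.
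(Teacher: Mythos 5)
Your proof is correct and is essentially the paper's argument: both reduce to showing that parity of edge--angle incidences forces every vertex to have even degree, which contradicts $v_3 = 8 + \sum_{h\ge4}(h-4)v_h > 0$. The only cosmetic difference is that you apply Lemma \ref{PaLem} to $\bm{b}$ and $\bm{c}$ separately and add the two congruences, whereas the paper merges $\bm{b}$ and $\bm{c}$ into a single label $\bm{l}$ so that all four angles become $\bm{l}$-angles and one application of the parity argument suffices.
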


\begin{proof} Assume the quadrilateral in Figure \ref{FailQuad} is a tile of some tiling. Then $\alpha, \delta$ are $\bm{c}$-angles and $\beta, \gamma$ are $\bm{b}$-angles. Suppose we view $\bm{b}, \bm{c}$ as the same label $\bm{l}$. Then every angle is an $\bm{l}$-angle so Lemma \ref{PaLem} implies that the total number of angles at a vertex must be even. Then the degree of every vertex must be even and $\ge4$, contradicting $v_3 >0$.
\end{proof}

The following results involving congruent quadrilaterals not limited to $\bm{a^2bc}$.

\begin{lem} \label{AlConvexLem} In a tiling of the sphere by congruent quadrilaterals, there is at most one angle with value $\ge\pi$. 
\end{lem}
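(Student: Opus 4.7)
The plan is a short incidence count. Suppose for contradiction that two of the tile's angles, say $\alpha$ and $\beta$, both satisfy $\alpha,\beta\ge\pi$. The key observation is that such a large angle is very restrictive about which vertex configurations it can participate in, and the restriction is sharp enough to be counted against the Euler relation.

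First I would argue that $\alpha$ cannot appear twice at any single vertex: two copies already contribute $2\alpha\ge 2\pi$, and since the vertex has degree at least $3$ some further positive angle must be present, pushing the angle sum strictly above $2\pi$ and contradicting the vertex-angle-sum equation. The same argument applies to $\beta$. Writing $V_\alpha,V_\beta$ for the sets of vertices containing at least one $\alpha$ (respectively $\beta$), we therefore get $|V_\alpha|=\#\alpha=f$ and $|V_\beta|=\#\beta=f$, since each of the $f$ copies of $\alpha$ lands at a distinct vertex, and likewise for $\beta$.

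Next I would check that $V_\alpha\cap V_\beta=\emptyset$. A vertex containing both $\alpha$ and $\beta$ already contributes $\alpha+\beta\ge 2\pi$, and with at least one additional positive angle forced by the degree condition the sum again exceeds $2\pi$. Hence $|V_\alpha\cup V_\beta|=2f$, so the total vertex count satisfies $v\ge 2f$. The final step is to invoke the combinatorial identities from the start of Section 2: since $e=2f$ we have $v=f+2$ from Euler's formula, giving $2f\le f+2$, i.e. $f\le 2$. But the degree bound $2e=\sum_v\deg(v)\ge 3v$ becomes $4f\ge 3(f+2)$, so $f\ge 6$, the required contradiction.

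I do not expect any step to be a serious obstacle; the only care needed is to handle the boundary case $\alpha=\pi$ (or $\beta=\pi$) cleanly, since the contradiction is stated via a strict excess. That case still works because two copies of $\pi$ at a single vertex, or an $\alpha\beta$ with $\alpha=\beta=\pi$, would force a degree-$2$ vertex, which is excluded by assumption. I should also note that the argument uses only the degree hypothesis, Euler's relation, and the edge count $e=2f$ (four edges per tile, each shared by two tiles), and in particular does not rely on the $\bm{a^2bc}$ edge structure, which is exactly what is needed for the statement as quoted.
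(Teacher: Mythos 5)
Your proof is correct, and it reaches the contradiction by a different final count than the paper. Both arguments begin with the same key observations: no vertex can contain two angles from $\{\alpha,\beta\}$ (two copies of the same one, or one of each), because the degree-$\ge 3$ hypothesis forces a third positive angle and pushes the vertex angle sum past $2\pi$; hence the vertices containing $\alpha$ and those containing $\beta$ form two disjoint sets of size $f$ each. From there the paper enumerates the admissible vertex forms $\alpha\gamma^{c}\delta^{d}$, $\beta\gamma^{c}\delta^{d}$, $\gamma^{c}\delta^{d}$ and double-counts angles: the $2f$ vertices carrying $\alpha$ or $\beta$ each need at least two copies of $\gamma$ or $\delta$, yet only $\#\gamma+\#\delta=2f$ such copies exist, giving $2f<2f$. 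You instead count vertices against Euler's formula: $v\ge 2f$ while $v=e-f+2=f+2$, so $f\le 2$, contradicting $f\ge 6$. Your route is slightly more economical, since it never needs to identify what the remaining angles at those vertices are, only that the vertices are distinct; the paper's route stays entirely within angle counting and does not invoke $v=f+2$. Your handling of the boundary case $\alpha=\pi$ via the degree hypothesis is the right thing to say, and your remark that the argument is independent of the $\bm{a^2bc}$ edge structure matches the paper's intent, as the lemma is stated for arbitrary congruent quadrilaterals.
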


\begin{proof} Assume the contrary. Suppose the angles are $\alpha, \beta, \gamma, \delta$. Then there are exactly two angles with value $\ge\pi$. Without loss of generality, we may assume $\alpha, \beta \ge \pi$. Then there is no $\alpha^2\cdots$, $\beta^2\cdots$, $\alpha\beta\cdots$. Moreover, all the vertices are in one of the following forms
\begin{align*}
\alpha\gamma^{c_i}\delta^{d_i}, \qquad
\beta\gamma^{c_j}\delta^{d_j}, \qquad
\gamma^{c_k}\delta^{d_k},
\end{align*}
where $c_i, d_i, c_j, d_j, c_k, d_k \ge 0$ and $c_i + d_i , c_j + d_j \ge 2$ and $c_k + d_k\ge3$. As $\alpha, \beta$ must appear at some vertices, $\# \alpha\gamma^{c_i}\delta^{d_i}, \#\beta\gamma^{c_j}\delta^{d_j} > 0$ for some $i, j$. Counting angles $\alpha, \beta$, we get
\begin{align*}
2f &= \# \alpha + \# \beta \\
&= \sum_{i} \#  \alpha\gamma^{c_i}\delta^{d_i} + \sum_{j} \# \beta\gamma^{c_j}\delta^{d_j} \\ 
&<  \sum_{i} (c_i+d_i) \# \alpha\gamma^{c_i}\delta^{d_i} +  \sum_{j} (c_j + d_j) \# \beta\gamma^{c_j}\delta^{d_j} + \sum_{k} (c_k + d_k) \# \gamma^{c_k}\delta^{d_k} \\
&= \# \gamma + \# \delta = 2f,
\end{align*}
a contradiction. 
\end{proof}

\begin{lem} \label{TwoHDAng} In a tiling of the sphere by congruent quadrilaterals, if two angles $\theta_1,\theta_2$ do not appear at any degree $3$ vertex, then there must be a degree $4$ vertex in form of $\theta_i^3\cdots$ ($i=1$ or $2$) or $\theta_i^2\theta_j\cdots$ ($i,j=1,2$), or a degree $5$ vertex in form of $\theta_i^k\theta_j^l$ ($k+l=5$).
\end{lem}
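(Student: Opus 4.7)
The plan is to argue by contradiction through a double-count of $\#\theta_1+\#\theta_2=2f$. Assume no degree $4$ vertex has the form $\theta_i^3\cdots$ or $\theta_i^2\theta_j\cdots$ (with $i,j\in\{1,2\}$) and no degree $5$ vertex has the form $\theta_i^k\theta_j^l$ with $k+l=5$. I will derive an upper bound on $\#\theta_1+\#\theta_2$ that conflicts with the combinatorial identity $f=6+\sum_{h\geq4}(h-3)v_h$.

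The first step is to bound $s_v:=\#_v\theta_1+\#_v\theta_2$ at each vertex $v$. By hypothesis, $s_v=0$ when $v$ has degree $3$. At a degree $4$ vertex, a short case check on the pair $(\#_v\theta_1,\#_v\theta_2)$ shows that every configuration with $s_v\geq3$ falls under an excluded form: if either entry is $\geq3$ the vertex is $\theta_i^3\cdots$, and otherwise both entries are $\geq1$ with the larger $\geq2$, giving $\theta_i^2\theta_j\cdots$ with $i\neq j$. Hence $s_v\leq2$. At a degree $5$ vertex, $s_v=5$ would realise $\theta_i^k\theta_j^l$ with $k+l=5$, so $s_v\leq4$. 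For degree $k\geq6$, the trivial bound $s_v\leq k$ suffices.

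The second step is counting. We obtain
\begin{equation*}
2f \;=\; \#\theta_1+\#\theta_2 \;=\; \sum_v s_v \;\leq\; 2v_4+4v_5+\sum_{k\geq6}kv_k,
\end{equation*}
while the combinatorial identity rewrites as
\begin{equation*}
2f \;=\; 12+2v_4+4v_5+6v_6+8v_7+10v_8+\cdots.
\end{equation*}
Subtracting the upper bound from this second expression gives $12+v_7+2v_8+3v_9+\cdots\leq0$, a contradiction.

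I anticipate no substantive obstacle. The only point requiring care is the degree-$4$ case analysis, which is exhaustive but elementary; the clean vanishing of the $v_4$, $v_5$, $v_6$ coefficients after subtraction is a reassuring sanity check that the excluded vertex types are precisely those whose absence forces the contradiction.
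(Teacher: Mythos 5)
Your proposal is correct and is essentially the paper's own argument: the paper counts the complementary pair of angles $\varphi$ (getting $2f\ge 3v_3+2v_4+v_5$), which via $\#_v\varphi=\deg(v)-s_v$ is exactly equivalent to your direct bound on $s_v$, and both routes reduce to the same contradiction $12+\sum_{h\ge6}(h-6)v_h\le 0$. The only cosmetic difference is that your version of the count makes the $v_3$ identity unnecessary since $s_v=0$ at degree $3$.
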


\begin{proof} Counting the other two angles as $\varphi$, then assume the contrary so that $\varphi$ appears at least twice at every degree $4$ vertex and at least once at every degree $5$ vertex. We get $2f \ge 3v_3 + 2v_4 + v_5$. By $f=6 + \sum_{h\ge4} (h-3)v_h$ and $v_3 = 8 + \sum_{h\ge4} (h - 4) v_h$, we get
\begin{align*}
0 \ge 3v_3 + 2v_4 + v_5 - 2f = 12 + \sum_{h\ge6} (h-6)v_h > 0,
\end{align*}
a contradiction.
\end{proof}

\begin{lem} \label{OneHDAng} In a tiling of the sphere by congruent quadrilaterals, if an angle $\theta$ does not appear at any vertex of degree $i$, where $3 \le i \le k$ and $k \ge 4$, then there must be a degree $\ge k+1$ vertex $\theta^{\floor{\frac{k+1}{2}}+1} \cdots$.
\end{lem}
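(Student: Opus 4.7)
The plan is to adapt the counting strategy of Lemma \ref{TwoHDAng} to a single angle: if $\theta$ appears only at vertices of degree $\ge k+1$, and only with bounded multiplicity there, then the total count $\#\theta = f$ will collide with the Euler-type bound $f = 6 + \sum_{h\ge 4}(h-3)v_h$, which forces the supply of high-degree vertices to be small.

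First, assume for contradiction that no vertex of degree $\ge k+1$ has the form $\theta^{\lfloor (k+1)/2\rfloor + 1}\cdots$. Write $m := \lfloor (k+1)/2\rfloor$, so that at every vertex of degree $\ge k+1$ there are at most $m$ copies of $\theta$. Combined with the hypothesis that $\theta$ is absent from every vertex of degree $i$ with $3 \le i \le k$, counting copies of $\theta$ gives
\[
f \;=\; \# \theta \;\le\; m \sum_{h\ge k+1} v_h .
\]
On the other hand, from the identity $f = 6 + \sum_{h\ge 4}(h-3)v_h$ and the observation that $h-3 \ge k-2$ for every $h \ge k+1$, one has
\[
f - 6 \;\ge\; \sum_{h\ge k+1}(h-3) v_h \;\ge\; (k-2) \sum_{h\ge k+1} v_h ,
\]
so $\sum_{h\ge k+1} v_h \le (f-6)/(k-2)$. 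Combining the two inequalities produces $(k-2)f \le m(f-6)$, i.e.
\[
(k-2-m)\, f \;\le\; -6m .
\]

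The final step is to verify $k-2 \ge m = \lfloor (k+1)/2\rfloor$ for every $k \ge 4$: for $k = 2\ell$ this reads $\ell \ge 2$, and for $k = 2\ell+1$ it again reads $\ell \ge 2$, both holding as soon as $k \ge 4$. Hence the left-hand side of the last displayed inequality is non-negative while the right-hand side is strictly negative, the desired contradiction. I do not expect any real obstacle; the only point requiring care is the boundary cases $k=4,5$, where $k-2 = m$ exactly and the contradiction is supplied entirely by the $-6m$ term (for $k \ge 6$ the inequality is comfortably satisfied).
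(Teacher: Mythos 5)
Your proposal is correct and follows essentially the same route as the paper: both assume $\theta$ appears at most $\lfloor\frac{k+1}{2}\rfloor$ times at each vertex of degree $\ge k+1$, count $\#\theta=f$ against $f=6+\sum_{h\ge4}(h-3)v_h$, and derive the contradiction from $h-3\ge k-2\ge\lfloor\frac{k+1}{2}\rfloor$ for $h\ge k+1$ and $k\ge4$. The only difference is bookkeeping (you isolate $\sum_{h\ge k+1}v_h$ first rather than combining everything in one chain of inequalities), and your parity check of $k-2\ge\lfloor\frac{k+1}{2}\rfloor$ including the boundary cases $k=4,5$ is accurate.
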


\begin{proof} Assume $\theta$ appears at most $\floor{\frac{k+1}{2}}$ times at every degree $\ge k+1$ vertex. Then $\sum_{h \ge k+1}  \floor{\frac{k+1}{2}} v_h \ge \#_{\ge k+1}  \theta =  \#\theta  = f$ so that
\begin{align*} 
0 &\ge 6 + \sum_{h\ge4} (h-3)v_h -  \sum_{h \ge k+1} \floor*{\frac{k+1}{2}} v_h \\
&\ge 6 + \sum_{h=4}^{k} (h-3)v_h  + \sum_{h \ge k+1} ( k - \floor*{\frac{k+1}{2}} - 2 ) v_h > 0,
\end{align*}
a contradiction.
\end{proof}

\begin{lem} \label{a3Lem} If $\AVC_3 = \{ \theta^3 \}$, then 
\begin{enumerate}
\item $v_4 = 18 + \sum_{h\ge5} (2h-9)v_h + \#_{\ge4}\theta$;
\item $v_4 \ge  18+ \#_{\ge4}\theta$, $f \ge 24$ and hence $\AVC_4 \neq \emptyset$;
\item there is a degree $4$ vertex without $\theta$; 
\item there is a degree $4$ vertex $\varphi^2\cdots$ where $\varphi \neq \theta$ and either there is another different degree $4$ vertex or there is a degree $5$ vertex without $\theta$, $\varphi$; 
\item when $\varphi^4$ is a vertex, where $\varphi \neq \theta$, there must be another degree $4$ vertex; 
\item when $\varphi$, where $\varphi \neq \theta$, appears at least twice in every degree $4$ vertex, then there must be a degree $5$ vertex without $\theta, \varphi$. 
\end{enumerate}
\end{lem}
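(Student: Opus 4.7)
The plan is to establish (1)--(3) by direct Euler-type counting, obtain (5) and (6) as independent consequences via contradiction, and then derive (4) by combining (3), (6), and a pigeonhole step on the vertex supplied by (3). The central identity is $\#_3\theta = 3v_3$ (forced by $\AVC_3=\{\theta^3\}$), which combined with $\#\theta=f$ yields $\#_{\ge 4}\theta=f-3v_3$. Substituting into $v_3=8+\sum_{h\ge 5}(h-4)v_h$ (the $h=4$ summand vanishes) and $f=6+v_4+\sum_{h\ge 5}(h-3)v_h$, and solving for $v_4$, gives (1). Item (2) is then immediate because $2h-9\ge 1$ for $h\ge 5$ and $f\ge 6+v_4\ge 24$. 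For (3), if every degree-$4$ vertex contained $\theta$ then $\#_{\ge 4}\theta\ge v_4$, directly contradicting $v_4\ge 18+\#_{\ge 4}\theta$.

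For (5), I would argue by contradiction: if $\varphi^4$ is the only degree-$4$ vertex type, then $\#_4\varphi=4v_4$, and since $\varphi$ does not appear at degree $3$, we have $f=\#\varphi=4v_4+\#_{\ge 5}\varphi\ge 4v_4$. Substituting the expression for $v_4$ from (1) into $f=6+v_4+\sum_{h\ge 5}(h-3)v_h$ and reorganising the inequality $f-4v_4\ge 0$ collects into
\[
-48-3\#_{\ge 4}\theta-\sum_{h\ge 5}(5h-24)v_h\ \ge\ 0,
\]
where every term on the left is non-positive and the constant is strictly negative, giving a contradiction.

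For (6), let $\psi_1,\psi_2$ be the two angles in $\{\alpha,\beta,\gamma,\delta\}\setminus\{\theta,\varphi\}$; neither appears at degree $3$. Apply Lemma \ref{TwoHDAng} to obtain either a degree-$4$ vertex of shape $\psi_i^3\cdots$ or $\psi_i^2\psi_j\cdots$, or a degree-$5$ vertex $\psi_i^k\psi_j^l$ with $k+l=5$. The hypothesis $\#_\nu\varphi\ge 2$ at every degree-$4$ vertex leaves at most two of the four slots available for $\{\psi_1,\psi_2\}$, ruling out the first two possibilities (both require at least three such slots). Thus the degree-$5$ option survives, and $\psi_i^k\psi_j^l$ is, by construction, a degree-$5$ vertex without $\theta$ or $\varphi$.

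For (4), apply (3) to obtain a degree-$4$ vertex $\nu_0$ without $\theta$; pigeonhole on four angle slots distributed among only three angles forces some $\varphi\neq\theta$ with $\#_{\nu_0}\varphi\ge 2$, giving the first claim. If no other degree-$4$ vertex type exists, then every degree-$4$ vertex equals $\nu_0$ and hence satisfies $\#_\nu\varphi\ge 2$; invoking (6) then produces a degree-$5$ vertex without $\theta,\varphi$, establishing the second claim. The main obstacle will be the bookkeeping in (5): the three identities (Euler, the refined $v_4$-formula from (1), and $f\ge 4v_4$) must be combined so that the coefficient of every $v_h$ has the same sign, which is precisely what drives the contradiction.
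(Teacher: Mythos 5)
Your items (1)--(3) and (5) follow the paper's argument essentially verbatim: the identity $\#_{\ge4}\theta=f-3v_3$ combined with $v_3=8+\sum_{h\ge4}(h-4)v_h$ and $f=6+\sum_{h\ge4}(h-3)v_h$ gives the formula for $v_4$, and the contradiction $f-4v_4=-48-3\#_{\ge4}\theta-\sum_{h\ge5}(5h-24)v_h\ge0$ is exactly the paper's computation for (5). Where you genuinely diverge is item (6), and consequently the second clause of (4). The paper proves (6) by a direct count: from $f=\#\varphi\ge 2v_4+\#_{\ge5}\varphi$ and the $v_4$-formula it extracts $v_5\ge 12+\#_{\ge5}\varphi+\#_{\ge4}\theta+\sum_{h\ge6}(h-6)v_h>\#_{5}\varphi+\#_{5}\theta$, whence some degree-$5$ vertex avoids both angles; it then proves (4) \emph{before} (6) by running this same computation inline. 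You instead apply Lemma \ref{TwoHDAng} to the two remaining angles $\psi_1,\psi_2$ (which are absent from $\AVC_3=\{\theta^3\}$), observe that the degree-$4$ alternatives $\psi_i^3\cdots$ and $\psi_i^2\psi_j\cdots$ each need at least three slots while the hypothesis $\#_\nu\varphi\ge2$ leaves only two, and conclude that the degree-$5$ vertex $\psi_i^k\psi_j^l$ must occur -- which by construction omits $\theta$ and $\varphi$. This is a valid and arguably cleaner route since it reuses an existing lemma and avoids repeating the count; what it gives up is the explicit lower bound $v_5>12$ that the paper's computation produces as a by-product (though the lemma statement does not require it). Deriving (4) by invoking (6) rather than duplicating its proof is also sound, since your (6) is established independently.
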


\begin{proof} Counting $\theta$, we have $f = \# \theta = \#_3 \theta + \#_{\ge4} \theta = 3v_3 + \#_{\ge4} \theta$, then %by $f=6 + \sum_{h\ge4} (h-3)v_h$, $v_3 = 8 + \sum_{h\ge4} (h - 4) v_h$,
\begin{align*}
\#_{\ge4} \theta = f - 3v_3 = ( 6 + \sum_{h\ge4} (h-3)v_h ) - 3( 8 + \sum_{h\ge4} (h - 4) v_h ) = v_4 - 18 - \sum_{h\ge5} (2h-9) v_h,
\end{align*}
which implies $v_4 = 18 + \sum_{h\ge5} (2h-9)v_h + \#_{\ge4}\theta$ and $v_4 >  \#_{\ge4}\theta$. Moreover, as $ \#_{\ge4} \theta \ge 0$, we get $v_4 \ge 18$, so
\begin{align*}
f=6 + \sum_{h\ge4} (h-3)v_h \ge 6 + 18 + \sum_{h\ge5} (h-3) v_h \ge 24.
\end{align*}
As $v_4 > \#_{\ge4}\theta \ge 0$, there must be a degree $4$ vertex without $\theta$. Then at this vertex, one of the other angles must appear at least twice at such a vertex, that is, it must be $\varphi^2\cdots$ for some $\varphi\neq\theta$. Suppose this is the only degree $4$ vertex in $\AVC_4$. Such a vertex must be $\varphi^2\cdots$ for some $\varphi\neq\theta$. So we have $\#_3\varphi = 0$, $\#_4 \varphi \ge 2 v_4$ and $\#\varphi = f = \#_{4} \varphi + \#_{\ge5} \varphi$. By $v_4 = 18 + \sum_{h\ge5} (2h-9)v_h + \#_{\ge4}\theta$, then
\begin{align*}
\#_{\ge5} \varphi = f - \#_{4}\varphi \le f - 2v_4 %= ( 6 + \sum_{h\ge5} (h-3)v_h ) - 2( 18 + \sum_{h\ge5} (2h-9)v_h + \#_{\ge5}\theta ) \\
=v_5 - 12 - \sum_{h\ge6} (h-6)v_h - \#_{\ge4}\theta,
\end{align*}
which implies $v_5 \ge 12 + \sum_{h\ge6} (h-6)v_h  + \#_{\ge4}\theta + \#_{\ge5} \varphi > 12$. As $v_5 > \#_{\ge5}\theta + \#_{\ge5} \varphi$, $\theta$ and $\varphi$ combined cannot appear at least once at every degree $5$ vertex. 

Assume $\AVC_4 = \{ \varphi^4 \}$ where $\varphi \neq \theta$, then $\# \varphi = f = 4v_4 + \#_{\ge5} \varphi$ such that 
\begin{align*}
\#_{\ge5} \varphi = f - 4v_4 &= 6 + v_4 + \sum_{h\ge5} (h-3)v_h - 4v_4 \\
&=  6 + \sum_{h\ge5} (h-3)v_h - 3( 18 + \sum_{h\ge5} (2h-9)v_h + \#_{\ge5} \theta ) \\
&= -48 - 3\#_{\ge5} \theta -  \sum_{h\ge5} (5h - 24)v_h < 0,
\end{align*}
a contradiction. So there must be another degree $4$ vertex different from $\varphi^4$. 

When $\varphi \neq \theta$ appears at least twice at every degree $4$ vertex, we get $\# \varphi  = f \ge 2v_4 + \#_{\ge5}\varphi$ such that
\begin{align*}
\#_{\ge5}\varphi \le f - 2v_4 &= ( 6 + v_4 + \sum_{h\ge5} (h-3)v_h ) - 2v_4 \\
&= 6 + \sum_{h\ge5} (h-3)v_h - ( 18 + \sum_{h\ge5} (2h-9)v_h + \#_{\ge4} \theta ) \\
&=v_5 -12 - \#_{\ge4} \theta  - \sum_{h\ge 6} (h-6)v_h,
\end{align*}
which implies
\begin{align*}
v_5 \ge 12 + \#_{\ge5}\varphi  +  \#_{\ge4} \theta + \sum_{h\ge 6} (h-6)v_h > \#_{\ge5}\varphi  +  \#_{\ge5} \theta.
\end{align*}
So $v_5 > 0$ and there must be a degree $5$ vertex without $\theta, \varphi$. 
\end{proof}

\begin{lem} \label{ab2Lem} If $\AVC_3 = \{ \theta^2\varphi \}$, then 
\begin{enumerate}
\item $v_4 = 10 + \sum_{h\ge5} (h-5)v_h + \#_{\ge4}\theta$;
\item $v_4 \ge  10+ \#_{\ge4}\theta$, $f \ge 16$, and hence $\AVC_4 \neq \emptyset$;
\item there is a degree $4$ vertex without $\theta$;
\item when there is no $\varphi\cdots$ in $\AVC_i$ for every $4\le i \le k$ where $k \ge 5$, there must be a degree $\ge k+1$ vertex $\varphi^{\floor*{\frac{k}{2}}+1}\cdots$;
\item when $\AVC_4 = \{ \psi^4 \}$, $\psi \neq \theta, \varphi$, there is a degree $5$ vertex with at most of $\psi,\theta$, and there is a degree $5$ vertex without $\theta$;
\item when $\theta$ and $\psi$, where $\psi \neq \varphi, \theta$, together appear four times at every degree $4$, then there is a degree $5$ vertex with at most one of $\theta, \psi$;
\item when $\varphi$ appears at least twice at every degree $4$ vertex, there is a degree $5$ vertex without both $\varphi$ and $\theta$.
\item for $\psi \neq \theta, \varphi$, there is degree $4$ $\psi\cdots$, or there is a degree $5$ $\psi^3\cdots$, or there is a degree $6$ $\psi^5\cdots$, or $\psi^7$ is a vertex. 
\end{enumerate}
\end{lem}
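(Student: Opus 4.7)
The plan is to extract everything from two angle counts. Since every degree-$3$ vertex is $\theta^2\varphi$, one has $f=\#\theta=2v_3+\#_{\ge4}\theta$ and $f=\#\varphi=v_3+\#_{\ge4}\varphi$. Eliminating $f,v_3$ via the standard formulas $v_3=8+\sum_{h\ge4}(h-4)v_h$ and $f=6+\sum_{h\ge4}(h-3)v_h$ immediately delivers (1), while $\#_{\ge4}\theta\ge0$ combined with $f\ge6+v_4$ delivers (2). For (3), the same identity gives $\#_4\theta\le\#_{\ge4}\theta<v_4$, so a degree-$4$ vertex missing $\theta$ must exist.

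For (4), (6) and (7) the strategy is uniform contradiction. Assume no vertex of the named form exists, which caps the occurrences of the relevant angle (for (4)) or pair (for (6) and (7)) at each vertex of each specified degree. Summing the caps yields an upper bound on the corresponding total count, and comparing with its exact value given by the identities above produces a linear inequality in $\{v_h\}$; substituting (1) for $v_4$ collapses it to a non-positive combination of $v_h$'s bounded below by a positive constant. The delicate step is verifying the coefficients. In (4), one checks $\lfloor k/2\rfloor+4-h\le0$ for all $h\ge k+1$ and $k\ge5$ (equality only at $k=5,6$); this is precisely what pins down the multiplicity $\lfloor k/2\rfloor+1$ in the conclusion. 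In (6) the inequality collapses through (1) to $v_4\ge10+\sum_{h\ge6}v_h\ge v_4+12$. In (7) the argument telescopes, without even invoking (1), to $\sum_{h\ge7}(6-h)v_h\ge12$, an absurdity.

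For (5), the first claim follows from (6) applied to $\AVC_4=\{\psi^4\}$, which automatically satisfies $\#_4\theta+\#_4\psi=4v_4$. For the second claim I would argue by contradiction: if every degree-$5$ vertex contains $\theta$, then $v_5\le\#_5\theta\le\#_{\ge4}\theta=v_4-10-\sum_{h\ge6}(h-5)v_h$, so $v_4\ge v_5+10+\sum_{h\ge6}(h-5)v_h$. Substituting into the identity $\#_{\ge5}\psi=f-4v_4$ gives $\#_{\ge5}\psi\le-24-v_5+\sum_{h\ge6}(12-2h)v_h\le-24-v_5<0$, contradicting $\#_{\ge5}\psi\ge0$.

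For (8), the negation gives $\#_h\psi\le2(h-4)$ for $4\le h\le7$ (the $h=7$ bound using $\psi^7$ forbidden) together with the trivial $\#_h\psi\le hv_h$ for $h\ge8$. Summing and comparing with $f=\#\psi=6+\sum_{h\ge 4}(h-3)v_h$, then applying (1) to substitute for $v_4$, reduces the inequality to $16\le\sum_{h\ge8}(8-h)v_h\le0$. The main obstacle across all eight items is choosing, for each negation, a linear combination of angle counts whose substitution via (1) leaves only non-positive coefficients on the remaining $v_h$'s; the constant $10$ in (1) (versus $18$ in Lemma~\ref{a3Lem}) is what ultimately dictates the particular degrees and multiplicities appearing in each conclusion.
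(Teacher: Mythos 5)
Your proposal is correct and follows essentially the same route as the paper: every part is obtained by double-counting an angle (or pair of angles) against $v_3 = 8+\sum_{h\ge4}(h-4)v_h$, $f=6+\sum_{h\ge4}(h-3)v_h$ and the identity in part (1), with the negation of each conclusion supplying the cap on occurrences. The only differences are cosmetic bookkeeping choices (e.g.\ in (8) you count $\psi$ where the paper counts the three complementary angles, and in (5) you reach the contradiction via $\#_{\ge5}\psi<0$ where the paper isolates $v_5>\#_5\theta$ directly); the resulting inequalities are equivalent.
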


\begin{proof} Counting $\theta$, we have $f = \# \theta = \#_3 \theta + \#_{\ge4} \theta = 2v_3 + \#_{\ge4} \theta$, then
\begin{align*}
\#_{\ge4} \theta = f - 2v_3 = ( 6 + \sum_{h\ge4} (h-3)v_h ) - 2( 8 + \sum_{h\ge4} (h - 4) v_h ) = v_4 - 10 - \sum_{h\ge5} (h-5) v_h,
\end{align*}
which implies $v_4 = 10 + \sum_{h\ge5} (h-5)v_h + \#_{\ge4}\theta$ and $v_4 >  \#_{\ge4}\theta$. Moreover, as $\#_{\ge4} \theta \ge 0$, we get $v_4 \ge 10$, so
\begin{align*}
f=6 + \sum_{h\ge4} (h-3)v_h \ge 6 + 10 + \sum_{h\ge5} (h-3) v_h \ge 16.
\end{align*}
As $v_4 > \#_{\ge4}\theta \ge 0$, there must be a degree $4$ vertex without $\theta$. When $\varphi$ does not appear in every degree $i$ vertex, where $4 \le i \le k$ and $k \ge 5$, assume that $\varphi$ appears at most $\floor*{\frac{k}{2}}$ times at every degree $\ge k+1$ vertex. Then $\#_{\ge k+1} \varphi \le \sum_{h\ge k+1} \floor*{\frac{k}{2}} v_h$ and counting $\varphi$, we have $f=\# \varphi = \#_3 \varphi + \#_{\ge k+1} \varphi \le \frac{f}{2} + \sum_{h\ge k+1} \floor*{\frac{k}{2}} v_h$, then by $v_4 = 10 + \sum_{h\ge5} (h-5)v_h + \#_{\ge4}\theta$
\begin{align*}
0 &\le -\frac{f}{2} + \sum_{h\ge k+1} \floor*{\frac{k}{2}} v_h \\
&= -\frac{1}{2} \left( 6 + v_4 + \sum_{h=5}^{k} (h-3)v_h  + \sum_{h \ge k+1} ( h - 3 - 2\floor*{\frac{k}{2}} )  v_h  \right) \\
&= -\frac{1}{2} \left( 16 +  \sum_{h=5}^{k} (2h-8)v_h + \#_{\ge4}\theta  + \sum_{h \ge k+1} ( 2h - 8 - 2\floor*{\frac{k}{2}} )  v_h  \right), 
\end{align*}
which is $< 0$, a contradiction. So there must be a degree $\ge k+1$ vertex $\varphi^{\floor*{\frac{k}{2}}+1}\cdots$. When $\AVC_4 = \{ \psi^4 \}$ where $\psi \neq \theta, \varphi$, we have $\#_{4}\theta = 0$ and $f = \#\psi = \#_4 \psi + \#_{\ge5} \psi = 4v_4 +  \#_{\ge5} \psi$, then
\begin{align*}
\#_{\ge5} \psi = f - 4v_4 &= 6 + v_4 + \sum_{h\ge5} (h-3)v_h - 4v_4 \\
&= 6 + \sum_{h\ge5} (h-3)v_h - 3( 10 + \sum_{h\ge5} (h-5) v_h + \#_{\ge4}\theta ) \\
&= 2v_5 - 24 - \sum_{h\ge6}2(h-6)v_h - 3\#_{\ge5}\theta,
\end{align*}
so we get
\begin{align*}
2v_5 = \#_{\ge5} \psi  + 3\#_{\ge5}\theta + 24 + \sum_{h\ge6}2(h-6)v_h.
\end{align*}
which implies $2v_5 > \#_5 \psi + \#_{5}\theta$ and $v_5 > \#_5 \theta$. Hence there must be a degree $5$ vertex with at most one of $\psi, \theta$, and there must be a vertex without $\theta$.

When $\theta$ and $\psi$, where $\psi \neq \varphi, \theta$, together appear four times at every degree $4$, let $\xi := \theta, \psi$, so we have $2f = \# \xi = \#_3 \xi + \#_4 \xi + \#_{\ge5} \xi = 2v_3 + 4v_4 + \#_{\ge5}\xi$. Then 
\begin{align*} 
 \#_{\ge5}\xi &= 2f - 2v_3 - 4v_4 \\
&= 2(6 + v_4 + \sum_{h\ge5} (h-3)v_h) - 2( 8 + \sum_{h\ge4} (h - 4) v_h ) - 4v_4 \\
&= 2( ( 6 + \sum_{h\ge5} (h-3)v_h ) - ( 8 + \sum_{h\ge5} (h - 4) v_h ) - ( 10 + \sum_{h\ge5} (h-5)v_h + \#_{\ge4}\theta  ) ) \\
&= 2( -12 - \#_{\ge4}\theta + v_5 - \sum_{h\ge6} (h-6)v_h ) 
\end{align*}
which implies
\begin{align*}
2v_5 =  \#_{\ge5}\xi + 24 + 2\#_{\ge4}\theta + \sum_{h\ge6} 2(h-6)v_h.
\end{align*}
So we get $v_5 > 0$ and $2 v_5 > \#_{\ge5}\xi$ and hence there is a degree $5$ vertex with at most of $\theta,\psi$.

When $\varphi$ appears at least twice at every degree $4$ vertex, we have $f = \#\varphi \ge v_3 + 2v_4 + \#_{\ge5} \varphi$. Then
\begin{align*}
0 &\ge v_3 + 2v_4 + \#_{\ge5} \varphi - f =   v_3 + 2v_4 + \#_{\ge5} \varphi - ( 6 + v_4 + \sum_{h\ge5} (h-3)v_h ) \\
& = \#_{\ge5} \varphi + 8 + \sum_{h\ge4} (h - 4) v_h +  10 + \sum_{h\ge5} (h-5) v_h + \#_{\ge4}\theta  - 6 - \sum_{h\ge5} (h-3)v_h \\
&= \#_{\ge5} \varphi  +  \#_{\ge4}\theta + 12 - v_5 + \sum_{h\ge6} (h-6)v_h
\end{align*}
which implies $v_5  \ge \#_{\ge5} \varphi  +  \#_{\ge4}\theta + 12 + \sum_{h\ge6} (h-6)v_h$. In particular, $v_5  > \#_5\varphi + \#_5\theta$ shows that there is a degree $5$ vertex without both $\varphi$, $\theta$.

When $\psi$ does not appear at any degree $3,4$ vertex, for $\xi \neq \psi$ assume that $\xi$ appears at least three times at each degree $5$ vertex, at least twice at each degree $6$ vertex and at least once at each degree $7$ vertex. Then $\# \xi = 3f \ge 3v_3 + 4v_4 +3v_5 + 2v_6+v_7$, which by $f=6 + \sum_{h\ge4} (h-3)v_h$, $v_3 = 8 + \sum_{h\ge4} (h - 4) v_h$ and $v_4 = 10 + \sum_{h\ge5} (h-5)v_h + \#_{\ge4}\theta$ implies
\begin{align*}
3v_3 + 4v_4 +3v_5 + 2v_6+v_7 - 3f &= 6 + v_4 + 3v_5 + 2v_6+v_7 - \sum_{h\ge5} 3v_h \\
& \ge 16 + \sum_{h\ge8}(h-8)v_h > 0,
\end{align*}
a contradiction.
\end{proof}

\subsection{Anglewise Vertex Combinations ($\AVC$)}

\begin{lem}\label{CountLem} If $\beta\gamma\delta$ is the only vertex with $\beta$ (equivalently $\gamma$ or $\delta$), then the other vertices are $\alpha^a$ where $a\ge3$.
\end{lem}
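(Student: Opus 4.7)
The plan is a straightforward counting argument using the notation $\#$ introduced earlier in the paper.

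First I would establish the equivalence claim in parentheses. Suppose $\beta\gamma\delta$ is the only vertex containing $\beta$. Since every tile contributes exactly one $\beta$, we have $\# \beta\gamma\delta = \#\beta = f$. At each such vertex there is exactly one $\gamma$ and exactly one $\delta$, so the number of $\gamma$'s and the number of $\delta$'s appearing at vertices of type $\beta\gamma\delta$ are both $f$. Since also $\#\gamma = \#\delta = f$, this exhausts every occurrence of $\gamma$ and of $\delta$ in the tiling. Therefore no other vertex contains $\gamma$ or $\delta$. The same argument starting from the assumption that $\beta\gamma\delta$ is the only vertex with $\gamma$ (resp.\ $\delta$) gives the reverse implications, yielding the claimed equivalence.

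Next, any vertex $\nu \neq \beta\gamma\delta$ therefore contains none of $\beta, \gamma, \delta$; by the angle-sum equation $a\alpha + b\beta + c\gamma + d\delta = 2\pi$ with $b=c=d=0$, such a vertex must be of the form $\alpha^a$. The standing assumption that every vertex has degree $\ge 3$ forces $a \ge 3$, completing the proof.

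There is no real obstacle here; the argument is essentially a one-line balance between $\#\beta$, $\#\gamma$, $\#\delta$ and the fact that each is equal to $f$. The only subtlety is making the equivalence in the parenthetical remark explicit, which is handled by the symmetry of the counting between the three angles $\beta, \gamma, \delta$ at the vertex $\beta\gamma\delta$.
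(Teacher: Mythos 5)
Your proof is correct and follows essentially the same route as the paper: both count $\#\beta=\#\beta\gamma\delta=f$ and use $\#\beta=\#\gamma=\#\delta$ to conclude that every $\gamma$ and $\delta$ is already accounted for at the $\beta\gamma\delta$ vertices, so any other vertex is $\alpha^a$ with $a\ge3$. Your added remarks on the parenthetical equivalence and the degree bound are fine but do not change the argument.
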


\begin{proof} By $\beta\cdots = \beta\gamma\delta$, we get $\#\beta = \# \beta\gamma\delta$. Combining with $\#\beta = \# \gamma = \#\delta$, we have $\# \gamma = \# \delta = \# \beta\gamma\delta$, which implies $\beta\gamma\delta$ is the only vertex with $\gamma$ and $\delta$. So the other vertices can only be $\alpha^a$ where $a\ge3$.
\end{proof}

\begin{lem} \label{ForbVerLem} In a tiling of the sphere by congruent quadrilateral $\bm{a^2bc}$ in Figure \ref{DefaultQuad}, the possible vertices are 
\begin{align*}
&\alpha^a, \
\alpha^a\beta^b, \
\alpha^a\beta^b\gamma^c, \
\alpha^a\beta^b\delta^d, \
\alpha^a\gamma^c\delta^d, \
\alpha^a\delta^d; \quad \text{where } a > 0 \text{ and } b,c,d \text{ are even};   \\
&\beta^b, \
\beta^b\gamma^c, \
\beta^b\delta^d, \
\gamma^c, \
\gamma^c\delta^d, \
\delta^d; \quad \text{where } b,c,d \text{ are even};  \\
&\beta^b\gamma^c\delta^d, \qquad \text{where } b,c,d \text{ are all even or all odd};   
\end{align*}
\end{lem}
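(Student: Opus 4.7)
The plan is to enumerate the $2^4 - 1 = 15$ nonempty subsets of $\{\alpha, \beta, \gamma, \delta\}$ that can occur at a vertex, and show that two of them are impossible while the remaining thirteen are exactly the vertex types in the list. Three tools will drive the argument: the Parity Lemma (Lemma \ref{PaLem}); the edge-compatibility observation that an $\bm{a}$-edge at a vertex only separates angles from $\{\alpha, \beta, \delta\}$, whereas $\gamma$ is flanked only by $\bm{b}$- and $\bm{c}$-edges; and the quadrilateral angle sum \eqref{QuadSum}, namely $\alpha+\beta+\gamma+\delta = (2+\tfrac{4}{f})\pi > 2\pi$.

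The two excluded subsets are $\{\alpha, \gamma\}$ and $\{\alpha, \beta, \gamma, \delta\}$. For $\alpha^a\gamma^c$ with $a, c \ge 1$, any cyclic arrangement of $\alpha$'s and $\gamma$'s at the vertex forces some $\alpha$ to be adjacent to some $\gamma$; but the edge between them would have to be both an $\bm{a}$-edge (on $\alpha$'s side) and a $\bm{b}$- or $\bm{c}$-edge (on $\gamma$'s side), which is impossible. For $\alpha^a\beta^b\gamma^c\delta^d$ with all four exponents $\ge 1$, the vertex angle sum would be at least $\alpha+\beta+\gamma+\delta > 2\pi$, contradicting the vertex angle sum of $2\pi$.

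For each of the thirteen remaining subsets, the parity restrictions follow from the Parity Lemma applied to whichever edges are present: a $\bm{b}$-edge is present iff $\beta$ or $\gamma$ appears, forcing $b + c \equiv 0 \pmod 2$; a $\bm{c}$-edge is present iff $\gamma$ or $\delta$ appears, forcing $c + d \equiv 0 \pmod 2$. In every case other than $\beta^b\gamma^c\delta^d$ with $b, c, d \ge 1$, at most two of $b, c, d$ are positive, so these two congruences collapse to the statement that each positive exponent among $b, c, d$ is even. In the exceptional case $\beta^b\gamma^c\delta^d$ (which can only occur with $a = 0$, since the all-four-present case has been excluded by angle sum), the congruences only yield $b \equiv c \equiv d \pmod 2$, permitting the all-odd alternative.

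The main obstacle is the non-adjacency argument for $\alpha$ and $\gamma$; it is the one step that leaves the pure parity framework and instead draws on edge-type information to rule out an otherwise parity-admissible vertex. Once this is in place, the remainder is a routine bookkeeping check matching the thirteen admissible subsets to the six-term $\alpha$-present list and the seven-term $\alpha$-absent list in the statement.
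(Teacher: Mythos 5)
Your proposal is correct and follows essentially the same route as the paper: the parity constraints come from the Parity Lemma, the vertex $\alpha^a\gamma^c$ is excluded because an $\alpha$ adjacent to a $\gamma$ would force a single edge to be simultaneously an $\bm{a}$-edge and a $\bm{b}$- or $\bm{c}$-edge, and the all-four-angles vertex is excluded by the quadrilateral angle sum. The only difference is presentational — you organise the cases as the fifteen nonempty subsets of the angle set, while the paper groups them by how many of $\beta,\gamma,\delta$ appear — and this changes nothing of substance.
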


\begin{proof} By Lemma \ref{PaLem}, when a vertex containing exactly one of $\beta$, $\gamma$, $\delta$, the number must be even, so $\alpha^a\varphi^{\,p}$, $a \ge 0$, $p$ odd, cannot be a vertex. Also by Lemma \ref{PaLem}, when a vertex containing exactly two of $\beta$, $\gamma$, $\delta$, then both $p$ and $q$ must be even, so $\alpha^a\varphi^{\,p}\psi^{\, q}$, $a \ge 0$, $p$ or $q$ odd, cannot be a vertex. Again by Lemma \ref{PaLem}, $b, c, d$ must be all even or all odd in $\beta^b\gamma^c\delta^d$ when $b, c, d > 0$. In $\alpha^a\gamma^c$, $a\ge1$ and by Lemma \ref{PaLem}, $c\ge2$, then there must be a non-empty $\bvert \, \gamma \Vert \cdots \Vert \gamma \, \bvert$ completed by $\alpha \cdots \alpha = \vert \alpha \vert \cdots \vert \alpha \vert$, a contradiction. By quadrilateral angle sum, no vertex contains all four angles. 
\end{proof}

To streamline the arguments, from this point onward, when we give a generic vertex such as $\alpha^a\beta^b\gamma^c$, it will be assumed to satisfy the conditions in Lemma \ref{ForbVerLem} unless otherwise specified. The only vertices that allow odd numbers of $\beta, \gamma, \delta$ is $\beta^b\gamma^c\delta^d$ and $\alpha^a\gamma^c$ is never a vertex.

\begin{cor} The list of degree $3, 4, 5$ vertices are 
\begin{align*}
\AVC_3 &= \{\alpha^3,   \alpha\beta^2,  \alpha\delta^2,  \beta\gamma\delta  \}, \\
\AVC_4 &= \{ \alpha^4,  \beta^4,  \gamma^4,   \delta^4,   \alpha^2\beta^2,   \alpha^2\delta^2,   \beta^2\gamma^2,   \beta^2\delta^2,   \gamma^2\delta^2 \}, \\
\AVC_5 &= \{  \alpha^5,   \alpha\beta^4,   \alpha\delta^4,   \alpha^3\beta^2,  \alpha^3\delta^2,   \beta^3\gamma\delta,  \beta\gamma^3\delta,  \beta\gamma\delta^3,  \alpha\beta^2\gamma^2,  \alpha\beta^2\delta^2,  \alpha\gamma^2\delta^2 \}.
\end{align*}
\end{cor}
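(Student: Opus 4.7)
The plan is to derive the three lists by a direct enumeration from Lemma \ref{ForbVerLem}, since that lemma already enumerates every possible template of a vertex together with the parity restrictions on the exponents. Fixing the degree $k=a+b+c+d\in\{3,4,5\}$ then reduces the task to solving finitely many integer equations.

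First I would go template by template through the list in Lemma \ref{ForbVerLem}, in each case imposing $a+b+c+d=k$ together with the constraints $a\ge 1$ (where required), $b,c,d$ even (or all three odd, in the $\beta^b\gamma^c\delta^d$ case), and the exclusion of $\alpha^a\gamma^c$. For $k=3$ only the templates $\alpha^a$, $\alpha^a\beta^b$, $\alpha^a\delta^d$, and $\beta^b\gamma^c\delta^d$ admit solutions, giving $\alpha^3$, $\alpha\beta^2$, $\alpha\delta^2$, and $\beta\gamma\delta$ (the last from $b=c=d=1$ in the all-odd case, since all-even would force $b+c+d\ge 6$). The other templates either fail on parity ($\beta^3,\gamma^3,\delta^3$) or force the sum to exceed $3$ (any template mixing $\alpha$ with two of $\beta,\gamma,\delta$ requires $a+b+c+d\ge 1+2+2=5$).

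For $k=4$ the parity forces the even-exponent templates to realise only the single partitions $4$ or $2+2$, yielding $\alpha^4,\beta^4,\gamma^4,\delta^4$ and $\alpha^2\beta^2,\alpha^2\delta^2,\beta^2\gamma^2,\beta^2\delta^2,\gamma^2\delta^2$; templates involving $\alpha$ with two of $\beta,\gamma,\delta$ are again ruled out by $a+b+c+d\ge 5$, and $\beta^b\gamma^c\delta^d$ has no all-even solution of sum $4$ and no all-odd solution since three odds cannot sum to an even number. For $k=5$ the allowed partitions are $5$, $1+4$, $3+2$, and $1+2+2$, producing $\alpha^5$, $\alpha\beta^4,\alpha\delta^4$, $\alpha^3\beta^2,\alpha^3\delta^2$, $\alpha\beta^2\gamma^2,\alpha\beta^2\delta^2,\alpha\gamma^2\delta^2$, while the all-odd case $b+c+d=5$ supplies $\beta^3\gamma\delta,\beta\gamma^3\delta,\beta\gamma\delta^3$.

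There is essentially no obstacle; the one point requiring care is to remember the two extra restrictions from Lemma \ref{ForbVerLem} beyond raw parity, namely that $\alpha^a\gamma^c$ is never a vertex (so $\alpha^3\gamma^2$ and $\alpha\gamma^4$ must be deleted from the degree-$5$ list) and that the mixed template $\alpha^a\gamma^c\delta^d$ still requires $d\ge 2$. Once these are observed, the three lists coincide exactly with the enumerations above.
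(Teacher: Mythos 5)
Your proposal is correct and follows essentially the same route as the paper: both proofs are a finite enumeration driven by Lemma \ref{ForbVerLem}, checking each admissible template against the parity constraints and the fixed degree (the paper merely organises the check by partition shape $\theta^3$, $\theta^2\varphi$, $\theta\varphi\psi$, etc., whereas you organise it by template). Your two flagged caveats — excluding $\alpha^a\gamma^c$ and keeping the exponents of $\gamma,\delta$ even in the mixed templates — are exactly the points the paper's proof also relies on, and your resulting lists agree with the corollary.
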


\begin{proof} Apply Lemma \ref{ForbVerLem} to degree $3$ vertices of the types $\theta^3$, $\theta^2\varphi$, $\theta\varphi\psi$, 
\begin{align*}
&\theta^3 = \alpha^3,&
&\theta^2\varphi =  \alpha\beta^2, \alpha\delta^2,&
&\theta\varphi\psi = \beta\gamma\delta.&
\end{align*}

Apply Lemma \ref{ForbVerLem} to degree $4$ vertices of the types $\theta^4$, $\theta^3\varphi$, $\theta^2\varphi^2$, $\theta^2\varphi\psi$, 
\begin{align*}
&\theta^4 = \alpha^4,  \beta^4, \gamma^4, \delta^4,&
&\theta^2\varphi^2 = \alpha^2\beta^2,  \alpha^2\delta^2,  \beta^2\gamma^2, \beta^2\delta^2,  \gamma^2\delta^2,&
\end{align*}
whereas the other combinations are impossible. 

Apply Lemma \ref{ForbVerLem} to degree $5$ vertices of the types $\theta^5$, $\theta^4\varphi$, $\theta^3\varphi^2$, $\theta^3\varphi\psi$, $\theta^2\varphi^2\psi$, 
\begin{align*}
&\theta^5 = \alpha^5, \qquad
\theta^4\varphi = \alpha\beta^4,  \alpha\delta^4, \qquad
\theta^3\varphi^2 = \alpha^3\beta^2, \,\, \alpha^3\delta^2, \\
&\theta^3\varphi\psi = \beta^3\gamma\delta,  \beta\gamma^3\delta,  \beta\gamma\delta^3, \qquad
\theta^2\varphi^2\psi = \alpha\beta^2\gamma^2,  \alpha\beta^2\delta^2,  \alpha\gamma^2\delta^2,
\end{align*}
whereas the other combinations are impossible. 
\end{proof}

\subsection{Geometry}

\begin{lem} \label{AngLBLem} In a tiling of the sphere by congruent convex quadrilaterals, for every angle $\theta$ in the quadrilateral we have $\theta > \frac{2\pi}{f}$. 
\end{lem}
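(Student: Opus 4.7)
The plan is to compare the area of each tile, which is pinned down by the angle sum, with the area of a spherical lune cut out at the chosen vertex. By the angle-sum equation \eqref{QuadSum}, the spherical excess of the tile is
\[
(\alpha + \beta + \gamma + \delta) - 2\pi = \frac{4\pi}{f},
\]
which by Girard's formula is exactly the area of each quadrilateral on the unit sphere. So the inequality $\theta > \frac{2\pi}{f}$ is equivalent to the geometric statement that the area of the tile is strictly less than $2\theta$, the area of a lune of angle $\theta$.

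Fix a vertex $V$ of the tile where the interior angle is $\theta$. Let $C_1, C_2$ be the two full great circles extending the two edges meeting at $V$; they intersect at $V$ and its antipode $V'$, partitioning the sphere into four lunes, two of angle $\theta$ and two of angle $\pi - \theta$. I would argue that convexity of the tile (each interior angle is strictly less than $\pi$ and each edge is a minor arc, so that the extended great circles are well defined and the tile lies on the interior side of each edge) forces the tile to sit in a single closed half-sphere bounded by $C_1$ and in a single closed half-sphere bounded by $C_2$. The intersection of these two half-spheres is exactly the closed lune at $V$ with apex angle $\theta$, whose area on the unit sphere is $2\theta$. This immediately gives $\tfrac{4\pi}{f} \le 2\theta$, i.e.\ $\theta \ge \tfrac{2\pi}{f}$.

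To upgrade this to a strict inequality, I would appeal to the fact that a convex spherical polygon is contained in an open hemisphere, so the tile cannot contain $V'$. By compactness of the tile, some open neighbourhood of $V'$ inside the lune is disjoint from the tile, and hence
\[
\frac{4\pi}{f} = \text{area of tile} < 2\theta,
\]
giving $\theta > \frac{2\pi}{f}$.

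The one step that demands care, rather than the arithmetic, is justifying the lune containment from the working notion of a convex spherical quadrilateral: one must check that extending the two edges at $V$ to full great circles is unambiguous (each edge is shorter than $\pi$) and that convexity propagates globally, so the tile really lies on the "interior" side of each of $C_1$ and $C_2$, not just in a neighbourhood of $V$. Once the containment is secured, Girard's theorem and the lune-area formula $2\theta$ close the argument cleanly.
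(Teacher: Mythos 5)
Your proposal is correct and follows essentially the same route as the paper: contain the convex tile in the lune of angle $\theta$ at the chosen vertex, note the lune has area $2\theta$ while the tile has area $\frac{4\pi}{f}$ by the angle-sum formula, and compare. The paper's proof is just a terser version of yours; your additional care over the strictness of the inequality and the justification of the lune containment fills in details the paper leaves implicit.
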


\begin{proof} For every angle $\theta$, by convexity, the quadrilateral is contained in the lune defined $\theta$. The area of the lune is $2\theta$ whereas the area of the quadrilateral is $\frac{4\pi}{f}$. Then we get $2\theta > \frac{4\pi}{f}$, which implies $\theta >  \frac{2\pi}{f}$.
\end{proof}

The following proposition is taken from Proposition 4.9 in \cite{ch}.

\begin{proposition}\label{EqAngEqEdge} In a spherical triangle such that none of its angles is $\pi$, two angles are equal if and only if their opposite edges are equal. 
\end{proposition}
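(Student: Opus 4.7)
The plan is to deduce both directions from the two forms of the spherical law of cosines. Label the angles $A$, $B$, $C$ and the opposite edges $a$, $b$, $c$, with all edges in $(0,\pi)$.

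For the forward direction, assume $A = B$. Applying the dual (angle) form of the spherical law of cosines to $A$ and to $B$ gives
\begin{align*}
\cos A &= -\cos B \cos C + \sin B \sin C \cos a, \\
\cos B &= -\cos A \cos C + \sin A \sin C \cos b.
\end{align*}
Substituting $A = B$ and subtracting yields $\sin A \sin C (\cos a - \cos b) = 0$. Since $A \neq \pi$ by hypothesis and $A \neq 0$ for a nondegenerate triangle, $\sin A \neq 0$; the same reasoning gives $\sin C \neq 0$. Hence $\cos a = \cos b$, and because $a,b \in (0,\pi)$ where $\cos$ is injective, $a = b$.

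For the converse, assume $a = b$. The standard edge form of the spherical law of cosines gives
\begin{align*}
\cos a &= \cos b \cos c + \sin b \sin c \cos A, \\
\cos b &= \cos a \cos c + \sin a \sin c \cos B.
\end{align*}
Substituting $a = b$ and subtracting produces $\sin a \sin c (\cos A - \cos B) = 0$. Since $a, c \in (0, \pi)$, $\sin a \sin c \neq 0$, so $\cos A = \cos B$; as $A, B \in (0, \pi)$, this gives $A = B$.

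There is no real obstacle here — the statement reduces to a two-line computation in each direction. The only delicate point is ensuring that the factors we divide out are nonzero, and that is precisely where the hypothesis "no angle equals $\pi$" is used (together with the implicit nondegeneracy, which rules out zero angles or edges). One could alternatively try the spherical law of sines $\sin a / \sin A = \sin b / \sin B$, but this only yields $\sin a = \sin b$, leaving the spurious case $a + b = \pi$ to be eliminated; the law of cosines approach above avoids that complication.
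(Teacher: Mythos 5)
The paper does not actually prove this proposition: it is imported verbatim as Proposition~4.9 of the reference \cite{ch} (``Laws for $n$-gons and its applications''), so there is no internal argument to compare yours against. Your proof via the two spherical laws of cosines is the standard textbook derivation of the spherical isosceles-triangle theorem, and it is correct for spherical triangles in the usual sense, i.e.\ with all edges and angles in $(0,\pi)$: the subtractions are right, the nonvanishing of the sine factors is exactly where nondegeneracy and the ``no angle equals $\pi$'' hypothesis enter, and your remark about why the law of sines is the wrong tool (the spurious case $a+b=\pi$) is apt. The one caveat worth flagging is scope. For a standard spherical triangle the hypothesis ``none of its angles is $\pi$'' is automatic, so the fact that the authors state it suggests the cited source proves the result for a wider class of triangles, possibly with a reflex angle; indeed the paper applies the proposition in Lemma~\ref{AngEqEdgeEqLem} to triangles such as $\triangle BCD$ obtained by extending a non-convex quadrilateral, whose base angles $\beta+\beta'$ and $\delta+\delta'$ are not obviously below $\pi$. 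In that generality your argument has two gaps: the laws of cosines in the form you quote require justification for non-convex triangles, and the final step ``$\cos A=\cos B$ implies $A=B$'' fails once angles may exceed $\pi$. So your proof establishes the proposition in the convex case but does not by itself cover every situation in which the paper invokes it.
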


\begin{lem} \label{AngEqEdgeEqLem} In a tiling of the sphere by congruent quadrilaterals $\bm{a^2bc}$ in Figure \ref{DefaultQuad}, if $\beta=\delta$ and $\gamma \neq \pi$, then $\bm{b}=\bm{c}$. In other words, there is no tilings for $\beta = \delta$ and $\gamma \neq \pi$.
\end{lem}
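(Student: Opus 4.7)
The plan is to introduce the diagonal $BD$ of the quadrilateral $ABCD$, which produces two spherical triangles: an isoceles triangle $\triangle ABD$ (since $AB = DA = \bm{a}$), and the triangle $\triangle BCD$ whose remaining sides $BC$ and $CD$ are exactly the $\bm{b}$- and $\bm{c}$-edges we want to show are equal. Proposition \ref{EqAngEqEdge} applied to the isoceles triangle $\triangle ABD$ yields $\angle ABD = \angle ADB$, and combining this equality with the hypothesis $\beta = \delta$ forces the two angles of $\triangle BCD$ at $B$ and $D$ to be equal. A second application of Proposition \ref{EqAngEqEdge} to $\triangle BCD$ then produces the desired conclusion $\bm{b} = \bm{c}$, where the hypothesis $\gamma \neq \pi$ is used to rule out precisely the degenerate case in which Proposition \ref{EqAngEqEdge} would not apply.

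Executing the steps: first, by Lemma \ref{AlConvexLem} together with $\beta = \delta$, both $\beta$ and $\delta$ are $< \pi$, so at most one of $\alpha, \gamma$ exceeds $\pi$. Draw the geodesic arc $BD$ and form the two spherical triangles $\triangle ABD$ and $\triangle BCD$. Since $AB = DA = \bm{a}$, Proposition \ref{EqAngEqEdge} applied to $\triangle ABD$ gives $\angle ABD = \angle ADB$; call this common value $\epsilon$. The quadrilateral angles decompose as $\beta = \angle DBC \pm \epsilon$ and $\delta = \angle BDC \pm \epsilon$, the sign depending on whether $BD$ lies inside or outside the wedges at $B$ and $D$. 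In either configuration, $\beta = \delta$ implies $\angle DBC = \angle BDC$ in $\triangle BCD$. Finally, Proposition \ref{EqAngEqEdge} applied to $\triangle BCD$, whose angle at $C$ is either $\gamma$ or $2\pi - \gamma$ and in either case not equal to $\pi$ by hypothesis, yields $BC = CD$, i.e., $\bm{b} = \bm{c}$.

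The principal obstacle is bookkeeping in the non-convex case. When $\alpha > \pi$ or $\gamma > \pi$ the diagonal $BD$ may lie on the opposite side of an edge from what one initially expects, and the angle of the corresponding triangle at the reflex vertex becomes $2\pi$ minus the quadrilateral's angle there; but a short configuration-by-configuration check confirms that the conclusion $\angle DBC = \angle BDC$ is robust under all sign choices. The only genuinely degenerate case is $\alpha = \pi$, in which $\triangle ABD$ collapses and $A$ lies on the arc $BD$ with $BA = AD$; then the quadrilateral reduces to the triangle $\triangle BCD$ with equal angles $\beta = \delta$ at $B$ and $D$, and Proposition \ref{EqAngEqEdge} (using $\gamma \neq \pi$) applies directly to give $\bm{b} = \bm{c}$.
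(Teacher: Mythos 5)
Your proposal is correct and takes essentially the same route as the paper's own proof: both draw the diagonal $BD$, use the isosceles triangle $\triangle ABD$ (via Proposition \ref{EqAngEqEdge}) to equate the two angles it contributes at $B$ and $D$, subtract or add these to $\beta=\delta$ to conclude the angles of $\triangle BCD$ at $B$ and $D$ are equal, and then apply Proposition \ref{EqAngEqEdge} to $\triangle BCD$, with the same case split for $\alpha\ge\pi$, $\gamma>\pi$, and the degenerate $\alpha=\pi$. The paper merely makes your "sign bookkeeping" explicit by listing the four configurations separately.
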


\begin{proof}By Lemma \ref{AlConvexLem}, there can be at most one angle $\ge\pi$ in a quadrilateral. When $\beta=\delta$, then $\beta$, $\delta < \pi$. 

When $\alpha$, $\gamma<\pi$, the isosceles triangle $\triangle ABD$ is contained in the quadrilateral with base angles $\beta_1$, $\delta_1$ and their respective complements $\beta_2$, $\delta_2$. Then by $\beta=\delta$ and $\beta_1= \delta_1$, we get
\begin{align*}
\beta_2 = \delta_2,
\end{align*}
which implies $\bm{b}=BC=DC=\bm{c}$.

\begin{figure}[htp]
\centering
\begin{tikzpicture}

\draw
	(0,0) -- (2,0) -- (2,2) -- (0,2) -- cycle;

\draw[dashed]
	(0,0) -- (2,2);

\draw[double, line width=0.6]
	(2,0) -- (2,2);

\draw[line width=2]
	(0,0) -- (2,0);

\node at (-0.2, -0.2) {$D$};
\node at (-0.2, 2.2) {$A$};
\node at (2.2, 2.2) {$B$};
\node at (2.2, -0.2) {$C$};

\node at (0.2,1.8) {\small $\alpha$};
\node at (0.25,0.6) {\small $\delta_1$};
\node at (0.6,0.25) {\small $\delta_2$};
\node at (1.35,1.7) {\small $\beta_1$};
\node at (1.75,1.4) {\small $\beta_2$};
\node at (1.75,0.25) {\small  $\gamma$};

%% estimate theta

\begin{scope}[xshift=4cm]

\draw
	(0,0) -- (0.6,1.45) -- (2,2) -- (0,2) -- cycle;

\draw[dashed]
	(0,0) -- (2,2);

\draw[double, line width=0.6]
	(0.6,1.45) -- (2,2);

\draw[line width=2]
	(0,0) -- (0.6,1.45);

\node at (-0.2, -0.2) {$D$};
\node at (-0.2, 2.2) {$A$};
\node at (2.2, 2.2) {$B$};
\node at (0.8, 1.25) {$C$};

\node at (0.2,1.8) {\small $\alpha$};
\node at (0.13,0.8) {\small $\delta$};
\node at (0.42,0.58) {\small $\delta'$};
\node at (1.1,1.8) {\small $\beta$};
\node at (1.42,1.52) {\small $\beta'$};
\node at (0.45,1.55) {\small  $\gamma$};

\end{scope}

\begin{scope}[xshift=6.5cm]

\draw
	(0,0) -- (2,0) -- (2,2) -- (1.45,0.6) -- cycle;

\draw[dashed]
	(0,0) -- (2,2);

\draw[double, line width=0.6]
	(2,0) -- (2,2);

\draw[line width=2]
	(0,0) -- (2,0);

\node at (-0.2, -0.2) {$D$};
\node at (1.25,0.8) {$A$};
\node at (2.2, 2.2) {$B$};
\node at (2.2, -0.2) {$C$};

\node at (1.55,0.45) {\small $\alpha$};
\node at (0.6,0.42) {\small $\delta'$};
\node at (0.85,0.16) {\small $\delta$};
\node at (1.6,1.38) {\small $\beta'$};
\node at (1.82,1.1) {\small $\beta$};
\node at (1.8,0.2) {\small  $\gamma$};

\end{scope}

\end{tikzpicture}
\caption{$\beta = \delta$ implies $\bm{b}=\bm{c}$}
\label{Geom1}
\end{figure}

When $\gamma > \pi$, the isosceles triangle $\triangle ABD$ contains the quadrilateral and the complementary triangle $\triangle CBD$ has base angles $\beta'$, $\delta'$. By the base angles of $\triangle ABD$, we have $\beta+\beta' = \delta + \delta'$, which implies $\beta' = \delta'$ in $\triangle CBD$ and hence $\bm{b}=\bm{c}$. 

When $\alpha = \pi$, the quadrilateral is in fact an isosceles $\triangle BCD$. Then $\beta=\delta$ implies $\bm{b}=\bm{c}$.

When $\alpha > \pi$, the triangle $\triangle BCD$ contains the quadrilateral and the complementary triangle $\triangle ABD$ is an isosceles triangle with base angles $\beta'$, $\delta'$. By $\beta = \delta$ and $\beta' = \delta'$, we have the base angles $\beta+\beta'=\delta+\delta'$ in $\triangle BCD$ whereby Proposition \ref{EqAngEqEdge} implies $\bm{b}=\bm{c}$.
\end{proof}

\begin{cor} In a tiling of the sphere by congruent quadrilaterals $\bm{a^2bc}$ and angles $\alpha, \beta, \gamma,  \delta$, there are at least two distinct angles.  
\end{cor}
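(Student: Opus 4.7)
The plan is to assume for contradiction that all four angles are equal, say $\alpha=\beta=\gamma=\delta=\theta$, and derive a contradiction from the two earlier geometric lemmas.

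First I would observe that under the assumption, we have in particular $\beta=\delta$. Next, Lemma \ref{AlConvexLem} guarantees that at most one angle in the quadrilateral can be $\ge\pi$. Since under our hypothesis all four angles share the common value $\theta$, the only way this is consistent is if $\theta<\pi$; otherwise all four angles would simultaneously be $\ge\pi$, violating \enquote{at most one}. In particular we conclude $\gamma=\theta\neq\pi$.

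Now I would invoke Lemma \ref{AngEqEdgeEqLem}: in a tiling of the sphere by congruent quadrilaterals of type $\bm{a^2bc}$, the combination $\beta=\delta$ together with $\gamma\neq\pi$ forces $\bm{b}=\bm{c}$. But the standing hypothesis on the tile is that its edge combination is $\bm{a^2bc}$ with exactly two equal edges, so $\bm{b}\neq\bm{c}$. This is the contradiction, and therefore not all four angles can coincide, i.e.\ at least two of $\alpha,\beta,\gamma,\delta$ are distinct.

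I do not anticipate any real obstacle: the corollary is a direct logical consequence of Lemmas \ref{AlConvexLem} and \ref{AngEqEdgeEqLem}, and the only subtle point is ruling out the borderline case $\gamma=\pi$, which is handled as above by the convexity count on angles $\ge\pi$. No appeal to the $\AVC$ machinery, counting, or Euler-formula arithmetic is needed.
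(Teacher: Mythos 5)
Your proof is correct and uses the same two ingredients as the paper: Lemma \ref{AngEqEdgeEqLem} to turn $\beta=\delta$ with $\gamma\neq\pi$ into the edge contradiction $\bm{b}=\bm{c}$, and Lemma \ref{AlConvexLem} to dispose of the borderline case $\gamma=\pi$. The only difference is cosmetic — you argue by contradiction from "all four angles equal" while the paper phrases it as a case split on whether $\gamma=\pi$ — so the argument is essentially identical to the paper's.
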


\begin{proof} By Lemma \ref{AngEqEdgeEqLem}, when $\beta = \delta$ and $\gamma\neq\pi$, we have $\bm{b}=\bm{c}$, a contradiction. So we either have $\gamma = \pi$ and by Lemma \ref{AlConvexLem}, all other angles $<\pi$ or $\beta\neq\delta$. 
\end{proof}

The following lemma is an adaptation of a lemma given by Akama and Van Cleemput in \cite{avc}.

\begin{lem} \label{LunEstLem} In a tiling of the sphere by congruent quadrilaterals $\bm{a^2bc}$ in Figure \ref{DefaultQuad}, if the quadrilateral is convex we have the following inequalities
\begin{align*}
\gamma + \delta < \pi + \beta, \\
\gamma + \beta < \pi + \delta.
\end{align*}
\end{lem}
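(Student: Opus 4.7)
The plan is to cut the quadrilateral along the diagonal $BD$ and exploit the isosceles structure forced by the two equal $\bm{a}$-edges $AB=AD$. By convexity the diagonal lies strictly inside, decomposing the quadrilateral into $\triangle ABD$ (with $AB=AD=\bm{a}$) and $\triangle BCD$ (with $BC=\bm{b}$ and $CD=\bm{c}$). Writing $\beta=\beta_1+\beta_2$ and $\delta=\delta_1+\delta_2$ with $\beta_1=\angle ABD$, $\delta_1=\angle ADB$, $\beta_2=\angle DBC$, $\delta_2=\angle BDC$, the isosceles triangle $\triangle ABD$ together with Proposition \ref{EqAngEqEdge} forces $\beta_1=\delta_1$, and hence $\beta-\delta=\beta_2-\delta_2$.

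The key step is the standard spherical triangle angle inequality: for any proper spherical triangle with angles $A,B,C\in(0,\pi)$ one has $A+B<\pi+C$. The cleanest derivation is via the polar triangle, whose sides are $\pi-A,\pi-B,\pi-C$; the usual edge triangle inequality $(\pi-A)+(\pi-B)>(\pi-C)$ applied to the polar rearranges to $A+B<\pi+C$. Applied to $\triangle BCD$ (with angles $\beta_2,\gamma,\delta_2$) in its two relevant symmetric forms this yields
\[
\gamma+\delta_2<\pi+\beta_2 \quad\text{and}\quad \gamma+\beta_2<\pi+\delta_2.
\]
Substituting $\beta_2=\beta-\beta_1$ and $\delta_2=\delta-\beta_1$, the $\beta_1$ terms cancel and the two claimed inequalities $\gamma+\delta<\pi+\beta$ and $\gamma+\beta<\pi+\delta$ emerge directly.

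The only real obstacle is bookkeeping: one must verify that the decomposition is non-degenerate so that $\triangle BCD$ admits a polar triangle. Convexity of the spherical quadrilateral places it in an open hemisphere and forces every vertex angle to lie in $(0,\pi)$; the interior diagonal then gives $\beta_1,\beta_2,\delta_1,\delta_2>0$, while $BD<\pi$ follows from $B$ and $D$ both lying in an open hemisphere. With these in hand $\triangle BCD$ is a proper spherical triangle and the argument closes cleanly, relying on no geometric input beyond convexity and the classical polar triangle duality.
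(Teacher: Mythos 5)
Your proposal is correct and follows essentially the same route as the paper: cut along the diagonal $BD$, use the isosceles triangle $\triangle ABD$ to get $\beta_1=\delta_1$, establish $\gamma+\delta_2<\pi+\beta_2$ and $\gamma+\beta_2<\pi+\delta_2$ for $\triangle BCD$, and add the base angles back in. The only difference is cosmetic: the paper derives the key inequality $A+B<\pi+C$ by comparing the area of $\triangle BCD$ with the areas of the $\beta_2$- and $\delta_2$-lunes containing it, whereas you invoke the polar-triangle form of the edge triangle inequality — two standard proofs of the same classical fact.
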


\begin{proof} By convexity, the quadrilateral can be divided into two triangles. Since every angle in each of the triangles is $< \pi$, so $\bm{a}, \bm{b}, \bm{c} < \pi$. The triangles $\triangle ABD$ and $\triangle BCD$ in the first picture in Figure \ref{Geom1} are contained in the quadrilateral. Let $\beta_1 = \delta_1$ be the base angles of isosceles $\triangle ABD$ and the corresponding complementary angles $\beta_2, \delta_2$ in $\triangle BCD$. Since $\triangle BCD$ is contained in the $\beta_2$-lune and $\delta_2$-lune, the area of $\triangle BCD$ is less than the area of both lunes, which implies
\begin{align*}
&\gamma + \beta_2 + \delta_2 - \pi < 2\beta_2,&  
&\gamma + \beta_2 + \delta_2 - \pi < 2\delta_2,&
\end{align*}
simplifying the above inequalities, and adding $\beta_1$ to the RHS of the first inequality and the LHS of the second, and adding $\delta_1$ to the LHS of the first inequality and the RHS of the second, we get the inequalities intended.
\end{proof}

\begin{lem}\label{albe2Lem} If $\alpha\beta^2$ is a vertex, then $\alpha\beta\cdots = \alpha\beta^2$, and $\alpha\delta^2$, $\alpha^a\beta^b\gamma^c$, $\alpha^a\beta^b\delta^d$, $\alpha^a\gamma^c\delta^d$ cannot be a vertex, and some degree $\ge4$ $\alpha^a\delta^d$ is a vertex. 
\end{lem}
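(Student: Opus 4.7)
The key consequences of $\alpha\beta^2$ being a vertex, combined with the quadrilateral sum~\eqref{QuadSum}, are the two relations
\begin{align*}
\alpha=2\pi-2\beta, \qquad \gamma+\delta=\beta+\tfrac{4\pi}{f},
\end{align*}
which also give $0<\beta<\pi$ (from $\alpha>0$) and, crucially, the strict inequality $\gamma+\delta>\beta$. These drive almost every exclusion below.

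For the identity $\alpha\beta\cdots=\alpha\beta^2$ together with the exclusions of $\alpha^a\beta^b\gamma^c$ and $\alpha^a\beta^b\delta^d$: by Lemma~\ref{ForbVerLem} any vertex containing both $\alpha$ and $\beta$ has $b$ even and hence $b\ge2$. Substituting $\alpha=2\pi-2\beta$ into the angle sum rewrites it as
\begin{align*}
(a-1)\alpha+(b-2)\beta+c\gamma+d\delta=0,
\end{align*}
with all coefficients $\ge0$; positivity of the angles then forces $a=1$, $b=2$, $c=d=0$, so the vertex is exactly $\alpha\beta^2$. In particular, neither $\alpha^a\beta^b\gamma^c$ (which would require $c\ge2$) nor $\alpha^a\beta^b\delta^d$ (which would require $d\ge2$) can occur.

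To rule out $\alpha\delta^2$: equating $\alpha+2\delta=\alpha+2\beta=2\pi$ gives $\beta=\delta$, and Lemma~\ref{AngEqEdgeEqLem} then forces $\gamma=\pi$ (since $\bm{b}=\bm{c}$ would violate the $\bm{a^2bc}$ hypothesis); feeding $\gamma=\pi$ and $\beta=\delta$ back into~\eqref{QuadSum} alongside $\alpha+2\beta=2\pi$ collapses the system to $f=4$, contradicting $f\ge6$. For $\alpha^a\gamma^c\delta^d$ (with $c,d\ge2$ by parity), the same substitution yields $c\gamma+d\delta=2a\beta-2(a-1)\pi$, whereas $c\gamma+d\delta\ge2(\gamma+\delta)=2\beta+\tfrac{8\pi}{f}$; the case $a=1$ yields $0\ge\tfrac{8\pi}{f}$ and the case $a\ge2$ yields $\beta\ge\pi+\tfrac{4\pi}{(a-1)f}>\pi$, both impossible.

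The most delicate step, and the one I expect to be the main obstacle, is producing a degree-$\ge4$ vertex of type $\alpha^a\delta^d$; a counting argument is awkward, so I would use AAD and reciprocity. At the $\alpha\beta^2$ vertex, $\alpha$ is flanked by two $\bm{a}$-edges, but inside $\alpha$'s own tile the two $\bm{a}$-edges lead respectively to $\beta$ (via edge $AB$) and to $\delta$ (via edge $AD$), so the AAD at $\alpha$ is forced to be $^\beta\alpha^\delta$; in particular the AAD of $\alpha\beta^2$ contains the segment $\alpha^\delta\vert^\alpha\beta$. Reciprocity then produces a vertex $\alpha\delta\cdots$; by the exclusions above, any vertex containing both $\alpha$ and $\delta$ must be of type $\alpha^a\delta^d$, and since $\alpha\delta^2$ has just been ruled out, its degree $a+d$ is $\ge4$.
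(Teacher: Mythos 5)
Your proposal is correct and follows essentially the same route as the paper: the angle sum of $\alpha\beta^2$ plus the parity constraints of Lemma \ref{ForbVerLem} to pin down $\alpha^a\beta^b=\alpha\beta^2$ and kill $\alpha^a\beta^b\gamma^c$, $\alpha^a\beta^b\delta^d$, $\alpha^a\gamma^c\delta^d$; Lemma \ref{AngEqEdgeEqLem} to exclude $\alpha\delta^2$ (the paper computes $\gamma=\frac{4\pi}{f}<\pi$ directly rather than deriving $f=4$ from $\gamma=\pi$, but this is the same dichotomy); and the forced AAD $\Vert\beta^{\alpha}\vert^{\beta}\alpha^{\delta}\vert^{\alpha}\beta\Vert$ with reciprocity to produce the degree $\ge4$ vertex $\alpha^a\delta^d$. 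No gaps.
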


\begin{proof} By $\alpha\beta^2$ and Lemma \ref{ForbVerLem}, $\alpha^a\beta^b = \alpha\beta^2$. Up to symmetry, the AAD of $\alpha\beta^2 = \Vert \beta^{\alpha} \vert^{\beta} \alpha^{\delta} \vert^{\alpha} \beta \Vert$. So some $\alpha\delta\cdots$ must be a vertex. 

When both $\alpha\beta^2$, $\alpha\delta^2$ are vertices, we get the angle formulae $\beta= \delta = \pi - \frac{\alpha}{2}$ and $\gamma=\frac{4\pi}{f}$. By $f\ge6$, $\gamma < \pi$, Lemma \ref{AngEqEdgeEqLem} implies $\bm{b}=\bm{c}$, a contradiction. So $\alpha\delta^2$ cannot be a vertex. By Lemma \ref{ForbVerLem}, $b,c$ are both even at $\alpha^a\beta^b\gamma^c$ and $b,d$ are both even at $\alpha^a\beta^b\delta^d$. By $\alpha\beta^2$, the angle sums of $\alpha^a\beta^b\gamma^c$ and $\alpha^a\beta^b\delta^d$ are $>2\pi$, so they cannot be vertices. By Lemma \ref{ForbVerLem}, $c,d\ge2$ at $\alpha^a\gamma^c\delta^d$, then we have $\alpha + 2\gamma + 2\delta \le 2\pi$, which combined with the angle sum of $\alpha\beta^2$ and divided by $2$ on both sides we get $\alpha+\beta+\gamma+\delta \le 2\pi$, contradicting the quadrilateral angle sum. So $\alpha^a\gamma^c\delta^d$ cannot be a vertex. 
In $\alpha\delta\cdots$, if there is $\beta$ or $\gamma$ in the remainder, as $\alpha^a\beta^b\delta^d$ $\alpha^a\gamma^c\delta^d$ are not vertices, we can only have $\alpha\delta\cdots=\alpha\beta\gamma\delta\cdots$, a contradiction. So there is no $\beta,\gamma$ in the remainder and hence $\alpha\delta\cdots = \alpha^a\delta^d$.
\end{proof}

\section{Vertices}

By Corollary to Lemma \ref{ForbVerLem}, 
\begin{align*}
\AVC_3 &= \{\alpha^3,   \alpha\beta^2,  \alpha\delta^2,  \beta\gamma\delta  \}, \\
\AVC_4 &= \{ \alpha^4,  \beta^4,  \gamma^4,  \delta^4,   \alpha^2\beta^2,  \alpha^2\delta^2,  \beta^2\gamma^2,  \beta^2\delta^2,  \gamma^2\delta^2 \}, \\
\AVC_5 &= \{  \alpha^5,  \alpha\beta^4,  \alpha\delta^4,   \alpha^3\beta^2,   \alpha^3\delta^2,  \beta^3\gamma\delta,  \beta\gamma^3\delta,   \beta\gamma\delta^3,    \alpha\beta^2\gamma^2,   \alpha\beta^2\delta^2,  \alpha\gamma^2\delta^2 \}.
\end{align*}

\begin{proposition} If $\beta\gamma\delta$ is a vertex, then $\AVC$ is given in \eqref{bgdAVC}. 
\end{proposition}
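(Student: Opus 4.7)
The plan is to exploit the strong constraint that $\beta\gamma\delta$ being a vertex, together with the quadrilateral angle sum \eqref{QuadSum}, immediately forces $\alpha = \tfrac{4\pi}{f}$. With this value of $\alpha$ pinned down and the linear relation $\beta + \gamma + \delta = 2\pi$ in hand, I would walk through each candidate vertex type permitted by Lemma \ref{ForbVerLem} and impose its angle sum as a second linear equation, keeping only those that are consistent with the parity conditions and with Lemma \ref{AlConvexLem}.

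For any other $\hat{\alpha}$-vertex of the form $\beta^b\gamma^c\delta^d$, subtracting the identity $\beta + \gamma + \delta = 2\pi$ from the vertex angle sum yields
\[ (b-1)\beta + (c-1)\gamma + (d-1)\delta = 0. \]
Because at most one of $\beta, \gamma, \delta$ is $\geq \pi$ by Lemma \ref{AlConvexLem}, and the exponents are nonnegative integers obeying the parity conditions of Lemma \ref{ForbVerLem}, this equation has only a handful of admissible solutions: the trivial $(b,c,d) = (1,1,1)$, a single nonzero exponent (producing $\beta^b$, $\gamma^c$, or $\delta^d$ of the appropriate parity), or two nonzero exponents on opposite sides of $1$ (producing mixed vertices like $\beta^b\gamma^c$, $\beta^b\delta^d$, or $\gamma^c\delta^d$ that pin down a specific ratio between two angles).

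For $\alpha$-vertices $\alpha^a\beta^b\gamma^c\delta^d$ with $a\geq 1$, I would substitute $\alpha = \tfrac{4\pi}{f}$ and again combine with $\beta + \gamma + \delta = 2\pi$. The parity restrictions of Lemma \ref{ForbVerLem} (which already exclude $\alpha^a\gamma^c$) together with the finiteness of solutions to the resulting integer linear equation reduce the enumeration to a short list; in particular a pure $\alpha$-vertex collapses to $\alpha^{f/2}$, admissible only when $f/2 \geq 3$. Each surviving candidate should finally be cross-checked against Lemma \ref{CountLem} (when $\beta\gamma\delta$ turns out to be the only $\beta$-, $\gamma$-, or $\delta$-vertex) to decide whether it is \emph{necessary} or merely \emph{optional} in the sense of the $\AVC$ notation.

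The hard part will be producing a complete, non-redundant enumeration, since the parity cases interact with the two linear equations in several ways and the bookkeeping of \emph{necessary} versus \emph{optional} vertices must be tracked carefully. The key simplification is that once $\alpha$ is fixed, every candidate vertex equation reduces, modulo $\beta + \gamma + \delta = 2\pi$, to a linear relation in at most two of the unknowns, so only finitely many integer exponent tuples survive. Collecting these and matching them to the parity constraints yields the $\AVC$ displayed in \eqref{bgdAVC}.
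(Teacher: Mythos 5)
There is a genuine gap: your strategy is essentially pure angle-sum arithmetic, and that cannot prove the statement. The conclusion \eqref{bgdAVC} is that $\beta\gamma\delta$ and $\alpha^{\frac{f}{2}}$ are the \emph{only} vertices, so you must exclude candidates such as $\gamma^c$, $\delta^d$, $\beta^b$, $\alpha^a\beta^b$, $\alpha^a\delta^d$. None of these is ruled out by the two linear relations you propose to use: for example $\gamma^c$ only forces $\gamma=\frac{2\pi}{c}$, and $\alpha^a\delta^d$ only forces $a\cdot\frac{4\pi}{f}+d\delta=2\pi$, both of which are satisfiable for infinitely many exponent tuples because you have no a priori lower bounds on the individual angles. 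Your claim that $(b-1)\beta+(c-1)\gamma+(d-1)\delta=0$ has ``only a handful of admissible solutions'' is therefore unjustified; Lemma \ref{AlConvexLem} bounds at most one angle from below by $\pi$, but it gives no positive lower bound on the others, so the exponent set consistent with the angle equations is not finite and, more importantly, contains vertices that genuinely must be excluded from the final $\AVC$.

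The paper's proof gets around this with combinatorial and geometric input that your outline never invokes. First, it shows (via the angle sum of $\beta\gamma\delta$ and Lemma \ref{BaLem}) that $\beta^2\cdots$ is a $\hat{\bm{c}}$-vertex and $\delta^2\cdots$ is a $\hat{\bm{b}}$-vertex, and then uses AAD analysis of $\bm{b}$- and $\bm{c}$-edges at a vertex to cut the candidate list down to \eqref{begadeV}. Second, the remaining candidates are killed by a case split on $\beta\ge\pi$ versus $\beta<\pi$ using the contrapositive of Lemma \ref{OneHDAng} and Lemma \ref{CountLem} in the first case, and in the second case the Balance Lemma, the convexity inequality of Lemma \ref{LunEstLem} (to get $\beta>\frac{\pi}{2}$), and an explicit AAD propagation contradiction around $\alpha^a\delta^2$ (Figure \ref{vbegade-alabe2}). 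These steps are where the real content of the proposition lies, and they have no counterpart in your plan; without them you would at best obtain a superset of \eqref{bgdAVC} containing spurious optional vertices.
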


\begin{proof} Up to symmetry between $\beta, \gamma, \delta$ at $\beta\gamma\delta$, it suffices to consider $\beta^2\cdots$. Assume the contrary, then by the symmetry between $\gamma, \delta$ in the vertex angle sum of $\beta\gamma\delta$ and Lemma \ref{PaLem}, suppose $\beta^2\cdots = \beta^2\delta^2\cdots$. The angle sum suggests that $\beta + \delta \le \pi$ which implies $\gamma \ge \pi$. However $\gamma \ge \pi$ implies that $\gamma^2\cdots$ is not a vertex, and then Lemma \ref{BaLem} implies that $\delta^2\cdots$ cannot be a vertex, contradicting $\beta^2\delta^2\cdots$ being a vertex. Hence, $\beta^2\cdots$ has no $\gamma, \delta$ and $\delta^2\cdots$ has no $\beta, \gamma$. In other words, $\beta^2\cdots$ is a $\hat{\bm{c}}$-vertex and $\delta^2\cdots$ is a $\hat{\bm{b}}$-vertex.

The AAD of $\bvert \, \gamma \Vert \cdots$ is $\bvert \, \gamma \Vert \beta \vert \cdots$ or $\bvert \, \gamma \Vert \gamma \, \bvert \cdots$. If $\bvert \, \gamma \Vert \cdots  =$ $\bvert \, \gamma \Vert \beta \vert \cdots$, by $\beta\gamma\delta$ and $\beta^2\cdots$ being a $\hat{\bm{c}}$-vertex, there is no more $\beta$ at the vertex. Apply Lemma \ref{PaLem} on $\bm{c}$, we get $\bvert \, \gamma \Vert \beta \vert \cdots = \vert \delta \, \bvert \, \gamma \Vert \beta \vert$, $\Vert \gamma \, \bvert \, \gamma \Vert \beta \vert \cdots$. The former is indeed the vertex $\beta\gamma\delta$ whereas the latter can only be filled by $\alpha, \gamma$ so it can only be $\alpha^a\beta\gamma^c(a\ge0, c\ge2)$ which by Lemma \ref{ForbVerLem} cannot be a vertex. So $\bvert \, \gamma \Vert \beta \vert \cdots = \beta\gamma\delta$. If $\bvert \, \gamma \Vert \cdots =$ $\bvert \, \gamma \Vert \gamma \, \bvert \cdots$, by $\beta^2\cdots$ being a $\hat{\bm{c}}$-vertex, $\delta^2\cdots$ being a $\hat{\bm{b}}$-vertex and Lemma \ref{ForbVerLem}, we get $\bvert \, \gamma \Vert \gamma \, \bvert \cdots = \gamma^c$. Hence 
\begin{align*}
\gamma\cdots = \beta\gamma\delta, \gamma^c.
\end{align*}

The AAD of $\bvert \, \delta \vert \cdots$ is $\bvert \, \delta \vert \alpha \vert \cdots$, $\bvert \, \delta \vert \beta \Vert \cdots$ or $\bvert \, \delta \vert \delta \, \bvert  \cdots$. If $\bvert \, \delta \vert \cdots =$ $\bvert \, \delta \vert \beta \Vert \cdots$, by $\beta\gamma\delta$, $\beta^2\cdots$ being a $\hat{\bm{c}}$-vertex and $\delta^2\cdots$ being a $\hat{\bm{b}}$-vertex, we get $\bvert \, \delta \vert \beta \Vert \cdots = \Vert \gamma \, \bvert \, \delta \vert \beta \Vert =\beta\gamma\delta$. If $\bvert \, \delta \vert \cdots =$ $\bvert \, \delta \vert \alpha \vert \cdots$ or $\bvert \, \delta \vert \delta \, \bvert  \cdots$, by $\beta\gamma\delta$ and Lemma \ref{ForbVerLem}, we get $\delta^d$, $\alpha^a\delta^d$. Hence
\begin{align*}
\delta\cdots = \beta\gamma\delta, \delta^d, \alpha^a\delta^d.
\end{align*}

By symmetry between $\beta,\delta$ in the default quadrilateral in Figure \ref{DefaultQuad}, we get 
\begin{align*}
\beta\cdots = \beta\gamma\delta, \beta^b, \alpha^a\beta^b.
\end{align*}
Hence the vertices are
\begin{align} \label{begadeV}
\beta\gamma\delta, 
\alpha^a, 
\beta^b, 
\gamma^c, 
\delta^d, 
\alpha^a\beta^b, 
\alpha^a\delta^d. 
\end{align}

By $\beta\gamma\delta$ and quadrilateral angle sum, we get $\alpha = \frac{4\pi}{f}$. We divide the discussion into two cases, $\beta \ge \pi$ and $\beta < \pi$. 

\begin{case*}[$\beta\ge\pi$] By the symmetry of $\beta, \gamma, \delta$ at $\beta\gamma\delta$, it suffices to discuss $\beta \ge \pi$. When $\beta \ge \pi$, $\beta^2\cdots$ cannot be a vertex. The contrapositive of Lemma \ref{OneHDAng} implies that $\beta$ must appear at some degree $3$ or $4$ vertex. However, since every vertex with $\beta$ has exactly one $\beta$, by Lemma \ref{ForbVerLem}, $\beta$ cannot appear at any degree $4$ vertex and indeed $\beta \cdots = \beta\gamma\delta$. Then Lemma \ref{CountLem} implies that the vertices are $\beta\gamma\delta, \alpha^a$. As $\alpha = \frac{4\pi}{f}$, we have $a=\frac{f}{2}$. Hence
\begin{align} \label{bgdAVC}
\AVC = \{ \beta\gamma\delta,  \alpha^{\frac{f}{2}} \}.
\end{align}
\end{case*}

\begin{case*}[$\beta<\pi$] 
When $\beta < \pi$, by $\beta\gamma\delta$, $\beta^2\cdots$ being a $\hat{\bm{c}}$-vertex, $\delta^2\cdots$ being a $\hat{\bm{b}}$-vertex and  Lemma \ref{ForbVerLem} we get $\bvert \, \gamma \Vert \cdots \vert \delta \, \bvert =$ $\bvert \, \gamma \Vert \beta \vert \delta \, \bvert$. So $\gamma\delta\cdots = \beta\gamma\delta$. By $\beta\gamma\delta$ and $\beta < \pi$, we have $\gamma + \delta > \pi$. Then $\max\{\gamma, \delta\} > \frac{\pi}{2}$. From \eqref{begadeV}, we have $\gamma^2\cdots = \gamma^{c\ge4}$, $\gamma\delta\cdots = \beta\gamma\delta$, and $\delta^2\cdots = \delta^d, \alpha^a\delta^d$.

\begin{subcase*}[$\gamma \ge \delta$] Since $\gamma^2\cdots = \gamma^{c\ge4}$, by $\gamma + \delta > \pi$ and $\gamma \ge \delta$, we have $\gamma > \frac{\pi}{2}$ and then $\gamma^4\cdots$ cannot be a vertex, so $\gamma^2\cdots$ cannot be a vertex. By Lemma \ref{BaLem}, $\delta^2\cdots$ cannot be a vertex. So the $\bm{c}$-vertices can only be $\beta\gamma\delta$. By Lemma \ref{CountLem}, the other vertices can only be $\alpha^{a \ge 3}$. Hence we get the same AVC in \eqref{bgdAVC}.
\end{subcase*}
\begin{subcase*}[$\gamma \le \delta$] Since $\delta^2\cdots = \delta^d, \alpha^a\delta^d$, by $\gamma + \delta > \pi$ and $\gamma \le \delta$, we have $\beta\delta\cdots = \beta\gamma\delta$ and $\delta > \frac{\pi}{2}$, so $\delta^4\cdots$ is not a vertex and hence $\delta^2\cdots = \alpha^a\delta^2$. Then the vertices are $\alpha^a, \, \beta^b, \, \alpha^a\beta^b, \, \beta\gamma\delta, \, \gamma^c, \, \alpha^a\delta^2$ from which we know that $\beta\gamma\cdots = \beta\delta\cdots = \beta\gamma\delta$, $\alpha\beta\cdots = \alpha^a\beta^b$, $\alpha\delta\cdots = \alpha^a\delta^2$, $\gamma^2\cdots = \gamma^c$ and $\alpha\gamma\cdots$, $\delta \vert \delta \cdots$, $\alpha^{\delta} \vert^{\delta} \alpha \cdots$ cannot be vertices.

When $\alpha^a\delta^2$ is a vertex, we have $\delta < \pi$. By $\beta\gamma\delta$, $\alpha^a\delta^2$ and $\gamma \le \delta$, we have $\beta + \gamma + \delta = a\alpha + 2\delta$ which implies $\beta - a\alpha =  \delta - \gamma \ge 0$. So by $\beta < \pi$ and $a \ge 1$, we get $\alpha \le \beta < \pi$. Since every angle is $<\pi$, the quadrilateral is convex. By convexity, Lemma \ref{LunEstLem} implies $\gamma + \delta < \pi + \beta$. Then by $\beta\gamma\delta$, we have $\gamma + \delta = 2\pi - \beta$, combining with the inequality, we get $\beta > \frac{\pi}{2}$. So Lemma \ref{ForbVerLem} implies that $\beta^b$, $\alpha^a\beta^{b\ge3}$ cannot be vertices. Then $\alpha^a\beta^b = \alpha^{a\ge1}\beta^2 = \Vert \beta \vert \alpha \cdots \alpha \vert \beta \Vert$ and hence $\beta \vert \beta \cdots$ and $\alpha^{\beta} \vert^{\beta} \alpha\cdots$ cannot be vertices. As a result, we get $\alpha \vert \alpha = \alpha^{\beta} \vert^{\delta} \alpha$ and hence $\alpha \vert \alpha$ has unique AAD $\cdots \vert^{\delta} \alpha^{\beta}  \vert ^{\delta} \alpha^{\beta} \vert \cdots$, by which we get 
\begin{align*}
\alpha^{a}\delta^2 =\, \bvert \delta^{\alpha} \vert^{\beta} \alpha^{\delta} \vert \cdots \vert^{\beta} \alpha^{\delta} \vert^{\alpha} \delta \, \bvert.
\end{align*}
By $\alpha\delta\cdots = \alpha^a\delta^2$ and $\beta\delta\cdots = \beta\gamma\delta$, the AAD at $ \cdots \, ^{\delta} \vert^{\beta} \alpha^{\delta} \vert^{\alpha} \delta \, \bvert$ starting at tiles $T_1, T_2, T_3$ in Figure \ref{vbegade-alabe2} implies $\beta_1\cdots = \beta\gamma\delta$ and $\delta_1\cdots = \alpha^a\delta^2$. Then we get $\gamma_1\cdots = \beta\gamma^2\cdots$ which is not a vertex, a contradiction. So $\alpha^a\delta^2$ cannot be a vertex and hence $\delta \, \bvert  \, \delta \cdots$ cannot be a vertex by which Lemma \ref{BaLem} implies that $\gamma \, \bvert  \, \gamma \cdots$ is also not a vertex. Then $\gamma^c$ is not a vertex and hence $\gamma \, \Vert \, \gamma \cdots$ cannot be a vertex by which Lemma \ref{BaLem} implies that $\alpha^a\beta^2$ cannot be a vertex. So we obtain the same $\AVC$ in \eqref{bgdAVC}.

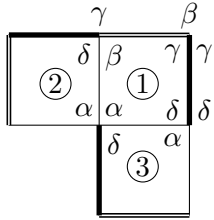
\begin{figure}[htp]
\centering
\begin{tikzpicture}[>=latex,scale=1]

\foreach \a in {0,...,2}
\draw[rotate=90*\a]
	(0,0) -- (0,-1.2) -- (1.2,-1.2) -- (1.2,0);

\draw[]
	(0,0) -- (-1.2,0);

\draw[double, line width=0.6]
	(0,-1.2) -- (1.2,-1.2)
	(-1.2,1.2) -- (-1.2,0) 
	(0,1.2) -- (1.2,1.2);

\draw[line width=2]
	(0,0) -- (0,-1.2)
	(1.2,0) -- (1.2,1.2)
	(0,1.2) -- (-1.2,1.2); 

\node at (0.2,0.2) {\small $\alpha$}; %T1
\node at (0.2,0.90) {\small $\beta$};
\node at (0.98,0.95) {\small $\gamma$}; 
\node at (0.98,0.2) {\small $\delta$};

\node at (-0.2,0.2) {\small $\alpha$}; %T2
\node at (-0.2,0.95) {\small $\delta$};

\node at (0.2,-0.25) {\small $\delta$}; %T3
\node at (0.98,-0.2) {\small $\alpha$};

\node at (0.0,1.45) {\small $\gamma$}; %T1, T3
\node at (1.4, 0.2) {\small $\delta$};
\node at (1.2, 1.45) {\small $\beta$};
\node at (1.4,0.95) {\small $\gamma$};

\node[inner sep=1,draw,shape=circle] at (0.58,0.55) {\small $1$};
\node[inner sep=1,draw,shape=circle] at (-0.58,0.55) {\small $2$};
\node[inner sep=1,draw,shape=circle] at (0.58,-0.55) {\small $3$};

\end{tikzpicture}
\caption{The AAD of $\alpha^a\delta^2$}
\label{vbegade-alabe2}
\end{figure}
\end{subcase*}
\end{case*}
The tilings of $\AVC$ in \eqref{bgdAVC} are constructed in Figure \ref{begadeEMT}. 
\end{proof}

\begin{proposition}\label{albe2Prop} When $\alpha\beta^2$ and $\gamma^2\delta^2$ are vertices, then we have $f>8$, $\beta > \delta \ge \frac{\pi}{2}$ and $\AVC = \{ \alpha\beta^2, \gamma^2\delta^2, \alpha^2\delta^2, \gamma^c \}$ and no tilings.
\end{proposition}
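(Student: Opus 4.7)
My plan proceeds in three stages: fix the four angles, derive the full $\AVC$, then rule out any tiling by AAD propagation. From the vertex sums of $\alpha\beta^2$ and $\gamma^2\delta^2$ together with the quadrilateral angle sum \eqref{QuadSum}, I get $\alpha=8\pi/f$, $\beta=(f-4)\pi/f$, and $\gamma+\delta=\pi$; the last identity puts $\gamma,\delta\in(0,\pi)$ and Lemma~\ref{AngEqEdgeEqLem} rules out $\beta=\delta$. Lemma~\ref{albe2Lem} then supplies some degree-$\geq 4$ vertex $\alpha^a\delta^d$; by Parity $d$ is even, and degree $\geq 4$ forces $(a,d)\neq(1,2)$. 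Assuming $\beta<\delta$ gives $2\pi=a\alpha+d\delta>a\alpha+d\beta>\alpha+2\beta=2\pi$ (the first $>$ uses $\beta<\delta$, the second uses $(a,d)\neq(1,2)$), a contradiction; hence $\beta>\delta$.

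To pin the vertex as $\alpha^2\delta^2$, I first eliminate $f=6$ (forbidden by Proposition~\ref{CubeProp} since $\gamma^2\delta^2$ is degree $4$) and $f=8$ (which makes $\alpha=\pi$ and degenerates the tile). For $f>8$ the quadrilateral is convex, so Lemma~\ref{LunEstLem} yields $\beta<2\delta$; substituting $\delta=(2\pi-a\alpha)/d$ converts this to $f(d-4)<4(d-4a)$, which excludes every $d\geq 4$ and, for $d=2$, restricts $a<(f+4)/8$. The requirement that $\delta$ be single-valued across all $\alpha^a\delta^d$-vertices, together with the minimality $a\geq 2$, pins the vertex as $\alpha^2\delta^2$, giving $\delta=(f-8)\pi/f>0$ (hence $f>8$) and $\gamma=8\pi/f=\alpha$. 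I then traverse the candidates of Lemma~\ref{ForbVerLem}: $\alpha^a\beta^b\gamma^c$, $\alpha^a\beta^b\delta^d$, $\alpha^a\gamma^c\delta^d$ vanish by Lemma~\ref{albe2Lem}; each of $\alpha^a$, $\beta^b$, $\beta^b\gamma^c$, $\beta^b\delta^d$, $\delta^d$, $\beta^b\gamma^c\delta^d$ is killed by parity, by the angle sum failing to admit an integer solution, or by forcing $f\in\{10,12,14\}$. These last exclusions simultaneously certify $\delta\geq\pi/2$ (equivalent to $f\geq 16$). What remains is $\AVC=\{\alpha\beta^2,\gamma^2\delta^2,\alpha^2\delta^2,\gamma^{f/4}\}$, and counting via $\#\alpha=\#\beta=\#\gamma=\#\delta=f$ fixes $\#\alpha\beta^2=f/2$, $\#\alpha^2\delta^2=\#\gamma^2\delta^2=f/4$, $\#\gamma^{f/4}=2$.

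For the non-existence of a tiling I propagate the AAD. The AAD $\vert^{\beta}\alpha^{\delta}\vert$ of $\alpha\beta^2$ combined with $\alpha\delta\cdots=\alpha^2\delta^2$ fixes the AAD at every $\alpha^2\delta^2$-vertex; tracing the $\bm{b}$- and $\bm{c}$-edges around a $\gamma^{f/4}$-vertex and repeatedly applying the reciprocity property chains these local choices across the tiling, eventually producing a forced $\beta\gamma\cdots$ or $\beta\delta\cdots$ vertex, which is forbidden by Lemma~\ref{albe2Lem}. I expect this final AAD walk to be the main obstacle; the earlier arithmetic, convexity estimates, and vertex enumeration are routine once the right substitutions are set up.
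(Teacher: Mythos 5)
Your overall architecture (angle formulae, convexity bounds via Lemma \ref{LunEstLem}, vertex enumeration, final AAD contradiction) matches the paper's, and several individual steps are sound — in particular your derivation of $\beta>\delta$ from a degree-$\ge 4$ vertex $\alpha^a\delta^d$ is clean and slightly more direct than the paper's. But there is a genuine gap at the step ``pins the vertex as $\alpha^2\delta^2$''. The convexity inequality $\beta<2\delta$ only gives $2\le a<\frac{f+4}{8}$ for $\alpha^a\delta^2$, and ``minimality of $a$'' is not an argument: for $f=24$ the vertex $\alpha^3\delta^2$ (with $\alpha=\frac{\pi}{3}$, $\delta=\frac{\pi}{2}$, $a=3<3.5$) satisfies every constraint you have imposed, and nothing you wrote rules it out. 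Since $\gamma=\pi-\delta$ and $\delta$ is read off from this vertex, all of your downstream conclusions — $\gamma=\alpha=\frac{8\pi}{f}$, the vertex $\gamma^{f/4}$, the counting, and $\delta\ge\frac{\pi}{2}$ — depend on $a=2$ and are therefore unsupported. The paper closes this by an AAD argument: it first shows $\beta\delta\cdots$ and $\alpha^{\beta}\vert^{\beta}\alpha\cdots$ are not vertices, hence $\alpha\vert\alpha$ has the unique AAD $\alpha^{\delta}\vert^{\delta}\alpha$ and no three consecutive $\alpha$ can occur at any vertex; since the two $\delta$ at $\alpha^a\delta^2$ must be joined by their $\bm{c}$-edges, the $a$ copies of $\alpha$ are consecutive and $a=2$. (The paper also reaches $\delta\ge\frac{\pi}{2}$ \emph{before} pinning $a$, by counting: if $\delta<\frac{\pi}{2}$ then $\gamma>\frac{\pi}{2}$ kills $\gamma^c$, leaving $\gamma\cdots=\gamma^2\delta^2$ and forcing $\#\gamma<\#\delta$.)

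Two smaller problems. Your exclusion of $f=8$ on the grounds that $\alpha=\pi$ ``degenerates the tile'' is not available in this framework — the paper explicitly allows a single angle equal to $\pi$ (see the case analysis in Lemma \ref{AngEqEdgeEqLem}); it instead obtains $f>8$ from $\frac{\pi}{2}\le\delta<\beta=\pi-\frac{4\pi}{f}$. And the final non-existence step, which you acknowledge is the main obstacle, is left as a sketch; the paper's version is a short, concrete chase anchored at the hypothesized vertex $\gamma^2\delta^2$ (not at the optional $\gamma^c$): its unique AAD determines four tiles, $\gamma\delta\cdots=\gamma^2\delta^2$ and $\beta^2\cdots=\alpha\beta^2$ determine three more, and the seventh tile forces a vertex $\alpha^2\beta\cdots$, which is impossible. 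As written, the proposal does not yet constitute a proof.
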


\begin{proof} By Lemma \ref{albe2Lem}, $\alpha^a\beta^b = \alpha\beta^2$ and some degree $\ge4$ $\alpha^a\delta^d$ must be a vertex and $\alpha\delta^2, \alpha^a\beta^b\gamma^c$, $\alpha^a\beta^b\delta^d, \alpha^a\gamma^c\delta^d$ are not vertices. Since $\gamma^2\delta^2$ is also a vertex, Lemma \ref{ForbVerLem} implies $\gamma^c\delta^d=\gamma^2\delta^2$. If $\beta^2\gamma^2$ is also a vertex, then it implies $\gamma < \pi$ and $\beta=\delta$, by Lemma \ref{AngEqEdgeEqLem} a contradiction. So $\beta^2\gamma^2$ cannot be a vertex. Then if $\beta^b\gamma^c$ is a vertex, we have $b \ge 2, c \ge 2$ and $b+c \ge 6$.

Next we show degree $\ge4$ $\alpha^a\delta^d$, $\beta^b\gamma^c (b \ge 2, c \ge 2, b+c \ge 6)$ cannot both appear as vertices. Assume they both are vertices. Then one of $(b-2), (c-2)$ is positive. Sum up two vertex equations and substitute the angle sums of $\alpha\beta^2, \gamma^2\delta^2$, we get $4\pi = a\alpha + b\beta + c\gamma+d\delta \ge 4\pi + (b-2)\beta + (c-2)\gamma + (d-2)\delta$, a contradiction. Hence the two vertices cannot both appear.

Since some degree $\ge4$ $\alpha^a\delta^d$ is a vertex and $\beta^b\gamma^c$ is not, by Lemma \ref{ForbVerLem} and the discussion above, $\gamma\cdots = \gamma^2\delta^2, \gamma^c, \beta^b\gamma^c\delta^d$. Meanwhile, by $\alpha\beta^2$, $\gamma^2\delta^2$, we get angle formulae
\begin{align*}
\alpha = \frac{8\pi}{f}, \qquad
\beta = \pi - \frac{4\pi}{f}, \qquad
\gamma + \delta = \pi.
\end{align*}
By the angle formulae, $\beta\gamma\delta$ cannnot be a vertex. Meanwhile, by $\gamma^2\delta^2$, we have $f \ge 8$ and by the angle formulae, the quadrilateral is convex. 

In $\beta^b\gamma^c\delta^d$, by Lemma \ref{ForbVerLem}, $b,c,d$ in $\beta^b\gamma^c\delta^d$ are either all even or all odd. By $\gamma^2\delta^2$, $c,d$ cannot be both $\ge2$, so one of them must be $1$. Then $b,c,d$ must be all odd. If $b\ge3$, then we get $\pi = \gamma + \delta \le c\gamma + d\delta \le 2\pi - 3\beta = -\pi + \frac{12\pi}{f}$ which implies $f\le 6$, contradicting $f\ge8$. As $\beta\gamma\delta$ is not a vertex, $\beta^b\gamma^c\delta^d = \beta\gamma^c\delta (c\ge3), \beta\gamma\delta^d(d\ge3)$. When $ \beta\gamma^c\delta (c\ge3)$ is a vertex, we have $2\pi - \frac{4\pi}{f} + 2\gamma = \beta + 3\gamma + \delta \le 2\pi$ which implies $\gamma \le \frac{2\pi}{f}$. However, by convexity, Lemma \ref{AngLBLem} implies that $\gamma>\frac{2\pi}{f}$, a contradiction. So $\beta\gamma^c\delta (c\ge3)$ cannot be a vertex. Apply the same argument to $\beta\gamma\delta^d(d\ge3)$, we get the same contradiction. So $\beta^b\gamma^c\delta^d$ cannot be a vertex. Moreover, by Lemma \ref{ForbVerLem} and the results above, $\gamma\cdots = \gamma^2\delta^2, \gamma^c$.

If $\delta < \frac{\pi}{2}$, combining with $\gamma + \delta = \pi$, we have $\gamma > \frac{\pi}{2}$. Then $\gamma^c$ cannot be a vertex. So $\gamma\cdots = \gamma^2\delta^2$. However, in such $\{ \alpha\beta^2, \gamma^2\delta^2, \alpha^a\delta^d, ... \}$ we have $\#\gamma < \#\delta$, a contradicton. So we must have $\delta \ge  \frac{\pi}{2}$ and $\alpha^a\delta^d (d\ge4)$, $\beta^b\delta^d(d\ge4)$, $\delta^d (d>4)$ cannot be vertices, and $\delta^d=\delta^4$, $\alpha\delta\cdots=\alpha^a\delta^2(a\ge2)$, $\beta^b\delta^d = \beta^b\delta^2$. By $\alpha\beta^2$ and $\alpha^a\delta^2(a\ge2)$, we get $\frac{\pi}{2} \le \delta < \beta = \pi - \frac{4\pi}{f}$ which implies $f > 8$. Then $4\beta = 4\pi - \frac{16\pi}{f} > 2\pi$ which implies $\beta^b$, $\beta^b\delta^2(b\ge4)$ cannot be vertices. Then $\beta^b\delta^d = \beta^2\delta^2$. However, if $\beta^2\delta^2$ is a vertex, we get  
\begin{align*}
\beta^2\delta^2: \quad 
\alpha=\frac{8\pi}{f}, \quad
\beta=\gamma = \pi - \frac{4\pi}{f}, \quad 
\delta=\frac{4\pi}{f}.
\end{align*}
As $\delta \ge \frac{\pi}{2}$, we get $f=8$ which implies $\beta = \gamma = \delta = \frac{\pi}{2}$, contradicting Lemma \ref{AngEqEdgeEqLem}. So $\beta^2\delta^2$ cannot be a vertex and there is no $\beta^b\delta^d$.

So we have $\{ \alpha\beta^2, \gamma^2\delta^2, \alpha^a\delta^2, \delta^4, \gamma^c \, \vert \, \alpha^a \}$, from which $\delta\cdots = \gamma^2\delta^2, \alpha^a\delta^2$ and $\beta\delta\cdots$ is not a vertex. Meanwhile, $\beta^2\cdots = \alpha\beta^2 = \alpha \vert \beta \Vert \beta$. In other words, there is exactly one $\bm{b}$-edge at $\alpha\beta^2$ which implies $\alpha^{\beta}\vert^{\beta}\alpha \cdots$ cannot be a vertex. Since $\beta\delta\cdots$, $\alpha^{\beta} \vert^{\beta} \alpha \cdots$ are not vertice, $\alpha\vert\alpha = \alpha^{\delta}\vert^{\delta}\alpha$ which implies there is no $\alpha\vert\alpha\vert\alpha$ at a vertex. Then $\alpha^a\delta^2 = \alpha^2\delta^2$ and $\alpha^a$ cannot be a vertex. Hence we have 
\begin{align*}
\AVC = \{ \alpha\beta^2, \gamma^2\delta^2, \alpha^2\delta^2, \delta^4, \gamma^c \}.
\end{align*}

\begin{figure}[htp]
\centering
\begin{tikzpicture}[>=latex,scale=1]

\draw
	(0,1.2) -- (1.2,1.2) -- (1.2,0) %T1
	(0,1.2) -- (-1.2,1.2) -- (-1.2,0) %T2
	(0,0) -- (0,-1.2) %T4
	(-1.2,-1.2) -- (0,-1.2) -- (1.2,-1.2) %T3, T4
	(-1.2,1.2) -- (-1.2, 2.4) %T6
	(-1.2,1.2) -- (-2.4, 1.2) %T6
	(1.2,1.2) -- (1.2, 2.4) %T5
	(1.2,1.2) -- (2.4, 1.2); %T5

\draw[double, line width=0.6]
	(0,0) -- (0,1.2) %T1-2
	(-1.2,0) -- (-1.2,-1.2) %T3
	(1.2,0) -- (1.2,-1.2); %T4
	%(0, -2.4) -- (1.2, -2.4);

\draw[line width=2]
	(-1.2,0) -- (0,0) -- (1.2,0) %T1-4, T2-3
	(-1.2, 0) -- (-2.4, 0)
	(1.2, 0) -- (2.4, 0);

\node at (0.2,0.2) {\small $\gamma$}; %T1
\node at (1,0.2) {\small $\delta$}; 
\node at (1,1) {\small $\alpha$}; 
\node at (0.2,0.95) {\small $\beta$}; 

\node at (-0.2,0.2) {\small $\gamma$}; %T2
\node at (-0.2,0.95) {\small $\beta$}; 
\node at (-1,1) {\small $\alpha$}; 
\node at (-1,0.2) {\small $\delta$};

\node at (-0.2,-0.25) {\small $\delta$}; %T3
\node at (-0.2,-1) {\small $\alpha$}; 
\node at (-1,-1) {\small $\beta$}; 
\node at (-1,-0.25) {\small $\gamma$}; 

\node at (0.2,-0.25) {\small $\delta$}; %T4
\node at (0.2,-1) {\small $\alpha$}; 
\node at (1,-1) {\small $\beta$}; 
\node at (1,-0.25) {\small $\gamma$};

\node at (1.4,0.2) {\small $\delta$}; %T5
\node at (1.4,1) {\small $\alpha$}; 

\node at (-1.4,0.2) {\small $\delta$}; %T6
\node at (-1.4,1) {\small $\alpha$}; 

\node at (0, 1.4) {\small $\alpha$}; %T7
\node at (1,1.4) {\small $\delta$}; 
\node at (-1,1.4) {\small $\beta$}; 

\node[inner sep=1,draw,shape=circle] at (0.59,0.58) {\small $1$};
\node[inner sep=1,draw,shape=circle] at (-0.6,0.58) {\small $2$};
\node[inner sep=1,draw,shape=circle] at (-0.6,-0.58) {\small $3$};
\node[inner sep=1,draw,shape=circle] at (0.59,-0.58) {\small $4$};

\node[inner sep=1,draw,shape=circle] at (2,0.6) {\small $5$};
\node[inner sep=1,draw,shape=circle] at (-2,0.6) {\small $6$};
\node[inner sep=1,draw,shape=circle] at (0,2) {\small $7$};

\end{tikzpicture}
\caption{The AAD of $\gamma^2\delta^2$}
\label{AlBe2-Ga2De2-AAD}
\end{figure}

By the unique AAD of $\gamma^2\delta^2$ we determine $T_1, T_2, T_3, T_4$ in Figure \ref{AlBe2-Ga2De2-AAD}. As $\gamma\delta\cdots = \gamma^2\delta^2$, we get $\delta_1\cdots = \delta_2\cdots = \gamma^2\delta^2$, so we further determine $T_5, T_6$. By $\beta^2\cdots = \alpha\beta^2$, we get $\beta_1\beta_2\cdots = \alpha\beta^2$ so we have $\alpha_7$ in $T_7$. By the symmetry about the common edge between $T_1$ and $T_2$, $T_7$ is determined but we get $\beta_7\cdots = \alpha^2\beta\cdots$, which is not a vertex, a contradiction. So $ \gamma^2\delta^2$ cannot be a vertex, contradicting the hypothesis of $\gamma^2\delta^2$ being a vertex. Hence there is no tilings.
\end{proof}

\begin{proposition}\label{Propa3ab2} When $\alpha^3, \alpha\beta^2$ are vertices and $\beta\gamma\delta$ is not a vertex, the $\AVC$ is given in \eqref{a3ab2avcf24}.
\end{proposition}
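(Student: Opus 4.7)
The plan is to extract the exact values of $\alpha$ and $\beta$ from the two degree-$3$ vertices, use Lemma \ref{albe2Lem} to collapse the list of admissible vertex shapes, and then do a finite case analysis on the mandatory $\alpha^a\delta^d$ vertex to pin down $\gamma$, $\delta$, and $f$.

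First I would read off $\alpha = \tfrac{2\pi}{3}$ from $\alpha^3 = 2\pi$, and then $\beta = \tfrac{2\pi}{3}$ from $\alpha + 2\beta = 2\pi$. Substituting into the quadrilateral angle sum \eqref{QuadSum} gives $\gamma + \delta = \tfrac{2\pi}{3} + \tfrac{4\pi}{f}$. Next I would apply Lemma \ref{albe2Lem}: it collapses $\alpha\beta\cdots$ to $\alpha\beta^2$, rules out $\alpha\delta^2$, $\alpha^a\beta^b\gamma^c$, $\alpha^a\beta^b\delta^d$, $\alpha^a\gamma^c\delta^d$, and guarantees that at least one degree-$\ge 4$ vertex of the form $\alpha^a\delta^d$ occurs. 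Combined with the hypothesis that $\beta\gamma\delta$ is not a vertex and with Lemma \ref{ForbVerLem}, the only remaining candidate shapes lie in
\begin{align*}
\{\alpha^3,\; \alpha\beta^2,\; \alpha^a\delta^d,\; \beta^b,\; \gamma^c,\; \delta^d,\; \beta^b\gamma^c,\; \beta^b\delta^d,\; \gamma^c\delta^d,\; \beta^b\gamma^c\delta^d\}.
\end{align*}
Using $\beta = \tfrac{2\pi}{3}$ one sees that any $\beta^b\gamma^c\delta^d$ with $b\ge 3$ exceeds $2\pi$; any all-even $\beta^b\gamma^c\delta^d$ has $b\ge 2$ and forces $c\gamma + d\delta \ge 2(\gamma+\delta) > \tfrac{4\pi}{3}$, contradicting the angle sum; so only $\beta\gamma^c\delta^d$ with $c,d$ odd and $c+d\ge 4$ survives.

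Then I would run a case analysis on $\alpha^a\delta^d$. Since $a\alpha + d\delta = 2\pi$ and $\alpha = \tfrac{2\pi}{3}$, the integer $a$ must lie in $\{1,2\}$, giving $\delta = \tfrac{2\pi(3-a)}{3d}$ with the parity restriction from Lemma \ref{ForbVerLem} and $d\ge 4$ when $a=1$ (since $\alpha\delta^2$ is excluded). For each such $(a,d)$, I compute $\delta$ and then $\gamma = \tfrac{2\pi}{3} + \tfrac{4\pi}{f} - \delta$, and ask whether $\gamma$ can be hosted by one of $\gamma^c$, $\beta^2\gamma^c$, $\gamma^c\delta^d$, or $\beta\gamma^c\delta^d$. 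Each of these is a single linear Diophantine relation in the remaining exponents and in $f$; the parity restrictions of Lemma \ref{ForbVerLem}, together with the convexity lower bound $\gamma > \tfrac{2\pi}{f}$ from Lemma \ref{AngLBLem} (convexity holds here because all four angles are less than $\pi$), should eliminate every branch except $\delta = \tfrac{\pi}{3}$, $\gamma = \tfrac{\pi}{2}$, $f = 24$, realised through $\alpha^2\delta^2$ (or equivalently $\alpha\delta^4$) and $\gamma^4$.

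Finally, with the four angle values frozen and $f = 24$, I would enumerate the full list of candidate shapes whose angle sum is exactly $2\pi$, namely $\alpha^3$, $\alpha\beta^2$, $\alpha^2\delta^2$, $\alpha\delta^4$, $\gamma^4$, $\beta^3$, $\beta^2\delta^2$, $\delta^6$, and separate the necessary ones (those forced by Lemma \ref{albe2Lem} or by the fact that $\gamma^4$ is the unique host of $\gamma$) from the optional ones to write down the $\AVC$ claimed in \eqref{a3ab2avcf24}. The principal obstacle is the third step: a priori $a,d,c$ range over infinitely many positive integers, so one needs to combine each $\alpha^a\delta^d$ equation with each of the four possible $\gamma$-host equations and verify systematically that only the listed branch survives both the integrality and the parity constraints; the saving grace is that once $\delta$ becomes a rational multiple of $\pi$ with small denominator, Lemma \ref{ForbVerLem} leaves at most a handful of $(c,d)$ to test.
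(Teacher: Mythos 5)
Your opening matches the paper: the angle formulae $\alpha=\beta=\tfrac{2\pi}{3}$, $\gamma+\delta=\tfrac{2\pi}{3}+\tfrac{4\pi}{f}$, and the use of Lemma \ref{albe2Lem} to prune the vertex list and force a degree $\ge 4$ vertex $\alpha^a\delta^d$ are exactly the paper's first step. The gap is in your third step: the claim that integrality, the parity constraints of Lemma \ref{ForbVerLem}, and the lower bound $\gamma>\tfrac{2\pi}{f}$ from Lemma \ref{AngLBLem} eliminate every branch except $\delta=\tfrac{\pi}{3}$, $\gamma=\tfrac{\pi}{2}$, $f=24$ is false. The worst survivor is $a=1$, $d=6$: then $\delta=\tfrac{2\pi}{9}$, and the host $\gamma^4$ forces $\gamma=\tfrac{\pi}{2}$ and $f=72$. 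The candidate $\AVC=\{\alpha^3,\alpha\beta^2,\gamma^4,\alpha\delta^6\}$ passes every one of your filters and is even consistent with angle counting ($\#\alpha^3=8$, $\#\alpha\beta^2=36$, $\#\gamma^4=18$, $\#\alpha\delta^6=12$) and Euler's formula; the paper can only exclude it by an adjacent angle deduction (the absence of $\beta\,\vert\,\beta$ forces the AAD of $\alpha^3$ to produce a $\beta\delta\cdots$ vertex, which does not exist, and then the AAD of $\alpha\delta^6$ requires $\alpha^2\cdots$). Your toolkit contains no AAD argument, so this branch cannot be closed. Two further branches also leak through: $a=d=2$ with host $\gamma^2\delta^2$ gives $f=12$, $\gamma=\tfrac{2\pi}{3}$, $\delta=\tfrac{\pi}{3}$, which satisfies parity and $\gamma>\tfrac{2\pi}{12}$ and is only excluded by Proposition \ref{albe2Prop} (or a $\#\gamma<\#\delta$ count); and $a=1$, $d=8$ with host $\gamma^2\delta^4$ gives $f=24$, $\delta=\tfrac{\pi}{6}$, $\gamma=\tfrac{2\pi}{3}$, excluded only by counting. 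You invoke neither Proposition \ref{albe2Prop} nor any counting argument.

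The paper's route sidesteps all of this: since $\gamma,\delta$ are absent from degree $3$ vertices and Lemma \ref{ForbVerLem} forbids a degree $5$ vertex $\gamma^c\delta^d$, Lemma \ref{TwoHDAng} forces one of $\gamma^4$, $\gamma^2\delta^2$, $\delta^4$ to appear; Proposition \ref{albe2Prop} kills $\gamma^2\delta^2$; and each remaining case pins one of $\gamma,\delta$ to exactly $\tfrac{\pi}{2}$, after which the enumeration over $f$ is short and the single stubborn value $f=72$ is dispatched by AAD. To repair your version you would need to import Lemma \ref{TwoHDAng} and Proposition \ref{albe2Prop}, add the counting arguments, and supply the AAD argument for $f=72$. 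Two smaller points: convexity is not automatic for $f\in\{8,10\}$, where $\gamma+\delta>\pi$ allows one angle to reach $\pi$, so the appeal to Lemma \ref{AngLBLem} needs $f\ge 12$ established first; and $\beta^3$ in your final $f=24$ list violates the Parity Lemma ($b$ must be even), so it should not appear even as a candidate, while the target list \eqref{a3ab2avcf24} is $\{\alpha^3,\alpha\beta^2,\gamma^4,\alpha^2\delta^2,\beta^2\delta^2,\alpha\delta^4,\delta^6\}$.
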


\begin{proof} By Lemma \ref{albe2Lem}, $\alpha\delta^2$, $\alpha^a\beta^b\gamma^c$, $\alpha^a\beta^b\delta^d$, $\alpha^a\gamma^c\delta^d$ cannot be vertices and some degree $\ge4$  $\alpha^a\delta^d$ must be a vertex. So we have 
\begin{align*}
\AVC_3 = \{ \alpha^3, \alpha\beta^2 \}.
\end{align*}
And the angle formulae are
\begin{align*}
\alpha=\frac{2\pi}{3}, \qquad 
\beta=\frac{2\pi}{3}, \qquad 
\gamma+\delta=\frac{2\pi}{3} + \frac{4\pi}{f}.
\end{align*}
As $\gamma$, $\delta$ do not appear at degree $3$ vertices, and by Lemma \ref{ForbVerLem} there is no degree $5$ $\gamma^c\delta^d$ where $c+d=5$. Then by Lemma \ref{TwoHDAng}, one of $\gamma^4, \gamma^2\delta^2, \delta^4$ must be a vertex. Meanwhile, by Proposition \ref{albe2Prop}, $\gamma^2\delta^2$ cannot be a vertex. So one of $\gamma^4, \delta^4$ must be a vertex and hence and $f\ge8$. We divide the discussion into these two cases. 

\begin{case*}[$\alpha^3,\alpha\beta^2,\gamma^4$] The angle formulae are
\begin{align*}
\alpha=\frac{2\pi}{3}, \qquad 
\beta=\frac{2\pi}{3}, \qquad 
\gamma=\frac{\pi}{2}, \qquad 
\delta=\frac{\pi}{6}+\frac{4\pi}{f},
\end{align*}
with the angle lower bounds $\alpha=\frac{2\pi}{3}, \beta=\frac{2\pi}{3}, \gamma=\frac{\pi}{2},\delta > \frac{\pi}{6}$. By the angle formulae and the angle lower bounds, the number of each angle at a vertex is bounded. Then apply Lemma \ref{ForbVerLem}, we get the other vertices with corresponding $f$ values, $f=12: \delta^4, \gamma^2\delta^2$, $f=24: \alpha^2\delta^2, \beta^2\delta^2, \alpha\delta^4, \delta^6$, $f=36: \beta\gamma\delta^3$, $f=48: \gamma^2\delta^4, \delta^8$, $f=72: \alpha\delta^6$, $f=120: \delta^{10}$. Lemma \ref{albe2Lem} implies that some degree $\ge4$ $\alpha^a\delta^d$ must be a vertex which is absent in $f=12, 36, 48, 120$, so there are no tilings for these $f$ values. Meanwhile, in $f=72: \{ \alpha^3,  \alpha\beta^2,  \gamma^4,  \alpha\delta^6 \}$, by no $\beta \vert \beta$, $\alpha^3$ has unique AAD $\vert^{\delta} \alpha^{\beta} \vert^{\delta} \alpha ^{\beta} \vert^{\delta} \alpha ^{\beta}\vert$ which implies $\beta\delta\cdots$ is a vertex. However, there is no such vertex in the $\AVC$ with $f=72$, so $\alpha^3$ cannot be a vertex and hence $\alpha^2\cdots$ is not a vertex for $f=72$. But the AAD of $\alpha\delta^6$ implies $\delta^{\alpha}\vert^{\alpha}\delta$ which requires $\alpha^2\cdots$ being a vertex, a contradiction. So $\alpha\delta^6$ cannot be a vertex. So we have
\begin{align}
\label{a3ab2avcf24}f&=24, \quad \{ \alpha^3,  \alpha\beta^2,  \gamma^4,   \alpha^2\delta^2, \beta^2\delta^2,   \alpha\delta^4, \delta^6 \}.
\end{align}

The construction of tilings will be explained in Section \ref{SecConst}.

\end{case*}

\begin{case*}[$\alpha^3,\alpha\beta^2,\delta^4$] We may assume that $\gamma^4$ is not a vertex. The angle formulae are
\begin{align*}
\alpha=\frac{2\pi}{3}, \qquad 
\beta=\frac{2\pi}{3}, \qquad 
\gamma=\frac{\pi}{6}+\frac{4\pi}{f}, \qquad 
\delta=\frac{\pi}{2}.
\end{align*}
with the angle lower bounds $\alpha=\frac{2\pi}{3}, \beta=\frac{2\pi}{3}, \gamma>\frac{\pi}{6}, \delta=\frac{\pi}{2}$. By the angle formulae, angle lower bounds and Lemma \ref{ForbVerLem}, we get the other vertices with corresponding $f$ value, $f=12:  \gamma^2\delta^2$, $f=24: \gamma^6, \beta^2\gamma^2$, $f=36: \beta\gamma^3\delta$, $ f=48:  \gamma^4\delta^2, \gamma^8$, $f=120: \gamma^{10}$. By Lemma \ref{albe2Lem}, $\alpha\delta\cdots$ must be a vertex which is absent in all these $f$ values. So $\delta^4$ cannot be a vertex.
\end{case*}

Hence we complete the proof.
\end{proof}

\begin{proposition} When $\alpha^3$ is the only degree $3$ vertex, the $\AVC$ is given in \eqref{a3avcf24}.
\end{proposition}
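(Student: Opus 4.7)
Since $\AVC_3 = \{\alpha^3\}$, the vertex equation immediately gives $\alpha = \tfrac{2\pi}{3}$, and by hypothesis $\alpha\beta^2, \alpha\delta^2, \beta\gamma\delta$ are all excluded from the $\AVC$. Lemma \ref{a3Lem}(3) supplies a degree-$4$ vertex without $\alpha$, which by the Corollary to Lemma \ref{ForbVerLem} must be one of $\beta^4, \gamma^4, \delta^4, \beta^2\gamma^2, \beta^2\delta^2, \gamma^2\delta^2$. The $\beta \leftrightarrow \delta$, $\bm{b} \leftrightarrow \bm{c}$ symmetry of the default quadrilateral identifies $\beta^4$ with $\delta^4$ and $\beta^2\gamma^2$ with $\gamma^2\delta^2$, reducing the analysis to the four cases $\gamma^4$, $\beta^4$, $\beta^2\gamma^2$, $\beta^2\delta^2$, which I would treat in turn.

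In each case I would pair the assumed degree-$4$ vertex equation with the quadrilateral angle sum $\alpha+\beta+\gamma+\delta = (2+\tfrac{4}{f})\pi$ to parametrise the four angles by $f$ and at most one free angle, apply Lemma \ref{AngEqEdgeEqLem} to excise any subcase forcing $\beta = \delta$ while $\gamma \neq \pi$, and then use Lemma \ref{AngLBLem} ($\theta > \tfrac{2\pi}{f}$), Lemma \ref{AlConvexLem} (at most one angle $\ge \pi$) and Lemma \ref{ForbVerLem} together with the forbidden degree-$3$ vertices to enumerate the finitely many admissible vertex types. The numerical constraints from Lemma \ref{a3Lem}(1)--(6) on $v_4$, $v_5$ and $\#_{\ge 4}\alpha$ further restrict which vertices must also appear. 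I expect the surviving case to be $\gamma^4$ at $f = 24$, with $\gamma = \tfrac{\pi}{2}$, $\beta + \delta = \pi$ and $\beta \neq \delta$; the full enumeration at these angle values should yield the $\AVC$ \eqref{a3avcf24} corresponding to the quadrilateral subdivision of the cube.

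To eliminate the remaining three cases I would use AAD-propagation arguments in the style of Proposition \ref{albe2Prop}: for $\beta^4$ the unique AAD $\beta^{\alpha}\vert^{\alpha}\beta$ along each of the four $\bm{b}$-edges would create $\alpha\vert\alpha$ neighbours that, combined with $\alpha\beta^2 \notin \AVC$ and Lemma \ref{BaLem}, force an unadmitted vertex; the $\beta^2\gamma^2$ case is handled analogously by propagating both $\bm{b}$- and $\bm{c}$-edge AADs and appealing to the absence of $\beta\gamma\delta$. The principal obstacle is the $\beta^2\delta^2$ subcase, whose angle formulae $\gamma = \tfrac{\pi}{2}$, $\beta + \delta = \pi$ coincide exactly with those of the surviving $\gamma^4$ case, so angle arithmetic alone cannot dispose of it. Its exclusion must be combinatorial: walking around a hypothetical $\beta^2\delta^2$ via its unique AAD $\Vert\beta^{\alpha}\vert^{\alpha}\beta\Vert\cdots\bvert\,\delta^{\alpha}\vert^{\alpha}\delta\,\bvert$ and using both $\alpha\beta^2 \notin \AVC$ and $\alpha\delta^2 \notin \AVC$ to push the two resulting $\alpha^2\cdots$ neighbours into vertices outside the admissible list, analogously to the $\gamma^2\delta^2$ contradiction in Proposition \ref{albe2Prop}. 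Carrying out this walk carefully is the main technical hurdle.
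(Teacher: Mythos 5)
Your opening reduction is reasonable and close in spirit to the paper's (the paper instead applies Lemma \ref{TwoHDAng} to $\beta,\gamma$ to force one of $\beta^4,\gamma^4,\beta^2\gamma^2$, and only brings in the remaining degree $4$ vertices inside the case $\alpha^3,\gamma^4$), and your identification of $f=24$, $\gamma=\frac{\pi}{2}$, $\beta+\delta=\pi$ as the surviving angle data is correct. The fatal problem is your plan for the fourth case: you propose to \emph{exclude} $\beta^2\delta^2$ by an AAD walk, but $\beta^2\delta^2$ is a vertex of the answer --- \eqref{a3avcf24} is $\AVC=\{\alpha^3,\gamma^4,\beta^2\delta^2\}$, realised by the quadrilateral subdivision of the cube, in which $\beta^2\delta^2$ genuinely occurs alongside $\gamma^4$. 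No combinatorial argument starting from \enquote{$\beta^2\delta^2$ is a vertex} can reach a contradiction, and the specific mechanism you describe breaks down immediately: the unique arrangement is $\vert\beta\Vert\beta\vert\delta\,\bvert\,\delta\vert$ with AAD $\beta^{\alpha}\vert^{\alpha}\delta$ on each $\bm{a}$-edge, so reciprocity only forces $\alpha\vert\alpha\cdots$ to be a vertex, and $\alpha^3$ is admitted --- nothing is pushed outside the admissible list. In the paper the subcase $(\alpha^3,\gamma^4,\beta^2\delta^2)$ is precisely the one that survives; what must be shown there is that every \emph{other} vertex compatible with the angle bounds is impossible (via convexity, Lemma \ref{LunEstLem}, Lemma \ref{AngLBLem}, and a $\#\beta=\#\delta$ counting argument), not that $\beta^2\delta^2$ itself dies.

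The same over-optimism affects your dismissals of $\beta^4$ and $\beta^2\gamma^2$. The AAD $\beta^{\alpha}\vert^{\alpha}\beta$ inside $\beta^4$ forces only that $\alpha^2\cdots$ is a vertex, and with $\alpha^3$ available this is no contradiction; the paper needs six subcases for $\alpha^3,\beta^4$ (pairing $\beta^4$ with each possible second degree $4$ vertex supplied by Lemma \ref{a3Lem}, computing angle formulae, and deriving contradictions from missing $\gamma\cdots$ or $\delta\cdots$ vertices or from unbalanced angle counts), and a further three subcases for $\alpha^3,\beta^2\gamma^2$. So the skeleton of your case split is workable, but the elimination arguments you sketch would not close any of the three non-surviving branches, and the one branch you intend to kill combinatorially is the one that must survive.
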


\begin{proof} By $\alpha^3$, the vertices $\alpha^4,\alpha^5,\alpha^3\beta^2,\alpha^3\delta^2$ cannot appear. Then the corresponding degree $4,5$ vertices are
\begin{align}
\label{a3avc4} \AVC_4 &= \{ \beta^4,  \gamma^4,  \delta^4,  \alpha^2\beta^2, \alpha^2\delta^2,  \beta^2\gamma^2,  \beta^2\delta^2,  \gamma^2\delta^2 \}, \\
\label{a3avc5} \AVC_5 &= \{ \alpha\beta^4,  \alpha\delta^4,  \beta^3\gamma\delta,  \beta\gamma^3\delta,  \beta\gamma\delta^3,  \alpha\beta^2\gamma^2,  \alpha\beta^2\delta^2, \alpha\gamma^2\delta^2 \}.
\end{align}
We will frequently refer to the above results in this proof. Meanwhile, by Lemma \ref{a3Lem}, we have $f\ge24$. Since there is no degree $5$ $\beta^b\gamma^c$ in \eqref{a3avc4}, apply Lemma \ref{TwoHDAng} on $\beta, \gamma$, one of $\beta^4, \gamma^4, \beta^2\gamma^2$ must appear as a vertex. 

\begin{case*}[$\alpha^3,\beta^4$] When $\alpha^3$, $\beta^4$ are vertices, we get $\alpha = \frac{2\pi}{3}$, $\beta = \frac{\pi}{2}$, so $\alpha^2\beta^2$ and $\alpha\beta^4$ cannot be a vertex. By \eqref{a3avc4} and Lemma \ref{a3Lem}, $\AVC_4 \neq \{ \beta^4 \}$, one of $\alpha^2\delta^2$, $\beta^2\gamma^2$, $\beta^2\delta^2$, $\gamma^4$, $\delta^4$, $\gamma^2\delta^2$ must appear.  

\begin{subcase*}[$\alpha^3,\beta^4,\alpha^2\delta^2$] The angle formulae are
\begin{align*}
\alpha=\frac{2\pi}{3}, \qquad 
\beta=\frac{\pi}{2}, \qquad 
\gamma=\frac{\pi}{2}+\frac{4\pi}{f}, \qquad
\delta=\frac{\pi}{3}.
\end{align*}
The angle lower bounds are $\alpha=\frac{2\pi}{3}$, $\beta=\frac{\pi}{2}$, $\gamma>\frac{\pi}{2}$, $\delta=\frac{\pi}{3}$. Then by Lemma \ref{ForbVerLem}, the other vertices are $\alpha\delta^4$, $\delta^6$, $ \gamma^2\delta^2$. We get $\{\alpha^3, \beta^4, \alpha^2\delta^2, \gamma^2\delta^2 \,  \vert \, \alpha\delta^4,  \delta^6 \}$ where $\beta\delta\cdots$ is not a vertex. At $\gamma^2\delta^2$, the AAD is unique as indicated in $T_1, T_2, T_3, T_4$ in Figure \ref{Al3Ga2De2AAD}. From the $\AVC$, we have $\alpha^2\cdots = \alpha^3$. So $\alpha_3\alpha_4\cdots = \alpha^3 = \alpha_3\alpha_4\alpha_5$. Then we determine $T_5$, up to symmetry we have $\delta_5 \cdots = \beta\delta\cdots$ which is not a vertex, a contradiction. So $\gamma^2\delta^2$ cannot be a vertex. Then $\{\alpha^3, \beta^4, \alpha^2\delta^2 \,  \vert \, \alpha\delta^4,  \delta^6 \}$ has no $\gamma\cdots$, a contradiction. So $\alpha^2\delta^2$ cannot be a vertex. 

\begin{figure}[htp]
\centering
\begin{tikzpicture}[>=latex,scale=1]

\draw
	(1.2,0) -- (1.2, 1.2)-- (0, 1.2) -- (-1.2, 1.2) -- (-1.2, 0)
	(-1.2,-1.2) -- (0, -1.2) -- (1.2,-1.2)
	(0,0) -- (0,-1.2);

\draw[line width=2]
	(-1.2,0) -- (0,0) -- (1.2,0);

\draw[double, line width=0.6]
	(0,0) -- (0,1.2)
	(-1.2,0) -- (-1.2, -1.2)
	(1.2,0) -- (1.2, -1.2);

\node at (0.2,0.2) {\small $\gamma$}; %T1
\node at (1,0.2) {\small $\delta$}; 
\node at (1,1) {\small $\alpha$}; 
\node at (0.2,0.92) {\small $\beta$}; 

\node at (-0.2,0.2) {\small $\gamma$}; %T2
\node at (-0.2,0.92) {\small $\beta$}; 
\node at (-1,1) {\small $\alpha$}; 
\node at (-1,0.2) {\small $\delta$}; 

\node at (-0.2,-0.25) {\small $\delta$}; %T3
\node at (-0.2,-1) {\small $\alpha$}; 
\node at (-1,-1) {\small $\beta$}; 
\node at (-1,-0.25) {\small $\gamma$}; 

\node at (0.2,-0.25) {\small $\delta$}; %T4
\node at (0.2,-1) {\small $\alpha$}; 
\node at (1,-1) {\small $\beta$}; 
\node at (1,-0.25) {\small $\gamma$};

\node at (0,-1.4)  {\small $\alpha$};  %T5
\node at (-1,-1.45)  {\small $\beta$}; 
\node at (1,-1.4)  {\small $\delta$};

\node[inner sep=1,draw,shape=circle] at (0.59,0.58) {\small $1$};
\node[inner sep=1,draw,shape=circle] at (-0.6,0.58) {\small $2$};
\node[inner sep=1,draw,shape=circle] at (-0.6,-0.58) {\small $3$};
\node[inner sep=1,draw,shape=circle] at (0.59,-0.58) {\small $4$};

\node[inner sep=1,draw,shape=circle] at (0,-2) {\small $5$};

\end{tikzpicture}
\caption{The AAD of $\gamma^2\delta^2$}
\label{Al3Ga2De2AAD}
\end{figure}
\end{subcase*}

\begin{subcase*}[$\alpha^3,\beta^4,\beta^2\gamma^2$] We may further assume that $\alpha^2\delta^2$ is not a vertex. The angle formulae are
\begin{align*}
\alpha=\frac{2\pi}{3}, \qquad 
\beta=\frac{\pi}{2}, \qquad 
\gamma=\frac{\pi}{2}, \qquad 
\delta=\frac{\pi}{3}+\frac{4\pi}{f}.
\end{align*}
When $f=24$, we get $\beta = \gamma = \delta =\frac{\pi}{2}$, contradicting Lemma \ref{AngEqEdgeEqLem}. So we may assume $f>24$. Then the angle lower bounds are $\alpha=\frac{2\pi}{3}$, $\beta=\frac{\pi}{2}$, $\gamma=\frac{\pi}{2}$, $\delta>\frac{\pi}{3}$. Then by Lemma \ref{ForbVerLem}, the other vertices can only be $\gamma^4$. However, there is no $\delta\cdots$, a contradition. So $\beta^2\gamma^2$ cannot be a vertex. 
\end{subcase*}

\begin{subcase*}[$\alpha^3,\beta^4,\beta^2\delta^2$] We may further assume that $\alpha^2\delta^2$, $\beta^2\gamma^2$ are not a vertices. The angle formulae are
\begin{align*}
\alpha=\frac{2\pi}{3}, \qquad 
\beta=\frac{\pi}{2}, \qquad 
\gamma=\frac{\pi}{3}+\frac{4\pi}{f}, \qquad 
\delta=\frac{\pi}{2}.
\end{align*}
When $f=24$, we get $\gamma < \pi$ and $\beta= \delta =\frac{\pi}{2}$, contradicting Lemma \ref{AngEqEdgeEqLem}. So we may assume $f>24$. Then the angle lower bounds are $\alpha=\frac{2\pi}{3}$, $\beta=\frac{\pi}{2}$, $\gamma>\frac{\pi}{3}$, $\delta=\frac{\pi}{2}$. By the angle lower bounds and Lemma \ref{ForbVerLem}, the other vertices can only be $\delta^4$. However there is no $\gamma\cdots$, a contradition. So $\beta^2\delta^2$ cannot be a vertex. 
\end{subcase*}

\begin{subcase*}[$\alpha^3,\beta^4,\gamma^4$] We may further assume that $\alpha^2\delta^2$, $\beta^2\gamma^2$, $\beta^2\delta^2$ are not a vertices. The angle formulae are
\begin{align*}
\alpha=\frac{2\pi}{3}, \qquad 
\beta=\frac{\pi}{2}, \qquad 
\gamma=\frac{\pi}{2}, \qquad 
\delta=\frac{\pi}{3}+\frac{4\pi}{f},
\end{align*}
which is the same angle formulae as in case $\alpha^3$, $\beta^4$, $\beta^2\gamma^2$, so we get the same conclusion. 
\end{subcase*}

\begin{subcase*}[$\alpha^3,\beta^4,\delta^4$] We may further assume that $\alpha^2\delta^2$, $\beta^2\gamma^2$, $\beta^2\delta^2$, $\gamma^4$ are not a vertices. The angle formulae are
\begin{align*}
\alpha=\frac{2\pi}{3}, \qquad 
\beta=\frac{\pi}{2}, \qquad 
\gamma=\frac{\pi}{3}+\frac{4\pi}{f}, \qquad 
\delta=\frac{\pi}{2},
\end{align*}
which is the same angle formulae as in case $\alpha^3$, $\beta^4$, $\beta^2\delta^2$, so we get the same conclusion. 
\end{subcase*}

\begin{subcase*}[$\alpha^3,\beta^4,\gamma^2\delta^2$] We may further assume that $\alpha^2\delta^2$, $\beta^2\gamma^2$, $\beta^2\delta^2$, $\gamma^4$, $\delta^4$ are not a vertices. The angle formulae are
\begin{align*}
f=24, \qquad
\alpha=\frac{2\pi}{3}, \qquad 
\beta=\frac{\pi}{2}, \qquad 
\gamma + \delta=\pi.
\end{align*}

By the angle formulae, the quadrilateral is convex. Then Lemma \ref{LunEstLem} implies
\begin{align*}
\gamma + \beta < \pi + \delta \quad \Rightarrow \quad \gamma - \delta < \frac{\pi}{2},
\end{align*}
Combining with $\gamma + \delta=\pi$ we get
\begin{align*}
\gamma < \frac{3\pi}{4}, \qquad
\delta > \frac{\pi}{4}. 
\end{align*}
By convexity and Lemma \ref{AngLBLem} and $f=24$, $\gamma > \frac{2\pi}{f} = \frac{\pi}{12}$. Then the angle lower bounds are $\alpha=\frac{2\pi}{3}$, $\beta=\frac{\pi}{2}$, $\gamma >  \frac{\pi}{12}$, $\delta > \frac{\pi}{4}$. By Lemma \ref{ForbVerLem}, the other vertices are 
\begin{align*}
\hat{\gamma}\cdots &= \alpha\delta^4,  \beta^2\delta^4, \delta^6, \delta^8, \\
\gamma\cdots &=\gamma^c,  \beta^2\gamma^c,  \alpha\beta^2\gamma^c,  \beta^b\gamma^c\delta^d,
\end{align*}
where $\beta^b\gamma^c\delta^d$ has $b,c,d$ are all even or all odd. Note that $\beta^2\delta^4$ or $\delta^8$ implies $\delta =  \frac{\pi}{4}$, so they cannot be vertices. Meanwhile, in $\{ \alpha^3, \beta^4, \gamma^2\delta^2 \}$, we have $\# \gamma = \# \delta$. When there is a vertex with more $\gamma$ than $\delta$, then there must be another vertex with more $\delta$ than $\gamma$, and vice versa. 

When $\alpha\delta^4$ or $\delta^6$ is a vertex, the angle formulae become
\begin{align*}
f=24, \qquad 
\alpha=\frac{2\pi}{3}, \qquad 
\beta=\frac{\pi}{2}, \qquad 
\gamma=\frac{2\pi}{3}, \qquad 
\delta=\frac{\pi}{3}.
\end{align*}
By Lemma \ref{ForbVerLem}, the other vertices can only be $\delta^6$ or $\alpha\delta^4$. However, in $\{ \alpha^3, \beta^4, \gamma^2\delta^2, \alpha\delta^4, \delta^6 \}$, we get $\# \gamma < \# \delta$ if any of $\alpha\delta^4, \delta^6$ is a vertex, a contradiction. So $\alpha\delta^4$ and $\delta^6$ cannot be vertices. 

Then $\beta^b\gamma^c\delta^d$ is the only vertex which may have more $\delta$ than $\gamma$. When $\beta^b\gamma^c\delta^d$ is a vertex, by $2\beta + \gamma + \delta = 2\pi$ and $b,c,d$ being all even or all odd, we have $\beta^b\gamma^c\delta^d=\beta\gamma^c\delta^d$. Assume that $\beta\gamma^c\delta^d$ has more $\delta$ than $\gamma$. By $\gamma + \delta = \pi$, $c,d$ cannot be both $\ge2$, then $c=1$ and such $\beta\gamma^c\delta^d = \beta\gamma\delta^d$ where $d=3,5,7$. By $\beta=\frac{\pi}{2}$, $\gamma + \delta = \pi$ and the vertex angle sum, we have $\frac{\pi}{2} = (d-1)\delta  > (d-1)\frac{\pi}{4}$, which implies $d < 3$. So $\beta\gamma\delta^d$ cannot be a vertex and hence no $\beta^b\gamma^c\delta^d$ with more $\delta$ than $\gamma$. Since there is no vertex with more $\delta$ than $\gamma$, every vertex has equal number of $\gamma$ and $\delta$. So $\gamma\cdots = \gamma^2\delta^2$ and we have $
f=24: \{ \alpha^3,  \beta^4,  \gamma^2\delta^2 \}$ which is a subset of the $\AVC$ in subcase ($\alpha^3,\beta^4,\alpha^2\delta^2$). So we get the same contradiction and $\gamma^2\delta^2$ cannot be a vertex. 
\end{subcase*}
Hence this case has no tilings.
\end{case*}

\begin{case*}[$\alpha^3,\gamma^4$] We may assume that $\beta^4$ is not a vertex. Moreover, by the symmetry of $\beta,\delta$ in the default quadrialeteral in Figure \ref{DefaultQuad}, we may also assume $\delta^4$ is not a vertex. When $\alpha^3,\gamma^4$ are vertices, we get $\alpha = \frac{2\pi}{3}$ and $\gamma = \frac{\pi}{2}$.  By Lemma \ref{a3Lem}, $\AVC_4 \neq \{ \gamma^4 \}$, combining with the above results, one of the other degree $4$ vertices must appear. Moreover, when $\beta^4$ is not a vertex, by the symmetry between $\beta, \delta$ in Figure \ref{DefaultQuad}, it suffices to discuss one of $\alpha^2\beta^2, \beta^2\gamma^2$, $\beta^2\delta^2$ being a vertex. 

\begin{subcase*}[$\alpha^3,\gamma^4,\alpha^2\beta^2$] The angle formulae are 
\begin{align*}
\alpha=\frac{2\pi}{3}, \qquad 
\beta=\frac{\pi}{3}, \qquad 
\gamma=\frac{\pi}{2}, \qquad 
\delta=\frac{\pi}{2}+\frac{4\pi}{f},
\end{align*}
The angle lower bounds are $\alpha=\frac{2\pi}{3}$, $\beta=\frac{\pi}{3}$, $\gamma=\frac{\pi}{2}$, $\delta > \frac{\pi}{2}$. Then by Lemma \ref{ForbVerLem}, the other vertices are $\beta^2\delta^2$, $\alpha\beta^4$, $\beta^6$. However, in $\{\alpha^3, \gamma^4, \alpha^2\beta^2, \beta^2\delta^2, \alpha\beta^4, \beta^6 \}$ we get $\# \beta > \# \delta$, a contradiction.  So $\alpha^2\beta^2$ cannot be a vertex.
\end{subcase*}

\begin{subcase*}[$\alpha^3,\gamma^4,\beta^2\gamma^2$] We may further assume that $\alpha^2\beta^2$, $\alpha^2\delta^2$ are not a vertex. The angle formulae are 
\begin{align*}
\alpha=\frac{2\pi}{3}, \qquad 
\beta=\frac{\pi}{2}, \qquad 
\gamma=\frac{\pi}{2}, \qquad 
\delta=\frac{\pi}{3}+\frac{4\pi}{f}.
\end{align*}
When $f=24$, we get $\beta = \gamma = \delta = \frac{\pi}{2}$, contradiction. So we have $f>24$. Then angle lower bounds are $\alpha=\frac{2\pi}{3}$, $\beta=\frac{\pi}{2}$, $\gamma=\frac{\pi}{2}$, $\delta>\frac{\pi}{3}$. By Lemma \ref{ForbVerLem}, the other vertices are $\beta^2\delta^2, \delta^4, \gamma^2\delta^2$, each of which implies $f=24$, a contradiction. Hence there is no vertices other than $\alpha^3, \gamma^4, \beta^2\gamma^2$ where there is no $\delta\cdots$, a contradiction. So $\beta^2\gamma^2$ cannot be a vertex. 
\end{subcase*}

\begin{subcase*}[$\alpha^3,\gamma^4,\beta^2\delta^2$] We may further assume that $\alpha^2\beta^2, \alpha^2\delta^2, \beta^2\gamma^2,\gamma^2\delta^2$ are not a vertex. The angle formulae are 
\begin{align*}
f=24, \qquad
\alpha=\frac{2\pi}{3}, \qquad 
\beta + \delta =\pi, \qquad 
\gamma=\frac{\pi}{2},
\end{align*}
then the quadrilateral is convex. By Lemma \ref{LunEstLem}, $\gamma + \beta < \pi + \delta$ implies $\beta - \delta < \pi - \gamma$. Then by $\beta + \delta =\pi$ and $\gamma=\frac{\pi}{2}$, we get $2\delta > \gamma$ which gives $\delta > \frac{\pi}{4}, \,\, \beta < \frac{3\pi}{4}$. By Lemma \ref{AngLBLem} and $f=24$, we get $\beta > \frac{2\pi}{f} = \frac{\pi}{12}$. So the angle lower bounds are $\alpha=\frac{2\pi}{3}$, $\beta >  \frac{\pi}{12}$, $\gamma=\frac{\pi}{2}$, $\delta > \frac{\pi}{4}$. Then by Lemma \ref{ForbVerLem} and the assumption, the other vertices are 
\begin{align*}
\hat{\beta}\cdots &= \alpha\delta^4,   \delta^6,  \delta^8,   \gamma^2\delta^4, \\
\beta\cdots &= \alpha\beta^b,   \alpha^2\beta^b,    \alpha\beta^b\gamma^2,   \beta^b,   \beta^b\gamma^2,  \beta^b\gamma^c\delta^d,
\end{align*}
where $\beta^b\gamma^c\delta^d$ has $b,c,d$ are all even or all odd. Note that $\delta^8$ or $\gamma^2\delta^4$ implies $\delta = \frac{\pi}{4}$, so they cannot be vertices. Meanwhile, in $\alpha^3, \gamma^4, \beta^2\delta^2$, we have $\# \beta = \# \delta$. When there is a vertex with more $\beta$ than $\delta$, then there must be another vertex with more $\delta$ than $\beta$, and vice versa. 

When $\alpha\delta^4$ or $\delta^6$ is a vertex, the angle formulae are
\begin{align*}
f=24, \qquad
\alpha=\frac{2\pi}{3}, \qquad 
\beta=\frac{2\pi}{3}, \qquad 
\gamma=\frac{\pi}{2}, \qquad 
\delta=\frac{\pi}{3}.
\end{align*}
Then by Lemma \ref{ForbVerLem} and the assumption, the other vertices can only be $\delta^6$ or $\alpha\delta^4$. However, in $\{ \alpha^3, \gamma^4, \beta^2\delta^2, \alpha\delta^4, \delta^6 \}$ we get $\# \beta < \# \delta$ if any of $\alpha\delta^4, \delta^6$ is a vertex, a contradiction. So $\alpha\delta^4$ and $\delta^6$ cannot be vertices.

Then $\beta^b\gamma^c\delta^d$ is the only vertex which may have more $\delta$ than $\beta$. When $\beta^b\gamma^c\delta^d$ is a vertex, by $\beta + 2\gamma + \delta = 2\pi$ and $b,c,d$ being all even or all odd, we have $\beta^b\gamma^c\delta^d=\beta^b\gamma\delta^d$. Assume that $\beta^b\gamma\delta^d$ has more $\delta$ than $\beta$. By $\beta + \delta = \pi$, we know that $b,d$ cannot be both $\ge2$, then $b=1$ and such $\beta^b\gamma\delta^d = \beta\gamma\delta^d$ where $d=3,5,7$. By $\gamma=\frac{\pi}{2}$, $\beta + \delta = \pi$ and the vertex angle sum, we have $\frac{\pi}{2} = (d-1)\delta >  (d-1)\frac{\pi}{4}$, which implies $d < 3$. So $\beta\gamma\delta^d$ cannot be a vertex and hence no $\beta^b\gamma^c\delta^d$ with more $\delta$ than $\beta$. Since there is no vertex with more $\delta$ than $\beta$, every vertex has equal number of $\beta$ and $\delta$ and such that $\alpha\beta^b, \alpha^2\beta^b, \alpha\beta^b\gamma^2, \beta^b, \beta^b\gamma^2$ cannot be vertices. So $\beta\cdots = \beta^2\delta^2$ and we have 
\begin{align}\label{a3avcf24}
f=24, \quad \AVC = \{ \alpha^3,  \gamma^4, \beta^2\delta^2 \}.
\end{align}
The tiling is uniquely constructed in Figure \ref{Tf24CubeSD}. 
\end{subcase*}

\end{case*}

\begin{case*}[$\alpha^3,\beta^2\gamma^2$] We may assume $\beta^4,\gamma^4,\delta^4$ are not vertices. Moreover, when $\alpha^2\delta^2$ is a vertex, there is no solutions. So $\alpha^2\delta^2$ cannot be a vertex. By Lemma \ref{a3Lem}, either we have one of $\alpha^2\beta^2,\beta^2\delta^2,\gamma^2\delta^2$ as a vertex or there is a degree $5$ vertex without $\alpha,\beta$. For the latter, however, there is no such degree $5$ vertex in \eqref{a3avc5}. So one of $\alpha^2\beta^2, \beta^2\delta^2, \gamma^2\delta^2$ must appear as a vertex. 

\begin{subcase*}[$\alpha^3,\beta^2\gamma^2,\alpha^2\beta^2$] The angle formulae are
\begin{align*}
\alpha=\frac{2\pi}{3}, \qquad 
\beta=\frac{\pi}{3}, \qquad 
\gamma=\frac{2\pi}{3}, \qquad 
\delta=\frac{\pi}{3}+\frac{4\pi}{f}.
\end{align*}
The angle lower bounds are $\alpha=\frac{2\pi}{3}$, $\beta=\frac{\pi}{3}$, $\gamma=\frac{2\pi}{3}$, $\delta>\frac{\pi}{3}$. By $f \ge 24$, $\beta^2\delta^2$ which implies $f=12$ is not a vertex. Then by Lemma \ref{ForbVerLem} and the assumption, the other vertices are $\alpha\beta^4,\beta^6$. However, there is no $\delta\cdots$, a contradiction. So $\alpha^2\beta^2$ cannot be a vertex. 
\end{subcase*}

\begin{subcase*}[$\alpha^3,\beta^2\gamma^2,\beta^2\delta^2$] We may further assume that $\alpha^2\beta^2$ is not a vertex. Then we either have another degree $4$ vertex without $\beta^2\cdots$, that is, $\gamma^2\delta^2$, or by Lemma \ref{a3Lem} there is a degree $5$ vertex without $\alpha$, $\beta$. When $\gamma^2\delta^2$ is also a vertex, we get $\beta = \gamma = \delta = \frac{\pi}{2}$, contradicting Lemma \ref{AngEqEdgeEqLem}. So there must be a degree $5$ vertex without $\alpha$, $\beta$, which is however impossible by \eqref{a3avc5}. So $\beta^2\delta^2$ cannot be a vertex.
\end{subcase*}

\begin{subcase*}[$\alpha^3,\beta^2\gamma^2,\gamma^2\delta^2$] We may further assume that $\alpha^2\beta^2$, $\beta^2\delta^2$ are not vertices. Then we either have another degree $4$ vertex without $\gamma^2\cdots$ or by Lemma \ref{a3Lem} there is a degree $5$ vertex without $\alpha,\gamma$. By \eqref{a3avc4} and the assumption, there is no such degree $4$ vertex, and by \eqref{a3avc5} there is no such degree $5$ vertex either. So $\gamma^2\delta^2$ cannot be a vertex.
\end{subcase*}
\end{case*}
Therefore we complete the proof.
\end{proof}

\begin{proposition}\label{pqEMTProp} When $\alpha\beta^2$ is the only degree $3$ vertex and $\gamma^2\delta^2$ is not a vertex, then the $\AVC$ is given in \eqref{ab2avcf16} and \eqref{ab2avcf}.
\end{proposition}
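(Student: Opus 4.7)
The plan is to reduce the problem via the preceding lemmas to a short case-split on the degree $4$ vertex forced by Lemma \ref{ab2Lem}, eliminate all but one case, then enumerate the complete $\AVC$ from the explicit angle formulae.

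First, since $\alpha\beta^2$ is a vertex, Lemma \ref{albe2Lem} gives $\alpha^a\beta^b = \alpha\beta^2$, excludes $\alpha\delta^2$, $\alpha^a\beta^b\gamma^c$, $\alpha^a\beta^b\delta^d$, $\alpha^a\gamma^c\delta^d$, and forces some degree $\ge 4$ vertex of the form $\alpha^a\delta^d$. Because $\alpha\beta^2$ is the only degree $3$ vertex, $\alpha^3$ and $\beta\gamma\delta$ are also ruled out. Applying Lemma \ref{ab2Lem}(iii) with $\theta = \beta$ and $\varphi = \alpha$ yields a degree $4$ vertex without $\beta$; after removing the hypothesis-excluded $\gamma^2\delta^2$, the Corollary to Lemma \ref{ForbVerLem} leaves the candidates $\alpha^4$, $\gamma^4$, $\delta^4$, $\alpha^2\delta^2$. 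Since Lemma \ref{ab2Lem}(ii) additionally forces $f \ge 16$, the minimal value $f = 16$ is what will distinguish the two forms of the $\AVC$.

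For each candidate I couple its angle equation with $\alpha + 2\beta = 2\pi$ (from $\alpha\beta^2$) and the quadrilateral angle sum \eqref{QuadSum} to extract explicit angle values, then search for a compatible $\alpha^a\delta^d$ vertex subject to Lemma \ref{PaLem} (forcing $d$ even), Lemma \ref{AngLBLem}, and Lemma \ref{AngEqEdgeEqLem}. I expect $\alpha^4$, $\delta^4$, and $\gamma^4$ without $\alpha^2\delta^2$ each to be eliminated: the candidate $\alpha^a\delta^d$ is squeezed into angle values that either violate the convex angle lower bound, violate parity, or produce $\beta = \delta$ contradicting Lemma \ref{AngEqEdgeEqLem}. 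The surviving scenario is $\alpha^2\delta^2$ playing the role of the forced $\alpha^a\delta^d$; an additional counting or vertex-existence step (using that $\gamma$ must still appear at some vertex and the available $\gamma$-vertices are highly constrained) forces $\gamma^4$ to coexist, yielding
\begin{align*}
\alpha = \pi - \frac{8\pi}{f}, \qquad \beta = \frac{\pi}{2} + \frac{4\pi}{f}, \qquad \gamma = \frac{\pi}{2}, \qquad \delta = \frac{8\pi}{f}.
\end{align*}

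With these explicit angles I enumerate every vertex allowed by Lemma \ref{ForbVerLem} together with the earlier exclusions. Beyond $\alpha\beta^2$, $\alpha^2\delta^2$, $\gamma^4$, the only remaining shapes are $\alpha^a\delta^d$, $\beta^b\delta^d$, and $\delta^d$; their angle-sum equations uniquely fix the exponents as $\alpha\delta^{(f+8)/8}$, $\beta^2\delta^{(f-8)/8}$, $\delta^{f/4}$. For $f = 16$ the exponent $(f-8)/8 = 1$ in $\beta^2\delta^{(f-8)/8}$ is odd and forbidden by Lemma \ref{PaLem} on a $\hat{\gamma}$-vertex, so this vertex drops out and a short finite check produces the reduced $\AVC$ \eqref{ab2avcf16}; for $f > 16$ the generic $\AVC$ \eqref{ab2avcf} results (with individual optional vertices dropping whenever their exponent fails parity, as the $\AVC$ lists only \emph{possible} vertices). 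The main obstacle will be the subcase elimination in the third paragraph: each of $\alpha^4$, $\delta^4$, and $\gamma^4$ without $\alpha^2\delta^2$ initially admits several consistent angle assignments, and discarding them cleanly requires simultaneously coordinating the parity constraint of Lemma \ref{PaLem}, the forced $\alpha^a\delta^d$ vertex from Lemma \ref{albe2Lem}, and the isosceles obstruction of Lemma \ref{AngEqEdgeEqLem}.
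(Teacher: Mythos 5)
Your overall skeleton (invoke Lemma \ref{albe2Lem}, force a degree $4$ vertex, extract angle formulae, enumerate) matches the paper, but your elimination strategy has a genuine gap: the cases you need to discard cannot all be killed by parity, angle lower bounds, and Lemma \ref{AngEqEdgeEqLem} alone. The clearest counterexample is your case $\alpha^4$: at $f=16$ the assignment $\alpha=\gamma=\delta=\frac{\pi}{2}$, $\beta=\frac{3\pi}{4}$ satisfies the quadrilateral angle sum, every parity condition, every angle bound, and has $\beta\neq\delta$, so $\alpha^4$ survives all three of your filters. The paper excludes $\alpha^a$ ($a\ge3$) only by an AAD argument: since $\beta\,\vert\,\beta\cdots$ and $\beta\,\vert\,\delta\cdots$ are not vertices, the arrangement $\alpha\vert\alpha\vert\alpha$ is impossible, hence $\alpha^a$ cannot occur and $\alpha^a\delta^2=\alpha^2\delta^2$. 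Without some such adjacency argument your case split does not close.

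The same problem recurs in your final enumeration. After fixing $\alpha=\pi-\frac{8\pi}{f}$, $\beta=\frac{\pi}{2}+\frac{4\pi}{f}$, $\gamma=\frac{\pi}{2}$, $\delta=\frac{8\pi}{f}$, it is not true that the only remaining shapes are $\alpha^a\delta^d$, $\beta^b\delta^d$, $\delta^d$: the vertices $\beta\gamma\delta^d$ (angle sum $2\pi$ exactly when $f=8d+4$, with $b=c=1$ and $d$ odd, so parity-legal) and $\gamma^2\delta^d$ (when $f=8d$, $d$ even) both pass Lemma \ref{ForbVerLem} and all numerical tests, and are not among the vertex types excluded by Lemma \ref{albe2Lem} since they contain no $\alpha$. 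The paper eliminates them only by the explicit tile-propagation arguments of Figure \ref{albe2-begade3-ga2ded} (the AAD of $\beta\gamma\delta^d$ forces an $\alpha\gamma\delta\cdots$ vertex; the AAD of $\gamma^2\delta^d$ forces $\alpha^2\beta\cdots$ or $\alpha^3\cdots$). Similarly, your $\delta^4$-without-$\gamma^4$ branch must account for $\gamma$ appearing at $\beta^2\gamma^2$, $\beta\gamma^3\delta$ or $\gamma^6$, the last of which the paper rules out only via the long AAD contradiction of Figure \ref{ga6AAD}. So the proposal identifies the right destination but omits the adjacent-angle-deduction machinery that is indispensable for reaching it.
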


\begin{proof} By Lemma \ref{albe2Lem}, $\alpha\beta\cdots = \alpha\beta^2$ and some degree $\ge4$ $\alpha^a\delta^d$ is a vertex and $\alpha\delta^2$, $\alpha^a\gamma^c\delta^d$, $\alpha^a\beta^b\gamma^c$, $\alpha^a\beta^b\delta^d$ cannot be a vertex. The corresponding degree $4,5$ vertices are
\begin{align}
\label{ab2avc4} \AVC_4 &= \{ \alpha^4,  \beta^4,  \gamma^4,  \delta^4, \alpha^2\delta^2,  \beta^2\gamma^2, \beta^2\delta^2 \}, \\
\label{ab2avc5} \AVC_5 &= \{  \alpha^5,  \alpha\delta^4,  \alpha^3\delta^2,  \beta^3\gamma\delta,  \beta\gamma^3\delta,  \beta\gamma\delta^3 \}.
\end{align}
Moreover, as $\gamma$, $\delta$ do not appear in degree $3$ vertices and every degree $5$ vertex has $\alpha$ or $\beta$, Lemma \ref{TwoHDAng} implies that one of $\gamma^4,\delta^4$ from \eqref{ab2avc4} must appear.

\begin{case*}[$\alpha\beta^2, \gamma^4$] By Lemma \ref{ForbVerLem}, degree $5$ $\delta^3\cdots = \alpha\delta^4, \beta\gamma\delta^3$, degree $6$ $\delta^5\cdots = \delta^6$, and $\delta^7$ cannot be a vertex. Apply Lemma \ref{ab2Lem} on $\delta$, one of $\delta^4, \alpha^2\delta^2, \beta^2\delta^2$, $ \alpha\delta^4, \beta\gamma\delta^3, \delta^6$ must be a vertex. 

\begin{subcase*}[$\alpha\beta^2,  \gamma^4, \delta^4$]The angle formulae are
\begin{align*}
\alpha=\frac{8\pi}{f}, \qquad 
\beta=\pi - \frac{4\pi}{f}, \qquad 
\gamma=\frac{\pi}{2}, \qquad 
\delta=\frac{\pi}{2}.
\end{align*}

From the angle formulae, by $f\ge16$, the angle inequalities are $\beta > \gamma = \delta \ge \alpha$ and the angle lower bounds are $\alpha = \frac{8\pi}{f}$, $\beta \ge \frac{3\pi}{4}$, $\gamma=\frac{\pi}{2}$, $\delta=\frac{\pi}{2}$. By the angle formulae, the angle lower bounds and Lemma \ref{ForbVerLem} and the assumption,
\begin{align*}
\hat{\alpha}\cdots &= \gamma^4,   \delta^4. \\
\alpha\cdots &= \alpha\beta^2,  \alpha^a,  \alpha^a\delta^2.
\end{align*}
So we have $\{ \alpha\beta^2, \gamma^4, \delta^4,  \alpha^a, \alpha^a\delta^2  \}$. Since $\beta\delta\cdots$ and $\alpha^{\beta}\vert^{\beta}\alpha\cdots$ are not vertices, there is no $\alpha\vert\alpha\vert\alpha$ at a vertex and hence $\alpha^a\delta^2= \alpha^2\delta^2$ and $\alpha^a$ cannot be a vertex. Then we have
\begin{align}\label{ab2avcf16}
f=16: \quad \AVC = \{ \alpha\beta^2,   \alpha^2\delta^2,  \gamma^4,  \delta^4  \}.
\end{align}
The tiling for the above is uniquely constructed in Figure \ref{pqEMT1624}.

Next we assume that $\delta^4$ is not a vertex, then in the unique AAD of $\alpha^2\delta^2$, we have $\alpha^{\delta} \vert ^{\delta} \alpha$ which implies $\cdots \, \bvert \delta \vert \delta \, \bvert \cdots$ must be a vertex. However in the $\AVC$, $\cdots \, \bvert \delta \vert \delta \, \bvert \cdots$ can only be $\delta^4$, contradicting $\delta^4$ not being a vertex. Hence Figure \ref{pqEMT1624} is the only tiling for $f=16$. 
\end{subcase*}

\begin{subcase*}[$\alpha\beta^2,\gamma^4,\alpha^2\delta^2$]We may further assume no $\delta^4$. The angle formulae are
\begin{align*}
\alpha=\pi - \frac{8\pi}{f}, \qquad 
\beta=\frac{\pi}{2}+\frac{4\pi}{f}, \qquad 
\gamma=\frac{\pi}{2}, \qquad 
\delta=\frac{8\pi}{f}.
\end{align*}

From the angle formulae, when $f=16$, we get $\alpha = \gamma = \delta = \frac{\pi}{2}$ and $\beta = \frac{3\pi}{4}$ and hence a subset of \eqref{ab2avcf16}. It suffices to discuss $f>16$. 

For $f>16$, the angle inequalities are $2\gamma > \alpha, \beta > \gamma > \delta$. The angle lower bounds are $\alpha \ge \frac{\pi}{2}$, $\beta > \frac{\pi}{2}$, $\gamma = \frac{\pi}{2}$, $\delta > 0$. By the angle formulae, the angle lower bounds and Lemma \ref{ForbVerLem}, 
\begin{align*}
\hat{\delta}\cdots &= \gamma^4,   \alpha\beta^2; \\
\delta\cdots &= \alpha^2\delta^2,   \alpha\delta^d,   \beta^2\delta^d,   \beta\gamma\delta^d,   \gamma^2\delta^d,   \delta^d.
\end{align*}
So the other vertices can only be $\alpha\delta^d, \beta^2\delta^d,  \beta\gamma\delta^d,  \gamma^2\delta^d,  \delta^d$.

\begin{figure}[htp]
\centering
\begin{tikzpicture}[>=latex,scale=1]

\draw[]
	(0,1.2) -- (1.2,1.2) %T1
	(0,0) -- (0,1.2) %T1-2
	(0,1.2) -- (-1.2,1.2) %T3
	(0,-1.2) -- (1.2,-1.2)-- (1.2,0) %T2
	(1.2,1.2) -- (1.2,2.4) %T4	
	(1.2, 0) -- (2.4,0) -- (2.4, 1.2); %T5

\draw[double, line width=0.6]
	(0,0) -- (1.2, 0) %T1
	(-1.2,0) -- (-1.2,1.2) %T3
	(0,2.4) -- (1.2,2.4) %T4
	(1.2,1.2) -- (2.4, 1.2); %T5

\draw[line width=2]
	(1.2,0) -- (1.2,1.2) %T1
	(0,0) -- (0,-1.2) %T2
	(0,0) -- (-1.2,0) %T3
	(0,1.2) -- (0,2.4); %T4 

\node at (0.2,1) {\small $\alpha$};%T1
\node at (0.2,0.2) {\small $\beta$}; 
\node at (1,1) {\small $\delta$}; 
\node at (1,0.2) {\small $\gamma$};

\node at (0.2,-0.25) {\small $\gamma$}; %T2
\node at (0.98,-0.3) {\small $\beta$};

\node at (-0.35,-0.25) {\small $\ddots$};

\node at (-0.2,0.25) {\small $\delta$}; %T3
\node at (-0.2,1) {\small $\alpha$};

\node at (1, 1.42) {\small $\alpha$}; %T4
\node at (0.2,1.45) {\small $\delta$};

\node at (1.4, 0.2) {\small $\delta$}; %T5
\node at (1.4,0.95) {\small $\gamma$};

\node[inner sep=1,draw,shape=circle] at (0.6,0.6) {\small $1$};
\node[inner sep=1,draw,shape=circle] at (0.6,-0.6) {\small $2$};
\node[inner sep=1,draw,shape=circle] at (-0.6,0.6) {\small $3$};
\node[inner sep=1,draw,shape=circle] at (0.6,1.8) {\small $4$};
\node[inner sep=1,draw,shape=circle] at (1.8,0.6) {\small $5$};

\begin{scope}[xshift=6.5cm, yshift=0cm]

\draw[]
	(0,0) -- (-1.2,0) -- (-1.2,1.2) %T1
	(0,0) -- (1.2, 0) -- (1.2,1.2) %T2
	(0.5,2.4) -- (1.7,2.4) %T3
	(0,1.2) -- (0.5,2.4) %T3...4
	(0,1.2) -- (-0.5,2.4) %T4...3
	(-0.5,2.4) -- (-1.7,2.4) %T4
	(-1.2, 0) -- (-2.4,0) -- (-2.4, 1.2) %T5
	(1.2, 0) -- (2.4,0) -- (2.4, 1.2); %T6

\draw[double, line width=0.6]
	(0,0) -- (0,1.2) %T1-2
	(1.2,1.2) -- (1.7,2.4) %T3	
	(-1.2,1.2) -- (-1.7,2.4); %T4

\draw[line width=2]
	(0,1.2) -- (-1.2,1.2) %T1
	(0,1.2) -- (1.2,1.2) %T2-3
	(-1.2,1.2) -- (-2.4, 1.2) %T5
	(1.2,1.2) -- (2.4, 1.2); %T6

\node at (-0.2,0.2) {\small $\beta$}; %T1
\node at (-0.2,0.95) {\small $\gamma$};
\node at (-1,0.95) {\small $\delta$};
\node at (-1,0.2) {\small $\alpha$};

\node at (0.2,0.95) {\small $\gamma$};%T2
\node at (0.2,0.2) {\small $\beta$}; 
\node at (1,0.95) {\small $\delta$}; 
\node at (1,0.2) {\small $\alpha$};

\node at (1.1, 1.42) {\small $\gamma$}; %T3
\node at (0.3,1.45) {\small $\delta$};

\node at (0.04,2) {\small $\cdots$};

\node at (-1.1, 1.42) {\small $\gamma$}; %T4
\node at (-0.3,1.45) {\small $\delta$};

\node at (-1.4, 0.2) {\small $\alpha$}; %T5
\node at (-1.4,0.95) {\small $\delta$};

\node at (-1.6,1.6) {\small $\vdots$};

\node at (1.4, 0.2) {\small $\alpha$}; %T6
\node at (1.4,0.95) {\small $\delta$}; 

\node at (1.6,1.6) {\small $\vdots$};

%\node at (1.55,-0.25) {\small $\cdots$};

\node[inner sep=1,draw,shape=circle] at (0.6,0.6) {\small $2$};
%\node[inner sep=1,draw,shape=circle] at (0.6,-0.6) {\small $2$};
\node[inner sep=1,draw,shape=circle] at (-0.6,0.6) {\small $1$};
\node[inner sep=1,draw,shape=circle] at (0.8,1.8) {\small $3$};
\node[inner sep=1,draw,shape=circle] at (-0.8,1.8) {\small $4$};
\node[inner sep=1,draw,shape=circle] at (-1.8,0.6) {\small $5$};
\node[inner sep=1,draw,shape=circle] at (1.8,0.6) {\small $6$};
\node[inner sep=1,draw,shape=circle] at (-0.6,-0.6) {\small $7$};

\end{scope}

\end{tikzpicture}
\caption{The AAD of $\beta\gamma\delta^d$ and $\gamma^2\delta^d$}
\label{albe2-begade3-ga2ded}
\end{figure}
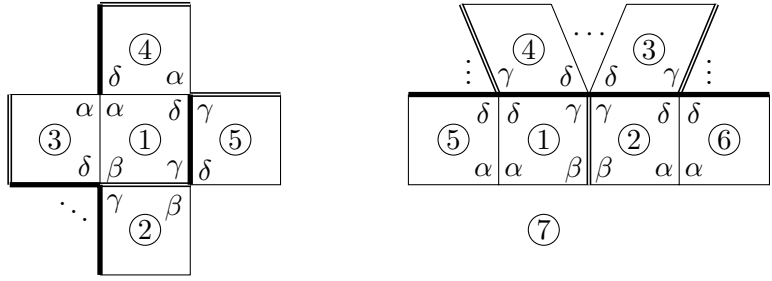

Since $\beta\gamma\delta^d = \vert \beta \Vert \gamma \, \bvert \, \delta \cdots \delta \vert$, by the unique AAD we determine $T_1, T_2, T_3$ in the first picture in Figure \ref{albe2-begade3-ga2ded}. By $\alpha^2\cdots = \alpha^2\delta^2$, we get $\alpha_1\alpha_3\cdots = \alpha^2\delta^2$ so that we further determine $T_4$. By $\beta\gamma\cdots = \beta\gamma\delta^d$, $\beta_2\gamma_1\cdots = \beta\gamma\delta^d$ so that we further determine $T_5$. At $T_4$, we get $\alpha_4\cdots = \alpha\gamma\delta\cdots$, which is not a vertex, a contradiction. So $\beta\gamma\delta^d$ cannot be a vertex. 

Since $\gamma^2\delta^d =$ $\bvert \, \gamma \Vert \gamma \, \bvert \, \delta \cdots \delta$, by the unique AAD we determine $T_1, T_2, T_3, T_4$ in the second picture in Figure \ref{albe2-begade3-ga2ded}. As $\beta\gamma\delta^d$ can no longer be a vertex, we get $\gamma\delta\cdots = \gamma^2\delta^d$ such that we further determine $T_5, T_6$. By $\beta^2\cdots = \alpha\beta^2, \beta^2\delta^d$, we have $\beta_1\beta_2\cdots = \alpha\beta^2, \beta^2\delta^d$. When $\beta_1\beta_2\cdots = \alpha\beta^2$, we determine $T_7$ and the picture is symmetric with respect to the common edge between $T_1, T_2$. Up to symmetry, we have $\alpha_1\alpha_5\cdots = \alpha_1\alpha_5\beta_7\cdots$ which contradicts $\alpha^2\beta\cdots$ not being a vertex. So $\beta_1\beta_2\cdots = \beta^2\delta^d$. Since $\beta^2\delta^d = \vert \beta \Vert \beta \vert \delta \, \bvert \, \cdots \, \bvert \, \delta \vert$, the unique AAD determines $T_7$ (adjacent to $T_1$) and we get $\alpha_1\alpha_5\cdots = \alpha_1\alpha_5\alpha_7\cdots$ which contradicts $\alpha^3\cdots$ not being a vertex. Then $\beta_1\beta_2\cdots$ cannot be a vertex and so $\gamma^2\delta^d$ cannot be a vertex. 

By the angle formulae, we have $\alpha\delta^d = \alpha\delta^{\frac{f+8}{8}}$, $\beta^2\delta^d =\beta^2\delta^{ \frac{f-8}{8}}$ and $\delta^d = \delta^{\frac{f}{4}}$. Hence we get
\begin{align}\label{ab2avcf} 
\AVC = \{ \alpha\beta^2,   \alpha^2\delta^2,  \gamma^4,   \alpha\delta^{\frac{f+8}{8}},   \beta^2\delta^{ \frac{f-8}{8}},  \delta^{\frac{f}{4}} \}.
\end{align}
For the above $\AVC$, the tilings will be constructed in the next section. 
\end{subcase*}

\begin{subcase*}[$\alpha\beta^2, \gamma^4, \beta^2\delta^2$] We may further assume no $\delta^4$, $\alpha^2\delta^2$. The angle formulae are
\begin{align*}
\alpha=\frac{\pi}{2}+\frac{4\pi}{f}, \qquad 
\beta=\frac{3\pi}{4} - \frac{2\pi}{f}, \qquad 
\gamma=\frac{\pi}{2}, \qquad 
\delta=\frac{\pi}{4}+\frac{2\pi}{f}.
\end{align*}

By $f\ge16$, the angle lower bounds are $\alpha > \frac{\pi}{2}$, $\beta \ge \frac{5\pi}{8}$, $\gamma=\frac{\pi}{2}$, $\delta > \frac{\pi}{4}$. Then by $f\ge16$, the angle lower bounds and Lemma \ref{ForbVerLem}, the other vertices are $f=24: \alpha\delta^4, \delta^6$. So we get $\{ \alpha\beta^2,   \gamma^4,   \beta^2\delta^2 \, \vert \, \alpha\delta^4,  \delta^6 \}$. Since $\beta^2\delta^2 = \Vert \beta^{\alpha} \vert^{\alpha} \delta \, \bvert \, \delta^{\alpha} \vert^{\alpha} \beta \Vert$, the AAD implies that $\alpha \vert \alpha \cdots$ has to be a vertex, contradicting $\alpha^2\cdots$ is not a vertex. So $\beta^2\delta^2$ cannot be a vertex. 
\end{subcase*}

\begin{subcase*}[$\alpha\beta^2,\gamma^4,\alpha\delta^4$] We may further assume no $\delta^4$, $\alpha^2\delta^2$, $\beta^2\delta^2$. The angle formulae are
\begin{align*}
\alpha=\frac{16\pi}{f}, \qquad 
\beta=\pi - \frac{8\pi}{f}, \qquad 
\gamma=\frac{\pi}{2}, \qquad 
\delta=\frac{\pi}{2} - \frac{4\pi}{f}.
\end{align*}
And we have $\beta = 2\delta$. By $f\ge16$, the angle lower bounds are $\alpha > 0$, $\beta \ge \frac{\pi}{2}$, $\gamma=\frac{\pi}{2}$, $\delta \ge \frac{\pi}{4}$. In $\alpha\cdots$, by Lemma \ref{ForbVerLem}, $\alpha\cdots = \alpha^a, \alpha^a\beta^b, \alpha^a\beta^b\gamma^c$, $\alpha^a\beta^b\delta^d$, $\alpha\gamma^c\delta^d$, $\alpha^a\delta^d$, where $a > 0$ and $b,c,d$ are even. Then $\alpha^a\beta^b\gamma^c$, $\alpha^a\beta^b\delta^d$, $\alpha\gamma^c\delta^d$ have vertex angle sums $ > 2\pi$ and $\alpha^a\beta^b=\alpha\beta^2$. Meanwhile, as $\alpha^2\delta^2$ is excluded by the assumption, $\alpha^a\delta^d = \alpha^{a\ge3}\delta^2, \alpha\delta^4$. So
\begin{align*}
\alpha\cdots = \alpha\beta^2,   \alpha\delta^4,    \alpha^{a\ge3}\delta^2,   \alpha^{a>3}.
\end{align*}
The $\hat{\alpha}$-vertices other than $\gamma^4$ with corresponding $f$ values are $f = 16: \beta^4, \beta^2\gamma^2, \beta^2\delta^4,  \gamma^2\delta^4, \delta^8$, $f = 20: \beta\gamma\delta^3$, $f = 24: \delta^6$. When the other $\hat{\alpha}$-vertices appear, combining with $f=16, 20, 24$, we get $\alpha = \pi, \frac{4\pi}{5}, \frac{2\pi}{3}$ and $\delta =\frac{\pi}{4}, \frac{3\pi}{10}, \frac{\pi}{3}$ respectively, then $\alpha^{a>3}$ and $\alpha^3\delta^2$ have angle sums $>2\pi$ so they cannot be vertices. Hence $\alpha\cdots = \alpha\beta^2, \alpha\delta^4$. However, the AAD shows that $\alpha\delta^4$ contains $\delta^{\alpha}\vert^{\alpha}\delta$ which implies $\alpha^2\cdots$ being a vertex, a contradiction. So none of the $\hat{\alpha}$-vertices other than $\gamma^4$ can be a vertex and we have $\{ \alpha\beta^2, \gamma^4, \alpha\delta^4,  \alpha^a, \alpha^a\delta^2 \}$. Since $\beta\delta\cdots$, $\alpha^{\beta} \vert^{\beta} \alpha \cdots$ are not vertices, there is no $\alpha\vert\alpha\vert\alpha$ at a vertex. So $\alpha^a$ is not a vertex and $\alpha^a\delta^2 = \alpha^2\delta^2$ which is however not a vertex under our assumption. Meanwhile, $\alpha\delta^4 = \vert \alpha \vert \delta \, \bvert \, \delta \vert \delta \, \bvert \, \delta \vert$ in which $\delta \vert \delta = \delta^{\alpha} \vert^{\alpha} \delta$ implies $\alpha^2\cdots$ has to be a vertex, which is not in $\{ \alpha\beta^2,  \gamma^4, \alpha\delta^4 \}$. Then $\alpha\delta^4$ cannot be a vertex. 
\end{subcase*}

\begin{subcase*}[$ \alpha\beta^2, \gamma^4,\beta\gamma\delta^3$] We may further assume no $\delta^4, \alpha^2\delta^2, \beta^2\delta^2, \alpha\delta^4$. The angle formulae are
\begin{align*}
\alpha=\frac{\pi}{2}+\frac{6\pi}{f}, \qquad 
\beta=\frac{3\pi}{4} - \frac{3\pi}{f}, \qquad 
\gamma=\frac{\pi}{2}, \qquad 
\delta=\frac{\pi}{4}+\frac{\pi}{f}.
\end{align*}
By $f\ge 16$, the angle inequalities are $\alpha > \frac{\pi}{2}$, $\beta \ge \frac{9\pi}{16}$, $\gamma = \frac{\pi}{2}$, $\delta > \frac{\pi}{4}$. By Lemma \ref{ForbVerLem} and the assumption, the other vertices can only be $\delta^6$, which implies $\AVC_4 = \{ \gamma^4 \}$. However, by Lemma \ref{ab2Lem}, there is a degree $5$ vertex with at most one of $\beta, \gamma$, which does not exist under the our assumption, a contradiction. Hence this case has no tilings. 
\end{subcase*}

\begin{subcase*}[$\alpha\beta^2, \gamma^4, \delta^{6}$] We may assume no $\delta^4, \alpha^2\delta^2, \beta^2\delta^2, \alpha\delta^4, \beta\gamma\delta^3$. The angle formulae are
\begin{align*}
\alpha=\frac{\pi}{3}+\frac{8\pi}{f}, \qquad 
\beta=\frac{5\pi}{6} - \frac{4\pi}{f}, \qquad 
\gamma=\frac{\pi}{2}, \qquad 
\delta=\frac{\pi}{3}.
\end{align*}
By Lemma \ref{albe2Lem}, some degree $\ge4$ $\alpha^a\delta^d$ is a vertex. Under our assumption, $\alpha^2\delta^2, \alpha\delta^4$ are not vertices, then by the angle formulae $\alpha^a\delta^d = \alpha^3\delta^2$ must be a vertex. When $\alpha^3\delta^2$ is a vertex, the angle values are 
\begin{align*}
f=72, \qquad
\alpha=\frac{4\pi}{9},\qquad
\beta=\frac{7\pi}{9}, \qquad 
\gamma=\frac{\pi}{2}, \qquad 
\delta=\frac{\pi}{3}.
\end{align*}
Then there is no other vertex. We have $\AVC = \{ \alpha\beta^2, \gamma^4,  \alpha^3\delta^2, \delta^{6} \}$ which implies $\beta^2\cdots = \alpha\beta^2$ and $\beta\delta\cdots$ is not a vertex. By the unique AAD of $\alpha\beta^2$ and $\alpha^{\beta}\vert^{\beta}\alpha\cdots$ is not a vertex. This implies that $\alpha\vert\alpha\vert\alpha \cdots$ cannot be a vertex. However the AAD of $\alpha^3\delta^2$ has to be $\bvert \, \delta \vert \alpha\vert\alpha\vert \alpha \vert \delta \, \bvert$, a contradiction. Then $\alpha^3\delta^2$ cannot be a vertex, a contradiction. Hence this case has no tilings. 
\end{subcase*}
Therefore we complete the discussion of this case.
\end{case*}

\begin{case*}[$\alpha\beta^2, \delta^4$] We may further assume no $\gamma^4$. So \eqref{ab2avc4} becomes $\AVC_4=\{ \alpha^4, \beta^4,  \delta^4, \alpha^2\delta^2,  \beta^2\gamma^2, \beta^2\delta^2 \}$. Apply Lemma \ref{ab2Lem} on $\gamma$, one of $\beta^2\gamma^2, \beta\gamma^3\delta, \gamma^6$ must be a vertex.

\begin{subcase*}[$\alpha\beta^2,\delta^4,\beta^2\gamma^2$] When $\delta^4$ is a vertex, $\alpha\delta^4$ cannot be a vertex. The angle formulae are
\begin{align*}
\alpha=\frac{\pi}{2} + \frac{4\pi}{f}, \qquad 
\beta=\frac{3\pi}{4} - \frac{2\pi}{f}, \qquad 
\gamma=\frac{\pi}{4}+\frac{2\pi}{f}, \qquad 
\delta=\frac{\pi}{2}.
\end{align*}
By $f\ge16$, the angle lower bounds are $\alpha > \frac{\pi}{2}$, $\beta \ge \frac{5\pi}{8}$, $\gamma > \frac{\pi}{4}$, $\delta = \frac{\pi}{2}$. By Lemma \ref{ForbVerLem}, the other vertices can only be $f=24: \gamma^6$.
So we get $\{ \alpha\beta^2,  \beta^2\gamma^2,  \delta^4,  \gamma^6 \}$ in which $\#\alpha < \#\beta$, a contradiction. So $\beta^2\gamma^2$ cannot be a vertex. 
\end{subcase*}

\begin{subcase*}[$\alpha\beta^2,\delta^4,\beta\gamma^3\delta$]We may further assume $\beta^2\gamma^2$ are not vertices. The angle formulae are
\begin{align*}
\alpha=\frac{\pi}{2}+\frac{6\pi}{f}, \qquad 
\beta=\frac{3\pi}{4} - \frac{3\pi}{f}, \qquad 
\gamma=\frac{\pi}{4}+\frac{\pi}{f}, \qquad 
\delta=\frac{\pi}{2}.
\end{align*}
By $f\ge16$, the angle lower bounds are $\alpha > \frac{\pi}{2}$, $\beta \ge \frac{9\pi}{16}$, $\gamma > \frac{\pi}{4}$, $\delta=\frac{\pi}{2}$. By the angle lower bounds and Lemma \ref{ForbVerLem}, there is no other vertex. So we get $\{ \alpha\beta^2, \delta^4,  \beta\gamma^3\delta \}$ in which $\#\alpha < \# \beta$. So $\beta\gamma^3\delta$ cannot be a vertex. 
\end{subcase*}

\begin{subcase*}[$\alpha\beta^2, \delta^4, \gamma^{6}$] We may further assume $\beta^2\gamma^2$, $\beta\gamma^3\delta$ are not vertices. The angle formulae are 
\begin{align*} 
\alpha=\frac{\pi}{3}+\frac{8\pi}{f}, \qquad 
\beta=\frac{5\pi}{6}-\frac{4\pi}{f}, \qquad 
\gamma=\frac{\pi}{3}, \qquad 
\delta=\frac{\pi}{2}.
\end{align*}
Since some degree $\ge4$ $\alpha^a\delta^d$ is a vertex, the angle formulae implies such $\alpha^a\delta^d=\alpha^2\delta^2$ is a vertex. When $\alpha^2\delta^2$ is a vertex, we get
\begin{align*} 
f=48, \qquad
\alpha=\frac{\pi}{2}, \qquad 
\beta=\frac{3\pi}{4}, \qquad 
\gamma=\frac{\pi}{3}, \qquad 
\delta=\frac{\pi}{2}.
\end{align*}
And the other vertex is $\alpha^4$. So we have $\AVC = \{ \alpha\beta^2, \alpha^4, \alpha^2\delta^2, \delta^4, \gamma^{6} \}$ which implies $\beta^2\cdots=\alpha\beta^2$ and $\beta\delta\cdots$ is not a vertex. By the unique AAD of $\alpha\beta^2$, $\beta^{\alpha}\vert^{\alpha}\beta\cdots$ is not a vertex so $\alpha \vert \alpha = \alpha^{\delta} \vert^{\delta}\alpha$ and hence $\alpha \vert \alpha \vert \alpha \cdots$ cannot be a vertex. Then $\alpha^4$ is not a vertex and $\AVC = \{ \alpha\beta^2, \alpha^2\delta^2, \delta^4, \gamma^{6} \}$. By the $\AVC$, we know $\alpha\beta \cdots = \beta^2\cdots = \alpha\beta^2$ and $\alpha^2\cdots = \alpha\delta\cdots = \alpha^2\delta^2$ and $\delta^3\cdots = \delta^4$. Then AAD of $\gamma^6$ in Figure \ref{ga6AAD} uniquely determines $T_1, ..., T_6$. Up to symmetry, we may assume $T_7$ as it is, then we determine $T_8, ..., T_{13}$ starting from $\beta_7$ and $T_{14}, ..., T_{16}$ starting from $\delta_7$. At $T_{17}$, one side of $\alpha_{17}$ there must be $\alpha^2\beta\cdots$ which is not a vertex, a contradiction. Hence this subcase has no tiling. 

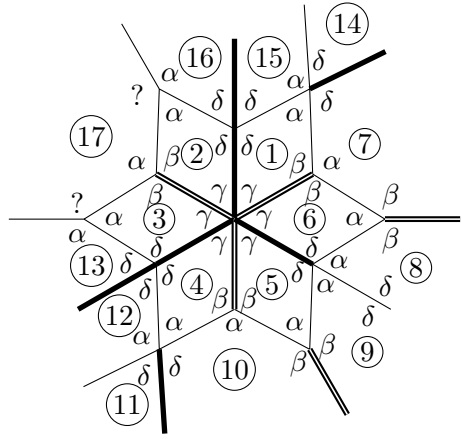
\begin{figure}

\centering
\begin{tikzpicture}[>=latex,scale=1]

\begin{scope}[]

\foreach \a in {0,...,2}{

\draw[rotate=120*\a]
	(30:1.2) -- (60:2)
	(120:2) -- (90:1.2) -- (60:2)
	(150:1.2) -- (120:2);

\draw[rotate=120*\a, double, line width=0.8]
	(30:0) -- (30:1.2);

\draw[rotate=120*\a, line width=2]
	(90:0) -- (90:1.2);

}

\foreach \a in {0,1}{
\draw[rotate=120*\a, line width=2]
	(90:1.2) -- (90:2.4);
}

\foreach \b in {1,2}{
\draw[rotate=60*\b]
	(60:2) -- (60:3);	
}

\foreach \b in {4,5}{
\draw[rotate=60*\b, double, line width=0.8]
	(60:2) -- (60:3);	
}

\draw[]
	(330:1.2) -- (330:2.4)
	(60:2) -- (72:3)
	(240:2) -- (228:3);
	
\draw[line width=2]
	(60:2) -- (48:3)
	(240:2) -- (252:3);

\node at (60:0.4) {\small $\gamma$}; %T1
\node at (40:1.1) {\small $\beta$}; %T1
\node at (80:1) {\small $\delta$}; %T1
\node at (60:1.6) {\small $\alpha$}; %T1

\node at (120:0.4) {\small $\gamma$}; %T2
\node at (100:1) {\small $\delta$}; %T2
\node at (136:1.15) {\small $\beta$}; %T2
\node at (120:1.6) {\small $\alpha$}; %T2

\node at (180:0.4) {\small $\gamma$}; %T3
\node at (164:1.1) {\small $\beta$}; %T3
\node at (200:1.1) {\small $\delta$}; %T3
\node at (180:1.6) {\small $\alpha$}; %T3

\node at (240:0.4) {\small $\gamma$}; %T4
\node at (220:1.15) {\small $\delta$}; %T4
\node at (260:1.1) {\small $\beta$}; %T4
\node at (240:1.6) {\small $\alpha$}; %T4

\node at (300:0.4) {\small $\gamma$}; %T5
\node at (280:1.1) {\small $\beta$}; %T5
\node at (320:1.1) {\small $\delta$}; %T5
\node at (300:1.6) {\small $\alpha$}; %T5

\node at (0:0.4) {\small $\gamma$}; %T6
\node at (340:1.1) {\small $\delta$}; %T6
\node at (18:1.1) {\small $\beta$}; %T6
\node at (0:1.6) {\small $\alpha$}; %T6

\node at (82:1.6) {\small $\delta$}; %T

\node at (62:2.4) {\small $\delta$}; %T
\node at (66:2.0) {\small $\alpha$}; %T

\node at (98:1.6) {\small $\delta$}; %T
\node at (114:2.1) {\small $\alpha$}; %T

\node at (30:1.5) {\small $\alpha$}; %T7
\node at (8:2.1) {\small $\beta$}; %T7
\node at (52:2.0) {\small $\delta$}; %T7

\node at (338:1.5) {\small $\alpha$}; %T
\node at (336:2.2) {\small $\delta$}; %T
\node at (352:2.1) {\small $\beta$}; %T

\node at (324:1.5) {\small $\alpha$}; %T
\node at (324:2.2) {\small $\delta$}; %T
\node at (306:2.1) {\small $\beta$}; %T

\node at (270:1.4) {\small $\alpha$}; %T
\node at (294:2.1) {\small $\beta$}; %T
\node at (248:2.1) {\small $\delta$}; %T

\node at (240:2.4) {\small $\delta$}; %T

\node at (232:2.0) {\small $\alpha$}; %T
\node at (218:1.5) {\small $\delta$}; %T

\node at (202:1.55) {\small $\delta$}; %T
\node at (186:2.1) {\small $\alpha$}; %T

\node at (150:1.5) {\small $\alpha$}; %T
\node at (174:2.1) {\small $?$}; %T
\node at (128:2.1) {\small $?$}; %T

\node[inner sep=1,draw,shape=circle] at (60:1) {\small $1$}; %T1
\node[inner sep=1,draw,shape=circle] at (120:1) {\small $2$}; %T2
\node[inner sep=1,draw,shape=circle] at (180:1) {\small $3$}; %T3
\node[inner sep=1,draw,shape=circle] at (240:1) {\small $4$}; %T4
\node[inner sep=1,draw,shape=circle] at (300:1) {\small $5$}; %T5
\node[inner sep=1,draw,shape=circle] at (360:1) {\small $6$}; %T6

\node[inner sep=1,draw,shape=circle] at (30:2) {\small $7$}; %T7
\node[inner sep=1,draw,shape=circle] at (345:2.5) {\small $8$}; %T8
\node[inner sep=1,draw,shape=circle] at (315:2.5) {\small $9$}; %T9
\node[inner sep=1,draw,shape=circle] at (270:2) {\small $10$}; %T10
\node[inner sep=1,draw,shape=circle] at (240:2.85) {\small $11$}; %T11
\node[inner sep=1,draw,shape=circle] at (220:2.0) {\small $12$}; %T12
\node[inner sep=1,draw,shape=circle] at (198:2.0) {\small $13$}; %T13

\node[inner sep=1,draw,shape=circle] at (60:3) {\small $14$}; %T14
\node[inner sep=1,draw,shape=circle] at (78:2.2) {\small $15$}; %T15
\node[inner sep=1,draw,shape=circle] at (102:2.2) {\small $16$}; %T16

\node[inner sep=1,draw,shape=circle] at (150:2.2) {\small $17$}; %T16

\end{scope}

\end{tikzpicture}
\caption{The AAD of $\gamma^6$.}
\label{ga6AAD}

\end{figure}

\end{subcase*}

\end{case*}
Therefore we complete the proof.
\end{proof}

\section{Tilings}

In the first part of this section, we demonstrate the construction of tilings from the $\AVC$s obtained in the previous section and in the second part we explain how they are related via flip modifications. The idea of flip modification was first introduced by Yan et al. Interested reader may refer to in \cite{wy2} for more examples.

\subsection{Construction} \label{SecConst}

We summarise the $\AVC$s from \eqref{bgdAVC}, \eqref{a3ab2avcf24}, \eqref{a3avcf24}, \eqref{ab2avcf16}, \eqref{ab2avcf}. in the previous section as follows
\begin{enumerate}
\item $\{ \beta\gamma\delta, \alpha^{\frac{f}{2}} \}$,
\item $\{ \alpha\beta^2,  \alpha^2\delta^2,  \gamma^4, \delta^4  \}$,
\item $\{ \alpha^3,  \gamma^4, \beta^2\delta^2 \}.$
\item $\{ \alpha^3, \alpha\beta^2,  \alpha^2\delta^2,  \beta^2\delta^2, \gamma^4,  \alpha\delta^4,  \delta^6 \}$,
\item $\{ \alpha\beta^2, \alpha^2\delta^2, \gamma^4, \alpha\delta^{\frac{f+8}{8}}, \beta^2\delta^{ \frac{f-8}{8}},  \delta^{\frac{f}{4}} \}$.
\end{enumerate}
It is apparent that some of the $\AVC$s is a subset or a special case of some subset of another $\AVC$. The relation between tilings constructed from each of these $\AVC$s will become transparent in the construction below.

\begin{case*}[$\AVC = \{ \beta\gamma\delta, \alpha^{\frac{f}{2}} \}$]

\begin{figure}[htp]
	\centering
	\begin{tikzpicture}[>=latex,scale=0.75]
	\foreach \a in {0,1,2}
	\draw[rotate=120*\a]
	(0,0) -- (-30:1.2)
	(30:2.4) -- (30:3.6);
	%-- (30:4.8) -- (90:2.4);
	%(-30:1.2) -- (30:2.4) -- (90:1.2);
	%(11:3.18) -- (30:2.4) -- (49:3.18);
	
	\foreach \b in {0,1,2}
	\draw[double, line width=0.6, rotate=120*\b]
	(-30:1.2) -- (30:2.4);
	
	\foreach \c in {0,1,2}
	\draw[line width=2, rotate=120*\c]
	(90:1.2) -- (30:2.4);

	%layer 0
	
	%layer 0 (1)
	\node[shift={(30:0.2)}] at (0,0) {\small $\alpha$};
	\node[shift={(150:0.2)}] at (0,0) {\small $\alpha$};
	\node[shift={(270:0.2)}] at (0,0) {\small $\alpha$};
	
	%layer 1
	
	%layer 1 (1)
	\node[shift={(0.05,0.2)}] at (30:2.4) {\small $\delta$};
	%\node[shift={(-0.2,0.2)}] at (30:2.4) {\small $\gamma$};
	\node[shift={(-0.35,-0.25)}] at (30:2.4) {\small $\gamma$};
	\node[shift={(0.06,-0.25)}] at (30:2.4) {\small $\beta$};
	
	%layer 1 (2)
	\node[shift={(0.0,0.2)}] at (90:1.2) {\small $\gamma$};
	%\node[shift={(-0.2,0.2)}] at (90:1.2) {\small $\beta$};
	\node[shift={(-0.2,-0.25)}] at (90:1.2) {\small $\beta$};
	\node[shift={(0.15,-0.25)}] at (90:1.2) {\small $\delta$};
	
	%layer 1 (3)
	\node[shift={(0.1,0.2)}] at (150:2.4) {\small $\beta$};
	%\node[shift={(-0.2,0.15)}] at (150:2.4) {\small $\gamma$};
	\node[shift={(-0.08,-0.25)}] at (150:2.4) {\small $\delta$};
	\node[shift={(0.35,-0.25)}] at (150:2.4) {\small $\gamma$};	
	
	%layer 1 (4)
	\node[shift={(0.05,0.25)}] at (210:1.15) {\small $\delta$};
	\node[shift={(-0.25,-0.05)}] at (210:1.2) {\small $\gamma$};
	%\node[shift={(-0.1,-0.3)}] at (210:1.2) {\small $\beta$};
	\node[shift={(0.28,-0.05)}] at (210:1.2) {\small $\beta$};
	
	%layer 1 (5)
	\node[shift={(0,0.3)}] at (270:2.4) {\small $\gamma$};
	\node[shift={(-0.2,-0.15)}] at (270:2.4) {\small $\beta$};
	%\node[shift={(0,-0.3)}] at (270:2.4) {\small $\gamma$};
	\node[shift={(0.2,-0.15)}] at (270:2.4) {\small $\delta$};
	
	%layer 1 (6)

	\node[shift={(-0.05,0.25)}] at (330:1.2) {\small $\beta$};
	\node[shift={(0.25,-0.05)}] at (330:1.2) {\small $\gamma$};
	\node[shift={(-0.28,-0.1)}] at (330:1.2) {\small $\delta$};

	\node[shift={(-30:2.2)}] at (0,0) {\small $\alpha$};
	\node[shift={(-150:2.2)}] at (0,0) {\small $\alpha$};
	\node[shift={(-270:2.2)}] at (0,0) {\small $\alpha$};

	\node[inner sep=1,draw,shape=circle] at (270:1) {\small $1$}; %T1
	\node[inner sep=1,draw,shape=circle] at (30:1) {\small $2$}; %T2
	\node[inner sep=1,draw,shape=circle] at (150:1) {\small $3$}; %T3
	\node[inner sep=1,draw,shape=circle] at (330:2.2) {\small $4$}; %T4
	\node[inner sep=1,draw,shape=circle] at (90:2.2) {\small $5$}; %T5
	\node[inner sep=1,draw,shape=circle] at (210:2.2) {\small $6$}; %T6
	
	\end{tikzpicture}
	\caption{Tiling of $f=6$, $\AVC = \{ \alpha^3, \beta\gamma\delta \}$}
	\label{Tf6}
\end{figure}

When $f=6$, starting at this vertex the tiling given by the cube is uniquely constructed in Figure \ref{Tf6}. 

We construct the tilings with $\AVC = \{ \beta\gamma\delta, \alpha^a \}$. When $a>3$, Figure \ref{begadeEMT} shows the Earth Map Tilings and their time zones. There are four time zones indicated, each consists of two tiles, namely $(T_1, T_4), (T_2, T_5), (T_3, T_6), (T_7, T_8)$.

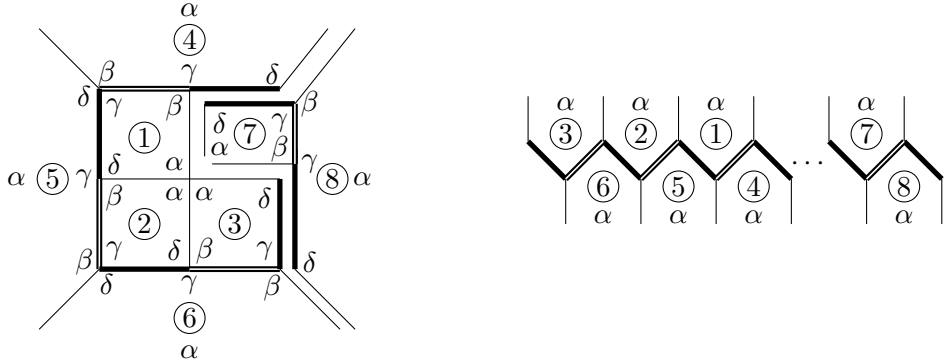
\begin{figure}[htp]
\centering
\begin{tikzpicture}[>=latex,scale=1]

\foreach \a in {0,...,3} %T1 - T3
\draw[rotate=90*\a]
	(0,0) -- (0,1.2);

\foreach \a in {0,...,2} %T1 - T3 
{
\draw[rotate=90*\a, double, line width=0.8]
	(0,1.2) -- (-1.2,1.2); 

\draw[rotate=90*\a, line width=2]
	(-1.2,1.2)-- (-1.2, 0);
}

\foreach \a in {0,...,1} %T1 - T3
\draw[rotate=90*\a]
	(-1.2, 1.2) -- (-2,2);

\foreach \a in {3} %T4
{
\draw
	(0.2,0.3) -- (0.2, 1)	
	(1.4, -1.2) -- (2.2,-2)
	(1.4, 0.2) -- (0.3, 0.2)
	(1.4,1) -- (2.2,2)
	(1.2, 1.2) -- (1.84, 2)
	(1.2,-1.2) -- (2, -2);

\draw[line width=2]
 	(0.2, 1) -- (1.4, 1)
	(1.4, 0.4) -- (1.4, -1.2)
	(0,1.2) -- (1.2,1.2); 

\draw[double, line width=0.8]
	(1.4, 1) -- (1.4, 0.2); 

}

\node at (-0.2,0.2) {\small $\alpha$}; %T1
\node at (-0.2,0.95) {\small $\beta$}; %T1
\node at (-1,0.95) {\small $\gamma$}; %T1
\node at (-1,0.2) {\small $\delta$}; %T1

\node at (-0.2,-0.2) {\small $\alpha$}; %T2
\node at (-1,-0.25) {\small $\beta$}; %T2
\node at (-1,-0.95) {\small $\gamma$}; %T2
\node at (-0.2,-0.95) {\small $\delta$}; %T2

\node at (0.2,-0.2) {\small $\alpha$}; %T3
\node at (0.2,-0.95) {\small $\beta$}; %T3
\node at (1,-0.95) {\small $\gamma$}; %T3
\node at (1,-0.25) {\small $\delta$}; %T3

\node at (0,2.25) {\small $\alpha$}; %T4
\node at (0,1.4) {\small $\gamma$}; %T4
\node at (-1.1,1.4) {\small $\beta$}; %T4
\node at (1.1,1.4) {\small $\delta$}; %T4

\node at (-2.3,0) {\small $\alpha$}; %T5
\node at (-1.4,0) {\small $\gamma$}; %T5
\node at (-1.4,1.1) {\small $\delta$}; %T5
\node at (-1.4,-1.1) {\small $\beta$}; %T5

\node at (0, -2.3) {\small $\alpha$}; %T6
\node at (0, -1.4) {\small $\gamma$}; %T6
\node at (-1.1,-1.4) {\small $\delta$}; %T6
\node at (1.1,-1.45) {\small $\beta$}; %T6

\node at (0.4,0.4) {\small $\alpha$}; %T7
\node at (0.4,0.75) {\small $\delta$}; %T7
\node at (1.2,0.75) {\small $\gamma$}; %T7
\node at (1.2,0.4) {\small $\beta$}; %T7

\node at (2.3,0) {\small $\alpha$}; %T8
\node at (1.6,0.2) {\small $\gamma$}; %T8
\node at (1.6,-1.1) {\small $\delta$}; %T8
\node at (1.6,1) {\small $\beta$}; %T8

\node[inner sep=1,draw,shape=circle] at (-0.6,0.55) {\small $1$}; %T1
\node[inner sep=1,draw,shape=circle] at (-0.6,-0.6) {\small $2$}; %T2
\node[inner sep=1,draw,shape=circle] at (0.6,-0.6) {\small $3$}; %T3

\node[inner sep=1,draw,shape=circle] at (0,1.85) {\small $4$}; %T4
\node[inner sep=1,draw,shape=circle] at (-1.82, 0) {\small $5$}; %T5
\node[inner sep=1,draw,shape=circle] at (0, -1.85) {\small $6$}; %T6
\node[inner sep=1,draw,shape=circle] at (0.8,0.6) {\small $7$}; %T7
\node[inner sep=1,draw,shape=circle] at (1.9, 0) {\small $8$}; %T8

\begin{scope}[xshift=5cm]

\foreach \b in {0,...,3} {
\begin{scope}[xshift=1*\b cm]
\draw[line width=2]
	(0,0) -- (-0.5,0.5);
\draw 
	(-0.5,0.5)-- (-0.5, 1.1)
	(0,0) -- (0,-0.6);

\end{scope}
}

\foreach \b in {0,...,2} {
\begin{scope}[xshift=1*\b cm]
\draw[double, line width=0.8]
	(0,0) -- (0.5,0.5);

\end{scope}
}

%\foreach \d in {4} {
%\begin{scope}[xshift=1*\d cm]
%\draw[dashed]
%	(0,0) -- (-0.5,0.5) -- (-0.5, 1.1)
%	(0,0) -- (0,-0.6);
%	%(0,0) -- (0.5,0.5);
%\end{scope}
%}

\foreach \c in {4,5} {
\begin{scope}[xshift=1*\c cm]
\draw[line width=2]
	(0,0) -- (-0.5,0.5);
\draw 
	(-0.5,0.5) -- (-0.5, 1.1)
	(0,0) -- (0,-0.6);
	%(0,0) -- (0.5,0.5);
\end{scope}
}

\foreach \c in {4} {
\begin{scope}[xshift=1*\c cm]
\draw[double, line width=0.8]
	(0,0) -- (0.5,0.5);
\end{scope}
}

\node at (3.25,0.2) {\small $\cdots$}; %T

\node at (0,1) {\small $\alpha$}; %T
\node at (1,1) {\small $\alpha$}; %T
\node at (2,1) {\small $\alpha$}; %T
\node at (4,1) {\small $\alpha$}; %T

\node at (0.5,-0.5) {\small $\alpha$}; %T
\node at (1.5,-0.5) {\small $\alpha$}; %T
\node at (2.5,-0.5) {\small $\alpha$}; %T
\node at (4.5,-0.5) {\small $\alpha$}; %T

\node[inner sep=1,draw,shape=circle] at (0,0.6) {\small $3$}; %T3
\node[inner sep=1,draw,shape=circle] at (1,0.6) {\small $2$}; %T2
\node[inner sep=1,draw,shape=circle] at (2,0.6) {\small $1$}; %T1

\node[inner sep=1,draw,shape=circle] at (0.5,-0.1) {\small $6$}; %T6
\node[inner sep=1,draw,shape=circle] at (1.5,-0.1) {\small $5$}; %T5
\node[inner sep=1,draw,shape=circle] at (2.5,-0.1) {\small $4$}; %T4

\node[inner sep=1,draw,shape=circle] at (4,0.6) {\small $7$}; %T7
\node[inner sep=1,draw,shape=circle] at (4.5,-0.1) {\small $8$}; %T8

\end{scope}

\end{tikzpicture}
\caption{Earth Map Tilings: $\AVC \equiv \{ \beta\gamma\delta, \alpha^a \}$}
\label{begadeEMT}
\end{figure}

\end{case*}

\begin{case*}[$\AVC = \{ \alpha\beta^2, \alpha^2\delta^2,  \gamma^4,  \delta^4 \}$] From the $\AVC$, we have $\alpha^2\cdots = \alpha^2\delta^2$, $\alpha\beta\cdots = \beta^2\cdots = \alpha\beta^2$, $\gamma^2\cdots = \gamma^4$. By the unique AAD of $\delta^4$, we detemine $T_1, T_2, T_3, T_4$ in Figure \ref{pqEMT1624}. Then by $\alpha^2\cdots = \alpha^2\delta^2$, we get $\alpha_1\alpha_2\cdots = \alpha_3\alpha_4\cdots = \alpha^2\delta^2$. So we determine $T_5, T_6, T_7, T_8$. Meanwhile, by $\alpha\beta\cdots = \alpha\beta^2$, we have $\alpha_5\beta_1\cdots = \alpha_6\beta_2\cdots = \alpha_7\beta_3\cdots = \alpha_8\beta_4\cdots = \alpha\beta^2$, so we determine $T_9, T_{10}, T_{11}, T_{12}$. Then we have $\alpha_9\beta_5\cdots = \alpha_{10}\beta_6 = \alpha_{11}\beta_7\cdots = \alpha_{12}\beta_{8}\cdots = \alpha\beta^2$, so we determine $T_{13}, T_{14}, T_{15}, T_{16}$. Hence we obtain the unique tiling with $\AVC \equiv \{ \alpha\beta^2, \alpha^2\delta^2, \gamma^4, \delta^4  \}$. 

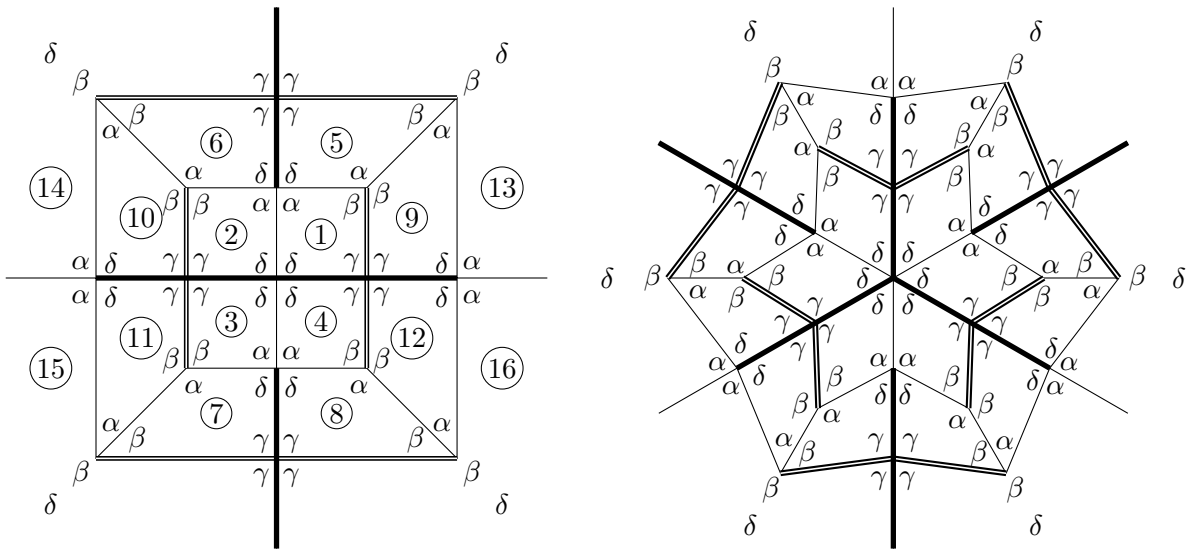
\begin{figure}[htp]
	\centering
	\begin{tikzpicture}[>=latex,scale=1]
	\foreach \a in {0,...,3}
	\draw[rotate=90*\a]
	(0,0) -- (3.6,0)
	(1.2,0) -- (1.2,1.2) -- (0,1.2)
	(2.4,0) -- (2.4,2.4) -- (0,2.4)
	(1.2,1.2) -- (2.4,2.4);
	
	\foreach \b in {0,1}
	\draw[rotate=180*\b, double, line width=0.6]
	(-2.4,2.4) -- (2.4,2.4) 
	(1.2,-1.2) -- (1.2,1.2);

	\foreach \c in {0,1}
	\draw[rotate=180*\c, line width=2]
	(0,0) -- (2.4,0)
	(0,1.2) -- (0,3.6);
	
	%layer 0
	
	%layer 0 (1)
	\node[shift={(0.2,0.2)}] at (0,0) {\small $\delta$};
	\node[shift={(-0.2,0.2)}] at (0,0) {\small $\delta$};
	\node[shift={(-0.2,-0.25)}] at (0,0) {\small $\delta$};
	\node[shift={(0.2,-0.25)}] at (0,0) {\small $\delta$};

	% each layer start from angle 0 and anticlockwise
	
	%layer 1
	
	%layer 1 (1)
	\node[shift={(0.2,0.2)}] at (1.2,0) {\small $\gamma$};
	\node[shift={(-0.2,0.2)}] at (1.2,0) {\small $\gamma$};
	\node[shift={(-0.2,-0.25)}] at (1.2,0) {\small $\gamma$};
	\node[shift={(0.2,-0.25)}] at (1.2,0) {\small $\gamma$};
	
	%layer 1 (2)
	\node[shift={(0.2,-0.1)}] at (1.2,1.2) {\small $\beta$};
	\node[shift={(-0.1,0.2)}] at (1.2,1.2) {\small $\alpha$};
	\node[shift={(-0.2,-0.25)}] at (1.2,1.2) {\small $\beta$};
	
	%layer 1 (3)
	\node[shift={(0.2,0.2)}] at (0,1.2) {\small $\delta$};
	\node[shift={(-0.2,0.2)}] at (0,1.2) {\small $\delta$};
	\node[shift={(-0.2,-0.22)}] at (0,1.2) {\small $\alpha$};
	\node[shift={(0.2,-0.22)}] at (0,1.2) {\small $\alpha$};

	%layer 1 (4)
	\node[shift={(0.2,-0.25)}] at (-1.2,1.2) {\small $\beta$};
	\node[shift={(0.1,0.2)}] at (-1.2,1.2) {\small $\alpha$};
	\node[shift={(-0.2,-0.1)}] at (-1.2,1.1) {\small $\beta$};
	
	%layer 1 (5)
	\node[shift={(0.2,0.2)}] at (-1.2,0) {\small $\gamma$};
	\node[shift={(-0.2,0.2)}] at (-1.2,0) {\small $\gamma$};
	\node[shift={(-0.2,-0.25)}] at (-1.2,0) {\small $\gamma$};
	\node[shift={(0.2,-0.25)}] at (-1.2,0) {\small $\gamma$};

	%layer 1 (6)
	\node[shift={(0.2,0.2)}] at (-1.2,-1.2) {\small $\beta$};
	\node[shift={(-0.2,0.1)}] at (-1.2,-1.2) {\small $\beta$};	
	\node[shift={(0.1,-0.25)}] at (-1.2,-1.2) {\small $\alpha$};

	%layer 1 (7)
	\node[shift={(0.2,0.2)}] at (0,-1.2) {\small $\alpha$};
	\node[shift={(-0.2,0.2)}] at (0,-1.2) {\small $\alpha$};
	\node[shift={(-0.2,-0.25)}] at (0,-1.2) {\small $\delta$};
	\node[shift={(0.2,-0.25)}] at (0,-1.2) {\small $\delta$};
	
	%layer 1 (8)
	\node[shift={(0.2,0.1)}] at (1.2,-1.2) {\small $\beta$};		
	\node[shift={(-0.2,0.2)}] at (1.2,-1.2) {\small $\beta$};
	\node[shift={(-0.1,-0.25)}] at (1.2,-1.2) {\small $\alpha$};

	%layer 2 
	
	%layer 2 (1)
	\node[shift={(0.2,0.2)}] at (2.4,0) {\small $\alpha$};
	\node[shift={(-0.2,0.2)}] at (2.4,0) {\small $\delta$};
	\node[shift={(-0.2,-0.25)}] at (2.4,0) {\small $\delta$};
	\node[shift={(0.2,-0.25)}] at (2.4,0) {\small $\alpha$};
	
	%layer 2 (2)
	\node[shift={(0.2,0.2)}] at (2.4,2.4) {\small $\beta$};
	\node[shift={(-0.55,-0.25)}] at (2.4,2.4) {\small $\beta$};	
	\node[shift={(-0.2,-0.45)}] at (2.4,2.4) {\small $\alpha$};	
	
	%layer 2 (3)
	\node[shift={(0.2,0.2)}] at (0,2.4) {\small $\gamma$};
	\node[shift={(-0.2,0.2)}] at (0,2.4) {\small $\gamma$};
	\node[shift={(-0.2,-0.25)}] at (0,2.4) {\small $\gamma$};
	\node[shift={(0.2,-0.25)}] at (0,2.4) {\small $\gamma$};

	%layer 2 (4)
	\node[shift={(0.55,-0.25)}] at (-2.4,2.4) {\small $\beta$};
	\node[shift={(-0.2,0.2)}] at (-2.4,2.4) {\small $\beta$};
	\node[shift={(0.2,-0.45)}] at (-2.4,2.4) {\small $\alpha$};

	%layer 2 (5)
	\node[shift={(0.2,0.2)}] at (-2.4,0) {\small $\delta$};
	\node[shift={(-0.2,0.2)}] at (-2.4,0) {\small $\alpha$};
	\node[shift={(-0.2,-0.25)}] at (-2.4,0) {\small $\alpha$};
	\node[shift={(0.2,-0.25)}] at (-2.4,0) {\small $\delta$};
	
	%layer 2 (6)
	\node[shift={(0.55,0.25)}] at (-2.4,-2.4) {\small $\beta$};
	\node[shift={(0.2,0.45)}] at (-2.4,-2.4) {\small $\alpha$};	
	\node[shift={(-0.2,-0.2)}] at (-2.4,-2.4) {\small $\beta$};
		
	%layer 2 (7)
	\node[shift={(0.2,0.2)}] at (0,-2.4) {\small $\gamma$};
	\node[shift={(-0.2,0.2)}] at (0,-2.4) {\small $\gamma$};
	\node[shift={(-0.2,-0.25)}] at (0,-2.4) {\small $\gamma$};
	\node[shift={(0.2,-0.25)}] at (0,-2.4) {\small $\gamma$};
	
	%layer 2 (8)
	\node[shift={(0.2,-0.2)}] at (2.4,-2.4) {\small $\beta$};	
	\node[shift={(-0.2,0.45)}] at (2.4,-2.4) {\small $\alpha$};	
	\node[shift={(-0.55,0.25)}] at (2.4,-2.4) {\small $\beta$};

	%layer 3
	
	%layer 3 (1)
	\node[shift={(3,3)}] at (0,0) {\small $\delta$};
	\node[shift={(-3,3)}] at (0,0) {\small $\delta$};
	\node[shift={(-3,-3)}] at (0,0) {\small $\delta$};
	\node[shift={(3,-3)}] at (0,0) {\small $\delta$};

	\node[inner sep=1,draw,shape=circle] at (0.59,0.58) {\small $1$};
	\node[inner sep=1,draw,shape=circle] at (-0.6,0.58) {\small $2$};
	\node[inner sep=1,draw,shape=circle] at (-0.6,-0.58) {\small $3$};
	\node[inner sep=1,draw,shape=circle] at (0.59,-0.58) {\small $4$};

	\node[inner sep=1,draw,shape=circle] at (0.8,1.8) {\small $5$};
	\node[inner sep=1,draw,shape=circle] at (-0.8,1.8) {\small $6$};	
	\node[inner sep=1,draw,shape=circle] at (-0.8,-1.8) {\small $7$};
	\node[inner sep=1,draw,shape=circle] at (0.8,-1.8) {\small $8$};

	\node[inner sep=1,draw,shape=circle] at (1.8,0.8) {\small $9$};
	\node[inner sep=1,draw,shape=circle] at (-1.8,0.8) {\small $10$};
	\node[inner sep=1,draw,shape=circle] at (-1.8,-0.8) {\small $11$};
	\node[inner sep=1,draw,shape=circle] at (1.8,-0.8) {\small $12$};

	\node[inner sep=1,draw,shape=circle] at (3,1.2) {\small $13$};
	\node[inner sep=1,draw,shape=circle] at (-3,1.2) {\small $14$};
	\node[inner sep=1,draw,shape=circle] at (-3,-1.2) {\small $15$};
	\node[inner sep=1,draw,shape=circle] at (3,-1.2) {\small $16$};

\begin{scope}[xshift=8.2cm]

\foreach \a in {0,...,5}{
\draw[rotate=60*\a]
	(90:0) -- (90:1.2) -- (90:2.4)
	(60:2) -- (60:3) %-- (90:2.4) -- (120:3)
	(90:2.4) -- (90: 3.6);

}

\foreach \a in {0,...,2}{

\draw[rotate=120*\a]
	(30:1.2) -- (60:2)
	(150:1.2) -- (120:2)
	(60:3) -- (90:2.4) -- (120:3);

\draw[rotate=120*\a, double, line width=0.8]
	(120:2) -- (90:1.2) -- (60:2)
	(0:3) -- (30:2.4) -- (60:3);

\draw[rotate=120*\a, line width=2]
	(90:0) -- (90:1.2) -- (90:2.4)
	(30:1.2) -- (30:2.4) -- (30:3.6);

}

\node at (60:0.4) {\small $\delta$}; %T1
\node at (40:1.1) {\small $\alpha$}; %T1
\node at (80:1) {\small $\gamma$}; %T1
\node at (58:1.55) {\small $\beta$}; %T1

\node at (120:0.4) {\small $\delta$}; %T2
\node at (100:1) {\small $\gamma$}; %T2
\node at (140:1.1) {\small $\alpha$}; %T2
\node at (122:1.55) {\small $\beta$}; %T2

\node at (180:0.4) {\small $\delta$}; %T3
\node at (160:1.1) {\small $\alpha$}; %T3
\node at (200:1.1) {\small $\gamma$}; %T3
\node at (180:1.55) {\small $\beta$}; %T3

\node at (240:0.4) {\small $\delta$}; %T4
\node at (220:1.15) {\small $\gamma$}; %T4
\node at (260:1.1) {\small $\alpha$}; %T4
\node at (240:1.55) {\small $\beta$}; %T4

\node at (300:0.4) {\small $\delta$}; %T5
\node at (280:1.1) {\small $\alpha$}; %T5
\node at (320:1.1) {\small $\gamma$}; %T5
\node at (300:1.55) {\small $\beta$}; %T5

\node at (0:0.4) {\small $\delta$}; %T6
\node at (340:1.1) {\small $\gamma$}; %T6
\node at (20:1.1) {\small $\alpha$}; %T6
\node at (0:1.55) {\small $\beta$}; %T6

\node at (38:1.55) {\small $\delta$}; %T7
\node at (36:2.2) {\small $\gamma$}; %T7
\node at (54:2.1) {\small $\alpha$}; %T7
\node at (56:2.55) {\small $\beta$}; %T7

\node at (82:1.6) {\small $\gamma$}; %T8
\node at (64:2.1) {\small $\beta$}; %T8
\node at (64:2.6) {\small $\alpha$}; %T8
\node at (84:2.2) {\small $\delta$}; %T8

\node at (98:1.6) {\small $\gamma$}; %T9
\node at (96:2.2) {\small $\delta$}; %T9
\node at (112:2.1) {\small $\beta$}; %T9
\node at (116:2.65) {\small $\alpha$}; %T9

\node at (142:1.6) {\small $\delta$}; %T10
\node at (125:2.1) {\small $\alpha$}; %T10
\node at (125:2.6) {\small $\beta$}; %T10
\node at (144:2.2) {\small $\gamma$}; %T10

\node at (159:1.6) {\small $\delta$}; %T11
\node at (156:2.2) {\small $\gamma$}; %T11
\node at (176:2.1) {\small $\alpha$}; %T11
\node at (176:2.6) {\small $\beta$}; %T11

\node at (203:1.6) {\small $\gamma$}; %T12
\node at (188:2.1) {\small $\beta$}; %T12
\node at (184:2.6) {\small $\alpha$}; %T12
\node at (203:2.2) {\small $\delta$}; %T12

\node at (218:1.6) {\small $\gamma$}; %T13
\node at (216:2.2) {\small $\delta$}; %T13
\node at (234:2.1) {\small $\beta$}; %T13
\node at (236:2.6) {\small $\alpha$}; %T13

\node at (264:1.55) {\small $\delta$}; %T14
\node at (246:2) {\small $\alpha$}; %T14
\node at (244:2.6) {\small $\beta$}; %T14
\node at (264:2.2) {\small $\gamma$}; %T14

\node at (276:1.55) {\small $\delta$}; %T15
\node at (276:2.2) {\small $\gamma$}; %T15
\node at (294:2.1) {\small $\alpha$}; %T15
\node at (296:2.6) {\small $\beta$}; %T15

\node at (322:1.55) {\small $\gamma$}; %T16
\node at (304:2.65) {\small $\alpha$}; %T16
\node at (306:2.1) {\small $\beta$}; %T16
\node at (324:2.3) {\small $\delta$}; %T16

\node at (336:1.55) {\small $\gamma$}; %T17
\node at (336:2.3) {\small $\delta$}; %T17
\node at (352:2) {\small $\beta$}; %T17
\node at (356:2.65) {\small $\alpha$}; %T17

\node at (22:1.55) {\small $\delta$}; %T18
\node at (24:2.2) {\small $\gamma$}; %T18
\node at (4:2.1) {\small $\alpha$}; %T18
\node at (4:2.55) {\small $\beta$}; %T18

\node at (60:3.25) {\small $\beta$}; %T19
\node at (34:2.6) {\small $\gamma$}; %T19
\node at (86:2.6) {\small $\alpha$}; %T19
\node at (60:3.8) {\small $\delta$}; %T19

\node at (94:2.6) {\small $\alpha$}; %T20
\node at (120:3.2) {\small $\beta$}; %T20
\node at (146:2.6) {\small $\gamma$}; %T20
\node at (120:3.8) {\small $\delta$}; %T20

\node at (180:3.2) {\small $\beta$}; %T21
\node at (155:2.65) {\small $\gamma$}; %T21
\node at (180:3.8) {\small $\delta$}; %T21
\node at (206:2.6) {\small $\alpha$}; %T21

\node at (214:2.6) {\small $\alpha$}; %T22
\node at (240:3.25) {\small $\beta$}; %T22
\node at (240:3.8) {\small $\delta$}; %T22
\node at (266:2.7) {\small $\gamma$}; %T22

\node at (274:2.7) {\small $\gamma$}; %T23
\node at (300:3.25) {\small $\beta$}; %T23
\node at (300:3.8) {\small $\delta$}; %T23
\node at (326:2.6) {\small $\alpha$}; %T23

\node at (334:2.6) {\small $\alpha$}; %T24
\node at (0:3.25) {\small $\beta$}; %T24
\node at (0:3.8) {\small $\delta$}; %T24
\node at (24:2.6) {\small $\gamma$}; %T24

\end{scope}

	\end{tikzpicture}
	\caption{$(4,4)$-Earth Map Tiling of $f=16$, $\AVC \equiv \{ \alpha\beta^2, \alpha^2\delta^2, \gamma^4, \delta^{4} \}$ and $(6,4)$-Earth Map Tiling of $f=24$, $\AVC \equiv \{ \alpha\beta^2, \alpha^2\delta^2, \gamma^4, \delta^{6} \}$}
	\label{pqEMT1624}
\end{figure}

In light of the unique construction starting at $\delta^4$ in Figure \ref{pqEMT1624}, we uniquely obtain the tilings of $\AVC \equiv \{ \alpha\beta^2, \alpha^2\delta^2, \gamma^4, \delta^{\frac{f}{4}} \}$. We call such tilings \textit{$(p, q)$-Earth Map Tilings}, where $p=\frac{f}{4}$ is the degree of the polar vertices $\delta^{\frac{f}{4}}$ and $q=4$ is the degree of the adjacent rings of vertices, $\alpha^2\delta^2$ and $\gamma^4$, to the polar vertices, for the reason explained in Figure \ref{pqEMT} (numbering not related). A time zone consists of eight tiles, composed in the shape as indicated in $T_1, \cdots, T_8$ in Figure \ref{pqEMT}. Two time zones combined we get the tiling of $f=16, \AVC \equiv \{ \alpha\beta^2, \alpha^2\delta^2, \gamma^4, \delta^4 \}$ in Figure \ref{pqEMT1624}. Three time zones combined we get the tiling of $f=24, \AVC \equiv \{ \alpha\beta^2, \alpha^2\delta^2, \gamma^4, \delta^6 \}$ in Proposition \ref{Propa3ab2}. In general, tilings of such kind consist of $k=\frac{f}{8}$ time zones for $k$ in Figure \ref{pqEMT}. 

\begin{figure}[htp]
	\centering

%SINGLE LINES

\begin{tikzpicture}[>=latex,scale=1]

\foreach \b in {0,1,2,3,4,6,7,8}{

	\begin{scope}[xshift=1.2*\b cm] 
	\draw[]
	(0,0) -- (0,-1.2)	
	(0.6, -1.2) -- (0.6, -2.4)
	(1.2, -2.4) -- (1.2, -3.6);

	\end{scope}
}

\foreach \b in {0,2,4,6, 8}{

	\begin{scope}[xshift=1.2*\b cm] 
	\draw[]
	(0,-1.2) -- (0.6, -1.2) 
	(0.6, -2.4) -- (1.2,-2.4);
	\end{scope}

}

\foreach \b in {0,2,6}{

	\begin{scope}[xshift=1.2*\b cm] 
	\draw[]
	(1.8, -1.2) -- (2.4, -1.2)
	(1.2, -2.4) -- (1.8, -2.4);
	\end{scope}

}

%DOUBLE LINES

\foreach \b in {0, 2, 6}{

	\begin{scope}[xshift=1.2*\b cm] 
	
	\draw[double, line width=0.8]
	(0.6, -1.2) -- (1.2, -1.2) -- (1.8, -1.2)
	(1.8, -2.4) -- (2.4, -2.4) -- (3.0, -2.4);

	\end{scope}
}

%BOLD LINES
\foreach \b in {0, 2, 6}{

	\begin{scope}[xshift=1.2*\b cm] 
	
	\draw[line width=2]
	(1.2, 0) -- (1.2, -1.2)
	(1.2, -1.2) -- (1.2, -2.4)
	(2.4, -1.2) -- (2.4, -2.4)
	(2.4, -2.4) -- (2.4, -3.6);
	\end{scope}
}

\node at (6.5,-1.8) {\small $\cdots$};

\node at (0.6,0) {\small $\delta$}; %T1
\node at (0.2,-1) {\small $\alpha$}; %T1
\node at (0.6,-1) {\small $\beta$}; %T1
\node at (1,-1) {\small $\gamma$}; %T1

\node at (1.8,0) {\small $\delta$}; %T2
\node at (1.4,-1) {\small $\gamma$}; %T2
\node at (1.8,-1) {\small $\beta$}; %T2
\node at (2.2,-1) {\small $\alpha$}; %T2

\node at (0.75,-2.25) {\small $\alpha$}; %T3
\node at (0.7,-1.45) {\small $\beta$}; %T3
\node at (1,-1.4) {\small $\gamma$}; %T3
\node at (1,-2.2) {\small $\delta$}; %T3

\node at (1.4,-1.4) {\small $\gamma$}; %T4
\node at (1.65,-1.45) {\small $\beta$}; %T4
\node at (1.35,-2.2) {\small $\delta$}; %T4
\node at (1.65,-2.25) {\small $\alpha$}; %T4

\node at (1.95,-1.35) {\small $\alpha$}; %T5
\node at (1.95,-2.2) {\small $\beta$}; %T5
\node at (2.25,-2.2) {\small $\gamma$}; %T5
\node at (2.25,-1.4) {\small $\delta$}; %T5

\node at (2.85,-1.35) {\small $\alpha$}; %T6
\node at (2.85,-2.2) {\small $\beta$}; %T6
\node at (2.55,-2.2) {\small $\gamma$}; %T6
\node at (2.55,-1.4) {\small $\delta$}; %T6

\node at (1.35,-2.55) {\small $\alpha$}; %T7
\node at (1.8,-2.62) {\small $\beta$}; %T7
\node at (2.25,-2.65) {\small $\gamma$}; %T7
\node at (1.8,-3.6) {\small $\delta$}; %T7

\node at (3.45,-2.55) {\small $\alpha$}; %T8
\node at (3,-2.62) {\small $\beta$}; %T8
\node at (2.55,-2.65) {\small $\gamma$}; %T8
\node at (3,-3.6) {\small $\delta$}; %T8

%\node at (3,0) {\small $\delta$}; %T9

%\node at (4.2,0) {\small $\delta$}; %T10

%\node at (7.8,0) {\small $\delta$}; %T
%\node at (9,0) {\small $\delta$}; %T

\node[inner sep=1,draw,shape=circle] at (0.6,-0.5) {\small $1$}; %T1
\node[inner sep=1,draw,shape=circle] at (1.8,-0.5) {\small $2$}; %T2
\node[inner sep=1,draw,shape=circle] at (0.9,-1.8) {\small $3$}; %T3
\node[inner sep=1,draw,shape=circle] at (1.5,-1.8) {\small $4$}; %T4
\node[inner sep=1,draw,shape=circle] at (2.1,-1.8) {\small $5$}; %T5
\node[inner sep=1,draw,shape=circle] at (2.7,-1.8) {\small $6$}; %T6
\node[inner sep=1,draw,shape=circle] at (1.8,-3.05) {\small $7$}; %T7
\node[inner sep=1,draw,shape=circle] at (3,-3.05) {\small $8$}; %T8

\node[inner sep=1,draw,shape=circle] at (3,-0.5) {\small $9$}; %T9
\node[inner sep=0.5,draw,shape=circle] at (4.2,-0.5) {\small $10$}; %T10

\node[inner sep=0.5,draw,shape=circle] at (3.3,-1.8) {\small $11$}; %T11
\node[inner sep=0.5,draw,shape=circle] at (3.9,-1.8) {\small $12$}; %T12
\node[inner sep=0.5,draw,shape=circle] at (4.5,-1.8) {\small $13$}; %T12
\node[inner sep=0.5,draw,shape=circle] at (5.1,-1.8) {\small $14$}; %T12

\node[inner sep=0.5,draw,shape=circle] at (4.2,-3) {\small $15$}; %T15
\node[inner sep=0.5,draw,shape=circle] at (5.4,-3) {\small $16$}; %T16

\node[inner sep=0.5,draw,shape=circle] at (7.8,-0.5) {\small $k_1$}; %T17
\node[inner sep=0.5,draw,shape=circle] at (9.0,-0.5) {\small $k_2$}; %T18
\node[inner sep=0.5,draw,shape=circle] at (8.1,-1.8) {\small $k_3$}; %T19
\node[inner sep=0.5,draw,shape=circle] at (8.7,-1.8) {\small $k_4$}; %T20
\node[inner sep=0.5,draw,shape=circle] at (9.3,-1.8) {\small $k_5$}; %T21
\node[inner sep=0.5,draw,shape=circle] at (9.9,-1.8) {\small $k_6$}; %T22

\node[inner sep=0.5,draw,shape=circle] at (9.0,-3) {\small $k_7$}; %T23
\node[inner sep=0.5,draw,shape=circle] at (10.2,-3) {\small $k_8$}; %T24

\end{tikzpicture}
\caption{$(\frac{f}{4}, 4)$-Earth Map Tiling of $\AVC \equiv \{ \alpha\beta^2, \alpha^2\delta^2, \gamma^4, \delta^{\frac{f}{4}} \}$: Time Zone Decomposition}
\label{pqEMT}
\end{figure}
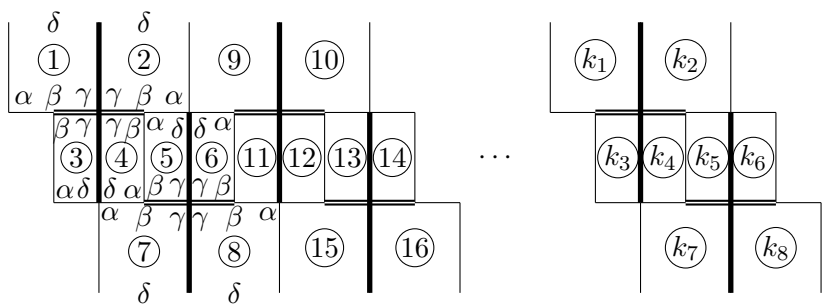

\end{case*}

\begin{case*}[$\AVC = \{\alpha^3, \alpha\beta^2, \alpha^2\delta^2, \beta^2\delta^2, \gamma^4, \alpha\delta^4, \delta^6 \}$] When $\delta^6$ is a vertex, from the previous case we will get the $(6,4)$-earth map tiling of $\AVC \equiv \{  \alpha\beta^2, \alpha^2\delta^2,  \gamma^4, \delta^6 \}$ in Figure \ref{pqEMT1624}. Next we may assume $\delta^6$ is not a vertex. Suppose $\alpha^3$ is a vertex, then Figure \ref{a3AAD} shows the uniquely determined the neighbourhood of $\alpha^3$ up to symmetry. In $T_6, T_7$, by $\alpha^2\cdots = \alpha^3, \alpha^2\delta^2$, there are two choices for $\alpha_6\alpha_7\cdots$. \\

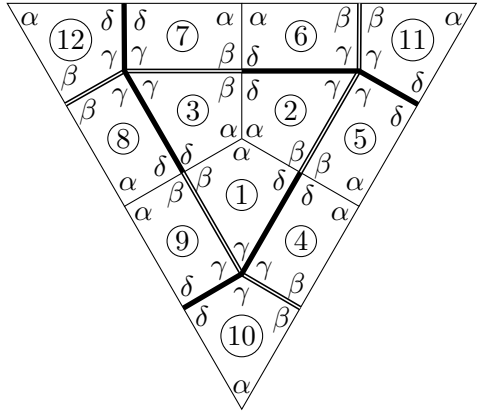
\begin{figure}[htp]
	\centering
	\begin{tikzpicture}[>=latex,scale=0.75]
	\foreach \a in {0,1,2}
	\draw[rotate=120*\a]
	(0,0)-- (-30:2.4) -- (30:4.8) -- (90:2.4)
	(-30:1.2) -- (30:2.4) -- (90:1.2)
	(11:3.18) -- (30:2.4) -- (49:3.18);
	
	\foreach \b in {0,1,2}
	\draw[double, line width=0.6, rotate=120*\b]
	(-30:1.2) -- (30:2.4) -- (49:3.18);
	
	\foreach \c in {0,1,2}
	\draw[line width=2, rotate=120*\c]
	(90:1.2) -- (30:2.4) -- (11:3.18);

	%layer 0
	
	%layer 0 (1)
	\node[shift={(30:0.2)}] at (0,0) {\small $\alpha$};
	\node[shift={(150:0.2)}] at (0,0) {\small $\alpha$};
	\node[shift={(270:0.2)}] at (0,0) {\small $\alpha$};
	
	%layer 1
	
	%layer 1 (1)
	\node[shift={(0.2,0.15)}] at (30:2.4) {\small $\gamma$};
	\node[shift={(-0.2,0.2)}] at (30:2.4) {\small $\gamma$};
	\node[shift={(-0.35,-0.25)}] at (30:2.4) {\small $\gamma$};
	\node[shift={(0.06,-0.35)}] at (30:2.4) {\small $\gamma$};
	
	%layer 1 (2)
	\node[shift={(0.15,0.2)}] at (90:1.2) {\small $\delta$};
	\node[shift={(-0.2,0.2)}] at (90:1.2) {\small $\beta$};
	\node[shift={(-0.2,-0.25)}] at (90:1.2) {\small $\beta$};
	\node[shift={(0.15,-0.25)}] at (90:1.2) {\small $\delta$};
	
	%layer 1 (3)
	\node[shift={(0.2,0.2)}] at (150:2.4) {\small $\gamma$};
	\node[shift={(-0.2,0.15)}] at (150:2.4) {\small $\gamma$};
	\node[shift={(-0.06,-0.35)}] at (150:2.4) {\small $\gamma$};
	\node[shift={(0.35,-0.25)}] at (150:2.4) {\small $\gamma$};	
	
	%layer 1 (4)
	\node[shift={(0.05,0.25)}] at (210:1.2) {\small $\delta$};
	\node[shift={(-0.3,0.09)}] at (210:1.2) {\small $\delta$};
	\node[shift={(-0.1,-0.3)}] at (210:1.2) {\small $\beta$};
	\node[shift={(0.28,-0.1)}] at (210:1.2) {\small $\beta$};
	
	%layer 1 (5)
	\node[shift={(0,0.3)}] at (270:2.4) {\small $\gamma$};
	\node[shift={(-0.3,0.05)}] at (270:2.4) {\small $\gamma$};
	\node[shift={(0,-0.3)}] at (270:2.4) {\small $\gamma$};
	\node[shift={(0.3,0.05)}] at (270:2.4) {\small $\gamma$};
	
	%layer 1 (6)
	\node[shift={(0.3,0.09)}] at (330:1.2) {\small $\beta$};
	\node[shift={(-0.05,0.25)}] at (330:1.2) {\small $\beta$};
	\node[shift={(-0.28,-0.1)}] at (330:1.2) {\small $\delta$};
	\node[shift={(0.1,-0.3)}] at (330:1.2) {\small $\delta$};
	
	% layer 2
	
	% layer 2 (1)
	\node[shift={(0,0.3)}] at (11:3.18) {\small $\delta$};
	\node[shift={(-0.28,-0.15)}] at (11:3.18) {\small $\delta$};
	
	% layer 2 (2)
	\node[shift={(-0.3,-0.2)}] at (30:4.8) {\small $\alpha$};
	
	% layer 2 (3)
	\node[shift={(0.22,-0.22)}] at (49:3.18) {\small $\beta$};
	\node[shift={(-0.22,-0.22)}] at (49:3.18) {\small $\beta$};
	
	% layer 2 (4)
	\node[shift={(0.22,-0.2)}] at (90:2.4) {\small $\alpha$};
	\node[shift={(-0.22,-0.2)}] at (90:2.4) {\small $\alpha$};
	
	% layer 2 (5)
	\node[shift={(0.22,-0.22)}] at (131:3.18) {\small $\delta$};
	\node[shift={(-0.22,-0.22)}] at (131:3.18) {\small $\delta$};
	
	% layer 2 (6)
	\node[shift={(0.3,-0.2)}] at (150:4.8) {\small $\alpha$};
	
	% layer 2 (7)
	\node[shift={(0.05,0.35)}] at (169:3.18) {\small $\beta$};
	\node[shift={(0.3,-0.1)}] at (169:3.18) {\small $\beta$};
	
	% layer 2 (8)
	\node[shift={(0.05,0.3)}] at (210:2.4) {\small $\alpha$};
	\node[shift={(0.25,-0.1)}] at (210:2.4) {\small $\alpha$};
	
	% layer 2 (9)
	\node[shift={(0.05,0.3)}] at (251:3.18) {\small $\delta$};
	\node[shift={(0.25,-0.1)}] at (251:3.18) {\small $\delta$};
	
	% layer 2 (10)
	\node[shift={(0,0.3)}] at (270:4.8) {\small $\alpha$};
	
	% layer 2 (11)
	\node[shift={(-0.05,0.3)}] at (289:3.18) {\small $\beta$};
	\node[shift={(-0.25,-0.15)}] at (289:3.18) {\small $\beta$};
	
	% layer 2 (12)
	\node[shift={(-0.05,0.3)}] at (330:2.4) {\small $\alpha$};
	\node[shift={(-0.25,-0.1)}] at (330:2.4) {\small $\alpha$};

	\node[inner sep=1,draw,shape=circle] at (270:1) {\small $1$}; %T1
	\node[inner sep=1,draw,shape=circle] at (30:1) {\small $2$}; %T2
	\node[inner sep=1,draw,shape=circle] at (150:1) {\small $3$}; %T3
	\node[inner sep=1,draw,shape=circle] at (300:2.1) {\small $4$}; %T4
	\node[inner sep=1,draw,shape=circle] at (0:2.1) {\small $5$}; %T5
	\node[inner sep=1,draw,shape=circle] at (60:2.1) {\small $6$}; %T6
	\node[inner sep=1,draw,shape=circle] at (120:2.1) {\small $7$}; %T7
	\node[inner sep=1,draw,shape=circle] at (180:2.1) {\small $8$}; %T8
	\node[inner sep=1,draw,shape=circle] at (240:2.1) {\small $9$}; %T9
	\node[inner sep=1,draw,shape=circle] at (270:3.5) {\small $10$}; %T10
	\node[inner sep=1,draw,shape=circle] at (30:3.5) {\small $11$}; %T11
	\node[inner sep=1,draw,shape=circle] at (150:3.5) {\small $12$}; %T12

	\end{tikzpicture}
	\caption{AAD of $\alpha^3$.}
	\label{a3AAD}
\end{figure}

When $\alpha_6\alpha_7\cdots = \alpha^3$, then $T_{13}, T_{14}, T_{15}$ in Figure \ref{Tf24a3-1} are determined. If $\beta_{8}\beta_{12}\cdots = \alpha\beta^2$, then either $\alpha_8\alpha_9\cdots = \alpha^2\beta\cdots$ or $\alpha_{12}\alpha_{15}\cdots = \alpha^2\beta\cdots$ which is not a vertex, a contradiction. So $\beta_{8}\beta_{12}\cdots = \beta^2\delta^2$. Then we determine $T_{16}, T_{17}, T_{18}$. By the same token, $\beta_4\beta_{10}\cdots =\beta^2\delta^2$ and we determine $T_{19}, T_{20}, T_{21}$. Lastly, by $\gamma\cdots = \gamma^4$, we get $\gamma_{13}\cdots = \gamma_{16}\cdots = \gamma_{19}\cdots = \gamma^4$ and we determine $T_{22}, T_{23}, T_{24}$ and hence the tiling of $f=24, \AVC \equiv \{\alpha^3, \beta^2\delta^2, \gamma^4 \}$. This tiling is the same as the tiling in Figure \ref{Tf24CubeSD} which will be further explained. 

\begin{figure}[htp]
	\centering
	\begin{tikzpicture}[>=latex,scale=0.85]
	\foreach \a in {0,1,2}
	\draw[rotate=120*\a]
	(0,0)-- (-30:2.4) -- (30:4.8) -- (90:2.4)
	(-30:1.2) -- (30:2.4) -- (90:1.2)
	(11:3.18) -- (30:2.4) -- (49:3.18)
	(49:3.18) -- (90:3.84) -- (131:3.18)
	(30:7.56) -- (90:3.84) -- (150:7.56)
	(30:4.8) -- (30:9);
	
	\foreach \c in {0,1,2}
	\draw[double, line width=0.6, rotate=120*\c]
	(-30:1.2) -- (30:2.4) -- (49:3.18)
	(30:7.56) -- (90:3.84) -- (131:3.18);
	
	\foreach \b in {0,1,2}
	\draw[line width=2, rotate=120*\b]
	(90:1.2) -- (30:2.4) -- (11:3.18)
	(49:3.18)  -- (90:3.84) -- (150:7.56);

	%layer 0
	
	%layer 0 (1)
	\node[shift={(30:0.2)}] at (0,0) {\small $\alpha$};
	\node[shift={(150:0.2)}] at (0,0) {\small $\alpha$};
	\node[shift={(270:0.2)}] at (0,0) {\small $\alpha$};
	
	%layer 1
	
	%layer 1 (1)
	\node[shift={(0.2,0.15)}] at (30:2.4) {\small $\gamma$};
	\node[shift={(-0.2,0.2)}] at (30:2.4) {\small $\gamma$};
	\node[shift={(-0.35,-0.25)}] at (30:2.4) {\small $\gamma$};
	\node[shift={(0.06,-0.35)}] at (30:2.4) {\small $\gamma$};
	
	%layer 1 (2)
	\node[shift={(0.15,0.2)}] at (90:1.2) {\small $\delta$};
	\node[shift={(-0.2,0.2)}] at (90:1.2) {\small $\beta$};
	\node[shift={(-0.2,-0.25)}] at (90:1.2) {\small $\beta$};
	\node[shift={(0.15,-0.25)}] at (90:1.2) {\small $\delta$};
	
	%layer 1 (3)
	\node[shift={(0.2,0.2)}] at (150:2.4) {\small $\gamma$};
	\node[shift={(-0.2,0.15)}] at (150:2.4) {\small $\gamma$};
	\node[shift={(-0.06,-0.35)}] at (150:2.4) {\small $\gamma$};
	\node[shift={(0.35,-0.25)}] at (150:2.4) {\small $\gamma$};	
	
	%layer 1 (4)
	\node[shift={(0.05,0.25)}] at (210:1.2) {\small $\delta$};
	\node[shift={(-0.3,0.09)}] at (210:1.2) {\small $\delta$};
	\node[shift={(-0.1,-0.3)}] at (210:1.2) {\small $\beta$};
	\node[shift={(0.28,-0.1)}] at (210:1.2) {\small $\beta$};
	
	%layer 1 (5)
	\node[shift={(0,0.3)}] at (270:2.4) {\small $\gamma$};
	\node[shift={(-0.3,0.05)}] at (270:2.4) {\small $\gamma$};
	\node[shift={(0,-0.3)}] at (270:2.4) {\small $\gamma$};
	\node[shift={(0.3,0.05)}] at (270:2.4) {\small $\gamma$};
	
	%layer 1 (6)
	\node[shift={(0.3,0.09)}] at (330:1.2) {\small $\beta$};
	\node[shift={(-0.05,0.25)}] at (330:1.2) {\small $\beta$};
	\node[shift={(-0.28,-0.1)}] at (330:1.2) {\small $\delta$};
	\node[shift={(0.1,-0.3)}] at (330:1.2) {\small $\delta$};

	% layer 2
	
	% layer 2 (1)
	\node[shift={(0,0.3)}] at (11:3.18) {\small $\delta$};
	\node[shift={(-0.28,-0.15)}] at (11:3.18) {\small $\delta$};
	\node[shift={(-0.12,-0.5)}] at (11:3.18) {\small $\beta$};
	\node[shift={(0.2,-0.05)}] at (11:3.18) {\small $\beta$};

	% layer 2 (2)
	\node[shift={(-0.3,-0.2)}] at (30:4.8) {\small $\alpha$};
	\node[shift={(105:0.2)}] at (30:4.8) {\small $\alpha$};
	\node[shift={(-45:0.2)}] at (30:4.8) {\small $\alpha$};
	
	% layer 2 (3)
	\node[shift={(0.05,0.22)}] at (49:3.18) {\small $\delta$};
	\node[shift={(-0.65,0.22)}] at (49:3.18) {\small $\delta$};
	\node[shift={(0.22,-0.22)}] at (49:3.18) {\small $\beta$};
	\node[shift={(-0.22,-0.22)}] at (49:3.18) {\small $\beta$};
	
	% layer 2 (4)
	\node[shift={(0.22,-0.2)}] at (90:2.4) {\small $\alpha$};
	\node[shift={(-0.22,-0.2)}] at (90:2.4) {\small $\alpha$};
	\node[shift={(0,0.2)}] at (90:2.4) {\small $\alpha$};
	
	% layer 2 (5)
	\node[shift={(0.65,0.22)}] at (131:3.18) {\small $\beta$};
	\node[shift={(-0.05,0.22)}] at (131:3.18) {\small $\beta$};
	\node[shift={(0.22,-0.22)}] at (131:3.18) {\small $\delta$};
	\node[shift={(-0.22,-0.22)}] at (131:3.18) {\small $\delta$};

	% layer 2 (6)
	\node[shift={(0.3,-0.2)}] at (150:4.8) {\small $\alpha$};
	\node[shift={(75:0.2)}] at (150:4.8) {\small $\alpha$};
	\node[shift={(225:0.2)}] at (150:4.8) {\small $\alpha$};
	
	% layer 2 (7)
	\node[shift={(0.05,0.35)}] at (169:3.18) {\small $\beta$};
	\node[shift={(0.3,-0.1)}] at (169:3.18) {\small $\beta$};
	\node[shift={(0.12,-0.5)}] at (169:3.18) {\small $\delta$};
	\node[shift={(-0.2,-0.05)}] at (169:3.18) {\small $\delta$};
	
	% layer 2 (8)
	\node[shift={(0.05,0.3)}] at (210:2.4) {\small $\alpha$};
	\node[shift={(0.25,-0.1)}] at (210:2.4) {\small $\alpha$};
	\node[shift={(210:0.2)}] at (210:2.4) {\small $\alpha$};
	
	% layer 2 (9)
	\node[shift={(0.05,0.3)}] at (251:3.18) {\small $\delta$};
	\node[shift={(-0.5,0.5)}] at (251:3.18) {\small $\beta$};
	\node[shift={(-0.15,-0.2)}] at (251:3.18) {\small $\beta$};
	\node[shift={(0.25,-0.1)}] at (251:3.18) {\small $\delta$};

	% layer 2 (10)
	\node[shift={(0,0.3)}] at (270:4.8) {\small $\alpha$};
	\node[shift={(195:0.2)}] at (270:4.8) {\small $\alpha$};
	\node[shift={(-5:0.2)}] at (270:4.8) {\small $\alpha$};
	
	% layer 2 (11)
	\node[shift={(0.45,0.45)}] at (289:3.18) {\small $\delta$};
	\node[shift={(-0.05,0.3)}] at (289:3.18) {\small $\beta$};
	\node[shift={(-0.25,-0.15)}] at (289:3.18) {\small $\beta$};
	\node[shift={(0.15,-0.15)}] at (289:3.18) {\small $\delta$};
	
	% layer 2 (12)
	\node[shift={(-0.05,0.3)}] at (330:2.4) {\small $\alpha$};
	\node[shift={(-0.25,-0.1)}] at (330:2.4) {\small $\alpha$};	
	\node[shift={(330:0.2)}] at (330:2.4) {\small $\alpha$};
	
	%layer 3
	
	%layer 3 (1)
	\node[shift={(105:0.25)}] at (30:7.56) {\small $\beta$};
	\node[shift={(-0.6,-0.22)}] at (30:7.56) {\small $\beta$};
	\node[shift={(-45:0.25)}] at (30:7.56) {\small $\delta$};
	\node[shift={(-0.45,-0.45)}] at (30:7.56) {\small $\delta$};

	%layer 3 (2)
	\node[shift={(90:0.25)}] at (90:3.84) {\small $\gamma$};
	\node[shift={(-0.6,-0.22)}] at (90:3.84) {\small $\gamma$};
	\node[shift={(0,-0.3)}] at (90:3.84) {\small $\gamma$};
	\node[shift={(0.6,-0.22)}] at (90:3.84) {\small $\gamma$};

	%layer 3 (3)
	\node[shift={(0.7,-0.2)}] at (150:7.56) {\small $\delta$};
	\node[shift={(75:0.25)}] at (150:7.56) {\small $\delta$};
	\node[shift={(225:0.25)}] at (150:7.56) {\small $\beta$};
	\node[shift={(0.55,-0.55)}] at (150:7.56) {\small $\beta$};
	
	%layer 3 (4)
	\node[shift={(30:0.25)}] at (210:3.84) {\small $\gamma$};
	\node[shift={(-0.15,0.5)}] at (210:3.84) {\small $\gamma$};
	\node[shift={(210:0.25)}] at (210:3.84) {\small $\gamma$};
	\node[shift={(0.45,-0.45)}] at (210:3.84) {\small $\gamma$};

	%layer 3 (5)
	\node[shift={(0.17,0.65)}] at (270:7.56) {\small $\beta$};
	\node[shift={(-0.17,0.65)}] at (270:7.56) {\small $\delta$};
	\node[shift={(195:0.25)}] at (270:7.56) {\small $\delta$};
	\node[shift={(-15:0.25)}] at (270:7.56) {\small $\beta$};

	%layer 3 (6)
	\node[shift={(0.15,0.5)}] at (330:3.84) {\small $\gamma$};
	\node[shift={(330:0.25)}] at (330:3.84) {\small $\gamma$};
	\node[shift={(150:0.25)}] at (330:3.84) {\small $\gamma$};
	\node[shift={(-0.45,-0.45)}] at (330:3.84) {\small $\gamma$};
	
	%layer 4 
	\node[shift={(90:5.1)}] at (0,0) {\small $\alpha$};
	\node[shift={(210:5)}] at (0,0) {\small $\alpha$};
	\node[shift={(330:5)}] at (0,0) {\small $\alpha$};

	\node[inner sep=1,draw,shape=circle] at (270:1) {\small $1$}; %T1
	\node[inner sep=1,draw,shape=circle] at (30:1) {\small $2$}; %T2
	\node[inner sep=1,draw,shape=circle] at (150:1) {\small $3$}; %T3
	\node[inner sep=1,draw,shape=circle] at (300:2.1) {\small $4$}; %T4
	\node[inner sep=1,draw,shape=circle] at (0:2.1) {\small $5$}; %T5
	\node[inner sep=1,draw,shape=circle] at (60:2.1) {\small $6$}; %T6
	\node[inner sep=1,draw,shape=circle] at (120:2.1) {\small $7$}; %T7
	\node[inner sep=1,draw,shape=circle] at (180:2.1) {\small $8$}; %T8
	\node[inner sep=1,draw,shape=circle] at (240:2.1) {\small $9$}; %T9
	\node[inner sep=1,draw,shape=circle] at (270:3.5) {\small $10$}; %T10
	\node[inner sep=1,draw,shape=circle] at (30:3.5) {\small $11$}; %T11
	\node[inner sep=1,draw,shape=circle] at (150:3.5) {\small $12$}; %T12
	\node[inner sep=1,draw,shape=circle] at (90:3.1) {\small $13$}; %T13
	\node[inner sep=1,draw,shape=circle] at (45:4.3) {\small $14$}; %T14
	\node[inner sep=1,draw,shape=circle] at (135:4.3) {\small $15$}; %T15
	\node[inner sep=1,draw,shape=circle] at (210:3.1) {\small $16$}; %T16
	\node[inner sep=1,draw,shape=circle] at (165:4.4) {\small $17$}; %T17
	\node[inner sep=1,draw,shape=circle] at (255:4.4) {\small $18$}; %T18
	\node[inner sep=1,draw,shape=circle] at (330:3.1) {\small $19$}; %T19
	\node[inner sep=1,draw,shape=circle] at (285:4.4) {\small $20$}; %T20
	\node[inner sep=1,draw,shape=circle] at (15:4.4) {\small $21$}; %T21
	\node[inner sep=1,draw,shape=circle] at (90:5) {\small $22$}; %T22
	\node[inner sep=1,draw,shape=circle] at (210:5) {\small $23$}; %T23
	\node[inner sep=1,draw,shape=circle] at (330:5) {\small $24$}; %T24

	\end{tikzpicture}
	\caption{The Tiling of $f=24$, $\AVC \equiv \{\alpha^3, \beta^2\delta^2, \gamma^4 \}$}
	\label{Tf24a3-1}
\end{figure}
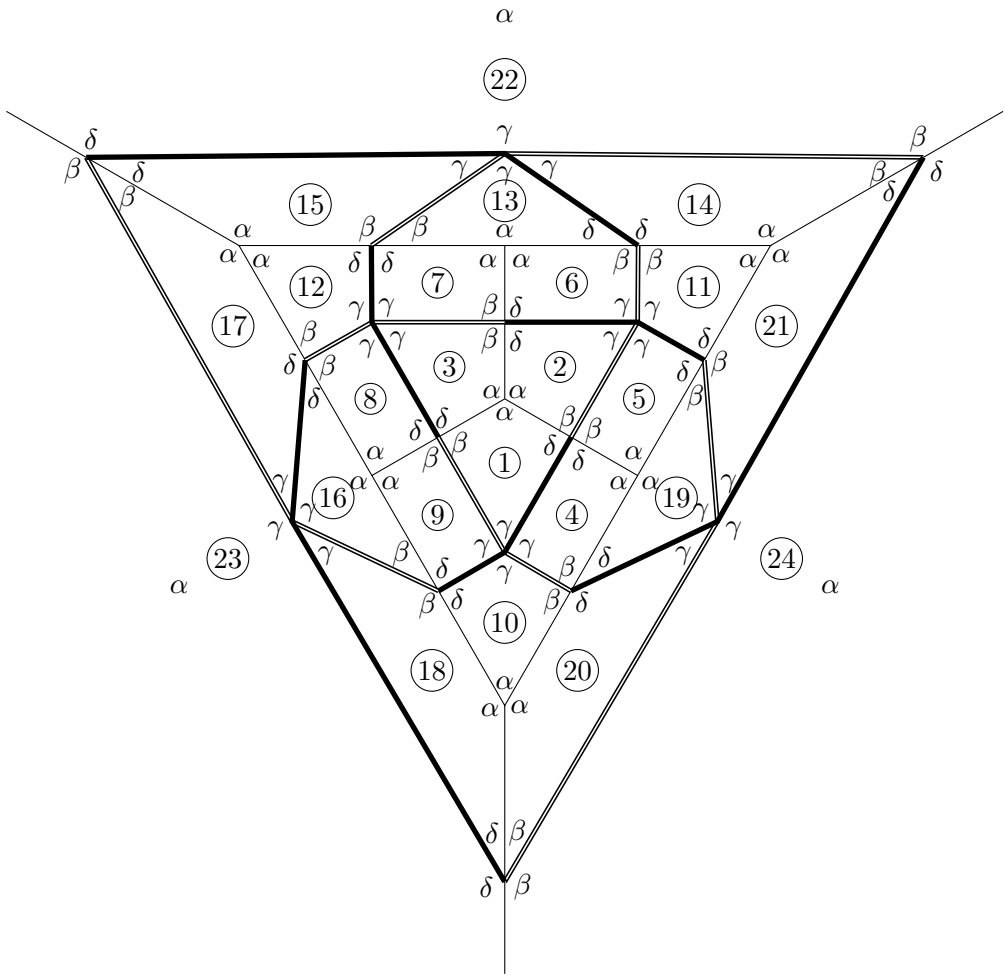

When $\alpha_6\alpha_7\cdots = \alpha^2\delta^2$, then $T_{13}, T_{14}, T_{15}$ in Figure \ref{Tf24a3-2} are determined. If $\beta_{4}\beta_{10}\cdots = \beta^2\delta^2$, we get $\alpha_4\alpha_5\cdots = \alpha^3$ and hence $\alpha_{15}\delta_5\delta_{11}\cdots = \alpha\beta\delta^2\cdots$ which is not a vertex, a contradiction. So $\beta_{4}\beta_{10}\cdots = \alpha\beta^2$ and we determine $T_{16}, T_{17}, T_{18}$. By the same token, $\beta_{8}\beta_{12}\cdots = \alpha\beta^2$ and we determine $T_{19}, T_{20}, T_{21}$. Lastly, by $\gamma\cdots = \gamma^4$, we get $\gamma_{13}\cdots = \gamma_{16}\cdots = \gamma_{19}\cdots = \gamma^4$ and we determine $T_{22}, T_{23}, T_{24}$ and hence the tiling of $f=24, \AVC \equiv \{\alpha^3, \alpha\beta^2, \alpha^2\delta^2, \beta^2\delta^2, \gamma^4 \}$. This is in fact the quadrilateral subdivision of the triangular prism in the third picture in Figure \ref{QuadSubd}.

\begin{figure}[htp]
	\centering
	\begin{tikzpicture}[>=latex,scale=0.85]
	\foreach \a in {0,1,2}
	\draw[rotate=120*\a]
	(0,0)-- (-30:2.4) -- (30:4.8) -- (90:2.4)
	(-30:1.2) -- (30:2.4) -- (90:1.2)
	(11:3.18) -- (30:2.4) -- (49:3.18)
	(131:3.18) -- (116:4.32) -- (40:7.2)
	(150:4.8) -- (160:7.2) -- (116:4.32)
	(90:2.4) -- (40:7.2)
	(116:4.32) -- (108:5.5);
	
	\foreach \c in {0,1,2}
	\draw[double, line width=0.6, rotate=120*\c]
	(-30:1.2) -- (30:2.4) -- (49:3.18)	
	(30:4.8) -- (40:7.2) -- (116:4.32);
	
	\foreach \b in {0,1,2}
	\draw[line width=2, rotate=120*\b]
	(90:1.2) -- (30:2.4) -- (11:3.18)
	(90:2.4) -- (40:7.2) -- (-4:4.32);

	%layer 0
	
	%layer 0 (1)
	\node[shift={(30:0.2)}] at (0,0) {\small $\alpha$};
	\node[shift={(150:0.2)}] at (0,0) {\small $\alpha$};
	\node[shift={(270:0.2)}] at (0,0) {\small $\alpha$};
	
	%layer 1
	
	%layer 1 (1)
	\node[shift={(0.2,0.15)}] at (30:2.4) {\small $\gamma$};
	\node[shift={(-0.2,0.2)}] at (30:2.4) {\small $\gamma$};
	\node[shift={(-0.35,-0.25)}] at (30:2.4) {\small $\gamma$};
	\node[shift={(0.06,-0.35)}] at (30:2.4) {\small $\gamma$};
	
	%layer 1 (2)
	\node[shift={(0.15,0.2)}] at (90:1.2) {\small $\delta$};
	\node[shift={(-0.2,0.2)}] at (90:1.2) {\small $\beta$};
	\node[shift={(-0.2,-0.25)}] at (90:1.2) {\small $\beta$};
	\node[shift={(0.15,-0.25)}] at (90:1.2) {\small $\delta$};
	
	%layer 1 (3)
	\node[shift={(0.2,0.2)}] at (150:2.4) {\small $\gamma$};
	\node[shift={(-0.2,0.15)}] at (150:2.4) {\small $\gamma$};
	\node[shift={(-0.06,-0.35)}] at (150:2.4) {\small $\gamma$};
	\node[shift={(0.35,-0.25)}] at (150:2.4) {\small $\gamma$};	
	
	%layer 1 (4)
	\node[shift={(0.05,0.25)}] at (210:1.2) {\small $\delta$};
	\node[shift={(-0.3,0.09)}] at (210:1.2) {\small $\delta$};
	\node[shift={(-0.1,-0.3)}] at (210:1.2) {\small $\beta$};
	\node[shift={(0.28,-0.1)}] at (210:1.2) {\small $\beta$};
	
	%layer 1 (5)
	\node[shift={(0,0.3)}] at (270:2.4) {\small $\gamma$};
	\node[shift={(-0.3,0.05)}] at (270:2.4) {\small $\gamma$};
	\node[shift={(0,-0.3)}] at (270:2.4) {\small $\gamma$};
	\node[shift={(0.3,0.05)}] at (270:2.4) {\small $\gamma$};
	
	%layer 1 (6)
	\node[shift={(0.3,0.09)}] at (330:1.2) {\small $\beta$};
	\node[shift={(-0.05,0.25)}] at (330:1.2) {\small $\beta$};
	\node[shift={(-0.28,-0.1)}] at (330:1.2) {\small $\delta$};
	\node[shift={(0.1,-0.3)}] at (330:1.2) {\small $\delta$};

	% layer 2
	
	% layer 2 (1)
	\node[shift={(275:0.3)}] at (12:3.20) {\small $\alpha$};	
	\node[shift={(20:0.3)}] at (10:3.15) {\small $\alpha$};
	\node[shift={(0,0.3)}] at (11:3.18) {\small $\delta$};
	\node[shift={(-0.28,-0.15)}] at (11:3.18) {\small $\delta$};
	
	% layer 2 (2)
	\node[shift={(-0.3,-0.2)}] at (30:4.8) {\small $\alpha$};
	\node[shift={(-45:0.2)}] at (30:4.8) {\small $\beta$};
	\node[shift={(105:0.2)}] at (30:4.8) {\small $\beta$};
	
	% layer 2 (3)
	\node[shift={(0.22,-0.22)}] at (49:3.18) {\small $\beta$};
	\node[shift={(-0.22,-0.22)}] at (49:3.18) {\small $\beta$};
	\node[shift={(90:0.15)}] at (49:3.18) {\small $\alpha$};

	% layer 2 (4)
	\node[shift={(0.22,-0.2)}] at (90:2.4) {\small $\alpha$};
	\node[shift={(-0.22,-0.2)}] at (90:2.4) {\small $\alpha$};
	\node[shift={(10:0.9)}] at (90:2.4) {\small $\delta$};	
	\node[shift={(95:0.2)}] at (90:2.4) {\small $\delta$};

	% layer 2 (5)
	\node[shift={(35:0.3)}] at (131:3.18) {\small $\alpha$};	
	\node[shift={(140:0.3)}] at (131:3.18) {\small $\alpha$};
	\node[shift={(0.22,-0.22)}] at (131:3.18) {\small $\delta$};
	\node[shift={(-0.22,-0.22)}] at (131:3.18) {\small $\delta$};
	
	% layer 2 (6)
	\node[shift={(0.3,-0.2)}] at (150:4.8) {\small $\alpha$};
	\node[shift={(75:0.2)}] at (150:4.8) {\small $\beta$};
	\node[shift={(-0.12,-0.25)}] at (150:4.8) {\small $\beta$};
	
	% layer 2 (7)
	\node[shift={(0.05,0.35)}] at (169:3.18) {\small $\beta$};
	\node[shift={(0.3,-0.1)}] at (169:3.18) {\small $\beta$};
	\node[shift={(210:0.15)}] at (169:3.18) {\small $\alpha$};
	
	% layer 2 (8)
	\node[shift={(0.05,0.3)}] at (210:2.4) {\small $\alpha$};
	\node[shift={(0.25,-0.1)}] at (210:2.4) {\small $\alpha$};
	\node[shift={(130:0.9)}] at (210:2.4) {\small $\delta$};	
	\node[shift={(215:0.2)}] at (212:2.4) {\small $\delta$};

	% layer 2 (9)
	\node[shift={(175:0.3)}] at (250:3.12) {\small $\alpha$};	
	\node[shift={(260:0.3)}] at (250:3.15) {\small $\alpha$};
	\node[shift={(0.05,0.3)}] at (251:3.18) {\small $\delta$};
	\node[shift={(0.25,-0.1)}] at (251:3.18) {\small $\delta$};

	% layer 2 (10)
	\node[shift={(0,0.3)}] at (270:4.8) {\small $\alpha$};
	\node[shift={(195:0.2)}] at (270:4.8) {\small $\beta$};
	\node[shift={(0.25,0)}] at (270:4.8) {\small $\beta$};

	% layer 2 (11)
	\node[shift={(-0.05,0.3)}] at (289:3.18) {\small $\beta$};
	\node[shift={(-0.25,-0.15)}] at (289:3.18) {\small $\beta$};
	\node[shift={(330:0.15)}] at (289:3.18) {\small $\alpha$};
	
	% layer 2 (12)
	\node[shift={(-0.05,0.3)}] at (330:2.4) {\small $\alpha$};
	\node[shift={(-0.25,-0.1)}] at (330:2.4) {\small $\alpha$};	
	\node[shift={(250:0.9)}] at (330:2.4) {\small $\delta$};	
	\node[shift={(335:0.2)}] at (330:2.4) {\small $\delta$};
	
	%layer 3 
	
	%layer 3 (1)	
	\node[shift={(40:0.2)}] at (40:7.2) {\small $\gamma$};
	\node[shift={(194:1.3)}] at (40:7.2) {\small $\gamma$};
	\node[shift={(222:0.8)}] at (40:7.2) {\small $\gamma$};
	\node[shift={(248:1.3)}] at (40:7.2) {\small $\gamma$};
	
	%layer 3 (2)	
	\node[shift={(47:0.3)}] at (116:4.32) {\small $\beta$};
	\node[shift={(115:0.25)}] at (116:4.32) {\small $\delta$};
	\node[shift={(235:0.3)}] at (116:4.32) {\small $\delta$};
	\node[shift={(295:0.3)}] at (116:4.32) {\small $\beta$};
	
	%layer 3 (3)
	\node[shift={(160:0.2)}] at (160:7.2) {\small $\gamma$};
	\node[shift={(314:1.3)}] at (160:7.2) {\small $\gamma$};
	\node[shift={(342:0.8)}] at (160:7.2) {\small $\gamma$};
	\node[shift={(8:1.3)}] at (160:7.2) {\small $\gamma$};
	
	%layer 3 (4)
	\node[shift={(0.15,0.25)}] at (236:4.32) {\small $\beta$};
	\node[shift={(-0.35,0.1)}] at (236:4.32) {\small $\beta$};
	\node[shift={(-0.09,-0.25)}] at (236:4.32) {\small $\delta$};
	\node[shift={((0.33,-0.06))}] at (236:4.32) {\small $\delta$};
	
	%layer 3 (5)
	\node[shift={(0.05,-0.24)}] at (280:7.2) {\small $\gamma$};
	\node[shift={(74:1.3)}] at (280:7.2) {\small $\gamma$};
	\node[shift={(102:0.8)}] at (280:7.2) {\small $\gamma$};
	\node[shift={(128:1.3)}] at (280:7.2) {\small $\gamma$};
	
	%layer 3 (6)
	\node[shift={(0.05,-0.3)}] at (356:4.32) {\small $\beta$};
	\node[shift={(-0.3,-0.12)}] at (356:4.32) {\small $\beta$};
	\node[shift={(-0.05,0.3)}] at (356:4.32) {\small $\delta$};
	\node[shift={(0.19,0.15)}] at (356:4.32) {\small $\delta$};
	
	%layer 4
	
	%layer 4 (1)
	\node[shift={(40:7)}] at (0,0) {\small $\alpha$};
	\node[shift={(160:7)}] at (0,0) {\small $\alpha$};
	\node[shift={(280:7)}] at (0,0) {\small $\alpha$};

	\node[inner sep=1,draw,shape=circle] at (270:1) {\small $1$}; %T1
	\node[inner sep=1,draw,shape=circle] at (30:1) {\small $2$}; %T2
	\node[inner sep=1,draw,shape=circle] at (150:1) {\small $3$}; %T3
	\node[inner sep=1,draw,shape=circle] at (300:2.1) {\small $4$}; %T4
	\node[inner sep=1,draw,shape=circle] at (0:2.1) {\small $5$}; %T5
	\node[inner sep=1,draw,shape=circle] at (60:2.1) {\small $6$}; %T6
	\node[inner sep=1,draw,shape=circle] at (120:2.1) {\small $7$}; %T7
	\node[inner sep=1,draw,shape=circle] at (180:2.1) {\small $8$}; %T8
	\node[inner sep=1,draw,shape=circle] at (240:2.1) {\small $9$}; %T9
	\node[inner sep=1,draw,shape=circle] at (270:3.5) {\small $10$}; %T10
	\node[inner sep=1,draw,shape=circle] at (30:3.5) {\small $11$}; %T11
	\node[inner sep=1,draw,shape=circle] at (150:3.5) {\small $12$}; %T12
	\node[inner sep=1,draw,shape=circle] at (90:3.3) {\small $13$}; %T13
	\node[inner sep=1,draw,shape=circle] at (45:4.3) {\small $14$}; %T14
	\node[inner sep=1,draw,shape=circle] at (15:4.2) {\small $15$}; %T15
	\node[inner sep=1,draw,shape=circle] at (285:4.2) {\small $16$}; %T16
	\node[inner sep=1,draw,shape=circle] at (330:3.4) {\small $17$}; %T17
	\node[inner sep=1,draw,shape=circle] at (255:4.2) {\small $18$}; %T18
	\node[inner sep=1,draw,shape=circle] at (165:4.2) {\small $19$}; %T19
	\node[inner sep=1,draw,shape=circle] at (205:3.3) {\small $20$}; %T20
	\node[inner sep=1,draw,shape=circle] at (135:4.2) {\small $21$}; %T21
	\node[inner sep=1,draw,shape=circle] at (90:4.8) {\small $22$}; %T22
	\node[inner sep=1,draw,shape=circle] at (330:4.8) {\small $23$}; %T23
	\node[inner sep=1,draw,shape=circle] at (205:4.8) {\small $24$}; %T24

	\end{tikzpicture}
	\caption{The Tiling $f=24$, $\AVC \equiv \{\alpha^3, \alpha\beta^2, \alpha^2\delta^2,  \beta^2\delta^2, \gamma^4 \}$}
	\label{Tf24a3-2}
\end{figure}
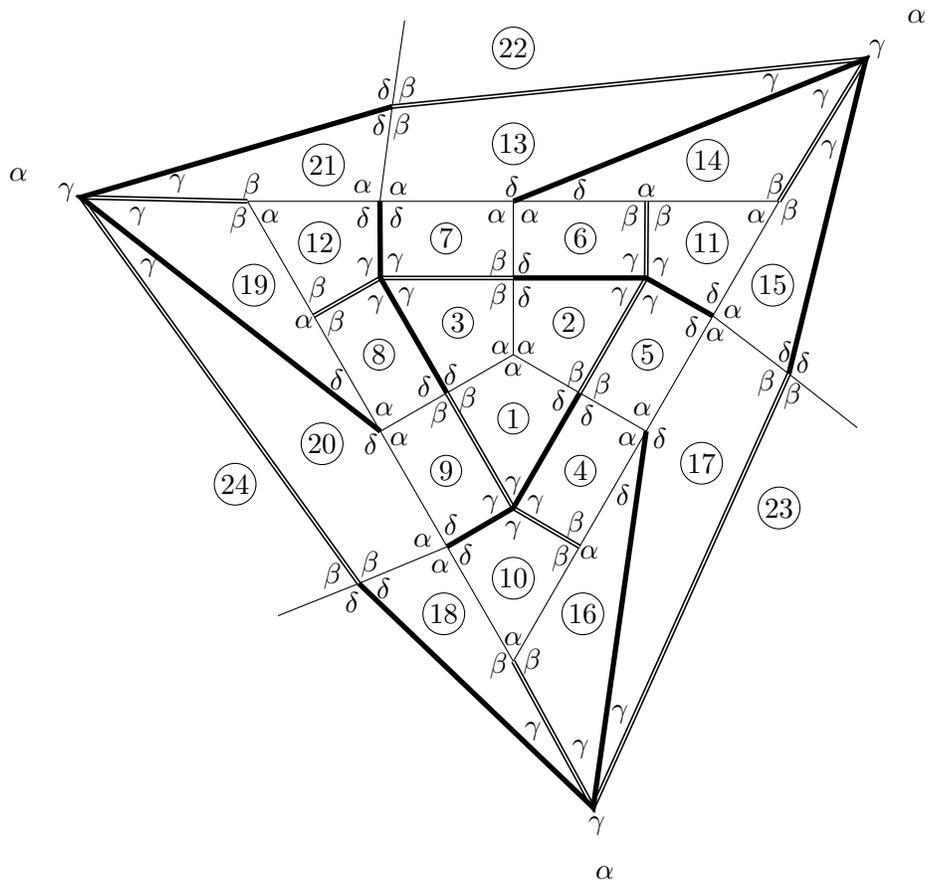

Next we may assume no $\alpha^3$. Then $\AVC = \{ \alpha\beta^2, \alpha^2\delta^2, \beta^2\delta^2, \gamma^4, \alpha\delta^4 \}$. Then $\alpha^2\cdots = \alpha^2\delta^2$ and $\delta^3\cdots = \alpha\delta^4$. If $\beta^2\delta^2$ is a vertex, starting at this vertex we determine $T_1, T_2, T_3, T_4$ in Figure \ref{AADbe2de2}. By $\gamma_3\cdots = \gamma_4\cdots = \gamma^4$, $T_5, T_6$ are determined. By $\alpha_1\alpha_4\cdots =  \alpha_2\alpha_{3}\cdots = \alpha^2\cdots = \alpha^2\delta^2$, $T_7, T_8, T_9, T_{10}$ are determined. By $\gamma_7\cdots = \gamma_8\cdots = \gamma_9\cdots = \gamma_{10}\cdots = \gamma^4$, $T_{11}, T_{12}, T_{13}, T_{14}$ are determined and at these four vertices we have $\delta^4\cdots = \alpha\delta^4$ must be a vertex. Up to symmetry, $\alpha\delta^4$ uniquely determines $T_{15}$ which is translated in Figure \ref{Tf24b2d2}. Meanwhile, $\gamma_{15}\cdots = \gamma_1\cdots = \gamma_2\cdots = \gamma^4$ so $T_{16}, T_{17}, T_{18}, T_{19}, T_{20}$ are determined. By $\alpha_{18}\alpha_{19}\cdots = \alpha^2\cdots  =\alpha^2\delta^2$, $T_{21}, T_{22}$ are determined. Finally, $\gamma_{21}\cdots = \gamma^4$ we determine $T_{23}, T_{24}$. Hence we obtain the tiling of $f=24, \AVC \equiv \{ \alpha\beta^2, \alpha^2\delta^2, \beta^2\delta^2, \gamma^4, \alpha\delta^4 \}$ in Figure \ref{Tf24b2d2}.

\begin{figure}[htp]
	\centering
	\begin{tikzpicture}[>=latex,scale=1]
	\foreach \a in {0,...,3}
	\draw[rotate=90*\a]
	(0,0) -- (4.8,0)
	(3.6,0) -- (3.6,3.6) -- (0,3.6) 
	(1.2,0) -- (1.2,1.2) -- (0,1.2)
	(2.4,0) -- (2.4,2.4) -- (0,2.4)
	(1.2,1.2) -- (3.6,3.6);
	
	\foreach \b in {0,1}
	\draw[double, line width=0.6,rotate=180*\b]
	(0,1.2) -- (0,-1.2)
	(2.4,2.4) -- (-2.4,2.4)
	(1.2,0) -- (3.6,0)
	(0,3.6) -- (0,4.8);
	
	\foreach \c in {1,3}
	\draw[line width=2,rotate=90*\c]
	(0,1.2) -- (0,-1.2)
	(2.4,2.4) -- (-2.4,2.4)
	(1.2,0) -- (3.6,0)
	(0,3.6) -- (0,4.8);

	%layer 0
	
	%layer 0 (1)
	\node[shift={(0.2,0.2)}] at (0,0) {\small $\gamma$};
	\node[shift={(-0.2,0.2)}] at (0,0) {\small $\gamma$};
	\node[shift={(-0.2,-0.25)}] at (0,0) {\small $\gamma$};
	\node[shift={(0.2,-0.25)}] at (0,0) {\small $\gamma$};

	% each layer start from angle 0 and anticlockwise
	
	%layer 1
	
	%layer 1 (1)
	\node[shift={(0.2,0.2)}] at (1.2,0) {\small $\beta$};
	\node[shift={(-0.2,0.2)}] at (1.2,0) {\small $\delta$};
	\node[shift={(-0.2,-0.25)}] at (1.2,0) {\small $\delta$};
	\node[shift={(0.2,-0.25)}] at (1.2,0) {\small $\beta$};
	
	%layer 1 (2)
	\node[shift={(0.2,-0.1)}] at (1.2,1.2) {\small $\alpha$};
	\node[shift={(-0.1,0.2)}] at (1.2,1.2) {\small $\alpha$};
	\node[shift={(-0.2,-0.22)}] at (1.2,1.2) {\small $\alpha$};
	
	%layer 1 (3)
	\node[shift={(0.2,0.2)}] at (0,1.2) {\small $\delta$};
	\node[shift={(-0.2,0.2)}] at (0,1.2) {\small $\delta$};
	\node[shift={(-0.2,-0.25)}] at (0,1.2) {\small $\beta$};
	\node[shift={(0.2,-0.25)}] at (0,1.2) {\small $\beta$};
	
	%layer 1 (4)
	\node[shift={(0.2,-0.22)}] at (-1.2,1.2) {\small $\alpha$};
	\node[shift={(0.1,0.2)}] at (-1.2,1.2) {\small $\alpha$};
	\node[shift={(-0.2,-0.1)}] at (-1.2,1.2) {\small $\alpha$};
	
	%layer 1 (5)
	\node[shift={(0.2,0.2)}] at (-1.2,0) {\small $\delta$};
	\node[shift={(-0.2,0.2)}] at (-1.2,0) {\small $\beta$};
	\node[shift={(-0.2,-0.25)}] at (-1.2,0) {\small $\beta$};
	\node[shift={(0.2,-0.25)}] at (-1.2,0) {\small $\delta$};
	
	%layer 1 (6)
	\node[shift={(0.2,0.22)}] at (-1.2,-1.2) {\small $\alpha$};
	\node[shift={(0.1,-0.2)}] at (-1.2,-1.2) {\small $\alpha$};
	\node[shift={(-0.2,0.1)}] at (-1.2,-1.2) {\small $\alpha$};
	
	%layer 1 (7)
	\node[shift={(0.2,0.2)}] at (0,-1.2) {\small $\beta$};
	\node[shift={(-0.2,0.2)}] at (0,-1.2) {\small $\beta$};
	\node[shift={(-0.2,-0.25)}] at (0,-1.2) {\small $\delta$};
	\node[shift={(0.2,-0.25)}] at (0,-1.2) {\small $\delta$};
	
	%layer 1 (8)
	\node[shift={(-0.2,0.22)}] at (1.2,-1.2) {\small $\alpha$};
	\node[shift={(-0.1,-0.2)}] at (1.2,-1.2) {\small $\alpha$};
	\node[shift={(0.2,0.1)}] at (1.2,-1.2) {\small $\alpha$};

	%layer 2 
	
	%layer 2 (1)
	\node[shift={(0.2,0.2)}] at (2.4,0) {\small $\gamma$};
	\node[shift={(-0.2,0.2)}] at (2.4,0) {\small $\gamma$};
	\node[shift={(-0.2,-0.25)}] at (2.4,0) {\small $\gamma$};
	\node[shift={(0.2,-0.25)}] at (2.4,0) {\small $\gamma$};

	%layer 2 (2)
	\node[shift={(0.2,-0.1)}] at (2.4,2.4) {\small $\delta$};
	\node[shift={(-0.1,0.25)}] at (2.4,2.4) {\small $\beta$};
	\node[shift={(-0.55,-0.25)}] at (2.4,2.4) {\small $\beta$};	
	\node[shift={(-0.2,-0.45)}] at (2.4,2.4) {\small $\delta$};	
	
	%layer 2 (3)
	\node[shift={(0.2,0.2)}] at (0,2.4) {\small $\gamma$};
	\node[shift={(-0.2,0.2)}] at (0,2.4) {\small $\gamma$};
	\node[shift={(-0.2,-0.25)}] at (0,2.4) {\small $\gamma$};
	\node[shift={(0.2,-0.25)}] at (0,2.4) {\small $\gamma$};
	
	%layer 2 (4)
	\node[shift={(0.55,-0.25)}] at (-2.4,2.4) {\small $\beta$};	
	\node[shift={(0.1,0.25)}] at (-2.4,2.4) {\small $\beta$};
	\node[shift={(0.2,-0.45)}] at (-2.4,2.4) {\small $\delta$};
	\node[shift={(-0.2,-0.1)}] at (-2.4,2.4) {\small $\delta$};
	
	%layer 2 (5)
	\node[shift={(0.2,0.2)}] at (-2.4,0) {\small $\gamma$};
	\node[shift={(-0.2,0.2)}] at (-2.4,0) {\small $\gamma$};
	\node[shift={(-0.2,-0.25)}] at (-2.4,0) {\small $\gamma$};
	\node[shift={(0.2,-0.25)}] at (-2.4,0) {\small $\gamma$};
	
	%layer 2 (6)
	\node[shift={(0.55,0.25)}] at (-2.4,-2.4) {\small $\beta$};
	\node[shift={(0.2,0.45)}] at (-2.4,-2.4) {\small $\delta$};
	\node[shift={(-0.2,0.1)}] at (-2.4,-2.4) {\small $\delta$};	
	\node[shift={(0.1,-0.25)}] at (-2.4,-2.4) {\small $\beta$};
	
	%layer 2 (7)
	\node[shift={(0.2,0.2)}] at (0,-2.4) {\small $\gamma$};
	\node[shift={(-0.2,0.2)}] at (0,-2.4) {\small $\gamma$};
	\node[shift={(-0.2,-0.25)}] at (0,-2.4) {\small $\gamma$};
	\node[shift={(0.2,-0.25)}] at (0,-2.4) {\small $\gamma$};
	
	%layer 2 (8)
	\node[shift={(0.2,0.1)}] at (2.4,-2.4) {\small $\delta$};	
	\node[shift={(-0.2,0.45)}] at (2.4,-2.4) {\small $\delta$};
	\node[shift={(-0.55,0.25)}] at (2.4,-2.4) {\small $\beta$};
	\node[shift={(-0.1,-0.25)}] at (2.4,-2.4) {\small $\beta$};
	
	%layer 3
	
	%layer 3 (1)
	\node[shift={(0.2,0.2)}] at (3.6,0) {\small $\delta$};
	\node[shift={(-0.2,0.2)}] at (3.6,0) {\small $\beta$};
	\node[shift={(-0.2,-0.25)}] at (3.6,0) {\small $\beta$};
	\node[shift={(0.2,-0.25)}] at (3.6,0) {\small $\delta$};
	
	% layer 3 (2)
	\node[shift={(0.2,0.2)}] at (3.6,3.6) {\small $\alpha$};
	\node[shift={(-0.5,-0.22)}] at (3.6,3.6) {\small $\alpha$};
	\node[shift={(-0.2,-0.5)}] at (3.6,3.6) {\small $\alpha$};
	
	%layer 3 (3)
	\node[shift={(0.2,0.2)}] at (0,3.6) {\small $\beta$};
	\node[shift={(-0.2,0.2)}] at (0,3.6) {\small $\beta$};
	\node[shift={(-0.2,-0.25)}] at (0,3.6) {\small $\delta$};
	\node[shift={(0.2,-0.25)}] at (0,3.6) {\small $\delta$};
	
	% layer 3 (4)
	\node[shift={(-0.2,0.2)}] at (-3.6,3.6) {\small $\alpha$};
	\node[shift={(0.5,-0.22)}] at (-3.6,3.6) {\small $\alpha$};
	\node[shift={(0.2,-0.5)}] at (-3.6,3.6) {\small $\alpha$};
	
	%layer 3 (5)
	\node[shift={(0.2,0.2)}] at (-3.6,0) {\small $\beta$};
	\node[shift={(-0.2,0.2)}] at (-3.6,0) {\small $\delta$};
	\node[shift={(-0.2,-0.25)}] at (-3.6,0) {\small $\delta$};
	\node[shift={(0.2,-0.25)}] at (-3.6,0) {\small $\beta$};

	% layer 3 (6)
	\node[shift={(-0.2,-0.2)}] at (-3.6,-3.6) {\small $\alpha$};
	\node[shift={(0.5,0.2)}] at (-3.6,-3.6) {\small $\alpha$};
	\node[shift={(0.2,0.5)}] at (-3.6,-3.6) {\small $\alpha$};
	
	%layer 3 (7)
	\node[shift={(0.2,0.2)}] at (0,-3.6) {\small $\delta$};
	\node[shift={(-0.2,0.2)}] at (0,-3.6) {\small $\delta$};
	\node[shift={(-0.2,-0.25)}] at (0,-3.6) {\small $\beta$};
	\node[shift={(0.2,-0.25)}] at (0,-3.6) {\small $\beta$};
	
	% layer 3 (8)
	\node[shift={(0.2,-0.2)}] at (3.6,-3.6) {\small $\alpha$};
	\node[shift={(-0.5,0.2)}] at (3.6,-3.6) {\small $\alpha$};
	\node[shift={(-0.2,0.5)}] at (3.6,-3.6) {\small $\alpha$};
	
	% layer 4 
	
	% layer 4 (1)
	\node[shift={(4.2,4.2)}] at (0,0) {\small $\gamma$};
	\node[shift={(-4.2,4.2)}] at (0,0) {\small $\gamma$};
	\node[shift={(-4.2,-4.2)}] at (0,0) {\small $\gamma$};
	\node[shift={(4.2,-4.2)}] at (0,0) {\small $\gamma$};

	\node[inner sep=1,draw,shape=circle] at (0.59,0.58) {\small $1$};
	\node[inner sep=1,draw,shape=circle] at (-0.6,0.58) {\small $2$};
	\node[inner sep=1,draw,shape=circle] at (-0.6,-0.58) {\small $3$};
	\node[inner sep=1,draw,shape=circle] at (0.59,-0.58) {\small $4$};

	\node[inner sep=1,draw,shape=circle] at (0.8,1.8) {\small $5$};
	\node[inner sep=1,draw,shape=circle] at (-0.8,1.8) {\small $6$};	
	\node[inner sep=1,draw,shape=circle] at (-0.8,-1.8) {\small $7$};
	\node[inner sep=1,draw,shape=circle] at (0.8,-1.8) {\small $8$};

	\node[inner sep=1,draw,shape=circle] at (1.8,0.8) {\small $9$};
	\node[inner sep=1,draw,shape=circle] at (-1.8,0.8) {\small $10$};
	\node[inner sep=1,draw,shape=circle] at (-1.8,-0.8) {\small $11$};
	\node[inner sep=1,draw,shape=circle] at (1.8,-0.8) {\small $12$};

	\node[inner sep=1,draw,shape=circle] at (3,1.2) {\small $13$};
	\node[inner sep=1,draw,shape=circle] at (1.2,3) {\small $14$};
	\node[inner sep=1,draw,shape=circle] at (-1.2,3) {\small $15$};
	\node[inner sep=1,draw,shape=circle] at (-3,1.2) {\small $16$};

	\node[inner sep=1,draw,shape=circle] at (-3,-1.2) {\small $17$};
	\node[inner sep=1,draw,shape=circle] at (-1.2,-3) {\small $18$};
	\node[inner sep=1,draw,shape=circle] at (1.2,-3) {\small $19$};
	\node[inner sep=1,draw,shape=circle] at (3,-1.2) {\small $20$};

	\node[inner sep=1,draw,shape=circle] at (4.2,1.6) {\small $21$};
	\node[inner sep=1,draw,shape=circle] at (-4.2,1.6) {\small $22$};
	\node[inner sep=1,draw,shape=circle] at (-4.2,-1.6) {\small $23$};
	\node[inner sep=1,draw,shape=circle] at (4.2,-1.6) {\small $24$};

	\end{tikzpicture}
	\caption{Tiling of $f=24$, $\AVC\equiv\{\alpha^3, \beta^2\delta^2, \gamma^4\}$: Cube Subdivision}
	\label{Tf24CubeSD}
\end{figure}
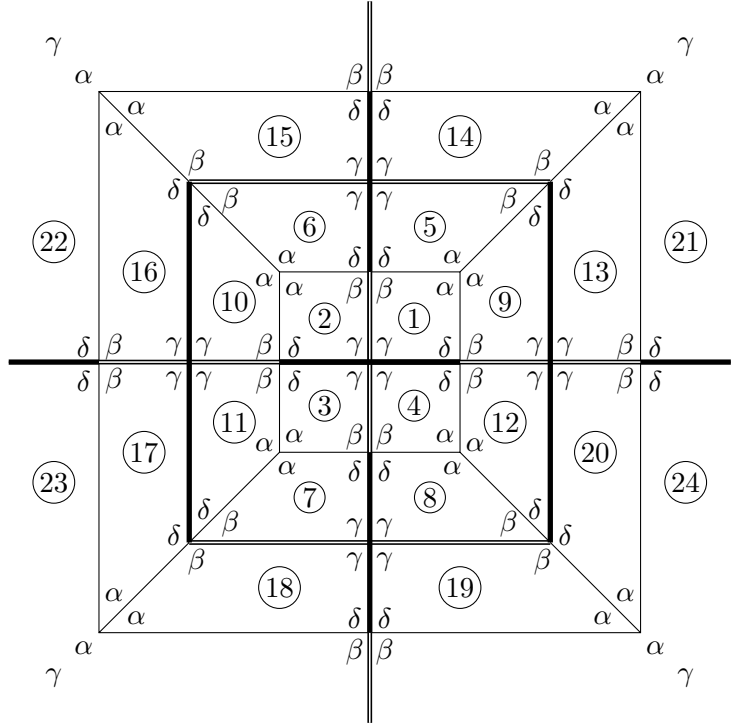

\begin{figure}[htp]
	\centering
	\begin{tikzpicture}[>=latex,scale=1]

\begin{scope}[xshift=0cm, yshift=0cm]

%	    A6
%	A2     A1
% 	    O
%	A5     A4
%	    A3
%midpoint of X and Y is XY or YX

	\coordinate (O) at (0,0);
	\coordinate[shift={(30:2.4)}] (A1) at (O);
	\coordinate[shift={(150:2.4)}] (A2) at (O);
	\coordinate[shift={(270:2.4)}] (A3) at (O);	
	
	\coordinate[shift={(30:2.4)}] (A4) at (A3);
	\coordinate[shift={(150:2.4)}] (A5) at (A3);
	
	\coordinate[shift={(150:2.4)}] (A6) at (A1);
	
	%midpoints 
	\coordinate[shift={(30:1.2)}] (OA1) at (O);
	\coordinate[shift={(150:1.2)}] (OA2) at (O);
	\coordinate[shift={(150:1.2)}] (A1A6) at (A1);
	\coordinate[shift={(30:1.2)}] (A2A6) at (A2);
	
	\coordinate[shift={(270:1.2)}] (OA3) at (O);
	\coordinate[shift={(30:1.2)}] (A3A4) at (A3);
	\coordinate[shift={(270:1.2)}] (A1A4) at (A1);
	
	\coordinate[shift={(150:1.2)}] (A3A5) at (A3);
	\coordinate[shift={(270:1.2)}] (A2A5) at (A2);

	\draw 
	(O) -- (A1)
	(O) -- (A2)
	(O) -- (A3)
	(A1) -- (A6) -- (A2)
	(A3) -- (A4) -- (A1)
	(A3) -- (A5) -- (A2);
	
	\draw[double, line width=0.6] 
	(OA1) -- (A2A6)
	(OA3) -- (A1A4)
	(OA2) -- (A3A5);

	\draw[line width=2]
	(OA2) -- (A1A6)
	(OA1) -- (A3A4)	
	(OA3) -- (A2A5);

\end{scope}

	\begin{scope}[xshift=2.8 cm]

	\coordinate (A1) at (0,0);
	\coordinate (A2) at (0.6*4.5,0.2*4.5);
	\coordinate (A3) at (1*4.5,0);
	\coordinate (A4) at (0.4*4.5,-0.2*4.5);
	\coordinate (B1) at (0.5*4.5,0.5*4.5);
	\coordinate (B2) at (0.5*4.5,-0.5*4.5);
	\coordinate (A1B1) at (0.25*4.5,0.25*4.5);
	\coordinate (A1A4) at (0.2*4.5,-0.1*4.5);
	\coordinate (A4B1) at (0.45*4.5,0.15*4.5);
	\coordinate (A1A4B1) at (0.3*4.5,0.1*4.5);
	\coordinate (A3B1) at (0.75*4.5,0.25*4.5);
	\coordinate (A3A4) at (0.7*4.5,-0.1*4.5);
	\coordinate (A3A4B1) at (0.6333*4.5,0.1*4.5);
	\coordinate (A3B2) at (0.75*4.5,-0.25*4.5);
	\coordinate (A4B2) at (0.45*4.5,-0.35*4.5);
	\coordinate (A3A4B2) at (0.6333*4.5,-0.2333*4.5);
	\coordinate (A1B2) at (0.25*4.5,-0.25*4.5);
	\coordinate (A1A4B2) at (0.3*4.5,-0.2333*4.5);

	%\begin{scope}[thick,dashed,,opacity=0.6]
	%\draw (A1) -- (A2) -- (A3);
	%\draw (B1) -- (A2) -- (B2);
	%\end{scope}
	\draw (A1) -- (A4) -- (B1);
	\draw (A1) -- (A4) -- (B2);
	\draw (A3) -- (A4) -- (B1);
	\draw (A3) -- (A4) -- (B2);
	\draw (B1) -- (A1) -- (B2) -- (A3) --cycle;

	\draw 
	(A1A4B1) -- (A1B1)
	(A1A4B1) -- (A1A4)
	(A1A4B1) -- (A4B1);
	
	\draw
	(A3A4B1) -- (A3B1)
	(A3A4B1) -- (A3A4)
	(A3A4B1) -- (A4B1);
	
	\draw
	(A3A4B2) -- (A3B2)
	(A3A4B2) -- (A4B2)
	(A3A4B2) -- (A3A4);

	\draw
	(A1A4B2) -- (A1B2)
	(A1A4B2) -- (A4B2)
	(A1A4B2) -- (A1A4);

	\draw[double, line width=0.6]
	(B1) -- (A1B1)
	(B1) -- (A3B1)
	(A1) -- (A1A4)
	(A3) -- (A3A4)
	(B2) -- (A1B2)
	(B2) -- (A3B2)
	(A4) -- (A4B1)
	(A4) -- (A4B2);

	\draw[line width=2]
	(B1) -- (A4B1)
	(A4) -- (A1A4)
	(A1) -- (A1B1)
	(A3) -- (A3B1)
	(A4) -- (A3A4)
	(B2) -- (A4B2)
	(A1) -- (A1B2)
	(A3) -- (A3B2);
	\end{scope}
	
	\begin{scope}[xshift = 9.4 cm, yshift=1cm]
		
	\draw 
	(0,0) -- (0,-3)
	(1.4,1.4) -- (1.4,-1.6)
	(-1.4,1.4) -- (-1.4,-1.6)
	(-1.4,1.4) -- (0,0) -- (1.4,1.4)--cycle
	(-1.4,-1.6) -- (0,-3) -- (1.4,-1.6);
	
	\draw
	(0,0.96) -- (0,1.4)
	(0,0.96) -- (-0.7,0.7)
	(0,0.96) -- (0.7,0.7);
	
	\draw
	(-0.7,0.7) -- (-1.4,-1.6)
	(0,0) -- (-0.7,-2.3)
	(0.7,0.7) -- (0,-3)
	(1.4,1.4) -- (0.7,-2.3);
	
	\draw 
	(-1.4,0.4) -- (0,-2)
	(0,-1) -- (1.4,-0.6);

	\draw[double, line width=0.6]
	(0,1.4) -- (-1.4,1.4)
	(-0.7,0.7) -- (0,0)
	(0.7,0.7) -- (1.4,1.4)
	(-1.4,1.4) -- (-1.4,0.4)
	(-1.4,-0.6) -- (-1.4,-1.6)
	(0,0) -- (0,-1)
	(0,-2) -- (0,-3)
	(1.4,1.4) -- (1.4,0.4)
	(1.4,-0.6) -- (1.4,-1.6)
	(-1.4,-1.6) -- (-0.7,-2.3)
	(0,-3) -- (0.7,-2.3);

	\draw[line width=2]
	(-1.4,1.4) -- (-0.7,0.7)
	(0,0) -- (0.7,0.7)
	(0,1.4) -- (1.4,1.4)
	(-0.7,-2.3) -- (0,-3)
	(0.7,-2.3) -- (1.4,-1.6)
	(-1.4,-1.6) -- (-1,-0.2857)
	(-0.4,-1.314) -- (0,0)
	(0.4,-0.8857) -- (0,-3)
	(1,-0.7143) -- (1.4,1.4);
	
	\end{scope}
	
	\end{tikzpicture}
	\caption{Quadrilateral Subdivisions of the Cube, the Octahedron and the Triangular Prism.}
	\label{QuadSubd}
\end{figure}
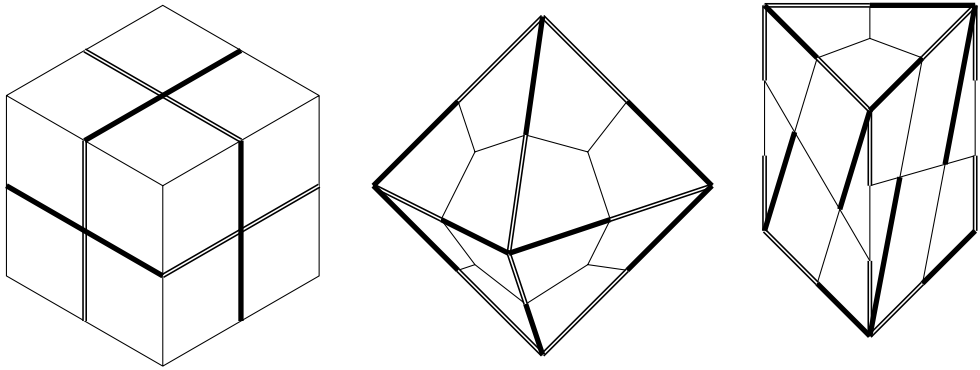

From the $\AVC$, we already know the tiling which is given by Figure \ref{Tf24a3-1}. The discussion here is to give unequivocal perspective to show that the tiling is indeed a quadrangle subdivision of the cube. We have $\alpha\cdots = \alpha^3, \beta\cdots = \delta\cdots = \beta^2\delta^2, \gamma\cdots = \gamma^4$. At $\gamma^4$, the unique AAD determines tiles $T_1, T_2, T_3, T_4$ in Figure \ref{Tf24CubeSD}. As $\beta\cdots = \beta^2\delta^2$, we get $\beta_1\beta_2\cdots = \beta_3\beta_4\cdots = \beta^2\delta^2$, so we determine $T_5, T_6, T_7, T_8$. Meanwhile, $\delta_1\delta_4\cdots = \delta_2\delta_3\cdots = \beta^2\delta^2$, so we determine $T_9, T_{10}, T_{11}, T_{12}$. Then by $\beta_5\cdots = \beta_6\cdots = \beta_7\cdots = \beta_8\cdots = \beta^2\delta^2$, so we determine $T_{13}, T_{14}, T_{15}, T_{16}, T_{17}, T_{18}, T_{19}, T_{20}$. Lastly $\beta_{13}\cdots = \beta_{16}\cdots = \beta_{17}\cdots = \beta_{20}\cdots = \beta^2\delta^2$, so we determine $T_{21}, T_{22}, T_{23}, T_{24}$. Hence we get a tiling with $f=24$. It is indeed the quadrilateral subdivision of the cube (or the octahedron) as indicated in the first picture (second picture) in Figure \ref{QuadSubd}.

\begin{figure}[htp]
	\centering
	\begin{tikzpicture}[>=latex,scale=1]
	
	%common
	\draw 
	(0,1.2) -- (0,-3.6);
	
	%vertical
	\draw 
	(1.2,1.2) -- (1.2,-2.4)
	(2.4,0) -- (2.4,-2.4);
	
	%horizontal
	\draw
	(0,1.2) -- (1.2,1.2)
	(0,0) -- (1.2,0)
	(0,-1.2) -- (1.2,-1.2)
	(0,-2.4) -- (1.2,-2.4);

	%connect (2.4,-2.4)
	\draw 
	(2.4,-2.4) -- (1.2,0)
	(2.4,-2.4) -- (1.2,-2.4)
	(2.4,-2.4) -- (0,-3.6);
	
	%connect (3.6,-3)
	\draw 
	(3.6,-3) -- (2.4,0)
	(3.6,-3) -- (0,-3.6);
	
	%other
	\draw 
	(2.4,0) -- (1.2,1.2);
	
	\begin{scope}[xscale=-1, yscale=1]
	%vertical
	\draw 
	(1.2,1.2) -- (1.2,-2.4)
	(2.4,0) -- (2.4,-2.4);
	
	%horizontal
	\draw
	(0,1.2) -- (1.2,1.2)
	(0,0) -- (1.2,0)
	(0,-1.2) -- (1.2,-1.2)
	(0,-2.4) -- (1.2,-2.4);

	%connect (2.4,-2.4)
	\draw 
	(2.4,-2.4) -- (1.2,0)
	(2.4,-2.4) -- (1.2,-2.4)
	(2.4,-2.4) -- (0,-3.6);
	
	%connect (3.6,-3)
	\draw 
	(3.6,-3) -- (2.4,0)
	(3.6,-3) -- (0,-3.6);
	
	%other
	\draw 
	(2.4,0) -- (1.2,1.2);
	\end{scope}
	
	\draw[double, line width=0.6]
	(0,0) -- (0,1.2)
	(-1.2,-1.2) -- (1.2,-1.2)
	(2.4,0) -- (2.4,-2.4) -- (1.2,-2.4)
	(-2.4,0) -- (-2.4,-2.4) -- (-1.2,-2.4);
	
	\draw[line width=2]
	(-1.2,1.2) -- (1.2,1.2)
	(0,0) -- (0,-2.4)
	(1.2,0) -- (2.4,-2.4) -- (0,-3.6)
	(-1.2,0) -- (-2.4,-2.4) -- (0,-3.6);

	% right region
	
	% right boundary anticlockwise start from North pole
	
	% right boundary 1
	\node[shift={(0.2,-0.25)}] at (0,1.2) {\small $\gamma$};
	
	% right boundary 2
	\node[shift={(0.2,0.2)}] at (0,0) {\small $\beta$};
	\node[shift={(0.2,-0.25)}] at (0,0) {\small $\delta$};
	
	% right boundary 3
	\node[shift={(0.2,0.2)}] at (0,-1.2) {\small $\gamma$};
	\node[shift={(0.2,-0.25)}] at (0,-1.2) {\small $\gamma$};
	
	% right boundary 4
	\node[shift={(0.2,0.2)}] at (0,-2.4) {\small $\delta$};
	\node[shift={(0.2,-0.22)}] at (0,-2.4) {\small $\alpha$};
	
	% right boundary 5
	\node[shift={(0.2,0.35)}] at (0,-3.6) {\small $\delta$};	
	\node[shift={(1.2,0.38)}] at (0,-3.6) {\small $\delta$};
	
	% right boundary 6
	\node[shift={(-0.3,0.2)}] at (3.6,-3) {\small $\alpha$};	
	
	% right boundary 7
	\node[shift={(-0.2,-0.15)}] at (2.4,0) {\small $\beta$};
	\node[shift={(0.2,-1)}] at (2.4,0) {\small $\beta$};
	
	% right boundary 8
	\node[shift={(-0.2,-0.25)}] at (1.2,1.2) {\small $\delta$};
	\node[shift={(0.2,-0.48)}] at (1.2,1.2) {\small $\alpha$};
	
	% right top point 
	\node[shift={(0.2,0.1)}] at (1.2,0) {\small $\delta$};
	\node[shift={(-0.2,0.2)}] at (1.2,0) {\small $\alpha$};
	\node[shift={(-0.2,-0.22)}] at (1.2,0) {\small $\alpha$};
	\node[shift={(0.2,-0.85)}] at (1.2,0) {\small $\delta$};
	
	% right middle point 
	\node[shift={(0.2,0)}] at (1.2,-1.2) {\small $\alpha$};
	\node[shift={(-0.2,0.2)}] at (1.2,-1.2) {\small $\beta$};
	\node[shift={(-0.2,-0.25)}] at (1.2,-1.2) {\small $\beta$};
	
	% right bottom point (left)
	\node[shift={(0.2,0.2)}] at (1.2,-2.4) {\small $\beta$};
	\node[shift={(-0.2,0.2)}] at (1.2,-2.4) {\small $\alpha$};
	\node[shift={(0,-0.25)}] at (1.2,-2.4) {\small $\beta$};
	
	% right bottom point (right)
	\node[shift={(0.2,0)}] at (2.4,-2.4) {\small $\gamma$};
	\node[shift={(-0.2,0.7)}] at (2.4,-2.4) {\small $\gamma$};
	\node[shift={(-0.4,0.2)}] at (2.4,-2.4) {\small $\gamma$};
	\node[shift={(-0.9,-0.23)}] at (2.4,-2.4) {\small $\gamma$};
	
	% left region
	\begin{scope}[xscale=-1, yscale=1]	
	% left boundary clockwise start from North pole
	
	% left boundary 1
	\node[shift={(-0.2,-0.25)}] at (0,1.2) {\small $\gamma$};
	
	% left boundary 2
	\node[shift={(-0.2,0.2)}] at (0,0) {\small $\beta$};
	\node[shift={(-0.2,-0.25)}] at (0,0) {\small $\delta$};
	
	% left boundary 3
	\node[shift={(-0.2,0.2)}] at (0,-1.2) {\small $\gamma$};
	\node[shift={(-0.2,-0.25)}] at (0,-1.2) {\small $\gamma$};
	
	% left boundary 4
	\node[shift={(-0.2,0.2)}] at (0,-2.4) {\small $\delta$};
	\node[shift={(-0.2,-0.22)}] at (0,-2.4) {\small $\alpha$};
	
	% left boundary 5
	\node[shift={(-0.2,0.35)}] at (0,-3.6) {\small $\delta$};	
	\node[shift={(-1.2,0.38)}] at (0,-3.6) {\small $\delta$};
	
	% left boundary 6
	\node[shift={(0.3,0.2)}] at (3.6,-3) {\small $\alpha$};	
	
	% left boundary 7
	\node[shift={(0.2,-0.15)}] at (2.4,0) {\small $\beta$};
	\node[shift={(-0.2,-1)}] at (2.4,0) {\small $\beta$};
	
	% left boundary 8
	\node[shift={(0.2,-0.25)}] at (1.2,1.2) {\small $\delta$};
	\node[shift={(-0.2,-0.48)}] at (1.2,1.2) {\small $\alpha$};
	
	% left top point 
	\node[shift={(-0.2,0.1)}] at (1.2,0) {\small $\delta$};
	\node[shift={(0.2,0.2)}] at (1.2,0) {\small $\alpha$};
	\node[shift={(0.2,-0.22)}] at (1.2,0) {\small $\alpha$};
	\node[shift={(-0.2,-0.85)}] at (1.2,0) {\small $\delta$};
	
	% left middle point 
	\node[shift={(-0.2,0)}] at (1.2,-1.2) {\small $\alpha$};
	\node[shift={(0.2,0.2)}] at (1.2,-1.2) {\small $\beta$};
	\node[shift={(0.2,-0.25)}] at (1.2,-1.2) {\small $\beta$};
	
	% left bottom point (right)
	\node[shift={(-0.2,0.2)}] at (1.2,-2.4) {\small $\beta$};
	\node[shift={(0.2,0.2)}] at (1.2,-2.4) {\small $\alpha$};
	\node[shift={(0,-0.25)}] at (1.2,-2.4) {\small $\beta$};
	
	% left bottom point (left)
	\node[shift={(-0.2,0)}] at (2.4,-2.4) {\small $\gamma$};
	\node[shift={(0.17,0.7)}] at (2.4,-2.4) {\small $\gamma$};
	\node[shift={(0.4,0.2)}] at (2.4,-2.4) {\small $\gamma$};
	\node[shift={(0.9,-0.23)}] at (2.4,-2.4) {\small $\gamma$};
	\end{scope}
	
	% numbering
	
	%top gamma4
	\node[inner sep=0,draw,shape=circle, shift={(-0.6,-0.6)}] at (0,1.2) {\small $1$};
	\node[inner sep=0,draw,shape=circle, shift={(0.6,-0.6)}] at (0,1.2) {\small $2$};

	%middle gamma4
	\node[inner sep=0,draw,shape=circle, shift={(0.6,0.6)}] at (0,-1.2) {\small $3$};
	\node[inner sep=0,draw,shape=circle, shift={(-0.6,0.6)}] at (0,-1.2) {\small $4$};
	\node[inner sep=0,draw,shape=circle, shift={(-0.6,-0.6)}] at (0,-1.2) {\small $5$};
	\node[inner sep=0,draw,shape=circle, shift={(0.6,-0.6)}] at (0,-1.2) {\small $6$};
	
	%left gamma4 
	\node[inner sep=0,draw,shape=circle, shift={(0.75,0.6)}] at (-2.4,-2.4) {\small $8$};
	\node[inner sep=0,draw,shape=circle, shift={(0.5,1.8)}] at (-2.4,-2.4) {\small $7$};
	\node[inner sep=0,draw,shape=circle, shift={(-0.5,-0.3)}] at (-2.4,-2.4) {\small $14$};
	\node[inner sep=0,draw,shape=circle, shift={(1.8,-0.45)}] at (-2.4,-2.4) {\small $13$};
	
	%right gamma4
	\node[inner sep=0,draw,shape=circle, shift={(-0.75,0.6)}] at (2.4,-2.4) {\small $10$};
	\node[inner sep=0,draw,shape=circle, shift={(-0.5,1.8)}] at (2.4,-2.4) {\small $9$};
	\node[inner sep=0,draw,shape=circle, shift={(0.5,-0.3)}] at (2.4,-2.4) {\small $11$};
	\node[inner sep=0,draw,shape=circle, shift={(-1.8,-0.45)}] at (2.4,-2.4) {\small $12$};
	
	\end{tikzpicture}
	\caption{AAD of $\beta^2\delta^2$.}
	\label{AADbe2de2}
\end{figure}
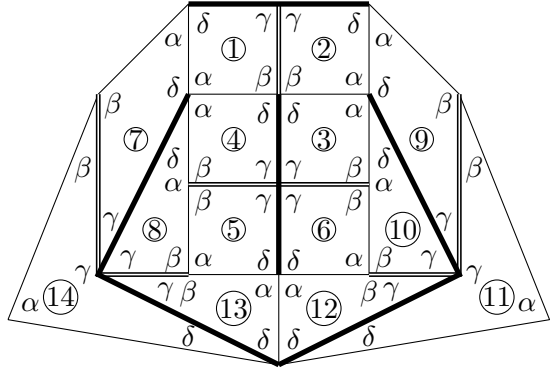

\begin{figure}[htp]
	\centering
	\begin{tikzpicture}[>=latex]

	\begin{scope}[xscale=-1]

	\draw[] 
	(0,0) -- (0,1.2) -- (-1.6, 1.8) 
	(0,0) -- (1.6, -0.6)
	(-1.6, 1.8) -- (1.6, 1.8) -- (2.8, 1.8) -- (2.8, 0.6)
	(-1.6, -0.6) -- (1.6, -0.6)
	(1.6, 1.8) -- (1.6, 3.0)
	(-1.6, -0.6)  -- (-1.6, -1.8)
	(-1.6, 1.8) -- (-2.8, 1.8) -- (-2.8, 3.0)
 	(-2.8, 0.6) -- (-2.8, 1.8)
	(2.8, 0.6) -- (2.8, -0.6) -- (2.8, -1.8)
	(1.6, -0.6) -- (2.8, -0.6)
	(-1.6, -0.6) -- (-2.8, -0.6) -- (-2.8, 0.6)
	(2.8, -1.8) -- (5.8, 3.8)
	(-2.8, 3.0) -- (-5.8, -2.6) 
	(1.6, 3.0) -- (1.6, 3.8) -- (5.8,3.8)
	(-1.6, -1.8) -- (-1.6, -2.6) -- (-5.8, -2.6)
	(1.6, 3.8) -- (0.6, 4.2)
	(-1.6, -2.6) -- (-0.6, -3.0);

	\draw[double, line width=0.6]
	(0,1.2) -- (1.6, 0.6) -- (2.8, 0.6)
	(0,0) -- (-1.6, 0.6) -- (-2.8, 0.6)
	(-1.6, 3.0) -- (1.6, 3.0)
	(-1.6, -1.8) -- (1.6, -1.8)
	(1.6, -1.8) -- (2.8, -1.8)
	(-1.6, 3.0) -- (-2.8, 3.0)
	(2.8, 1.8) -- (4.6, 3.0) -- (5.8, 3.8)
	(-2.8, -0.6) -- (-4.6,-1.8) -- (-5.8, -2.6);

	\draw[line width=2]
	(-1.6, 1.8) -- (-1.6, 0.6)
	(1.6, -0.6) -- (1.6, 0.6)
	(1.6, 0.6) -- (1.6, 1.8)
	(-1.6, -0.6) -- (-1.6,0.6)
	(-1.6, 1.8) -- (-1.6, 3.0)
	(1.6, -0.6) -- (1.6, -1.8)
	(1.6, 3.0) -- (4.6, 3.0)
	(-1.6, -1.8) -- (-4.6, -1.8)
	(2.8, -0.6) -- (4.6, 3.0)
	(-2.8, 1.8) -- (-4.6, -1.8)
	(-1.6, 3.0) -- (-1.6, 3.6)
	(1.6, -1.8) -- (1.6, -2.4)
	(1.6, 3.8) -- (1.6, 4.4)
	(-1.6, -2.6) -- (-1.6, -3.2);

	\draw[dotted, line width=2]
	(-1.6, -3.2) -- (-1.6, -3.8)
	(1.6, -2.4) -- (1.6, -3.0)
	(-1.6, 3.6) -- (-1.6, 4.2)
	(1.6, 4.4) -- (1.6, 5.0);

	\draw[dotted, line width=0.75]
	(0.6, 4.2) -- (0.2, 4.4)
	(-0.6, -3.0) -- (-0.2, -3.2);

	\node at (1.4, 3.7) {\small $\alpha$}; %T1
	\node at (1.4, 3.2) {\small $\beta$}; %T1
	\node at (-1.4, 3.2) {\small $\gamma$}; %T1
	\node at (-1.4, -3.0) {\small $\delta$}; %T1

	\node at (1.4, 1.95) {\small $\alpha$}; %T2
	\node at (1.4, 2.75) {\small $\beta$}; %T2
	\node at (-1.4, 2.75) {\small $\gamma$}; %T2
	\node at (-1.4, 2.0) {\small $\delta$}; %T2
	
	\node at (1.8, 1.95) {\small $\alpha$}; %T3
	\node at (2.7, 1.95) {\small $\beta$}; %T3
	\node at (3.9, 2.75) {\small $\gamma$}; %T3
	\node at (1.8, 2.75) {\small $\delta$}; %T3
	
	\node at (1.8, 3.65) {\small $\alpha$}; %T4
	\node at (5.2, 3.6) {\small $\beta$}; %T4
	\node at (4.6, 3.2) {\small $\gamma$}; %T4
	\node at (1.8, 3.2) {\small $\delta$}; %T4
	
	\node at (2.92, -1.3) {\small $\alpha$}; %T5
	\node at (5.3, 3.2) {\small $\beta$}; %T5
	\node at (3.0, -0.6) {\small $\delta$}; %T5
	\node at (4.75, 2.8) {\small $\gamma$}; %T5

	\node at (3.0, 0.6) {\small $\alpha$}; %T6
	\node at (2.95, 1.6) {\small $\beta$}; %T6
	\node at (4.1, 2.4) {\small $\gamma$}; %T6
	\node at (2.95, 0.1) {\small $\delta$}; %T6

	\node at (-1.4, -2.5) {\small $\alpha$}; %T7
	\node at (-1.4, -2.1) {\small $\beta$}; %T7
	\node at (1.4, -2.0) {\small $\gamma$}; %T7
	\node at (1.4, 4.1) {\small $\delta$}; %T7

	\node at (5.9, 4.0) {\small $\alpha$}; %T8
	\node at (2.8, -2.05) {\small $\beta$}; %T8
	\node at (1.8, -2.0) {\small $\gamma$}; %T8
	\node at (1.8, 4.0) {\small $\delta$}; %T8

	\node at (-0.8, 1.65) {\small $\alpha$}; %T9
	\node at (0.0, 1.4) {\small $\beta$}; %T9
	\node at (1.4, 0.9) {\small $\gamma$}; %T9
	\node at (1.4, 1.6) {\small $\delta$}; %T9

	\node at (2.6, 1.6) {\small $\alpha$}; %T10
	\node at (2.6, 0.8) {\small $\beta$}; %T10
	\node at (1.8, 0.8) {\small $\gamma$}; %T10
	\node at (1.8, 1.6) {\small $\delta$}; %T10

	\node at (0.2, 0.1) {\small $\alpha$}; %T11
	\node at (0.2, 0.8) {\small $\beta$}; %T11
	\node at (1.4, 0.4) {\small $\gamma$}; %T11
	\node at (1.4, -0.3) {\small $\delta$}; %T11

	\node at (2.6, -0.4) {\small $\alpha$}; %T12
	\node at (2.6, 0.35) {\small $\beta$}; %T12
	\node at (1.8, 0.4) {\small $\gamma$}; %T12
	\node at (1.8, -0.4) {\small $\delta$}; %T12

	\node at (2.6, -0.8) {\small $\alpha$}; %T13
	\node at (2.6, -1.6) {\small $\beta$}; %T13
	\node at (1.8, -1.6) {\small $\gamma$}; %T13
	\node at (1.8, -0.8) {\small $\delta$}; %T13

	\node at (-1.4, -0.8) {\small $\alpha$}; %T14
	\node at (-1.4, -1.6) {\small $\beta$}; %T14
	\node at (1.4, -1.6) {\small $\gamma$}; %T14
	\node at (1.4, -0.8) {\small $\delta$}; %T14

	\node at (0.8, -0.45) {\small $\alpha$}; %T15
	\node at (0.0, -0.3) {\small $\beta$}; %T15
	\node at (-1.4, 0.3) {\small $\gamma$}; %T15
	\node at (-1.4, -0.4) {\small $\delta$}; %T15

	\node at (-0.2, 1.0) {\small $\alpha$}; %T16
	\node at (-0.2, 0.3) {\small $\beta$}; %T16
	\node at (-1.4, 0.7) {\small $\gamma$}; %T16
	\node at (-1.4, 1.5) {\small $\delta$}; %T16

	\node at (-5.9, -2.8) {\small $\alpha$}; %T17
	\node at (-2.8, 3.2) {\small $\beta$}; %T17
	\node at (-1.8, 3.2) {\small $\gamma$}; %T17
	\node at (-1.8, -2.8) {\small $\delta$}; %T17

	\node at (-2.6, 1.6) {\small $\alpha$}; %T18	
	\node at (-2.6, 0.8) {\small $\beta$}; %T18
	\node at (-1.8, 0.8) {\small $\gamma$}; %T18
	\node at (-1.8, 1.6) {\small $\delta$}; %T18

	\node at (-2.6, 2.0) {\small $\alpha$}; %T19
	\node at (-2.6, 2.75) {\small $\beta$}; %T19
	\node at (-1.8, 2.75) {\small $\gamma$}; %T19
	\node at (-1.8, 2.0) {\small $\delta$}; %T19

	\node at (-2.6, -0.4) {\small $\alpha$}; %T20
	\node at (-2.6, 0.3) {\small $\beta$}; %T20
	\node at (-1.8, 0.3) {\small $\gamma$}; %T20
	\node at (-1.8, -0.4) {\small $\delta$}; %T20

	\node at (-2.95, 2.4) {\small $\alpha$}; %T21
	\node at (-5.3, -2.0) {\small $\beta$}; %T21
	\node at (-4.8, -1.6) {\small $\gamma$}; %T21
	\node at (-3.0, 1.8) {\small $\delta$}; %T21

	\node at (-2.95, 0.6) {\small $\alpha$}; %T22
	\node at (-3.0, -0.5) {\small $\beta$}; %T22
	\node at (-4.0, -1.1) {\small $\gamma$}; %T22
	\node at (-2.9, 1.2) {\small $\delta$}; %T22

	\node at (-1.8, -0.8) {\small $\alpha$}; %T23
	\node at (-2.75, -0.85) {\small $\beta$}; %T23
	\node at (-3.8, -1.6) {\small $\gamma$}; %T23
	\node at (-1.8, -1.6) {\small $\delta$}; %T23

	\node at (-1.8, -2.4) {\small $\alpha$}; %T24
	\node at (-5.0, -2.4) {\small $\beta$}; %T24
	\node at (-4.5, -2.0) {\small $\gamma$}; %T24
	\node at (-1.8, -2.0) {\small $\delta$}; %T24

	\node[inner sep=1,draw,shape=circle] at (0.0, 3.6) {\small $1$}; %T1
	\node[inner sep=1,draw,shape=circle] at (0.0, 2.4) {\small $2$}; %T2
	\node[inner sep=1,draw,shape=circle] at (2.4, 2.4) {\small $3$}; %T3
	\node[inner sep=1,draw,shape=circle] at (3.2, 3.4) {\small $4$}; %T4
	\node[inner sep=1,draw,shape=circle] at (4.05, 1.2) {\small $5$}; %T5
	\node[inner sep=1,draw,shape=circle] at (3.2, 1.2) {\small $6$}; %T6
	\node[inner sep=1,draw,shape=circle] at (0.0, -2.4) {\small $7$}; %T7
	\node[inner sep=1,draw,shape=circle] at (2.2, -2.4) {\small $8$}; %T8
	
	\node[inner sep=1,draw,shape=circle] at (0.8, 1.4) {\small $9$}; %T9
	\node[inner sep=1,draw,shape=circle] at (2.2, 1.2) {\small $10$}; %T10
	\node[inner sep=1,draw,shape=circle] at (0.8, 0.3) {\small $11$}; %T11
	\node[inner sep=1,draw,shape=circle] at (2.2, 0.0) {\small $12$}; %T12
	\node[inner sep=1,draw,shape=circle] at (2.2, -1.2) {\small $13$}; %T13
	\node[inner sep=1,draw,shape=circle] at (0.0, -1.2) {\small $14$}; %T14
	\node[inner sep=1,draw,shape=circle] at (-0.8, -0.2) {\small $15$}; %T15
	\node[inner sep=1,draw,shape=circle] at (-0.8, 0.9) {\small $16$}; %T16

	\node[inner sep=1,draw,shape=circle] at (-2.2, 0.0) {\small $17$}; %T17
	\node[inner sep=1,draw,shape=circle] at (-2.2, 1.2) {\small $18$}; %T18
	\node[inner sep=1,draw,shape=circle] at (-2.2, 2.4) {\small $19$}; %T19
	\node[inner sep=1,draw,shape=circle] at (-2.2, 3.6) {\small $20$}; %T20

	\node[inner sep=0.5,draw,shape=circle] at (-4.05, 0.0) {\small $21$}; %T21
	\node[inner sep=1,draw,shape=circle] at (-3.2, 0.0) {\small $22$}; %T22

	\node[inner sep=1,draw,shape=circle] at (-2.4, -1.2) {\small $23$}; %T23
	\node[inner sep=1,draw,shape=circle] at (-3.2, -2.2) {\small $24$}; %T24

	\end{scope}

	\end{tikzpicture}
	\caption{Tiling of $f=24$, $\AVC \equiv \{ \alpha\beta^2, \alpha^2\delta^2, \beta^2\delta^2, \gamma^4, \alpha\delta^4 \}$.}
	\label{Tf24b2d2}
\end{figure}

Next we may assume $\beta^2\delta^2$ is not a vertex and we have $\AVC = \{ \alpha\beta^2, \alpha^2\delta^2, \gamma^4, \alpha\delta^4 \}$. If $\alpha^2\delta^2$ is a vertex, then in $\alpha\beta^2, \alpha^2\delta^2, \gamma^4$, we have $\# \alpha > \# \delta$ so $\alpha\delta^4$ must be a vertex. Meanwhile, if $\alpha\delta^4$ is a vertex, the AAD implies $\alpha^2\cdots$ must be a vertex, so $\alpha^2\delta^2$. Then both $\alpha^2\delta^2, \alpha\delta^4$ must be vertices. Starting at $\alpha\delta^4$, by $\alpha\beta\cdots = \alpha\beta^2, \gamma\cdots = \gamma^4$ we determine $T_1, \cdots T_{10}$ in Figure \ref{Tf24a2d2ad4}. By $\beta^2\cdots = \alpha\beta^2$, $\beta_2\beta_3\cdots = \alpha\beta^2$ so we also determine $T_{11}$. Then we have $\alpha_2\cdots = \alpha\delta^3\cdots = \alpha\delta^4$. So $T_{12}$ is determined. By $\gamma_{11}\cdots = \gamma^4$, we determine $T_{13}, T_{14}$. By the same token, we also determine $T_{15}, T_{16}$. By $\alpha_{10}\alpha_{12}\cdots = \alpha^2\delta^2$, we determine $T_{17}, T_{18}$. Then by $\alpha_{9}\beta_{18}\cdots = \alpha\beta^2$, $T_{19}$ is also determined. By the same token, $T_{20}, T_{21}, T_{22}$ are determined. By $\alpha_{14}\beta_{17}\cdots = \alpha_{15}\beta_{20}\cdots = \alpha\beta^2$, $T_{23}, T_{24}$ are determined. Hence we obtain the tiling for $f=24, \AVC = \{ \alpha\beta^2, \alpha^2\delta^2, \gamma^4, \alpha\delta^4 \}$.

\begin{figure}[htp]
	\centering
	\begin{tikzpicture}[>=latex,scale=0.95]

	\begin{scope}[xscale=-1] %FIRST PIC	

	\draw[] 
	(0, -1.2) -- (0,0) -- (1.2,0) -- (1.2, 1.2)
	(1.2, 1.2) -- (1.2, 2.4) -- (0, 2.4)
	(0, -1.2) -- (0, -2.4) -- (1.2, -2.4)
	(0,0) -- (-1.2,0.6) -- (-1.2, 1.8)
	(1.2,0) -- (2.4, -0.6) -- (2.4, -1.8)
	(0, 2.4) -- (-1.2, 3.0) -- (-1.2, 1.8)
	(1.2, -2.4) -- (2.4, -3.0) -- (2.4, -1.8) 
	(0, 2.4) -- (2.4, 4.2)
	(1.2, -2.4) -- (-1.2, -4.2)
	(2.4, -0.6) -- (3.6, 1.8)
	(-1.2, 0.6) -- (-2.4, -1.8)
	(2.4, 4.2) -- (4.8, 4.2) -- (3.6, 1.8)
	(-1.2, -4.2) -- (-3.6, -4.2) -- (-2.4, -1.8)
	(2.4, 4.2) -- (2.4, 4.8) 
	(-1.2, -4.2) -- (-1.2, -4.8);

	\draw[double, line width=0.6]
	(0,1.2) -- (1.2,1.2)
	(0,-1.2) -- (1.2,-1.2)
	(1.2, 2.4) -- (2.4, 2.4) -- (3.6, 1.8)
	(0,-2.4) -- (-1.2, -2.4) -- (-2.4, -1.8) 
	(0,1.2) -- (-1.2,1.8) 
	(1.2,-1.2) -- (2.4, -1.8)
	(2.4, -3.0) -- (4.8, -4.2)
	(-1.2, 3.0) -- (-3.6, 4.2)
	(-3.6, -4.2) -- (-3.6, 4.2)
	(4.8, 4.2) -- (4.8, -4.2);

	\draw[line width=2]
	(0,0) -- (0,1.2) -- (0,2.4)
	(1.2,0) -- (1.2, -1.2) -- (1.2, -2.4)
	(0,0)  -- (-1.2, -2.4)
	(1.2,0) -- (2.4, 2.4) 
	(2.4, 2.4) -- (2.4, 4.2)
	(-1.2, -2.4) -- (-1.2, -4.2)
	(2.4, -0.6) -- (4.8, -4.2)
	(-1.2, 0.6) -- (-3.6, 4.2)
	(-3.6, 4.2) -- (2.4, 4.2)
	(-1.2, -4.2) -- (4.8, -4.2);

	\draw[dotted, line width=0.6]
	(2.4, 4.8) -- (2.4, 5.2)
	(-1.2, -4.8) -- (-1.2, -5.2);

	\node at (0.2, -0.2) {\small $\alpha$}; %T1
	\node at (0.2, -0.95) {\small $\beta$}; %T1
	\node at (1, -0.95) {\small $\gamma$}; %T1
	\node at (1, -0.2) {\small $\delta$}; %T1

	\node at (0.2, 0.2) {\small $\delta$}; %T2
	\node at (1, 0.2) {\small $\alpha$}; %T2
	\node at (1, 0.9) {\small $\beta$}; %T2
	\node at (0.2, 0.95) {\small $\gamma$}; %T2

	\node at (1, 2.2) {\small $\alpha$}; %T3
	\node at (1, 1.4) {\small $\beta$}; %T3
	\node at (0.2, 1.4) {\small $\gamma$}; %T3
	\node at (0.2, 2.2) {\small $\delta$}; %T3

	\node at (-1.0, 2.7) {\small $\alpha$}; %T4
	\node at (-1.0, 2.0) {\small $\beta$}; %T4
	\node at (-0.2, 1.5) {\small $\gamma$}; %T4
	\node at (-0.2, 2.3) {\small $\delta$}; %T4

	\node at (-1.0, 0.7) {\small $\alpha$}; %T5
	\node at (-1.0, 1.4) {\small $\beta$}; %T5
	\node at (-0.2, 1) {\small $\gamma$}; %T5	
	\node at (-0.2, 0.3) {\small $\delta$}; %T5

	\node at (-1.1, 0.3) {\small $\alpha$}; %T6
	\node at (-2.1, -1.6) {\small $\beta$}; %T6
	\node at (-1.25, -2.1) {\small $\gamma$}; %T6
	\node at (-0.25, -0.1) {\small $\delta$}; %T6

	\node at (-0.2, -1.2) {\small $\alpha$}; %T7
	\node at (-0.15, -2.2) {\small $\beta$}; %T7
	\node at (-0.9, -2.2) {\small $\gamma$}; %T7
	\node at (-0.15, -0.6) {\small $\delta$}; %T7

	\node at (0.2, -2.2) {\small $\alpha$}; %T8
	\node at (1, -1.45) {\small $\gamma$}; %T8
	\node at (0.2, -1.45) {\small $\beta$}; %T8
	\node at (1, -2.2) {\small $\delta$}; %T8

	\node at (2.25, -2.75) {\small $\alpha$}; %T9
	\node at (2.2, -2.0) {\small $\beta$}; %T9
	\node at (1.4, -1.55) {\small $\gamma$}; %T9
	\node at (1.4, -2.2) {\small $\delta$}; %T9

	\node at (2.2, -0.7) {\small $\alpha$}; %T10
	\node at (2.2, -1.4) {\small $\beta$}; %T10
	\node at (1.4, -1.1) {\small $\gamma$}; %T10
	\node at (1.4, -0.3) {\small $\delta$}; %T10

	\node at (1.35, 1.2) {\small $\alpha$}; %T11
	\node at (1.4, 2.15) {\small $\beta$}; %T11
	\node at (2.1, 2.15) {\small $\gamma$}; %T11
	\node at (1.3, 0.5) {\small $\delta$}; %T11
	
	\node at (3.3, 1.6) {\small $\beta$}; %T12
	\node at (2.45, 2.1) {\small $\gamma$}; %T12
	\node at (1.45, 0.1) {\small $\delta$}; %T12
	\node at (2.3, -0.4) {\small $\alpha$}; %T12

	\node at (0.45, 2.55) {\small $\alpha$}; %T13
	\node at (1.2, 2.6) {\small $\beta$}; %T13
	\node at (2.2, 2.6) {\small $\gamma$}; %T13
	\node at (2.2, 3.8) {\small $\delta$}; %T13

	\node at (4.5, 4.0) {\small $\alpha$}; %T14
	\node at (3.6, 2.1) {\small $\beta$}; %T14
	\node at (2.6, 2.5) {\small $\gamma$}; %T14
	\node at (2.6, 4.0) {\small $\delta$}; %T14

	\node at (-3.3, -4.0) {\small $\alpha$}; %T15
	\node at (-2.3, -2.2) {\small $\beta$}; %T15
	\node at (-1.4, -2.6) {\small $\gamma$}; %T15
	\node at (-1.4, -4.0) {\small $\delta$}; %T15

	\node at (0.8, -2.55) {\small $\alpha$}; %T16
	\node at (0.0, -2.65) {\small $\beta$}; %T16
	\node at (-1.0, -2.65) {\small $\gamma$}; %T16
	\node at (-1.0, -3.8) {\small $\delta$}; %T19

	\node at (3.8, 1.75) {\small $\alpha$}; %T17
	\node at (4.6, 3.4) {\small $\beta$}; %T17
	\node at (2.6, -0.5) {\small $\delta$}; %T17
	\node at (4.6, -3.6) {\small $\gamma$}; %T17

	\node at (2.6, -1.85) {\small $\alpha$}; %T18
	\node at (2.55, -2.75) {\small $\beta$}; %T18
	\node at (4.1, -3.6) {\small $\gamma$}; %T18
	\node at (2.55, -1.15) {\small $\delta$}; %T18

	\node at (1.2, -2.65) {\small $\alpha$}; %T19
	\node at (2.3, -3.3) {\small $\beta$}; %T19
	\node at (3.9, -4.0) {\small $\gamma$}; %T19
	\node at (-0.7, -4.0) {\small $\delta$}; %T19

	\node at (-2.6, -1.8) {\small $\alpha$}; %T20
	\node at (-3.4, -3.5) {\small $\beta$}; %T20
	\node at (-3.4, 3.4) {\small $\gamma$}; %T20
	\node at (-1.4, 0.55) {\small $\delta$}; %T20

	\node at (-1.4, 1.8) {\small $\alpha$}; %T21
	\node at (-1.4, 2.8) {\small $\beta$}; %T21
	\node at (-3.0, 3.6) {\small $\gamma$}; %T21
	\node at (-1.3, 1.1) {\small $\delta$}; %T21

	\node at (0.0, 2.6) {\small $\alpha$}; %T22
	\node at (-1.0, 3.2) {\small $\beta$}; %T22
	\node at (-2.8, 4.0) {\small $\gamma$}; %T22
	\node at (1.8, 4.0) {\small $\delta$}; %T22

	\node at (2.6, 4.4) {\small $\alpha$}; %T23
	\node at (5.0, 4.4) {\small $\beta$}; %T23
	\node at (5.0, -4.4) {\small $\gamma$}; %T23
	\node at (-1.0, -4.4) {\small $\delta$}; %T23
	
	\node at (-1.4, -4.4) {\small $\alpha$}; %T24
	\node at (-3.8, -4.4) {\small $\beta$}; %T24
	\node at (-3.8, 4.4) {\small $\gamma$}; %T24
	\node at (2.2, 4.4) {\small $\delta$}; %T24

	\node[inner sep=1,draw,shape=circle] at (0.6, -0.6) {\small $1$}; %T1
	\node[inner sep=1,draw,shape=circle] at (0.6, 0.6) {\small $2$}; %T2
	\node[inner sep=1,draw,shape=circle] at (0.6, 1.8) {\small $3$}; %T3
	\node[inner sep=1,draw,shape=circle] at (-0.6, 2.1) {\small $4$}; %T4
	\node[inner sep=1,draw,shape=circle] at (-0.6, 0.95) {\small $5$}; %T5
	\node[inner sep=1,draw,shape=circle] at (-1.1, -0.8) {\small $6$}; %T6
	\node[inner sep=1,draw,shape=circle] at (-0.35, -1.6) {\small $7$}; %T7
	\node[inner sep=1,draw,shape=circle] at (0.6, -1.8) {\small $8$}; %T8
	\node[inner sep=1,draw,shape=circle] at (1.8, -2.1) {\small $9$}; %T9
	\node[inner sep=1,draw,shape=circle] at (1.8, -0.9) {\small $10$}; %T10
	\node[inner sep=1,draw,shape=circle] at (1.6, 1.6) {\small $11$}; %T11
	\node[inner sep=1,draw,shape=circle] at (2.4, 0.8) {\small $12$}; %T2
	\node[inner sep=1,draw,shape=circle] at (1.6, 3.0) {\small $13$}; %T13
	\node[inner sep=1,draw,shape=circle] at (3.4, 3.0) {\small $14$}; %T14
	\node[inner sep=1,draw,shape=circle] at (-2.15, -3.0) {\small $15$}; %T15
	\node[inner sep=1,draw,shape=circle] at (-0.35, -3.0) {\small $16$}; %T16
	\node[inner sep=1,draw,shape=circle] at (4.0, 0.0) {\small $17$}; %T17
	\node[inner sep=1,draw,shape=circle] at (3.0, -2.6) {\small $18$}; %T18
	\node[inner sep=1,draw,shape=circle] at (1.8, -3.6) {\small $19$}; %T19
	\node[inner sep=1,draw,shape=circle] at (-2.8, 0.0) {\small $20$}; %T20
	\node[inner sep=1,draw,shape=circle] at (-1.8, 2.5) {\small $21$}; %T21
	\node[inner sep=1,draw,shape=circle] at (-0.6, 3.6) {\small $22$}; %T22
	\node[inner sep=1,draw,shape=circle] at (1.8, -4.8) {\small $23$}; %T23
	\node[inner sep=1,draw,shape=circle] at (-0.6, 4.8) {\small $24$}; %T24

	\end{scope}

	\begin{scope}[xshift=9 cm, xscale=-1]

	\draw[] 
	(0, -1.2) -- (0,0) -- (1.2,0) -- (1.2, 1.2)
	(1.2, 1.2) -- (1.2, 2.4) -- (0, 2.4)
	(0, -1.2) -- (0, -2.4) -- (1.2, -2.4)
	%(0,0) -- (-1.2,0.6) -- (-1.2, 1.8)
	%(1.2,0) -- (2.4, -0.6) -- (2.4, -1.8)
	(0, 2.4) -- (-1.2, 3.0) %-- (-1.2, 1.8)
	(1.2, -2.4) -- (2.4, -3.0) %-- (2.4, -1.8) 
	(0, 2.4) -- (2.4, 4.2)
	(1.2, -2.4) -- (-1.2, -4.2)
	%(2.4, -0.6) -- (3.6, 1.8)
	%(-1.2, 0.6) -- (-2.4, -1.8)
	(2.4, 4.2) -- (3.6, 4.2) %-- (3.6, 1.8)
	(-1.2, -4.2) -- (-2.4, -4.2) %-- (-2.4, -1.8)
	(2.4, 4.2) -- (2.4, 4.8) 
	(-1.2, -4.2) -- (-1.2, -4.8);

	\draw[double, line width=0.6]
	(0,1.2) -- (1.2,1.2)
	(0,-1.2) -- (1.2,-1.2)
	(1.2, 2.4) -- (2.4, 2.4) %-- (3.6, 1.8)
	(0,-2.4) -- (-1.2, -2.4) %-- (-2.4, -1.8) 
	%(0,1.2) -- (-1.2,1.8) 
	%(1.2,-1.2) -- (2.4, -1.8)
	%(2.4, -3.0) -- (4.8, -4.2)
	%(-1.2, 3.0) -- (-3.6, 4.2)
	%(-3.6, -4.2) -- (-3.6, 4.2)
	%(4.8, 4.2) -- (4.8, -4.2)
	;

	\draw[line width=2]
	(0,1.8) -- (0,2.4)
	(1.2, -1.8) -- (1.2, -2.4)
	%(0,0)  -- (-1.2, -2.4)
	%(1.2,0) -- (2.4, 2.4) 
	(2.4, 3.4) -- (2.4, 4.2)
	(-1.2, -3.4) -- (-1.2, -4.2)
	%(2.4, -0.6) -- (4.8, -4.2)
	%(-1.2, 0.6) -- (-3.6, 4.2)
	%(-3.6, 4.2) -- (2.4, 4.2)
	%(-1.2, -4.2) -- (4.8, -4.2)
	;

	\draw[dotted, line width=0.6]
	(2.4, 4.8) -- (2.4, 5.2)
	(-1.2, -4.8) -- (-1.2, -5.2);

	\node at (0.2, -0.2) {\small $\alpha$}; %T1
	\node at (0.2, -0.95) {\small $\beta$}; %T1
	%\node at (1, -0.95) {\small $\gamma$}; %T1
	%\node at (1, -0.2) {\small $\delta$}; %T1

	\node at (1, 0.2) {\small $\alpha$}; %T2
	\node at (1, 0.9) {\small $\beta$}; %T2
	%\node at (0.2, 0.95) {\small $\gamma$}; %T2
	%\node at (0.2, 0.2) {\small $\delta$}; %T2

	\node at (1, 2.2) {\small $\alpha$}; %T3
	\node at (1, 1.4) {\small $\beta$}; %T3
	%\node at (0.2, 1.4) {\small $\gamma$}; %T3
	\node at (0.2, 2.2) {\small $\delta$}; %T3

	%\node at (-1.0, 2.7) {\small $\alpha$}; %T4
	%\node at (-1.0, 2.0) {\small $\beta$}; %T4
	%\node at (-0.2, 1.5) {\small $\gamma$}; %T4
	\node at (-0.2, 2.3) {\small $\delta$}; %T4

	%\node at (-1.0, 0.7) {\small $\alpha$}; %T5
	%\node at (-1.0, 1.4) {\small $\beta$}; %T5
	%\node at (-0.2, 1) {\small $\gamma$}; %T5	
	%\node at (-0.2, 0.3) {\small $\delta$}; %T5

	%\node at (-1.1, 0.3) {\small $\alpha$}; %T6
	%\node at (-2.1, -1.6) {\small $\beta$}; %T6
	%\node at (-1.25, -2.1) {\small $\gamma$}; %T6
	%\node at (-0.25, -0.1) {\small $\delta$}; %T6

	\node at (-0.2, -1.2) {\small $\alpha$}; %T7
	\node at (-0.15, -2.2) {\small $\beta$}; %T7
	%\node at (-0.9, -2.2) {\small $\gamma$}; %T7
	%\node at (-0.15, -0.6) {\small $\delta$}; %T7

	\node at (0.2, -2.2) {\small $\alpha$}; %T8
	\node at (0.2, -1.45) {\small $\beta$}; %T8
	%\node at (1, -1.45) {\small $\gamma$}; %T8
	\node at (1, -2.2) {\small $\delta$}; %T8

	%\node at (2.25, -2.75) {\small $\alpha$}; %T9
	%\node at (2.2, -2.0) {\small $\beta$}; %T9
	%\node at (1.4, -1.55) {\small $\gamma$}; %T9
	\node at (1.4, -2.2) {\small $\delta$}; %T9

	%\node at (2.2, -0.7) {\small $\alpha$}; %T10
	%\node at (2.2, -1.4) {\small $\beta$}; %T10
	%\node at (1.4, -1.1) {\small $\gamma$}; %T10
	%\node at (1.4, -0.3) {\small $\delta$}; %T10

	\node at (1.35, 1.2) {\small $\alpha$}; %T11
	\node at (1.4, 2.15) {\small $\beta$}; %T11
	%\node at (2.1, 2.15) {\small $\gamma$}; %T11
	%\node at (1.3, 0.5) {\small $\delta$}; %T11
	
	%\node at (2.3, -0.4) {\small $\alpha$}; %T12
	%\node at (3.3, 1.6) {\small $\beta$}; %T12
	%\node at (2.45, 2.1) {\small $\gamma$}; %T12
	%\node at (1.45, 0.1) {\small $\delta$}; %T12

	\node at (0.45, 2.55) {\small $\alpha$}; %T13
	\node at (1.2, 2.6) {\small $\beta$}; %T13
	%\node at (2.2, 2.6) {\small $\gamma$}; %T13
	\node at (2.2, 3.8) {\small $\delta$}; %T13

	%\node at (4.5, 4.0) {\small $\alpha$}; %T14
	%\node at (3.6, 2.1) {\small $\beta$}; %T14
	%\node at (2.6, 2.5) {\small $\gamma$}; %T14
	\node at (2.6, 4.0) {\small $\delta$}; %T14

	%\node at (-3.3, -4.0) {\small $\alpha$}; %T15
	%\node at (-2.3, -2.2) {\small $\beta$}; %T15
	%\node at (-1.4, -2.6) {\small $\gamma$}; %T15
	\node at (-1.4, -4.0) {\small $\delta$}; %T15

	\node at (0.8, -2.55) {\small $\alpha$}; %T16
	\node at (0.0, -2.65) {\small $\beta$}; %T16
	%\node at (-1.0, -2.65) {\small $\gamma$}; %T16
	\node at (-1.0, -3.8) {\small $\delta$}; %T19

	%\node at (3.8, 1.75) {\small $\alpha$}; %T17
	%\node at (4.6, 3.4) {\small $\beta$}; %T17
	%\node at (4.6, -3.6) {\small $\gamma$}; %T17
	%\node at (2.6, -0.5) {\small $\delta$}; %T17

	%\node at (2.6, -1.85) {\small $\alpha$}; %T18
	%\node at (2.55, -2.75) {\small $\beta$}; %T18
	%\node at (4.1, -3.6) {\small $\gamma$}; %T18
	%\node at (2.55, -1.15) {\small $\delta$}; %T18

	\node at (1.2, -2.65) {\small $\alpha$}; %T19
	%\node at (2.3, -3.3) {\small $\beta$}; %T19
	%\node at (3.9, -4.0) {\small $\gamma$}; %T19
	%\node at (-0.7, -4.0) {\small $\delta$}; %T19

	%\node at (-2.6, -1.8) {\small $\alpha$}; %T20
	%\node at (-3.4, -3.5) {\small $\beta$}; %T20
	%\node at (-3.4, 3.4) {\small $\gamma$}; %T20
	%\node at (-1.4, 0.55) {\small $\delta$}; %T20

	%\node at (-1.4, 1.8) {\small $\alpha$}; %T21
	%\node at (-1.4, 2.8) {\small $\beta$}; %T21
	%\node at (-3.0, 3.6) {\small $\gamma$}; %T21
	%\node at (-1.3, 1.1) {\small $\delta$}; %T21

	\node at (0.0, 2.6) {\small $\alpha$}; %T22
	%\node at (-1.0, 3.2) {\small $\beta$}; %T22
	%\node at (-2.8, 4.0) {\small $\gamma$}; %T22
	%\node at (1.8, 4.0) {\small $\delta$}; %T22

	\node at (2.6, 4.4) {\small $\alpha$}; %T23
	%\node at (5.0, 4.4) {\small $\beta$}; %T23
	%\node at (5.0, -4.4) {\small $\gamma$}; %T23
	%\node at (-1.0, -4.4) {\small $\delta$}; %T23
	
	\node at (-1.4, -4.4) {\small $\alpha$}; %T24
	%\node at (-3.8, -4.4) {\small $\beta$}; %T24
	%\node at (-3.8, 4.4) {\small $\gamma$}; %T24
	%\node at (2.2, 4.4) {\small $\delta$}; %T24

	\node at (3.0, 0.0) {\small $R_1$}; %R1
	\node at (-1.8, 0.0) {\small $R_2$}; %R2

	\node at (1.7, 0.0) {\small $\delta^{\frac{f+8}{8}}$}; %R1
	\node at (-0.6, -4.4) {\small $\delta^{\frac{f-8}{8}}$}; %R1
	\node at (-0.5, 0.0) {\small $\delta^{\frac{f+8}{8}}$}; %R2
	\node at (1.8, 4.4) {\small $\delta^{\frac{f-8}{8}}$}; %R2

	\node[inner sep=1,draw,shape=circle] at (0.6, -0.6) {\small $1$}; %T1
	\node[inner sep=1,draw,shape=circle] at (0.6, 0.6) {\small $2$}; %T2
	\node[inner sep=1,draw,shape=circle] at (0.6, 1.8) {\small $3$}; %T3
	%\node[inner sep=1,draw,shape=circle] at (-0.6, 2.1) {\small $4$}; %T4
	%\node[inner sep=1,draw,shape=circle] at (-0.6, 0.95) {\small $5$}; %T5
	%\node[inner sep=1,draw,shape=circle] at (-1.1, -0.8) {\small $6$}; %T6
	%\node[inner sep=1,draw,shape=circle] at (-0.35, -1.6) {\small $7$}; %T7
	\node[inner sep=1,draw,shape=circle] at (0.6, -1.8) {\small $8$}; %T8
	%\node[inner sep=1,draw,shape=circle] at (1.8, -2.1) {\small $9$}; %T9
	%\node[inner sep=1,draw,shape=circle] at (1.8, -0.9) {\small $10$}; %T10
	%\node[inner sep=1,draw,shape=circle] at (1.6, 1.6) {\small $11$}; %T11
	%\node[inner sep=1,draw,shape=circle] at (2.4, 0.8) {\small $12$}; %T2
	\node[inner sep=1,draw,shape=circle] at (1.6, 3.0) {\small $13$}; %T13
	%\node[inner sep=1,draw,shape=circle] at (3.4, 3.0) {\small $14$}; %T14
	%\node[inner sep=1,draw,shape=circle] at (-2.15, -3.0) {\small $15$}; %T15
	\node[inner sep=1,draw,shape=circle] at (-0.35, -3.0) {\small $16$}; %T16
	%\node[inner sep=1,draw,shape=circle] at (4.0, 0.0) {\small $17$}; %T17
	%\node[inner sep=1,draw,shape=circle] at (3.0, -2.6) {\small $18$}; %T18
	%\node[inner sep=1,draw,shape=circle] at (1.8, -3.6) {\small $19$}; %T19
	%\node[inner sep=1,draw,shape=circle] at (-2.8, 0.0) {\small $20$}; %T20
	%\node[inner sep=1,draw,shape=circle] at (-1.8, 2.5) {\small $21$}; %T21
	%\node[inner sep=1,draw,shape=circle] at (-0.6, 3.6) {\small $22$}; %T22
	%\node[inner sep=1,draw,shape=circle] at (1.8, -4.8) {\small $23$}; %T23
	%\node[inner sep=1,draw,shape=circle] at (-0.6, 4.8) {\small $24$}; %T24

	\end{scope}
	
	\end{tikzpicture}
	\caption{Tiling of $f=24$, $\AVC \equiv \{ \alpha\beta^2, \alpha^2\delta^2, \gamma^4, \alpha\delta^4 \}$}
	\label{Tf24a2d2ad4}
	
\end{figure}

\end{case*}

\begin{case*}[$\AVC = \{\alpha\beta^2, \alpha^2\delta^2, \gamma^4, \alpha\delta^{\frac{f+8}{8}}, \beta^2\delta^{\frac{f-8}{8}}, \delta^{\frac{f}{4}} \}$] When $\delta^{\frac{f}{4}}$ is a vertex, we get the $(\frac{f}{4},4)$-earth map tiling of $\AVC \equiv \{  \alpha\beta^2, \alpha^2\delta^2,  \gamma^4, \delta^{\frac{f}{4}} \}$. Then we may assume $\delta^{\frac{f}{4}}$ is not a vertex. When $\beta^2\delta^{\frac{f-8}{8}}$ is a vertex, we know that $\frac{f-8}{8} \ge 2$, that is, $f\ge24$. When $f=24$, we have seen the tiling in Figure \ref{Tf24b2d2} obtained by the unique AAD of $\beta^2\delta^2$ in Figure \ref{AADbe2de2}. When $f>24$, the same AAD applies to $\beta^2\delta^{\frac{f-8}{8}}$ as indicated in the second picture of Figure \ref{AADbe2ded} where $R$ can be viewed as cutting out $T_3, T_4, T_5, T_6$ in Figure \ref{AADbe2de2}, along the common edge of $T_{12}, T_{13}$ and filled by $\frac{f-24}{16}$ copies of zones which are given by $T_{k_1}, ... T_{k_8}$ plus half a time zone given by $T_{k_1}, T_{k_2}, T_{k_3}, T_{k_4}$ in Figure \ref{pqEMT}. For such region $R$, we may express it as $R=R(d, d-2)$ where $d, d-2$ are the multiplicities of $\delta$ in the correponding vertices in the first picture of Figure \ref{AADbe2ded}. So a tiling is uniquely determined for each $f>24$ and $f \equiv 0 \mod 8$ as in Figure \ref{b2dFamily} where $R_1 = R_2 = R(\frac{f-8}{8}, \frac{f-24}{8})$. \\

\begin{figure}[htp]
	\centering
	\begin{tikzpicture}[>=latex,scale=0.7]

	\begin{scope}[]

\foreach \a in {0,1,2,3,4,5,6,7,8,9}
\draw[rotate=36*\a]
(90:3) -- (126:3);

\coordinate (O) at (0,0);
\coordinate (A) at (90:0.8);
\coordinate (B) at (270:0.8);
\coordinate (L) at (162:1.8);
\coordinate (R) at (342:1.8);

\coordinate (A1) at (90:3);
\coordinate (A2) at (126:3);
\coordinate (A3) at (162:3);
\coordinate (A4) at (198:3);
\coordinate (A5) at (234:3);
\coordinate (A6) at (270:3);
\coordinate (A7) at (306:3);
\coordinate (A8) at (342:3);
\coordinate (A9) at (18:3);
\coordinate (A10) at (54:3);

\coordinate[shift={(-18:0.9)}] (A3L) at (A3);
\coordinate[shift={(89:0.9)}] (A5L) at (A5);
\coordinate[shift={(43:0.9)}] (A5B) at (A5);

\coordinate[shift={(198:0.9)}] (A9L) at (A9);
\coordinate[shift={(91:0.9)}] (A7L) at (A7);
\coordinate[shift={(137:0.9)}] (A7B) at (A7);

\draw
(A5) -- (A5B)
(A7) -- (A7B);

% b edge 

\draw[double, line width=0.6] 
(A3) -- (A3L)
(A9) -- (A9L);

% c edge 

\draw[line width=2]
(A5) -- (A5L)
(A7) -- (A7L);

% A1
\node[shift={(0.02,-0.35)}] at (A1) {\small $\delta^{d}$};

% A2
\node[shift={(306:0.2)}] at (A2) {\small $\alpha$};

% A3
\node[shift={(0.3,0.15)}] at (A3) {\small $\beta$};
\node[shift={(0.2,-0.35)}] at (A3) {\small $\beta$};

% A4
\node[shift={(18:0.2)}] at (A4) {\small $\alpha$};

% A5
\node[shift={(-0.2,0.65)}] at (A5) {\small $\delta$};
\node[shift={(0.2,0.5)}] at (A5) {\small $\delta$};
\node[shift={(0.4,0.08)}] at (A5) {\small $\alpha$};

% A6
\node[shift={(-0.02,0.4)}] at (A6) {\small $\delta^{d-2}$};

% A7
\node[shift={(0.2,0.65)}] at (A7) {\small $\delta$};
\node[shift={(-0.2,0.5)}] at (A7) {\small $\delta$};
\node[shift={(-0.4,0.08)}] at (A7) {\small $\alpha$};

% A8
\node[shift={(162:0.2)}] at (A8) {\small $\alpha$};

% A9
\node[shift={(-0.4,0.12)}] at (A9) {\small $\beta$};
\node[shift={(-0.2,-0.35)}] at (A9) {\small $\beta$};

% A10
\node[shift={(234:0.2)}] at (A10) {\small $\alpha$};

\node at (0,0) {\small $R$};

\end{scope}

\begin{scope}[xshift=10 cm, yshift=0 cm]

\foreach \a in {0,1,2,3,4,5,6,7,8,9}
\draw[rotate=36*\a]
(90:3) -- (126:3);

\coordinate (O) at (0,0);
\coordinate (A) at (90:0.8);
\coordinate (B) at (270:0.8);
\coordinate (L) at (162:1.8);
\coordinate (R) at (342:1.8);

\coordinate (A1) at (90:3);
\coordinate (A2) at (126:3);
\coordinate (A3) at (162:3);
\coordinate (A4) at (198:3);
\coordinate (A5) at (234:3);
\coordinate (A6) at (270:3);
\coordinate (A7) at (306:3);
\coordinate (A8) at (342:3);
\coordinate (A9) at (18:3);
\coordinate (A10) at (54:3);

\coordinate[shift={(-18:0.9)}] (A3L) at (A3);
\coordinate[shift={(89:0.9)}] (A5L) at (A5);
\coordinate[shift={(43:0.9)}] (A5B) at (A5);

\coordinate[shift={(198:0.9)}] (A9L) at (A9);
\coordinate[shift={(91:0.9)}] (A7L) at (A7);
\coordinate[shift={(137:0.9)}] (A7B) at (A7);

%\draw 
%(54:3) -- (A)--(B) -- (234:3)
%(L) -- (90:3)
%(L) -- (162:3)
%(L) -- (234:3)
%(L) -- (A)
%(R) -- (270:3)
%(R) -- (342:3)
%(R) -- (54:3)
%(R) -- (B);

\draw
(A5) -- (A5B)
(A7) -- (A7B)
(A10) -- (54:4.3)
(A1) -- (90:4.3)
(A2) -- (126:4.3)
(54:4.3) -- (90:4.3) -- (126:4.3)
%right extra region
(54:3) -- (18:4.1) -- (-18:3)
(18:4.1) -- (30:5) -- (54:4.3)
(18:4.1) to[out=300,in=-30] (A6)
(30:5) to[out=-60,in=80] (-18:5) to[out=240,in=-30] (A6)
%left extra region
(126:3) -- (162:4.1) -- (198:3)
(162:4.1) -- (150:5) -- (126:4.3)
(162:4.1) to[out=240,in=210] (A6)
(150:5) to[out=240,in=100] (198:5) to[out=300,in=210] (A6);

% b edge 

\draw[double, line width=0.6] 
(A3) -- (A3L)
(A9) -- (A9L);

% c edge 

\draw[line width=2]
(A5) -- (A5L)
(A7) -- (A7L);

% A1
\node[shift={(0.02,-0.4)}] at (A1) {\small $\delta^{\frac{f-8}{8}}$};

% A2
\node[shift={(306:0.2)}] at (A2) {\small $\alpha$};

% A3
\node[shift={(0.3,0.15)}] at (A3) {\small $\beta$};
\node[shift={(0.2,-0.35)}] at (A3) {\small $\beta$};

% A4
\node[shift={(18:0.2)}] at (A4) {\small $\alpha$};

% A5
\node[shift={(-0.2,0.65)}] at (A5) {\small $\delta$};
\node[shift={(0.2,0.5)}] at (A5) {\small $\delta$};
\node[shift={(0.4,0.08)}] at (A5) {\small $\alpha$};

% A6
\node[shift={(-0.02,0.4)}] at (A6) {\small $\delta^{\frac{f-24}{8}}$};

% A7
\node[shift={(0.2,0.65)}] at (A7) {\small $\delta$};
\node[shift={(-0.2,0.5)}] at (A7) {\small $\delta$};
\node[shift={(-0.4,0.08)}] at (A7) {\small $\alpha$};

% A8
\node[shift={(162:0.2)}] at (A8) {\small $\alpha$};

% A9
\node[shift={(-0.4,0.15)}] at (A9) {\small $\beta$};
\node[shift={(-0.2,-0.35)}] at (A9) {\small $\beta$};

% A10
\node[shift={(234:0.2)}] at (A10) {\small $\alpha$};

% b edge

\draw[double, line width=0.6]
(A1) -- (90:4.3)
(30:5) -- (18:4.1) -- (A8)
(150:5) -- (162:4.1) -- (A4);

% c edge
\draw[line width=2]
(126:4.3) -- (90:4.3) -- (54:4.3)
(A10) -- (18:4.1) to[out=300,in=-30] (A6)
(A2) -- (162:4.1) to[out=240,in=210] (A6);

%right region

% right gamma4
\node[shift={(0.2,0)}] at (18:4.1) {\small $\gamma$};
\node[shift={(-0.1,0.25)}] at (18:4.1) {\small $\gamma$};
\node[shift={(-0.25,-0.15)}] at (18:4.1) {\small $\gamma$};
\node[shift={(-0.03,-0.5)}] at (18:4.1) {\small $\gamma$};

%right boundary anticlockwise from north pole

%right boundary 1
\node[shift={(0.2,-0.3)}] at (90:4.3) {\small $\gamma$};

%right boundary 2
\node[shift={(0.2,0.2)}] at (A1) {\small $\beta$};

%right boundary 3
\node[shift={(0.7,-0.65)}] at (A10) {\small $\delta$};
\node[shift={(0.3,0.1)}] at (A10) {\small $\delta$};
\node[shift={(-0.05,0.25)}] at (A10) {\small $\alpha$};

%right boundary 4
\node[shift={(18:0.2)}] at (A9) {\small $\alpha$};

%right boundary 5
\node[shift={(0.15,0.7)}] at (A8) {\small $\beta$};
\node[shift={(-20:0.2)}] at (A8) {\small $\beta$};

%right boundary 6
\node[shift={(-54:0.2)}] at (A7) {\small $\alpha$};

%right boundary 7
\node[shift={(0.7,0.03)}] at (A6) {\small $\delta$};
\node[shift={(1.5,-0.08)}] at (A6) {\small $\delta$};

%right boundary 8 
\node[shift={(-0.2,0.05)}] at (-18:5) {\small $\alpha$};

%right boundary 9
\node[shift={(-0.3,-0.15)}] at (30:5) {\small $\beta$};
\node[shift={(0.03,-0.5)}] at (30:5) {\small $\beta$};

%right boundary 10
\node[shift={(0.1,-0.3)}] at (54:4.3) {\small $\alpha$};
\node[shift={(-0.35,-0.15)}] at (54:4.3) {\small $\delta$};

%left region

% left gamma4
\node[shift={(-0.2,0)}] at (162:4.1) {\small $\gamma$};
\node[shift={(0.1,0.25)}] at (162:4.1) {\small $\gamma$};
\node[shift={(0.25,-0.15)}] at (162:4.1) {\small $\gamma$};
\node[shift={(0.03,-0.5)}] at (162:4.1) {\small $\gamma$};

%left boundary clockwise from north pole

%left boundary 1
\node[shift={(-0.2,-0.3)}] at (90:4.3) {\small $\gamma$};

%left boundary 2
\node[shift={(-0.2,0.2)}] at (A1) {\small $\beta$};

%left boundary 3
\node[shift={(-0.7,-0.65)}] at (A2) {\small $\delta$};
\node[shift={(-0.3,0.1)}] at (A2) {\small $\delta$};
\node[shift={(0.05,0.25)}] at (A2) {\small $\alpha$};

%left boundary 4
\node[shift={(162:0.2)}] at (A3) {\small $\alpha$};

%left boundary 5
\node[shift={(-0.15,0.8)}] at (A4) {\small $\beta$};
\node[shift={(200:0.2)}] at (A4) {\small $\beta$};

%left boundary 6
\node[shift={(234:0.2)}] at (A5) {\small $\alpha$};

%left boundary 7
\node[shift={(-0.7,0.03)}] at (A6) {\small $\delta$};
\node[shift={(-1.55,-0.08)}] at (A6) {\small $\delta$};

%left boundary 8 
\node[shift={(0.2,0.05)}] at (198:5) {\small $\alpha$};

%left boundary 9
\node[shift={(0.3,-0.15)}] at (150:5) {\small $\beta$};
\node[shift={(-0.05,-0.5)}] at (150:5) {\small $\beta$};

%left boundary 10
\node[shift={(-0.1,-0.3)}] at (126:4.3) {\small $\alpha$};
\node[shift={(0.35,-0.15)}] at (126:4.3) {\small $\delta$};

% numbering

%\node[inner sep=0,draw,shape=circle, shift={(0.85,0.25)}] at (A1) {\small $a$};
%\node[inner sep=0,draw,shape=circle, shift={(-0.85,0.25)}] at (A1) {\small $b$};

%\node[inner sep=0,draw,shape=circle, shift={(1.2,0)}] at (A10) {\small $c$};
%\node[inner sep=0,draw,shape=circle, shift={(-1.2,0)}] at (A2) {\small $d$};

%\node[inner sep=0,draw,shape=circle, shift={(162:0.6)}] at (-18:5) {\small $e$};
%\node[inner sep=0,draw,shape=circle, shift={(18:0.6)}] at (198:5) {\small $f$};

%\node[inner sep=0,draw,shape=circle, shift={(0,-0.55)}] at (A8) {\small $g$};
%\node[inner sep=0,draw,shape=circle, shift={(0,-0.55)}] at (A4) {\small $h$};

%\node[inner sep=0,draw,shape=circle, shift={(0.25,-0.3)}] at (A9) {\small $i$};
%\node[inner sep=0,draw,shape=circle, shift={(-0.25,-0.3)}] at (A3) {\small $j$};

\node at (0,0) {\small $R$};

%\draw[fill,blue] (A1) circle (0.1cm);
%\draw[fill,red] (A6) circle (0.1cm);

\end{scope}

\end{tikzpicture}

\caption{Region $R$ and AAD of $\beta^{2}\delta^{\frac{f-8}{8}}$}
\label{AADbe2ded}
\end{figure}

\begin{figure}[htp]
	\centering
	\begin{tikzpicture}[>=latex,scale=1]

	\begin{scope}[xscale=-1]

	\draw[] 
	(0,0) -- (0,1.2) -- (-1.6, 1.8) 
	(0,0) -- (1.6, -0.6)
	(-1.6, 1.8) -- (1.6, 1.8) -- (2.8, 1.8) -- (2.8, 0.6)
	(-1.6, -0.6) -- (1.6, -0.6)
	(1.6, 1.8) -- (1.6, 3.0)
	(-1.6, -0.6)  -- (-1.6, -1.8)
	
	(-1.6, 1.8) -- (-2.8, 2.2) -- (-2.8, 3.0) %R2
 	(-2.8, 0.6) -- (-2.8, 1.4) -- (-1.6, 1.8) %R2

	(2.8, 0.6) -- (2.8, -0.2) %R1
	(1.6, -0.6) -- (2.8, -0.2) %R1
	(2.8, -1.0) -- (2.8, -1.8) %R1
	(1.6, -0.6) -- (2.8, -1.0) %R1

	(-1.6, -0.6) -- (-2.8, -0.6) -- (-2.8, 0.6)
	(2.8, -1.8) -- (5.8, 3.8)
	(-2.8, 3.0) -- (-5.8, -2.6) 
	(1.6, 3.0) -- (1.6, 3.8) -- (5.8,3.8)
	(-1.6, -1.8) -- (-1.6, -2.6) -- (-5.8, -2.6)
	(1.6, 3.8) -- (0.6, 4.2)
	(-1.6, -2.6) -- (-0.6, -3.0);

	\draw[double, line width=0.6]
	(0,1.2) -- (1.6, 0.6) -- (2.8, 0.6)
	(0,0) -- (-1.6, 0.6) -- (-2.8, 0.6)
	(-1.6, 3.0) -- (1.6, 3.0)
	(-1.6, -1.8) -- (1.6, -1.8)
	(1.6, -1.8) -- (2.8, -1.8)
	(-1.6, 3.0) -- (-2.8, 3.0)
	(2.8, 1.8) -- (3.4, 2.2)  %R1
	(5.2, 3.4) -- (5.8, 3.8) %R1
	(-2.8, -0.6) -- (-3.4,-1.0) %R2
	(-5.2, -2.2) -- (-5.8, -2.6); %R2

	\draw[line width=2]
	(-1.6, 1.8) -- (-1.6, 0.6)
	(1.6, -0.6) -- (1.6, 0.6)
	(1.6, 0.6) -- (1.6, 1.8)
	(-1.6, -0.6) -- (-1.6,0.6)
	(-1.6, 1.8) -- (-1.6, 3.0)
	(1.6, -0.6) -- (1.6, -1.8)
	%(1.6, 3.0) -- (4.6, 3.0) %R1
	%(2.8, -0.6) -- (4.6, 3.0) %R1

	%(-1.6, -1.8) -- (-4.6, -1.8) %R2
	%(-2.8, 1.8) -- (-4.6, -1.8) %R2

	(-1.6, 3.0) -- (-1.6, 3.6)
	(1.6, -1.8) -- (1.6, -2.4)
	(1.6, 3.8) -- (1.6, 4.4)
	(-1.6, -2.6) -- (-1.6, -3.2);

	\draw[dotted, line width=2]
	(-1.6, -3.2) -- (-1.6, -3.8)
	(1.6, -2.4) -- (1.6, -3.0)
	(-1.6, 3.6) -- (-1.6, 4.2)
	(1.6, 4.4) -- (1.6, 5.0);

	\draw[dotted, line width=0.75]
	(0.6, 4.2) -- (0.2, 4.4)
	(-0.6, -3.0) -- (-0.2, -3.2);

	\node at (1.4, 3.7) {\small $\alpha$}; %T1
	\node at (1.4, 3.2) {\small $\beta$}; %T1
	\node at (-1.4, 3.2) {\small $\gamma$}; %T1
	\node at (-1.4, -3.0) {\small $\delta$}; %T1

	\node at (1.4, 1.95) {\small $\alpha$}; %T2
	\node at (1.4, 2.75) {\small $\beta$}; %T2
	\node at (-1.4, 2.75) {\small $\gamma$}; %T2
	\node at (-1.4, 2.0) {\small $\delta$}; %T2
	
	\node at (1.8, 1.95) {\small $\alpha$}; %T3
	\node at (2.7, 1.95) {\small $\beta$}; %T3
	%\node at (3.9, 2.75) {\small $\gamma$}; %T3 %R1
	%\node at (1.8, 2.75) {\small $\delta$}; %T3 %R1
	
	\node at (1.8, 3.65) {\small $\alpha$}; %T4
	\node at (5.2, 3.6) {\small $\beta$}; %T4
	%\node at (4.6, 3.2) {\small $\gamma$}; %T4 %R1
	%\node at (1.8, 3.2) {\small $\delta$}; %T4 %R1
	
	\node at (2.92, -1.3) {\small $\alpha$}; %T5
	\node at (5.3, 3.2) {\small $\beta$}; %T5
	%\node at (4.75, 2.8) {\small $\gamma$}; %T5 %R1
	%\node at (3.0, -0.6) {\small $\delta$}; %T5 %R1

	\node at (3.0, 0.6) {\small $\alpha$}; %T6
	\node at (2.95, 1.6) {\small $\beta$}; %T6
	%\node at (4.1, 2.4) {\small $\gamma$}; %T6  %R1
	%\node at (2.95, 0.1) {\small $\delta$}; %T6 %R1

	\node at (-1.4, -2.5) {\small $\alpha$}; %T7
	\node at (-1.4, -2.1) {\small $\beta$}; %T7
	\node at (1.4, -2.0) {\small $\gamma$}; %T7
	\node at (1.4, 4.1) {\small $\delta$}; %T7

	\node at (5.9, 4.0) {\small $\alpha$}; %T8
	\node at (2.8, -2.05) {\small $\beta$}; %T8
	\node at (1.8, -2.0) {\small $\gamma$}; %T8
	\node at (1.8, 4.0) {\small $\delta$}; %T8

	\node at (-0.8, 1.65) {\small $\alpha$}; %T9
	\node at (0.0, 1.4) {\small $\beta$}; %T9
	\node at (1.4, 0.9) {\small $\gamma$}; %T9
	\node at (1.4, 1.6) {\small $\delta$}; %T9

	\node at (2.6, 1.6) {\small $\alpha$}; %T10
	\node at (2.6, 0.8) {\small $\beta$}; %T10
	\node at (1.8, 0.8) {\small $\gamma$}; %T10
	\node at (1.8, 1.6) {\small $\delta$}; %T10

	\node at (0.2, 0.1) {\small $\alpha$}; %T11
	\node at (0.2, 0.8) {\small $\beta$}; %T11
	\node at (1.4, 0.4) {\small $\gamma$}; %T11
	\node at (1.4, -0.3) {\small $\delta$}; %T11

	\node at (2.6, -0.1) {\small $\alpha$}; %T12
	\node at (2.6, 0.35) {\small $\beta$}; %T12
	\node at (1.8, 0.4) {\small $\gamma$}; %T12
	\node at (1.8, -0.3) {\small $\delta$}; %T12

	\node at (2.6, -1.1) {\small $\alpha$}; %T13
	\node at (2.6, -1.6) {\small $\beta$}; %T13
	\node at (1.8, -1.6) {\small $\gamma$}; %T13
	\node at (1.8, -0.9) {\small $\delta$}; %T13

	\node at (-1.4, -0.8) {\small $\alpha$}; %T14
	\node at (-1.4, -1.6) {\small $\beta$}; %T14
	\node at (1.4, -1.6) {\small $\gamma$}; %T14
	\node at (1.4, -0.8) {\small $\delta$}; %T14

	\node at (0.8, -0.45) {\small $\alpha$}; %T15
	\node at (0.0, -0.3) {\small $\beta$}; %T15
	\node at (-1.4, 0.3) {\small $\gamma$}; %T15
	\node at (-1.4, -0.4) {\small $\delta$}; %T15

	\node at (-0.2, 1.0) {\small $\alpha$}; %T16
	\node at (-0.2, 0.3) {\small $\beta$}; %T16
	\node at (-1.4, 0.7) {\small $\gamma$}; %T16
	\node at (-1.4, 1.5) {\small $\delta$}; %T16

	\node at (-5.9, -2.8) {\small $\alpha$}; %T17
	\node at (-2.8, 3.2) {\small $\beta$}; %T17
	\node at (-1.8, 3.2) {\small $\gamma$}; %T17
	\node at (-1.8, -2.8) {\small $\delta$}; %T17

	\node at (-2.6, 1.3) {\small $\alpha$}; %T18	
	\node at (-2.6, 0.8) {\small $\beta$}; %T18
	\node at (-1.8, 0.8) {\small $\gamma$}; %T18
	\node at (-1.8, 1.5) {\small $\delta$}; %T18

	\node at (-2.6, 2.3) {\small $\alpha$}; %T19
	\node at (-2.6, 2.75) {\small $\beta$}; %T19
	\node at (-1.8, 2.75) {\small $\gamma$}; %T19
	\node at (-1.8, 2.1) {\small $\delta$}; %T19

	\node at (-2.6, -0.4) {\small $\alpha$}; %T20
	\node at (-2.6, 0.3) {\small $\beta$}; %T20
	\node at (-1.8, 0.3) {\small $\gamma$}; %T20
	\node at (-1.8, -0.4) {\small $\delta$}; %T20

	\node at (-2.95, 2.4) {\small $\alpha$}; %T21
	\node at (-5.3, -2.0) {\small $\beta$}; %T21
	%\node at (-4.8, -1.6) {\small $\gamma$}; %T21
	%\node at (-3.0, 1.8) {\small $\delta$}; %T21

	\node at (-2.95, 0.6) {\small $\alpha$}; %T22
	\node at (-3.0, -0.5) {\small $\beta$}; %T22
	%\node at (-4.0, -1.1) {\small $\gamma$}; %T22
	%\node at (-2.9, 1.2) {\small $\delta$}; %T22

	\node at (-1.8, -0.8) {\small $\alpha$}; %T23
	\node at (-2.75, -0.85) {\small $\beta$}; %T23
	%\node at (-3.8, -1.6) {\small $\gamma$}; %T23
	%\node at (-1.8, -1.6) {\small $\delta$}; %T23

	\node at (-1.8, -2.4) {\small $\alpha$}; %T24
	\node at (-5.0, -2.4) {\small $\beta$}; %T24
	%\node at (-4.5, -2.0) {\small $\gamma$}; %T24
	%\node at (-1.8, -2.0) {\small $\delta$}; %T24

	\node at (4.2, 2.8) {\small $R_1$}; %R1
	\node at (-4.0, -1.6) {\small $R_2$}; %R2

	\node[inner sep=1,draw,shape=circle] at (0.0, 3.6) {\small $1$}; %T1
	\node[inner sep=1,draw,shape=circle] at (0.0, 2.4) {\small $2$}; %T2
	%\node[inner sep=1,draw,shape=circle] at (2.4, 2.4) {\small $3$}; %T3
	%\node[inner sep=1,draw,shape=circle] at (3.2, 3.4) {\small $4$}; %T4
	%\node[inner sep=1,draw,shape=circle] at (4.05, 1.2) {\small $5$}; %T5
	%\node[inner sep=1,draw,shape=circle] at (3.2, 1.2) {\small $6$}; %T6
	\node[inner sep=1,draw,shape=circle] at (0.0, -2.4) {\small $7$}; %T7
	\node[inner sep=1,draw,shape=circle] at (2.2, -2.4) {\small $8$}; %T8
	
	\node[inner sep=1,draw,shape=circle] at (0.8, 1.4) {\small $9$}; %T9
	\node[inner sep=1,draw,shape=circle] at (2.2, 1.2) {\small $10$}; %T10
	\node[inner sep=1,draw,shape=circle] at (0.8, 0.3) {\small $11$}; %T11
	\node[inner sep=1,draw,shape=circle] at (2.2, 0.0) {\small $12$}; %T12
	\node[inner sep=1,draw,shape=circle] at (2.2, -1.2) {\small $13$}; %T13
	\node[inner sep=1,draw,shape=circle] at (0.0, -1.2) {\small $14$}; %T14
	\node[inner sep=1,draw,shape=circle] at (-0.8, -0.2) {\small $15$}; %T15
	\node[inner sep=1,draw,shape=circle] at (-0.8, 0.9) {\small $16$}; %T16

	\node[inner sep=1,draw,shape=circle] at (-2.2, 0.0) {\small $17$}; %T17
	\node[inner sep=1,draw,shape=circle] at (-2.2, 1.2) {\small $18$}; %T18
	\node[inner sep=1,draw,shape=circle] at (-2.2, 2.4) {\small $19$}; %T19
	\node[inner sep=1,draw,shape=circle] at (-2.2, 3.6) {\small $20$}; %T20

	%\node[inner sep=0.5,draw,shape=circle] at (-4.05, 0.0) {\small $21$}; %T21
	%\node[inner sep=1,draw,shape=circle] at (-3.2, 0.0) {\small $22$}; %T22

	%\node[inner sep=1,draw,shape=circle] at (-2.4, -1.2) {\small $23$}; %T23
	%\node[inner sep=1,draw,shape=circle] at (-3.2, -2.2) {\small $24$}; %T24

	\end{scope}

	\end{tikzpicture}
	\caption{Tiling of $\AVC \equiv \{ \alpha\beta^2, \alpha^2\delta^2, \gamma^4, \beta^2\delta^{\frac{f-8}{8}},  \alpha\delta^{\frac{f+8}{8}} \}$}
	\label{b2dFamily}
\end{figure}
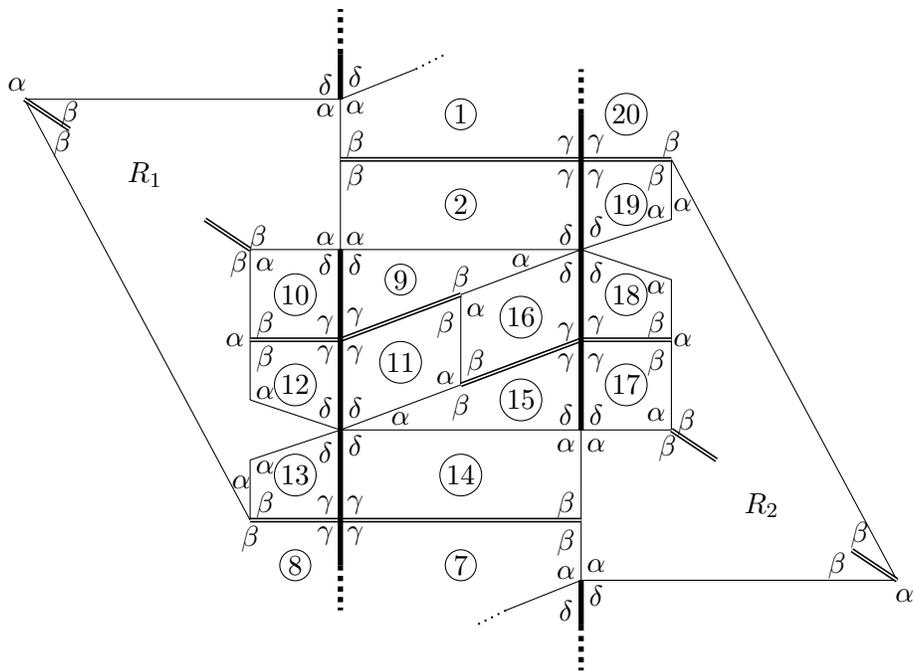

When $\delta^{\frac{f}{4}}$, $\beta^2\delta^{\frac{f-8}{8}}$ are not vertices, $\alpha\delta^{\frac{f+8}{8}}$ is the only high degree vertex and $\beta^2\cdots = \alpha\beta^2$. From $\alpha\delta^{\frac{f+8}{8}}$, we know that $f \ge 24$. When $f=24$, we have seen the tiling obtained by $\alpha\delta^4$ in the first picture of Figure \ref{Tf24a2d2ad4}. Indeed the same AAD applies to $\alpha\delta^{\frac{f+8}{8}}$ and for each $f\ge 24$ and $f \equiv 0 \mod 8$ we uniquely determine a tiling of $\AVC \equiv \{ \alpha\beta^2, \alpha^2\delta^2, \gamma^4, \alpha\delta^{\frac{f+8}{8}} \}$ as indicated in the second picture of Figure \ref{Tf24a2d2ad4} where $R_1 = R_2 = R(\frac{f+8}{8}, \frac{f-8}{8})$. 
\end{case*}

\subsection{Relations via Flip Modifications}

First we explain the relation between the tilings in Figure \ref{Tf24a3-1} and \ref{Tf24a3-2} via flip modifications. We express the AAD of Figure \ref{a3AAD} as an $\alpha^3$-disk in the first picture of Figure \ref{a3DiskFlip}. Flipping along the dahsed line we obtain its flip modification. The tiling in Figure \ref{Tf24a3-1} can be obtained by gluing two copies of the $\alpha^3$-disk along the five edges of each of the neighbouring disks and then the along other ten edges in the first row of Figure \ref{a3RelnFlip}. The tiling in Figure \ref{Tf24a3-2} can be obtained by gluing an $\alpha^3$-disk and its flip modification along the five edges of each of the neighbouring disks and then along other ten edges in the second row of Figure \ref{a3RelnFlip}. \\

\begin{figure}[htp]
	\centering
	\begin{tikzpicture}[>=latex,scale=0.7]

	\foreach \b in {0}
	{
	\begin{scope}[]
	\draw[red, line width=2, dashed]
	(105:4) -- (285:4);

	\foreach \a in {0,1,2,3,4,5,6,7,8,9,10,11}
	\draw[rotate=30*\a]
	(90:3) -- (120:3);
	
	\coordinate (O) at (0,0);
	\coordinate (A1) at (90:3);
	\coordinate (A2) at (120:3);
	\coordinate (A3) at (150:3);
	\coordinate (A4) at (180:3);
	\coordinate (A5) at (210:3);
	\coordinate (A6) at (240:3);
	\coordinate (A7) at (270:3);
	\coordinate (A8) at (300:3);
	\coordinate (A9) at (330:3);
	\coordinate (A10) at (360:3);
	\coordinate (A11) at (30:3);
	\coordinate (A12) at (60:3);
	
	\coordinate (A1h) at (90:1.5);	
	\coordinate (A5h) at (210:1.5);
	\coordinate (A9h) at (330:1.5);
	
	% 6(2-sqrt(3)) sim 1.607695

	\coordinate (B1) at (30:1.607695);
	\coordinate (B2) at (150:1.607695);
	\coordinate (B3) at (270:1.607695);	
	
	\draw 
	(O) -- (A1)
	(O) -- (A5)
	(O) -- (A9)
	(A1h) -- (A4)
	(A1h) -- (A10)
	(A5h) -- (A2)
	(A5h) -- (A8)
	(A9h) -- (A6)
	(A9h) -- (A12);
	
	% boundary
	
	\draw[]
	(A1) -- (A2) -- (A3) -- (A4) -- (A5) -- (A6) -- (A7);
	
	\draw[]
	(A7) -- (A8) -- (A9) -- (A10) -- (A11) -- (A12) -- (A1);

	% b edge 
	\draw[double, line width=0.6]
	(A1h) -- (A4)
	(A5h) -- (A8)
	(A9h) -- (A12);
	
	% c edge 
	\draw[line width=2]
	(A1h) -- (A10)
	(A5h) -- (A2)
	(A9h) -- (A6);
	
	%layer 0 	
	
	%layer 0 (1)
	\node[shift={(30:0.2)}] at (0,0) {\small $\alpha$};
	\node[shift={(150:0.2)}] at (0,0) {\small $\alpha$};
	\node[shift={(270:0.2)}] at (0,0) {\small $\alpha$};
	
	%layer 1 start from first vertex with non zero angle	
	
	%layer 1 (1)
	\node[shift={(0.25,0.15)}] at (B1) {\small $\gamma$};
	\node[shift={(-0.2,0.32)}] at (B1) {\small $\gamma$};
	\node[shift={(-0.22,-0.22)}] at (B1) {\small $\gamma$};
	\node[shift={(0.2,-0.4)}] at (B1) {\small $\gamma$};

	%layer 1 (2)
	\node[shift={(0.2,0.15)}] at (A1h) {\small $\delta$};
	\node[shift={(-0.2,0.15)}] at (A1h) {\small $\beta$};
	\node[shift={(-0.2,-0.35)}] at (A1h) {\small $\beta$};
	\node[shift={(0.2,-0.35)}] at (A1h) {\small $\delta$};
	
	%layer 1 (3)
	\node[shift={(-0.25,0.15)}] at (B2) {\small $\gamma$};
	\node[shift={(0.2,0.32)}] at (B2) {\small $\gamma$};
	\node[shift={(0.22,-0.22)}] at (B2) {\small $\gamma$};
	\node[shift={(-0.2,-0.4)}] at (B2) {\small $\gamma$};

	%layer 1 (4)
	\node[shift={(0.2,0.35)}] at (A5h) {\small $\delta$};
	\node[shift={(-0.2,0.15)}] at (A5h) {\small $\delta$};
	\node[shift={(-0.05,-0.3)}] at (A5h) {\small $\beta$};
	\node[shift={(0.42,0)}] at (A5h) {\small $\beta$};

	%layer 1 (5)
	\node[shift={(0.4,0)}] at (B3) {\small $\gamma$};
	\node[shift={(0,0.3)}] at (B3) {\small $\gamma$};
	\node[shift={(-0.4,0)}] at (B3) {\small $\gamma$};
	\node[shift={(0,-0.3)}] at (B3) {\small $\gamma$};

	%layer 1 (6)
	\node[shift={(-0.2,0.38)}] at (A9h) {\small $\beta$};
	\node[shift={(0.2,0.15)}] at (A9h) {\small $\beta$};
	\node[shift={(0.05,-0.3)}] at (A9h) {\small $\delta$};
	\node[shift={(-0.42,0)}] at (A9h) {\small $\delta$};

	%layer 2  anticlockwise start from north pole A1
	
	%layer 2 (1)
	\node[shift={(0.2,-0.25)}] at (A1) {\small $\alpha$};
	\node[shift={(-0.2,-0.25)}] at (A1) {\small $\alpha$};
	
	%layer 2 (2)
	\node[shift={(0.2,-0.25)}] at (A2) {\small $\delta$};
	\node[shift={(-0.2,-0.45)}] at (A2) {\small $\delta$};
	
	%layer 2 (3)
	\node[shift={(330:0.2)}] at (A3) {\small $\alpha$};

	%layer 2 (4)
	\node[shift={(0.25,0.35)}] at (A4) {\small $\beta$};
	\node[shift={(0.25,-0.15)}] at (A4) {\small $\beta$};
	
	%layer 2 (5)
	\node[shift={(0.14,0.3)}] at (A5) {\small $\alpha$};
	\node[shift={(0.3,-0.05)}] at (A5) {\small $\alpha$};
	
	%layer 2 (6)
	\node[shift={(0,0.3)}] at (A6) {\small $\delta$};
	\node[shift={(0.45,0.05)}] at (A6) {\small $\delta$};
	
	%layer 2 (7)
	\node[shift={(90:0.2)}] at (A7) {\small $\alpha$};
	
	%layer 2 (8)
	\node[shift={(0,0.3)}] at (A8) {\small $\beta$};
	\node[shift={(-0.5,0.05)}] at (A8) {\small $\beta$};
	
	%layer 2 (9)
	\node[shift={(-0.14,0.3)}] at (A9) {\small $\alpha$};
	\node[shift={(-0.3,-0.05)}] at (A9) {\small $\alpha$};
	
	%layer 2 (10)
	\node[shift={(-0.25,0.35)}] at (A10) {\small $\delta$};
	\node[shift={(-0.25,-0.15)}] at (A10) {\small $\delta$};
	
	%layer 2 (11)
	\node[shift={(210:0.2)}] at (A11) {\small $\alpha$};
	
	%layer 2 (12)
	\node[shift={(-0.2,-0.25)}] at (A12) {\small $\beta$};
	\node[shift={(0.15,-0.45)}] at (A12) {\small $\beta$};
	
%	\draw[fill,black] (A1) circle (0.1cm);
%	\draw[fill,black] (A7) circle (0.1cm);
	\end{scope}
	}

\begin{scope}[xshift=8cm, yshift=0 cm]
	\foreach \a in {0,1,2,3,4,5,6,7,8,9,10,11}
	\draw[rotate=30*\a]
	(90:3) -- (120:3);
	
	\coordinate (O) at (0,0);
	\coordinate (A1) at (90:3);
	\coordinate (A2) at (120:3);
	\coordinate (A3) at (150:3);
	\coordinate (A4) at (180:3);
	\coordinate (A5) at (210:3);
	\coordinate (A6) at (240:3);
	\coordinate (A7) at (270:3);
	\coordinate (A8) at (300:3);
	\coordinate (A9) at (330:3);
	\coordinate (A10) at (360:3);
	\coordinate (A11) at (30:3);
	\coordinate (A12) at (60:3);

	\coordinate (A4h) at (180:1.5);
	\coordinate (A8h) at (300:1.5);
	\coordinate (A12h) at (60:1.5);

	% 6(2-sqrt(3)) sim 1.607695

	\coordinate (B1) at (120:1.607695);
	\coordinate (B2) at (240:1.607695);
	\coordinate (B3) at (0:1.607695);

	\draw
	(O) -- (A4)
	(O) -- (A8)
	(O) -- (A12)
	(A4h) -- (A1)
	(A4h) -- (A7)
	(A8h) -- (A5)
	(A8h) -- (A11)
	(A12h) -- (A3)
	(A12h) -- (A9);
	
	% boundary

	\draw[]
	(A7) -- (A8) -- (A9) -- (A10) -- (A11) -- (A12) -- (A1);
	
	\draw[]
	(A1) -- (A2) -- (A3) -- (A4) -- (A5) -- (A6) -- (A7);
	
	% b edge
	
	\draw[double, line width=0.6] 
	(A4h) -- (A7)
	(A8h) -- (A11)
	(A12h) -- (A3);
	
	% c edge
	\draw[line width=2] 
	(A4h) -- (A1)
	(A8h) -- (A5)
	(A12h) -- (A9);
	
	%layer 0 	
	
	%layer 0 (1)
	\node[shift={(120:0.2)}] at (0,0) {\small $\alpha$};
	\node[shift={(240:0.2)}] at (0,0) {\small $\alpha$};
	\node[shift={(0:0.2)}] at (0,0) {\small $\alpha$};

	%layer 1 (1)
	\node[shift={(0:0.3)}] at (B3) {\small $\gamma$};
	\node[shift={(90:0.4)}] at (B3) {\small $\gamma$};
	\node[shift={(180:0.3)}] at (B3) {\small $\gamma$};
	\node[shift={(270:0.4)}] at (B3) {\small $\gamma$};

	%layer 1 (2)
	\node[shift={(10:0.25)}] at (A12h) {\small $\delta$};
	\node[shift={(110:0.25)}] at (A12h) {\small $\beta$};
	\node[shift={(218:0.4)}] at (A12h) {\small $\beta$};
	\node[shift={(270:0.35)}] at (A12h) {\small $\delta$};
	
	%layer 1 (3)
	\node[shift={(120:0.3)}] at (B1) {\small $\gamma$};
	\node[shift={(210:0.4)}] at (B1) {\small $\gamma$};
	\node[shift={(300:0.3)}] at (B1) {\small $\gamma$};
	\node[shift={(30:0.4)}] at (B1) {\small $\gamma$};

	%layer 1 (4)
	\node[shift={(120:0.25)}] at (A4h) {\small $\delta$};
	\node[shift={(240:0.3)}] at (A4h) {\small $\beta$};
	\node[shift={(330:0.45)}] at (A4h) {\small $\beta$};
	\node[shift={(35:0.35)}] at (A4h) {\small $\delta$};

	%layer 1 (5)
	\node[shift={(240:0.3)}] at (B2) {\small $\gamma$};
	\node[shift={(330:0.4)}] at (B2) {\small $\gamma$};
	\node[shift={(60:0.3)}] at (B2) {\small $\gamma$};
	\node[shift={(150:0.4)}] at (B2) {\small $\gamma$};
	
	%layer 1 (6)
	\node[shift={(250:0.25)}] at (A8h) {\small $\delta$};
	\node[shift={(-5:0.28)}] at (A8h) {\small $\beta$};
	\node[shift={(88:0.4)}] at (A8h) {\small $\beta$};
	\node[shift={(150:0.4)}] at (A8h) {\small $\delta$};

	%layer 2 start from A1 anticlockwise
	
	%layer 2 (1)
	\node[shift={(0.08,-0.3)}] at (A1) {\small $\delta$};
	\node[shift={(-0.38,-0.3)}] at (A1) {\small $\delta$};
	
	%layer 2 (2)
	\node[shift={(-60:0.2)}] at (A2) {\small $\alpha$};
	
	%layer 2 (3)
	\node[shift={(0.45,0.2)}] at (A3) {\small $\beta$};
	\node[shift={(0.1,-0.3)}] at (A3) {\small $\beta$};
	
	%layer 2 (4)
	\node[shift={(0.28,0.2)}] at (A4) {\small $\alpha$};
	\node[shift={(0.28,-0.25)}] at (A4) {\small $\alpha$};
	
	%layer 2 (5)
	\node[shift={(0.45,-0.2)}] at (A5) {\small $\delta$};
	\node[shift={(0.1,0.3)}] at (A5) {\small $\delta$};
	
	%layer 2 (6)
	\node[shift={(60:0.2)}] at (A6) {\small $\alpha$};
	
	%layer 2 (7)
	\node[shift={(0.1,0.33)}] at (A7) {\small $\beta$};
	\node[shift={(-0.4,0.33)}] at (A7) {\small $\beta$};
	
	%layer 2 (8)
	\node[shift={(-0.3,0.1)}] at (A8) {\small $\alpha$};
	\node[shift={(0.04,0.3)}] at (A8) {\small $\alpha$};
	
	%layer 2 (9)
	\node[shift={(-0.28,0.03)}] at (A9) {\small $\delta$};
	\node[shift={(-0.02,0.4)}] at (A9) {\small $\delta$};
	
	%layer 2 (10)
	\node[shift={(180:0.2)}] at (A10) {\small $\alpha$};

	%layer 2 (11)
	\node[shift={(-0.28,-0.06)}] at (A11) {\small $\beta$};
	\node[shift={(-0.06,-0.43)}] at (A11) {\small $\beta$};

	%layer 2 (12)
	\node[shift={(-0.3,-0.1)}] at (A12) {\small $\alpha$};
	\node[shift={(0.04,-0.3)}] at (A12) {\small $\alpha$};
	
%	\draw[fill,black] (A1) circle (0.1cm);
%	\draw[fill,black] (A7) circle (0.1cm);
	\end{scope}

\end{tikzpicture}
\caption{$\alpha^3$-Disk and Its Flip Modification}
\label{a3DiskFlip}
\end{figure}
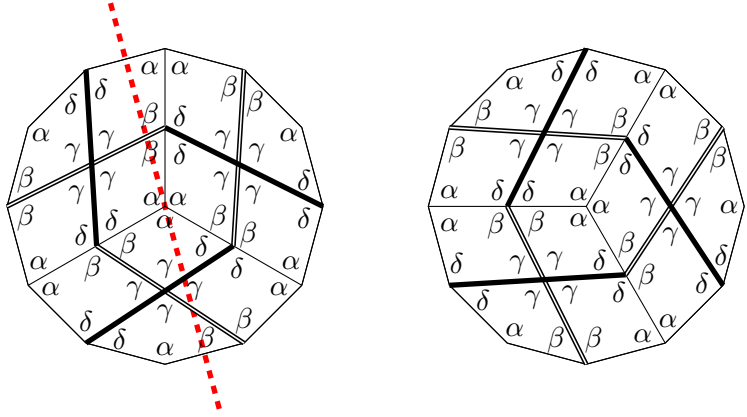

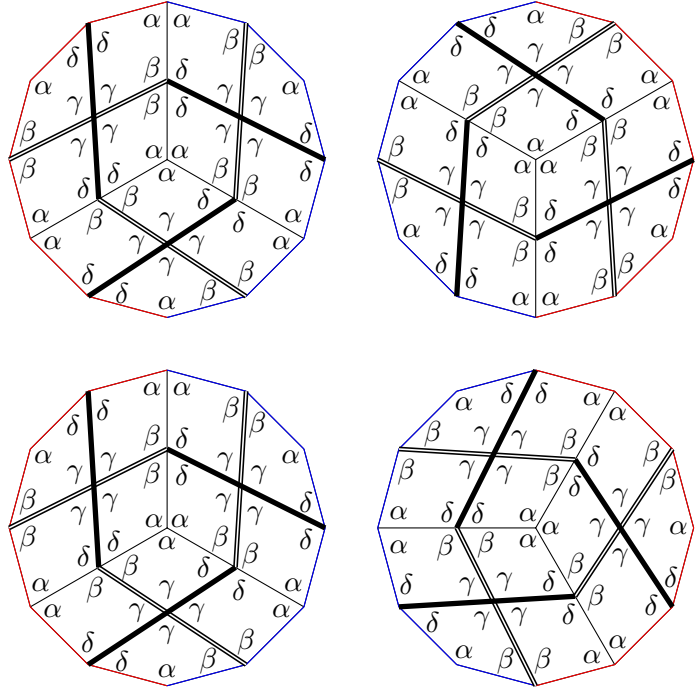
\begin{figure}[htp]
	\centering
	\begin{tikzpicture}[>=latex,scale=0.7]

	\foreach \b in {0,1}
	{
	\begin{scope}[xshift=0cm, yshift=-7*\b cm]
	\foreach \a in {0,1,2,3,4,5,6,7,8,9,10,11}
	\draw[rotate=30*\a]
	(90:3) -- (120:3);
	
	\coordinate (O) at (0,0);
	\coordinate (A1) at (90:3);
	\coordinate (A2) at (120:3);
	\coordinate (A3) at (150:3);
	\coordinate (A4) at (180:3);
	\coordinate (A5) at (210:3);
	\coordinate (A6) at (240:3);
	\coordinate (A7) at (270:3);
	\coordinate (A8) at (300:3);
	\coordinate (A9) at (330:3);
	\coordinate (A10) at (360:3);
	\coordinate (A11) at (30:3);
	\coordinate (A12) at (60:3);
	
	\coordinate (A1h) at (90:1.5);	
	\coordinate (A5h) at (210:1.5);
	\coordinate (A9h) at (330:1.5);
	
	% 6(2-sqrt(3)) sim 1.607695

	\coordinate (B1) at (30:1.607695);
	\coordinate (B2) at (150:1.607695);
	\coordinate (B3) at (270:1.607695);	
	
	\draw 
	(O) -- (A1)
	(O) -- (A5)
	(O) -- (A9)
	(A1h) -- (A4)
	(A1h) -- (A10)
	(A5h) -- (A2)
	(A5h) -- (A8)
	(A9h) -- (A6)
	(A9h) -- (A12);
	
	% boundary
	
	\draw[red]
	(A1) -- (A2) -- (A3) -- (A4) -- (A5) -- (A6) -- (A7);
	
	\draw[blue]
	(A7) -- (A8) -- (A9) -- (A10) -- (A11) -- (A12) -- (A1);

	% b edge 
	\draw[double, line width=0.6]
	(A1h) -- (A4)
	(A5h) -- (A8)
	(A9h) -- (A12);
	
	% c edge 
	\draw[line width=2]
	(A1h) -- (A10)
	(A5h) -- (A2)
	(A9h) -- (A6);
	
	%layer 0 	
	
	%layer 0 (1)
	\node[shift={(30:0.2)}] at (0,0) {\small $\alpha$};
	\node[shift={(150:0.2)}] at (0,0) {\small $\alpha$};
	\node[shift={(270:0.2)}] at (0,0) {\small $\alpha$};
	
	%layer 1 start from first vertex with non zero angle	
	
	%layer 1 (1)
	\node[shift={(0.25,0.15)}] at (B1) {\small $\gamma$};
	\node[shift={(-0.2,0.32)}] at (B1) {\small $\gamma$};
	\node[shift={(-0.22,-0.22)}] at (B1) {\small $\gamma$};
	\node[shift={(0.2,-0.4)}] at (B1) {\small $\gamma$};

	%layer 1 (2)
	\node[shift={(0.2,0.15)}] at (A1h) {\small $\delta$};
	\node[shift={(-0.2,0.15)}] at (A1h) {\small $\beta$};
	\node[shift={(-0.2,-0.35)}] at (A1h) {\small $\beta$};
	\node[shift={(0.2,-0.35)}] at (A1h) {\small $\delta$};
	
	%layer 1 (3)
	\node[shift={(-0.25,0.15)}] at (B2) {\small $\gamma$};
	\node[shift={(0.2,0.32)}] at (B2) {\small $\gamma$};
	\node[shift={(0.22,-0.22)}] at (B2) {\small $\gamma$};
	\node[shift={(-0.2,-0.4)}] at (B2) {\small $\gamma$};

	%layer 1 (4)
	\node[shift={(0.2,0.35)}] at (A5h) {\small $\delta$};
	\node[shift={(-0.2,0.15)}] at (A5h) {\small $\delta$};
	\node[shift={(-0.05,-0.3)}] at (A5h) {\small $\beta$};
	\node[shift={(0.42,0)}] at (A5h) {\small $\beta$};

	%layer 1 (5)
	\node[shift={(0.4,0)}] at (B3) {\small $\gamma$};
	\node[shift={(0,0.3)}] at (B3) {\small $\gamma$};
	\node[shift={(-0.4,0)}] at (B3) {\small $\gamma$};
	\node[shift={(0,-0.3)}] at (B3) {\small $\gamma$};

	%layer 1 (6)
	\node[shift={(-0.2,0.38)}] at (A9h) {\small $\beta$};
	\node[shift={(0.2,0.15)}] at (A9h) {\small $\beta$};
	\node[shift={(0.05,-0.3)}] at (A9h) {\small $\delta$};
	\node[shift={(-0.42,0)}] at (A9h) {\small $\delta$};

	%layer 2  anticlockwise start from north pole A1
	
	%layer 2 (1)
	\node[shift={(0.2,-0.25)}] at (A1) {\small $\alpha$};
	\node[shift={(-0.2,-0.25)}] at (A1) {\small $\alpha$};
	
	%layer 2 (2)
	\node[shift={(0.2,-0.25)}] at (A2) {\small $\delta$};
	\node[shift={(-0.2,-0.45)}] at (A2) {\small $\delta$};
	
	%layer 2 (3)
	\node[shift={(330:0.2)}] at (A3) {\small $\alpha$};

	%layer 2 (4)
	\node[shift={(0.25,0.35)}] at (A4) {\small $\beta$};
	\node[shift={(0.25,-0.15)}] at (A4) {\small $\beta$};
	
	%layer 2 (5)
	\node[shift={(0.14,0.3)}] at (A5) {\small $\alpha$};
	\node[shift={(0.3,-0.05)}] at (A5) {\small $\alpha$};
	
	%layer 2 (6)
	\node[shift={(0,0.3)}] at (A6) {\small $\delta$};
	\node[shift={(0.45,0.05)}] at (A6) {\small $\delta$};
	
	%layer 2 (7)
	\node[shift={(90:0.2)}] at (A7) {\small $\alpha$};
	
	%layer 2 (8)
	\node[shift={(0,0.3)}] at (A8) {\small $\beta$};
	\node[shift={(-0.5,0.05)}] at (A8) {\small $\beta$};
	
	%layer 2 (9)
	\node[shift={(-0.14,0.3)}] at (A9) {\small $\alpha$};
	\node[shift={(-0.3,-0.05)}] at (A9) {\small $\alpha$};
	
	%layer 2 (10)
	\node[shift={(-0.25,0.35)}] at (A10) {\small $\delta$};
	\node[shift={(-0.25,-0.15)}] at (A10) {\small $\delta$};
	
	%layer 2 (11)
	\node[shift={(210:0.2)}] at (A11) {\small $\alpha$};
	
	%layer 2 (12)
	\node[shift={(-0.2,-0.25)}] at (A12) {\small $\beta$};
	\node[shift={(0.15,-0.45)}] at (A12) {\small $\beta$};
	
%	\draw[fill,black] (A1) circle (0.1cm);
%	\draw[fill,black] (A7) circle (0.1cm);
	\end{scope}
	}

	\begin{scope}[xshift=7cm, yshift=0 cm]
	\foreach \a in {0,1,2,3,4,5,6,7,8,9,10,11}
	\draw[rotate=30*\a]
	(90:3) -- (120:3);

	\coordinate (O) at (0,0);
	\coordinate (A1) at (90:3);
	\coordinate (A2) at (120:3);
	\coordinate (A3) at (150:3);
	\coordinate (A4) at (180:3);
	\coordinate (A5) at (210:3);
	\coordinate (A6) at (240:3);
	\coordinate (A7) at (270:3);
	\coordinate (A8) at (300:3);
	\coordinate (A9) at (330:3);
	\coordinate (A10) at (360:3);
	\coordinate (A11) at (30:3);
	\coordinate (A12) at (60:3);

	\coordinate (A3h) at (150:1.5);
	\coordinate (A7h) at (270:1.5);
	\coordinate (A11h) at (30:1.5);	
	
	% 6(2-sqrt(3)) sim 1.607695

	\coordinate (B1) at (90:1.607695);
	\coordinate (B2) at (210:1.607695);
	\coordinate (B3) at (330:1.607695);	
	
	\draw
	(O) -- (A3)
	(O) -- (A7)
	(O) -- (A11)
	(A3h) -- (A6)
	(A3h) -- (A12)
	(A7h) -- (A4)
	(A7h) -- (A10)
	(A11h) -- (A2)
	(A11h) -- (A8);

	% boundary
	
	\draw[red]
	(A7) -- (A8) -- (A9) -- (A10) -- (A11) -- (A12) -- (A1);
	
	\draw[blue]
	(A1) -- (A2) -- (A3) -- (A4) -- (A5) -- (A6) -- (A7);
	
	% b edge 
	\draw[double, line width=0.6]
	(A3h) -- (A12)
	(A7h) -- (A4)
	(A11h) -- (A8);
	
	% c edge
	\draw[line width=2]
	(A3h) -- (A6)
	(A7h) -- (A10)
	(A11h) -- (A2);
	
	%layer 0 	
	
	%layer 0 (1)
	\node[shift={(90:0.2)}] at (0,0) {\small $\alpha$};
	\node[shift={(210:0.2)}] at (0,0) {\small $\alpha$};
	\node[shift={(330:0.2)}] at (0,0) {\small $\alpha$};
	
	% layer 1
	
	%layer 1 (1)
	\node[shift={(-0.2,-0.38)}] at (A11h) {\small $\beta$};
	\node[shift={(0.2,-0.15)}] at (A11h) {\small $\beta$};
	\node[shift={(0.05,0.3)}] at (A11h) {\small $\delta$};
	\node[shift={(-0.42,0)}] at (A11h) {\small $\delta$};
	
	%layer 1 (2)
	\node[shift={(0.4,0)}] at (B1) {\small $\gamma$};
	\node[shift={(0,0.3)}] at (B1) {\small $\gamma$};
	\node[shift={(-0.4,0)}] at (B1) {\small $\gamma$};
	\node[shift={(0,-0.3)}] at (B1) {\small $\gamma$};
	
	%layer 1 (3)
	\node[shift={(0.2,-0.35)}] at (A3h) {\small $\delta$};
	\node[shift={(-0.2,-0.15)}] at (A3h) {\small $\delta$};
	\node[shift={(0.05,0.3)}] at (A3h) {\small $\beta$};
	\node[shift={(0.42,0)}] at (A3h) {\small $\beta$};
	
	%layer 1 (4)
	\node[shift={(-0.25,-0.2)}] at (B2) {\small $\gamma$};
	\node[shift={(0.2,-0.37)}] at (B2) {\small $\gamma$};
	\node[shift={(0.22,0.17)}] at (B2) {\small $\gamma$};
	\node[shift={(-0.2,0.35)}] at (B2) {\small $\gamma$};
	
	%layer 1 (5)
	\node[shift={(0.2,-0.18)}] at (A7h) {\small $\delta$};
	\node[shift={(-0.2,-0.2)}] at (A7h) {\small $\beta$};
	\node[shift={(-0.2,0.38)}] at (A7h) {\small $\beta$};
	\node[shift={(0.2,0.38)}] at (A7h) {\small $\delta$};
	
	%layer 1 (6)
	\node[shift={(0.25,-0.2)}] at (B3) {\small $\gamma$};
	\node[shift={(-0.2,-0.37)}] at (B3) {\small $\gamma$};
	\node[shift={(-0.22,0.17)}] at (B3) {\small $\gamma$};
	\node[shift={(0.2,0.35)}] at (B3) {\small $\gamma$};
	
	%layer 2  start from A1 anticlockwise
	
	%layer 2 (1)
	\node[shift={(270:0.2)}] at (A1) {\small $\alpha$};

	%layer 2 (2)
	\node[shift={(0,-0.3)}] at (A2) {\small $\delta$};
	\node[shift={(0.45,-0.05)}] at (A2) {\small $\delta$};
	
	%layer 2 (3)
	\node[shift={(0.14,-0.3)}] at (A3) {\small $\alpha$};
	\node[shift={(0.3,0.05)}] at (A3) {\small $\alpha$};

	%layer 2 (4)
	\node[shift={(0.3,-0.42)}] at (A4) {\small $\beta$};
	\node[shift={(0.25,0.15)}] at (A4) {\small $\beta$};

	%layer 2 (5)
	\node[shift={(30:0.2)}] at (A5) {\small $\alpha$};
	
	%layer 2 (6)
	\node[shift={(0.2,0.25)}] at (A6) {\small $\delta$};
	\node[shift={(-0.2,0.45)}] at (A6) {\small $\delta$};
	
	%layer 2 (7)
	\node[shift={(0.2,0.25)}] at (A7) {\small $\alpha$};
	\node[shift={(-0.2,0.25)}] at (A7) {\small $\alpha$};

	%layer 2 (8)
	\node[shift={(-0.2,0.2)}] at (A8) {\small $\beta$};
	\node[shift={(0.15,0.4)}] at (A8) {\small $\beta$};

	%layer 2 (9)
	\node[shift={(150:0.2)}] at (A9) {\small $\alpha$};

	%layer 2 (10)
	\node[shift={(-0.25,-0.38)}] at (A10) {\small $\delta$};
	\node[shift={(-0.25,0.12)}] at (A10) {\small $\delta$};
	
	%layer 2 (11)
	\node[shift={(-0.14,-0.3)}] at (A11) {\small $\alpha$};
	\node[shift={(-0.3,0.05)}] at (A11) {\small $\alpha$};

	%layer 2 (12)
	\node[shift={(0,-0.3)}] at (A12) {\small $\beta$};
	\node[shift={(-0.5,-0.1)}] at (A12) {\small $\beta$};
	
%	\draw[fill,black] (A1) circle (0.1cm);
%	\draw[fill,black] (A7) circle (0.1cm);

	\end{scope}
	
	\begin{scope}[xshift=7cm, yshift=-7 cm]
	\foreach \a in {0,1,2,3,4,5,6,7,8,9,10,11}
	\draw[rotate=30*\a]
	(90:3) -- (120:3);
	
	\coordinate (O) at (0,0);
	\coordinate (A1) at (90:3);
	\coordinate (A2) at (120:3);
	\coordinate (A3) at (150:3);
	\coordinate (A4) at (180:3);
	\coordinate (A5) at (210:3);
	\coordinate (A6) at (240:3);
	\coordinate (A7) at (270:3);
	\coordinate (A8) at (300:3);
	\coordinate (A9) at (330:3);
	\coordinate (A10) at (360:3);
	\coordinate (A11) at (30:3);
	\coordinate (A12) at (60:3);

	\coordinate (A4h) at (180:1.5);
	\coordinate (A8h) at (300:1.5);
	\coordinate (A12h) at (60:1.5);

	% 6(2-sqrt(3)) sim 1.607695

	\coordinate (B1) at (120:1.607695);
	\coordinate (B2) at (240:1.607695);
	\coordinate (B3) at (0:1.607695);

	\draw
	(O) -- (A4)
	(O) -- (A8)
	(O) -- (A12)
	(A4h) -- (A1)
	(A4h) -- (A7)
	(A8h) -- (A5)
	(A8h) -- (A11)
	(A12h) -- (A3)
	(A12h) -- (A9);
	
	% boundary

	\draw[red]
	(A7) -- (A8) -- (A9) -- (A10) -- (A11) -- (A12) -- (A1);
	
	\draw[blue]
	(A1) -- (A2) -- (A3) -- (A4) -- (A5) -- (A6) -- (A7);
	
	% b edge
	
	\draw[double, line width=0.6] 
	(A4h) -- (A7)
	(A8h) -- (A11)
	(A12h) -- (A3);
	
	% c edge
	\draw[line width=2] 
	(A4h) -- (A1)
	(A8h) -- (A5)
	(A12h) -- (A9);
	
	%layer 0 	
	
	%layer 0 (1)
	\node[shift={(120:0.2)}] at (0,0) {\small $\alpha$};
	\node[shift={(240:0.2)}] at (0,0) {\small $\alpha$};
	\node[shift={(0:0.2)}] at (0,0) {\small $\alpha$};

	%layer 1 (1)
	\node[shift={(0:0.3)}] at (B3) {\small $\gamma$};
	\node[shift={(90:0.4)}] at (B3) {\small $\gamma$};
	\node[shift={(180:0.3)}] at (B3) {\small $\gamma$};
	\node[shift={(270:0.4)}] at (B3) {\small $\gamma$};

	%layer 1 (2)
	\node[shift={(10:0.25)}] at (A12h) {\small $\delta$};
	\node[shift={(110:0.25)}] at (A12h) {\small $\beta$};
	\node[shift={(218:0.4)}] at (A12h) {\small $\beta$};
	\node[shift={(270:0.35)}] at (A12h) {\small $\delta$};
	
	%layer 1 (3)
	\node[shift={(120:0.3)}] at (B1) {\small $\gamma$};
	\node[shift={(210:0.4)}] at (B1) {\small $\gamma$};
	\node[shift={(300:0.3)}] at (B1) {\small $\gamma$};
	\node[shift={(30:0.4)}] at (B1) {\small $\gamma$};

	%layer 1 (4)
	\node[shift={(120:0.25)}] at (A4h) {\small $\delta$};
	\node[shift={(240:0.3)}] at (A4h) {\small $\beta$};
	\node[shift={(330:0.45)}] at (A4h) {\small $\beta$};
	\node[shift={(35:0.35)}] at (A4h) {\small $\delta$};

	%layer 1 (5)
	\node[shift={(240:0.3)}] at (B2) {\small $\gamma$};
	\node[shift={(330:0.4)}] at (B2) {\small $\gamma$};
	\node[shift={(60:0.3)}] at (B2) {\small $\gamma$};
	\node[shift={(150:0.4)}] at (B2) {\small $\gamma$};
	
	%layer 1 (6)
	\node[shift={(250:0.25)}] at (A8h) {\small $\delta$};
	\node[shift={(-5:0.28)}] at (A8h) {\small $\beta$};
	\node[shift={(88:0.4)}] at (A8h) {\small $\beta$};
	\node[shift={(150:0.4)}] at (A8h) {\small $\delta$};

	%layer 2 start from A1 anticlockwise
	
	%layer 2 (1)
	\node[shift={(0.08,-0.3)}] at (A1) {\small $\delta$};
	\node[shift={(-0.38,-0.3)}] at (A1) {\small $\delta$};
	
	%layer 2 (2)
	\node[shift={(-60:0.2)}] at (A2) {\small $\alpha$};
	
	%layer 2 (3)
	\node[shift={(0.45,0.2)}] at (A3) {\small $\beta$};
	\node[shift={(0.1,-0.3)}] at (A3) {\small $\beta$};
	
	%layer 2 (4)
	\node[shift={(0.28,0.2)}] at (A4) {\small $\alpha$};
	\node[shift={(0.28,-0.25)}] at (A4) {\small $\alpha$};
	
	%layer 2 (5)
	\node[shift={(0.45,-0.2)}] at (A5) {\small $\delta$};
	\node[shift={(0.1,0.3)}] at (A5) {\small $\delta$};
	
	%layer 2 (6)
	\node[shift={(60:0.2)}] at (A6) {\small $\alpha$};
	
	%layer 2 (7)
	\node[shift={(0.1,0.33)}] at (A7) {\small $\beta$};
	\node[shift={(-0.4,0.33)}] at (A7) {\small $\beta$};
	
	%layer 2 (8)
	\node[shift={(-0.3,0.1)}] at (A8) {\small $\alpha$};
	\node[shift={(0.04,0.3)}] at (A8) {\small $\alpha$};
	
	%layer 2 (9)
	\node[shift={(-0.28,0.03)}] at (A9) {\small $\delta$};
	\node[shift={(-0.02,0.4)}] at (A9) {\small $\delta$};
	
	%layer 2 (10)
	\node[shift={(180:0.2)}] at (A10) {\small $\alpha$};

	%layer 2 (11)
	\node[shift={(-0.28,-0.06)}] at (A11) {\small $\beta$};
	\node[shift={(-0.06,-0.43)}] at (A11) {\small $\beta$};

	%layer 2 (12)
	\node[shift={(-0.3,-0.1)}] at (A12) {\small $\alpha$};
	\node[shift={(0.04,-0.3)}] at (A12) {\small $\alpha$};
	
%	\draw[fill,black] (A1) circle (0.1cm);
%	\draw[fill,black] (A7) circle (0.1cm);
	\end{scope}
\end{tikzpicture}
\caption{Relation between Two Tilings}
\label{a3RelnFlip}
\end{figure}

Next we first provide a different and more useful perspective that explains the relation between $(6,4)$-earth map tiling in Figure \ref{pqEMT1624} and the tilings in Figure \ref{Tf24b2d2} and Figure \ref{Tf24a2d2ad4}. The time zone in Figure \ref{pqEMT} can be viewed as a disk in the first picture of Figure \ref{TZFlip}. Flipping the disk along the dashed line, we get the modification in the second picture of Figure \ref{TZFlip}. The tilings mentioned are obtained by gluing different combinations of such disks. For instance, the $(6,4)$-earth map tiling is obtained in the first row of Figure \ref{RelnViaFlipEg} by gluing along the five edges from each of the neighbouring disks and then gluing the remaining five edges from the first disk with those from the last disk. Likewise, the other two tilings are obtained in the second and the third row of Figure \ref{RelnViaFlipEg}.

\begin{figure}[htp]
	\centering
	\begin{tikzpicture}[>=latex,scale=0.7]

\begin{scope}[]
			\draw[red, line width=2, dashed]
			(108:4) -- (288:4);
			\end{scope}

\foreach \b in {0}
			{
			\begin{scope}[]			
			\foreach \a in {0,1,2,3,4,5,6,7,8,9}
			\draw[rotate=36*\a]
			(90:3) -- (126:3);
			
			\coordinate (O) at (0,0);
			\coordinate (A) at (90:0.8);
			\coordinate (B) at (270:0.8);
			\coordinate (L) at (162:1.8);
			\coordinate (R) at (342:1.8);
			
			\coordinate (A1) at (90:3);
			\coordinate (A2) at (126:3);
			\coordinate (A3) at (162:3);
			\coordinate (A4) at (198:3);
			\coordinate (A5) at (234:3);
			\coordinate (A6) at (270:3);
			\coordinate (A7) at (306:3);
			\coordinate (A8) at (342:3);
			\coordinate (A9) at (18:3);
			\coordinate (A10) at (54:3);
			
			\draw 
			(54:3) -- (A)--(B) -- (234:3)
			(L) -- (90:3)
			(L) -- (162:3)
			(L) -- (234:3)
			(L) -- (A)
			(R) -- (270:3)
			(R) -- (342:3)
			(R) -- (54:3)
			(R) -- (B);
			
			%boundary
			
			\draw[]
			(A6) -- (A7) -- (A8) -- (A9) -- (A10) -- (A1);
			
			\draw[]
			(A1) -- (A2) -- (A3) -- (A4) -- (A5) -- (A6);

			% b edge 
			
			\draw[double, line width=0.6] 
			(A3) -- (L) -- (A)
			(A8) -- (R) -- (B);
			
			% c edge 
			
			\draw[line width=2]
			(A1) -- (L) -- (A5)
			(A10) -- (R) -- (A6);
			
			%left region 
			
			% left gamma4 
			\node[shift={(0.45,0.25)}] at (L) {\small $\gamma$};
			\node[shift={(-0.1,0.25)}] at (L) {\small $\gamma$};
			\node[shift={(-0.2,-0.25)}] at (L) {\small $\gamma$};
			\node[shift={(0.2,-0.25)}] at (L) {\small $\gamma$};
			
			% left boundary start from north pole A1 anticlockwisely
			
			% left boundary 1
			\node[shift={(0.02,-0.3)}] at (A1) {\small $\delta$};
			\node[shift={(-0.45,-0.32)}] at (A1) {\small $\delta$};
			
			% left boundary 2
			\node[shift={(306:0.2)}] at (A2) {\small $\alpha$};
			
			% left boundary 3
			\node[shift={(0.3,0.15)}] at (A3) {\small $\beta$};
			\node[shift={(0.2,-0.35)}] at (A3) {\small $\beta$};
			
			% left boundary 4
			\node[shift={(18:0.2)}] at (A4) {\small $\alpha$};
			
			% left boundary 5
			\node[shift={(-0.2,0.65)}] at (A5) {\small $\delta$};
			\node[shift={(0.2,0.5)}] at (A5) {\small $\delta$};
			\node[shift={(0.4,0.08)}] at (A5) {\small $\alpha$};
			
			% upper inner Point A
			
			\node[shift={(0,0.22)}] at (A) {\small $\beta$};
			\node[shift={(-0.2,-0.3)}] at (A) {\small $\beta$};
			\node[shift={(0.2,-0.1)}] at (A) {\small $\alpha$};
			
			% right region
			
			% right gamma4 
			
			\node[shift={(-0.45,-0.3)}] at (R) {\small $\gamma$};
			\node[shift={(0.1,-0.3)}] at (R) {\small $\gamma$};
			\node[shift={(0.2,0.2)}] at (R) {\small $\gamma$};
			\node[shift={(-0.2,0.2)}] at (R) {\small $\gamma$};

			% right boundary start from south pole A6 anticlockwisely
			
			% right boundary 1
			\node[shift={(-0.02,0.3)}] at (A6) {\small $\delta$};
			\node[shift={(0.45,0.32)}] at (A6) {\small $\delta$};
			
			% right boundary 2
			\node[shift={(126:0.2)}] at (A7) {\small $\alpha$};
			
			% right boundary 3
			\node[shift={(-0.3,-0.15)}] at (A8) {\small $\beta$};
			\node[shift={(-0.2,0.35)}] at (A8) {\small $\beta$};
			
			% right boundary 4
			\node[shift={(198:0.2)}] at (A9) {\small $\alpha$};
			
			% right boundary 5
			\node[shift={(0.2,-0.65)}] at (A10) {\small $\delta$};
			\node[shift={(-0.2,-0.5)}] at (A10) {\small $\delta$};
			\node[shift={(-0.4,-0.08)}] at (A10) {\small $\alpha$};

			% lower inner Point B
			
			\node[shift={(0,-0.25)}] at (B) {\small $\beta$};
			\node[shift={(0.2,0.25)}] at (B) {\small $\beta$};
			\node[shift={(-0.2,0.1)}] at (B) {\small $\alpha$};

			%		\draw[fill,black] (A1) circle (0.1cm);
			%		\draw[fill,black] (A6) circle (0.1cm);
			\end{scope}
			}

			\begin{scope}[xshift=8 cm, yshift=0 cm]
			\foreach \a in {0,1,2,3,4,5,6,7,8,9}
			\draw[rotate=36*\a]
			(90:3) -- (126:3);
			
			\coordinate (O) at (0,0);
			\coordinate (A) at (126:0.8);
			\coordinate (B) at (306:0.8);
			\coordinate (L) at (234:1.8);
			\coordinate (R) at (54:1.8);

			\coordinate (A1) at (90:3);
			\coordinate (A2) at (126:3);
			\coordinate (A3) at (162:3);
			\coordinate (A4) at (198:3);
			\coordinate (A5) at (234:3);
			\coordinate (A6) at (270:3);
			\coordinate (A7) at (306:3);
			\coordinate (A8) at (342:3);
			\coordinate (A9) at (18:3);
			\coordinate (A10) at (54:3);
			\draw 
			(A3) -- (A)--(B) -- (A8)
			(R) -- (A2)
			(R) -- (A10)
			(R) -- (A8)
			(R) -- (A)
			(L) -- (A7)
			(L) -- (A5)
			(L) -- (A3)
			(L) -- (B);
			
			%boundary
			
			\draw[]
			(A6) -- (A7) -- (A8) -- (A9) -- (A10) -- (A1);
			
			\draw[]
			(A1) -- (A2) -- (A3) -- (A4) -- (A5) -- (A6);

			% b edge 
			
			\draw[double, line width=0.6] 
			(A10) -- (R) -- (A)
			(A5) -- (L) -- (B);
			
			% c edge 
			
			\draw[line width=2]
			(A3) -- (L) -- (A7)
			(A2) -- (R) -- (A8);
			
			% left region
			
			% left gamma4 
			
			\node[shift={(0.45,0.02)}] at (L) {\small $\gamma$};
			\node[shift={(0.05,0.25)}] at (L) {\small $\gamma$};
			\node[shift={(-0.25,-0.03)}] at (L) {\small $\gamma$};
			\node[shift={(0.08,-0.3)}] at (L) {\small $\gamma$};

			% left boundary start from A3 anticlockwise
			
			% left boundary 1
			\node[shift={(0.4,0.2)}] at (A3) {\small $\alpha$};
			\node[shift={(0.45,-0.26)}] at (A3) {\small $\delta$};
			\node[shift={(0.18,-0.55)}] at (A3) {\small $\delta$};
			
			% left boundary 2
			\node[shift={(18:0.2)}] at (A4) {\small $\alpha$};
			
			% left boundary 3
			\node[shift={(0,0.4)}] at (A5) {\small $\beta$};
			\node[shift={(0.4,0.1)}] at (A5) {\small $\beta$};
			
			% left boundary 4
			\node[shift={(90:0.2)}] at (A6) {\small $\alpha$};
			
			% left boundary 5
			\node[shift={(-0.65,0.01)}] at (A7) {\small $\delta$};
			\node[shift={(-0.02,0.3)}] at (A7) {\small $\delta$};

			% lower inner point B
			\node[shift={(-0.4,0.02)}] at (B) {\small $\beta$};
			\node[shift={(0.02,-0.3)}] at (B) {\small $\beta$};
			\node[shift={(0.15,0.2)}] at (B) {\small $\alpha$};
			
			% right region
			
			% right gamma4 
			
			\node[shift={(-0.45,-0.04)}] at (R) {\small $\gamma$};
			\node[shift={(-0.05,-0.3)}] at (R) {\small $\gamma$};
			\node[shift={(0.25,0.01)}] at (R) {\small $\gamma$};
			\node[shift={(-0.08,0.25)}] at (R) {\small $\gamma$};

			% right boundary start from A8 anticlockwise
			
			% right boundary 1
			\node[shift={(-0.4,-0.2)}] at (A8) {\small $\alpha$};
			\node[shift={(-0.45,0.26)}] at (A8) {\small $\delta$};
			\node[shift={(-0.18,0.55)}] at (A8) {\small $\delta$};
			
			% right boundary 2
			\node[shift={(198:0.2)}] at (A9) {\small $\alpha$};
			
			% right boundary 3
			\node[shift={(0,-0.4)}] at (A10) {\small $\beta$};
			\node[shift={(-0.4,-0.1)}] at (A10) {\small $\beta$};
			
			% right boundary 4
			\node[shift={(270:0.2)}] at (A1) {\small $\alpha$};
			
			% right boundary 5
			\node[shift={(0.65,-0.01)}] at (A2) {\small $\delta$};
			\node[shift={(0.02,-0.3)}] at (A2) {\small $\delta$};
			
			% upper inner point A
			\node[shift={(0.4,-0.08)}] at (A) {\small $\beta$};
			\node[shift={(-0.02,0.25)}] at (A) {\small $\beta$};
			\node[shift={(-0.15,-0.2)}] at (A) {\small $\alpha$};

%			\draw[fill,black] (A1) circle (0.1cm);
%			\draw[fill,black] (A6) circle (0.1cm);
			\end{scope}
	\end{tikzpicture}
\caption{Time Zone Disk and Its Flip Modification}
\label{TZFlip}
\end{figure}
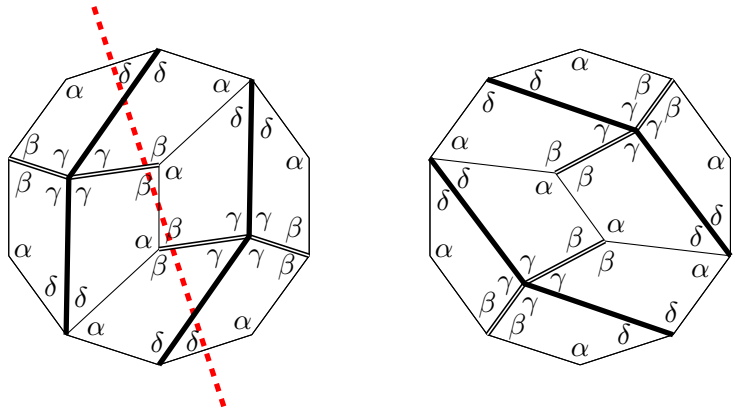

\begin{figure}[htp]
	\centering
	\begin{tikzpicture}[>=latex,scale=0.7]

	\foreach \b in {0,1,2}
	{
		\begin{scope}[xshift=0 cm, yshift=-7*\b cm]
		
		\foreach \a in {0,1,2,3,4,5,6,7,8,9}
		\draw[rotate=36*\a]
		(90:3) -- (126:3);
		
		\coordinate (O) at (0,0);
		\coordinate (A) at (90:0.8);
		\coordinate (B) at (270:0.8);
		\coordinate (L) at (162:1.8);
		\coordinate (R) at (342:1.8);
		
		\coordinate (A1) at (90:3);
		\coordinate (A2) at (126:3);
		\coordinate (A3) at (162:3);
		\coordinate (A4) at (198:3);
		\coordinate (A5) at (234:3);
		\coordinate (A6) at (270:3);
		\coordinate (A7) at (306:3);
		\coordinate (A8) at (342:3);
		\coordinate (A9) at (18:3);
		\coordinate (A10) at (54:3);
		
		\draw 
		(54:3) -- (A)--(B) -- (234:3)
		(L) -- (90:3)
		(L) -- (162:3)
		(L) -- (234:3)
		(L) -- (A)
		(R) -- (270:3)
		(R) -- (342:3)
		(R) -- (54:3)
		(R) -- (B);
		
		%boundary
		
		\draw[red]
		(A1) -- (A2) -- (A3) -- (A4) -- (A5) -- (A6);
		
		\draw[green]
		(A6) -- (A7) -- (A8) -- (A9) -- (A10) -- (A1);
		
		% b edge 
		
		\draw[double, line width=0.6] 
		(A3) -- (L) -- (A)
		(A8) -- (R) -- (B);
		
		% c edge 
		
		\draw[line width=2]
		(A1) -- (L) -- (A5)
		(A10) -- (R) -- (A6);
		
		%left region 
		
		% left gamma4 
		\node[shift={(0.45,0.25)}] at (L) {\small $\gamma$};
		\node[shift={(-0.1,0.25)}] at (L) {\small $\gamma$};
		\node[shift={(-0.2,-0.25)}] at (L) {\small $\gamma$};
		\node[shift={(0.2,-0.25)}] at (L) {\small $\gamma$};
		
		% left boundary start from north pole A1 anticlockwisely
		
		% left boundary 1
		\node[shift={(0.02,-0.3)}] at (A1) {\small $\delta$};
		\node[shift={(-0.45,-0.32)}] at (A1) {\small $\delta$};
		
		% left boundary 2
		\node[shift={(306:0.2)}] at (A2) {\small $\alpha$};
		
		% left boundary 3
		\node[shift={(0.3,0.15)}] at (A3) {\small $\beta$};
		\node[shift={(0.2,-0.35)}] at (A3) {\small $\beta$};
		
		% left boundary 4
		\node[shift={(18:0.2)}] at (A4) {\small $\alpha$};
		
		% left boundary 5
		\node[shift={(-0.2,0.65)}] at (A5) {\small $\delta$};
		\node[shift={(0.2,0.5)}] at (A5) {\small $\delta$};
		\node[shift={(0.4,0.08)}] at (A5) {\small $\alpha$};
		
		% upper inner Point A
		
		\node[shift={(0,0.22)}] at (A) {\small $\beta$};
		\node[shift={(-0.2,-0.3)}] at (A) {\small $\beta$};
		\node[shift={(0.2,-0.1)}] at (A) {\small $\alpha$};
		
		% right region
		
		% right gamma4 
		
		\node[shift={(-0.45,-0.3)}] at (R) {\small $\gamma$};
		\node[shift={(0.1,-0.3)}] at (R) {\small $\gamma$};
		\node[shift={(0.2,0.2)}] at (R) {\small $\gamma$};
		\node[shift={(-0.2,0.2)}] at (R) {\small $\gamma$};

		% right boundary start from south pole A6 anticlockwisely
		
		% right boundary 1
		\node[shift={(-0.02,0.3)}] at (A6) {\small $\delta$};
		\node[shift={(0.45,0.32)}] at (A6) {\small $\delta$};
		
		% right boundary 2
		\node[shift={(126:0.2)}] at (A7) {\small $\alpha$};
		
		% right boundary 3
		\node[shift={(-0.3,-0.15)}] at (A8) {\small $\beta$};
		\node[shift={(-0.2,0.35)}] at (A8) {\small $\beta$};
		
		% right boundary 4
		\node[shift={(198:0.2)}] at (A9) {\small $\alpha$};
		
		% right boundary 5
		\node[shift={(0.2,-0.65)}] at (A10) {\small $\delta$};
		\node[shift={(-0.2,-0.5)}] at (A10) {\small $\delta$};
		\node[shift={(-0.4,-0.08)}] at (A10) {\small $\alpha$};

		% lower inner Point B
		
		\node[shift={(0,-0.25)}] at (B) {\small $\beta$};
		\node[shift={(0.2,0.25)}] at (B) {\small $\beta$};
		\node[shift={(-0.2,0.1)}] at (B) {\small $\alpha$};

%		\draw[fill,black] (A1) circle (0.1cm);
%		\draw[fill,black] (A6) circle (0.1cm);
		\end{scope}
	}

		\begin{scope}[xshift=6cm, yshift=0 cm]
		\foreach \a in {0,1,2,3,4,5,6,7,8,9}
		\draw[rotate=36*\a]
		(90:3) -- (126:3);
		
		\coordinate (O) at (0,0);
		\coordinate (A) at (90:0.8);
		\coordinate (B) at (270:0.8);
		\coordinate (L) at (162:1.8);
		\coordinate (R) at (342:1.8);
		
		\coordinate (A1) at (90:3);
		\coordinate (A2) at (126:3);
		\coordinate (A3) at (162:3);
		\coordinate (A4) at (198:3);
		\coordinate (A5) at (234:3);
		\coordinate (A6) at (270:3);
		\coordinate (A7) at (306:3);
		\coordinate (A8) at (342:3);
		\coordinate (A9) at (18:3);
		\coordinate (A10) at (54:3);
		
		\draw 
		(54:3) -- (A)--(B) -- (234:3)
		(L) -- (90:3)
		(L) -- (162:3)
		(L) -- (234:3)
		(L) -- (A)
		(R) -- (270:3)
		(R) -- (342:3)
		(R) -- (54:3)
		(R) -- (B);
		
		%boundary
		
		\draw[green]
		(A1) -- (A2) -- (A3) -- (A4) -- (A5) -- (A6);
		
		\draw[blue]
		(A6) -- (A7) -- (A8) -- (A9) -- (A10) -- (A1);
		
		% b edge 
		
		\draw[double, line width=0.6] 
		(A3) -- (L) -- (A)
		(A8) -- (R) -- (B);
		
		% c edge 
		
		\draw[line width=2]
		(A1) -- (L) -- (A5)
		(A10) -- (R) -- (A6);
		
		%left region 
		
		% left gamma4 
		\node[shift={(0.45,0.25)}] at (L) {\small $\gamma$};
		\node[shift={(-0.1,0.25)}] at (L) {\small $\gamma$};
		\node[shift={(-0.2,-0.25)}] at (L) {\small $\gamma$};
		\node[shift={(0.2,-0.25)}] at (L) {\small $\gamma$};
		
		% left boundary start from north pole A1 anticlockwisely
		
		% left boundary 1
		\node[shift={(0.02,-0.3)}] at (A1) {\small $\delta$};
		\node[shift={(-0.45,-0.32)}] at (A1) {\small $\delta$};
		
		% left boundary 2
		\node[shift={(306:0.2)}] at (A2) {\small $\alpha$};
		
		% left boundary 3
		\node[shift={(0.3,0.15)}] at (A3) {\small $\beta$};
		\node[shift={(0.2,-0.35)}] at (A3) {\small $\beta$};
		
		% left boundary 4
		\node[shift={(18:0.2)}] at (A4) {\small $\alpha$};
		
		% left boundary 5
		\node[shift={(-0.2,0.65)}] at (A5) {\small $\delta$};
		\node[shift={(0.2,0.5)}] at (A5) {\small $\delta$};
		\node[shift={(0.4,0.08)}] at (A5) {\small $\alpha$};
		
		% upper inner Point A
		
		\node[shift={(0,0.22)}] at (A) {\small $\beta$};
		\node[shift={(-0.2,-0.3)}] at (A) {\small $\beta$};
		\node[shift={(0.2,-0.1)}] at (A) {\small $\alpha$};
		
		% right region
		
		% right gamma4 
		
		\node[shift={(-0.45,-0.3)}] at (R) {\small $\gamma$};
		\node[shift={(0.1,-0.3)}] at (R) {\small $\gamma$};
		\node[shift={(0.2,0.2)}] at (R) {\small $\gamma$};
		\node[shift={(-0.2,0.2)}] at (R) {\small $\gamma$};

		% right boundary start from south pole A6 anticlockwisely
		
		% right boundary 1
		\node[shift={(-0.02,0.3)}] at (A6) {\small $\delta$};
		\node[shift={(0.45,0.32)}] at (A6) {\small $\delta$};
		
		% right boundary 2
		\node[shift={(126:0.2)}] at (A7) {\small $\alpha$};
		
		% right boundary 3
		\node[shift={(-0.3,-0.15)}] at (A8) {\small $\beta$};
		\node[shift={(-0.2,0.35)}] at (A8) {\small $\beta$};
		
		% right boundary 4
		\node[shift={(198:0.2)}] at (A9) {\small $\alpha$};
		
		% right boundary 5
		\node[shift={(0.2,-0.65)}] at (A10) {\small $\delta$};
		\node[shift={(-0.2,-0.5)}] at (A10) {\small $\delta$};
		\node[shift={(-0.4,-0.08)}] at (A10) {\small $\alpha$};

		% lower inner Point B
		
		\node[shift={(0,-0.25)}] at (B) {\small $\beta$};
		\node[shift={(0.2,0.25)}] at (B) {\small $\beta$};
		\node[shift={(-0.2,0.1)}] at (B) {\small $\alpha$};

		%		\draw[fill,black] (A1) circle (0.1cm);
		%		\draw[fill,black] (A6) circle (0.1cm);
		\end{scope}

		\foreach \b in {1,2}
			{
			\begin{scope}[xshift=6 cm, yshift=-7*\b cm]
			\foreach \a in {0,1,2,3,4,5,6,7,8,9}
			\draw[rotate=36*\a]
			(90:3) -- (126:3);
			
			\coordinate (O) at (0,0);
			\coordinate (A) at (126:0.8);
			\coordinate (B) at (306:0.8);
			\coordinate (L) at (234:1.8);
			\coordinate (R) at (54:1.8);

			\coordinate (A1) at (90:3);
			\coordinate (A2) at (126:3);
			\coordinate (A3) at (162:3);
			\coordinate (A4) at (198:3);
			\coordinate (A5) at (234:3);
			\coordinate (A6) at (270:3);
			\coordinate (A7) at (306:3);
			\coordinate (A8) at (342:3);
			\coordinate (A9) at (18:3);
			\coordinate (A10) at (54:3);
			\draw 
			(A3) -- (A)--(B) -- (A8)
			(R) -- (A2)
			(R) -- (A10)
			(R) -- (A8)
			(R) -- (A)
			(L) -- (A7)
			(L) -- (A5)
			(L) -- (A3)
			(L) -- (B);
			
			%boundary
			
			\draw[green]
			(A1) -- (A2) -- (A3) -- (A4) -- (A5) -- (A6);
			
			\draw[blue]
			(A6) -- (A7) -- (A8) -- (A9) -- (A10) -- (A1);
			
			% b edge 
			
			\draw[double, line width=0.6] 
			(A10) -- (R) -- (A)
			(A5) -- (L) -- (B);
			
			% c edge 
			
			\draw[line width=2]
			(A3) -- (L) -- (A7)
			(A2) -- (R) -- (A8);
			
			% left region
			
			% left gamma4 
			
			\node[shift={(0.45,0.02)}] at (L) {\small $\gamma$};
			\node[shift={(0.05,0.25)}] at (L) {\small $\gamma$};
			\node[shift={(-0.25,-0.03)}] at (L) {\small $\gamma$};
			\node[shift={(0.08,-0.3)}] at (L) {\small $\gamma$};

			% left boundary start from A3 anticlockwise
			
			% left boundary 1
			\node[shift={(0.4,0.2)}] at (A3) {\small $\alpha$};
			\node[shift={(0.45,-0.26)}] at (A3) {\small $\delta$};
			\node[shift={(0.18,-0.55)}] at (A3) {\small $\delta$};
			
			% left boundary 2
			\node[shift={(18:0.2)}] at (A4) {\small $\alpha$};
			
			% left boundary 3
			\node[shift={(0,0.4)}] at (A5) {\small $\beta$};
			\node[shift={(0.4,0.1)}] at (A5) {\small $\beta$};
			
			% left boundary 4
			\node[shift={(90:0.2)}] at (A6) {\small $\alpha$};
			
			% left boundary 5
			\node[shift={(-0.65,0.01)}] at (A7) {\small $\delta$};
			\node[shift={(-0.02,0.3)}] at (A7) {\small $\delta$};

			% lower inner point B
			\node[shift={(-0.4,0.02)}] at (B) {\small $\beta$};
			\node[shift={(0.02,-0.3)}] at (B) {\small $\beta$};
			\node[shift={(0.15,0.2)}] at (B) {\small $\alpha$};
			
			% right region
			
			% right gamma4 
			
			\node[shift={(-0.45,-0.04)}] at (R) {\small $\gamma$};
			\node[shift={(-0.05,-0.3)}] at (R) {\small $\gamma$};
			\node[shift={(0.25,0.01)}] at (R) {\small $\gamma$};
			\node[shift={(-0.08,0.25)}] at (R) {\small $\gamma$};

			% right boundary start from A8 anticlockwise
			
			% right boundary 1
			\node[shift={(-0.4,-0.2)}] at (A8) {\small $\alpha$};
			\node[shift={(-0.45,0.26)}] at (A8) {\small $\delta$};
			\node[shift={(-0.18,0.55)}] at (A8) {\small $\delta$};
			
			% right boundary 2
			\node[shift={(198:0.2)}] at (A9) {\small $\alpha$};
			
			% right boundary 3
			\node[shift={(0,-0.4)}] at (A10) {\small $\beta$};
			\node[shift={(-0.4,-0.1)}] at (A10) {\small $\beta$};
			
			% right boundary 4
			\node[shift={(270:0.2)}] at (A1) {\small $\alpha$};
			
			% right boundary 5
			\node[shift={(0.65,-0.01)}] at (A2) {\small $\delta$};
			\node[shift={(0.02,-0.3)}] at (A2) {\small $\delta$};
			
			% upper inner point A
			\node[shift={(0.4,-0.08)}] at (A) {\small $\beta$};
			\node[shift={(-0.02,0.25)}] at (A) {\small $\beta$};
			\node[shift={(-0.15,-0.2)}] at (A) {\small $\alpha$};

%			\draw[fill,black] (A1) circle (0.1cm);
%			\draw[fill,black] (A6) circle (0.1cm);
			\end{scope}
			}
			
			\foreach \b in {0,1}
			{
			\begin{scope}[xshift=12 cm, yshift=-7*\b cm]			
			\foreach \a in {0,1,2,3,4,5,6,7,8,9}
			\draw[rotate=36*\a]
			(90:3) -- (126:3);
			
			\coordinate (O) at (0,0);
			\coordinate (A) at (90:0.8);
			\coordinate (B) at (270:0.8);
			\coordinate (L) at (162:1.8);
			\coordinate (R) at (342:1.8);
			
			\coordinate (A1) at (90:3);
			\coordinate (A2) at (126:3);
			\coordinate (A3) at (162:3);
			\coordinate (A4) at (198:3);
			\coordinate (A5) at (234:3);
			\coordinate (A6) at (270:3);
			\coordinate (A7) at (306:3);
			\coordinate (A8) at (342:3);
			\coordinate (A9) at (18:3);
			\coordinate (A10) at (54:3);
			
			\draw 
			(54:3) -- (A)--(B) -- (234:3)
			(L) -- (90:3)
			(L) -- (162:3)
			(L) -- (234:3)
			(L) -- (A)
			(R) -- (270:3)
			(R) -- (342:3)
			(R) -- (54:3)
			(R) -- (B);
			
			%boundary
			
			\draw[red]
			(A6) -- (A7) -- (A8) -- (A9) -- (A10) -- (A1);
			
			\draw[blue]
			(A1) -- (A2) -- (A3) -- (A4) -- (A5) -- (A6);

			% b edge 
			
			\draw[double, line width=0.6] 
			(A3) -- (L) -- (A)
			(A8) -- (R) -- (B);
			
			% c edge 
			
			\draw[line width=2]
			(A1) -- (L) -- (A5)
			(A10) -- (R) -- (A6);
			
			%left region 
			
			% left gamma4 
			\node[shift={(0.45,0.25)}] at (L) {\small $\gamma$};
			\node[shift={(-0.1,0.25)}] at (L) {\small $\gamma$};
			\node[shift={(-0.2,-0.25)}] at (L) {\small $\gamma$};
			\node[shift={(0.2,-0.25)}] at (L) {\small $\gamma$};
			
			% left boundary start from north pole A1 anticlockwisely
			
			% left boundary 1
			\node[shift={(0.02,-0.3)}] at (A1) {\small $\delta$};
			\node[shift={(-0.45,-0.32)}] at (A1) {\small $\delta$};
			
			% left boundary 2
			\node[shift={(306:0.2)}] at (A2) {\small $\alpha$};
			
			% left boundary 3
			\node[shift={(0.3,0.15)}] at (A3) {\small $\beta$};
			\node[shift={(0.2,-0.35)}] at (A3) {\small $\beta$};
			
			% left boundary 4
			\node[shift={(18:0.2)}] at (A4) {\small $\alpha$};
			
			% left boundary 5
			\node[shift={(-0.2,0.65)}] at (A5) {\small $\delta$};
			\node[shift={(0.2,0.5)}] at (A5) {\small $\delta$};
			\node[shift={(0.4,0.08)}] at (A5) {\small $\alpha$};
			
			% upper inner Point A
			
			\node[shift={(0,0.22)}] at (A) {\small $\beta$};
			\node[shift={(-0.2,-0.3)}] at (A) {\small $\beta$};
			\node[shift={(0.2,-0.1)}] at (A) {\small $\alpha$};
			
			% right region
			
			% right gamma4 
			
			\node[shift={(-0.45,-0.3)}] at (R) {\small $\gamma$};
			\node[shift={(0.1,-0.3)}] at (R) {\small $\gamma$};
			\node[shift={(0.2,0.2)}] at (R) {\small $\gamma$};
			\node[shift={(-0.2,0.2)}] at (R) {\small $\gamma$};

			% right boundary start from south pole A6 anticlockwisely
			
			% right boundary 1
			\node[shift={(-0.02,0.3)}] at (A6) {\small $\delta$};
			\node[shift={(0.45,0.32)}] at (A6) {\small $\delta$};
			
			% right boundary 2
			\node[shift={(126:0.2)}] at (A7) {\small $\alpha$};
			
			% right boundary 3
			\node[shift={(-0.3,-0.15)}] at (A8) {\small $\beta$};
			\node[shift={(-0.2,0.35)}] at (A8) {\small $\beta$};
			
			% right boundary 4
			\node[shift={(198:0.2)}] at (A9) {\small $\alpha$};
			
			% right boundary 5
			\node[shift={(0.2,-0.65)}] at (A10) {\small $\delta$};
			\node[shift={(-0.2,-0.5)}] at (A10) {\small $\delta$};
			\node[shift={(-0.4,-0.08)}] at (A10) {\small $\alpha$};

			% lower inner Point B
			
			\node[shift={(0,-0.25)}] at (B) {\small $\beta$};
			\node[shift={(0.2,0.25)}] at (B) {\small $\beta$};
			\node[shift={(-0.2,0.1)}] at (B) {\small $\alpha$};

			%		\draw[fill,black] (A1) circle (0.1cm);
			%		\draw[fill,black] (A6) circle (0.1cm);
			\end{scope}
			}

			\begin{scope}[xshift=12 cm, yshift=-14 cm]
			\foreach \a in {0,1,2,3,4,5,6,7,8,9}
			\draw[rotate=36*\a]
			(90:3) -- (126:3);
			
			\coordinate (O) at (0,0);
			\coordinate (A) at (126:0.8);
			\coordinate (B) at (306:0.8);
			\coordinate (L) at (234:1.8);
			\coordinate (R) at (54:1.8);

			\coordinate (A1) at (90:3);
			\coordinate (A2) at (126:3);
			\coordinate (A3) at (162:3);
			\coordinate (A4) at (198:3);
			\coordinate (A5) at (234:3);
			\coordinate (A6) at (270:3);
			\coordinate (A7) at (306:3);
			\coordinate (A8) at (342:3);
			\coordinate (A9) at (18:3);
			\coordinate (A10) at (54:3);
			\draw 
			(A3) -- (A)--(B) -- (A8)
			(R) -- (A2)
			(R) -- (A10)
			(R) -- (A8)
			(R) -- (A)
			(L) -- (A7)
			(L) -- (A5)
			(L) -- (A3)
			(L) -- (B);
			
			%boundary
			
			\draw[red]
			(A6) -- (A7) -- (A8) -- (A9) -- (A10) -- (A1);
			
			\draw[blue]
			(A1) -- (A2) -- (A3) -- (A4) -- (A5) -- (A6);

			% b edge 
			
			\draw[double, line width=0.6] 
			(A10) -- (R) -- (A)
			(A5) -- (L) -- (B);
			
			% c edge 
			
			\draw[line width=2]
			(A3) -- (L) -- (A7)
			(A2) -- (R) -- (A8);
			
			% left region
			
			% left gamma4 
			
			\node[shift={(0.45,0.02)}] at (L) {\small $\gamma$};
			\node[shift={(0.05,0.25)}] at (L) {\small $\gamma$};
			\node[shift={(-0.25,-0.03)}] at (L) {\small $\gamma$};
			\node[shift={(0.08,-0.3)}] at (L) {\small $\gamma$};

			% left boundary start from A3 anticlockwise
			
			% left boundary 1
			\node[shift={(0.4,0.2)}] at (A3) {\small $\alpha$};
			\node[shift={(0.45,-0.26)}] at (A3) {\small $\delta$};
			\node[shift={(0.18,-0.55)}] at (A3) {\small $\delta$};
			
			% left boundary 2
			\node[shift={(18:0.2)}] at (A4) {\small $\alpha$};
			
			% left boundary 3
			\node[shift={(0,0.4)}] at (A5) {\small $\beta$};
			\node[shift={(0.4,0.1)}] at (A5) {\small $\beta$};
			
			% left boundary 4
			\node[shift={(90:0.2)}] at (A6) {\small $\alpha$};
			
			% left boundary 5
			\node[shift={(-0.65,0.01)}] at (A7) {\small $\delta$};
			\node[shift={(-0.02,0.3)}] at (A7) {\small $\delta$};

			% lower inner point B
			\node[shift={(-0.4,0.02)}] at (B) {\small $\beta$};
			\node[shift={(0.02,-0.3)}] at (B) {\small $\beta$};
			\node[shift={(0.15,0.2)}] at (B) {\small $\alpha$};
			
			% right region
			
			% right gamma4 
			
			\node[shift={(-0.45,-0.04)}] at (R) {\small $\gamma$};
			\node[shift={(-0.05,-0.3)}] at (R) {\small $\gamma$};
			\node[shift={(0.25,0.01)}] at (R) {\small $\gamma$};
			\node[shift={(-0.08,0.25)}] at (R) {\small $\gamma$};

			% right boundary start from A8 anticlockwise
			
			% right boundary 1
			\node[shift={(-0.4,-0.2)}] at (A8) {\small $\alpha$};
			\node[shift={(-0.45,0.26)}] at (A8) {\small $\delta$};
			\node[shift={(-0.18,0.55)}] at (A8) {\small $\delta$};
			
			% right boundary 2
			\node[shift={(198:0.2)}] at (A9) {\small $\alpha$};
			
			% right boundary 3
			\node[shift={(0,-0.4)}] at (A10) {\small $\beta$};
			\node[shift={(-0.4,-0.1)}] at (A10) {\small $\beta$};
			
			% right boundary 4
			\node[shift={(270:0.2)}] at (A1) {\small $\alpha$};
			
			% right boundary 5
			\node[shift={(0.65,-0.01)}] at (A2) {\small $\delta$};
			\node[shift={(0.02,-0.3)}] at (A2) {\small $\delta$};
			
			% upper inner point A
			\node[shift={(0.4,-0.08)}] at (A) {\small $\beta$};
			\node[shift={(-0.02,0.25)}] at (A) {\small $\beta$};
			\node[shift={(-0.15,-0.2)}] at (A) {\small $\alpha$};

%			\draw[fill,black] (A1) circle (0.1cm);
%			\draw[fill,black] (A6) circle (0.1cm);
			\end{scope}
\end{tikzpicture}
\caption{Relation Between Three Tilings}
\label{RelnViaFlipEg}
\end{figure}
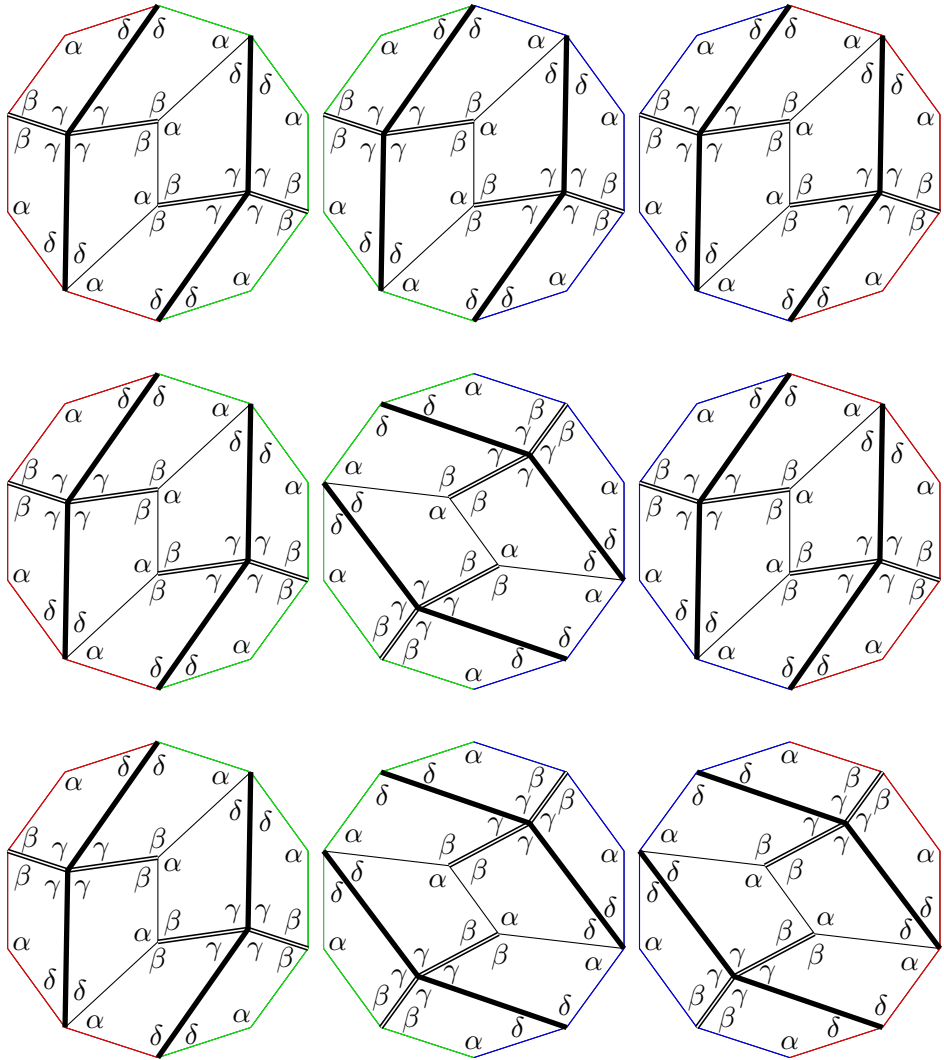

The new perspective can indeed be extended to explain the relation between the three families of tilings. First we have the corresponding disk of $\frac{f-8}{16}$ copies of time zones in the first picture of Figure \ref{TZCopiesFlip} and its flip modification along the dashed line given in the second picture of Figure \ref{TZCopiesFlip}. Next apply the same gluing method to $1$-time zone disk and two $\frac{f-8}{16}$-time zone disks, we get the $(\frac{f}{4},4)$-earth map tiling as illustrated in the first row of Figure \ref{RelnViaFlip}. Likewise the tilings of $\AVC \equiv \{ \alpha\beta^2, \alpha^2\delta^2, \gamma^4, \alpha\delta^{\frac{f+8}{8}} \}$ and $\AVC \equiv \{ \alpha\beta^2, \alpha^2\delta^2, \gamma^4, \alpha\delta^{\frac{f+8}{8}}, \beta^2\delta^{\frac{f-8}{8}} \}$ are obtained in the second and the third row of Figure \ref{RelnViaFlip}.

\begin{figure}[htp]
	\centering
	\begin{tikzpicture}[>=latex,scale=0.7]

\begin{scope}[]
			\draw[red, line width=2, dashed]
			(108:4) -- (288:4);
\end{scope}

	\foreach \a in {0,1,2,3,4,5,6,7,8,9}
	\draw[rotate=36*\a]
	(90:3) -- (126:3);
	
	\coordinate (O) at (0,0);
	\coordinate (A) at (90:0.8);
	\coordinate (B) at (270:0.8);
	\coordinate (L) at (162:1.8);
	\coordinate (R) at (342:1.8);
	
	\coordinate (A1) at (90:3);
	\coordinate (A2) at (126:3);
	\coordinate (A3) at (162:3);
	\coordinate (A4) at (198:3);
	\coordinate (A5) at (234:3);
	\coordinate (A6) at (270:3);
	\coordinate (A7) at (306:3);
	\coordinate (A8) at (342:3);
	\coordinate (A9) at (18:3);
	\coordinate (A10) at (54:3);

	\coordinate[shift={(-18:0.9)}] (A3L) at (A3);
	\coordinate[shift={(89:0.9)}] (A5L) at (A5);
	\coordinate[shift={(43:0.9)}] (A5B) at (A5);
	
	\coordinate[shift={(162:0.9)}] (A8R) at (A8);
	\coordinate[shift={(269:0.9)}] (A10R) at (A10);
	\coordinate[shift={(223:0.9)}] (A10A) at (A10);

	%\draw 
	%(54:3) -- (A)--(B) -- (234:3)
	%(L) -- (90:3)
	%(L) -- (162:3)
	%(L) -- (234:3)
	%(L) -- (A)
	%(R) -- (270:3)
	%(R) -- (342:3)
	%(R) -- (54:3)
	%(R) -- (B);

	\draw
	(A5) -- (A5B)
	(A10) -- (A10A);

	% b edge 
	
	\draw[double, line width=0.6] 
	%(A3) -- (L) -- (A)
	%(A8) -- (R) -- (B)
	(A3) -- (A3L)
	(A8) -- (A8R);
	
	% c edge 
	
	\draw[line width=2]
	%(A1) -- (L) -- (A5)
	%(A10) -- (R) -- (A6)
	(A5) -- (A5L)
	(A10) -- (A10R);

	%left region 
	
	% left boundary start from north pole A1 anticlockwisely
	
	% left boundary 1
	\node[shift={(0.02,-0.4)}] at (A1) {\small $\delta^{\frac{f-8}{8}}$};

	% left boundary 2
	\node[shift={(306:0.2)}] at (A2) {\small $\alpha$};
	
	% left boundary 3
	\node[shift={(0.3,0.15)}] at (A3) {\small $\beta$};
	\node[shift={(0.2,-0.35)}] at (A3) {\small $\beta$};
	
	% left boundary 4
	\node[shift={(18:0.2)}] at (A4) {\small $\alpha$};
	
	% left boundary 5
	\node[shift={(-0.2,0.65)}] at (A5) {\small $\delta$};
	\node[shift={(0.2,0.5)}] at (A5) {\small $\delta$};
	\node[shift={(0.4,0.08)}] at (A5) {\small $\alpha$};

	% right region

	% right boundary start from south pole A6 anticlockwisely
	
	% right boundary 1
	\node[shift={(-0.02,0.4)}] at (A6) {\small $\delta^{\frac{f-8}{8}}$};
	
	% right boundary 2
	\node[shift={(126:0.2)}] at (A7) {\small $\alpha$};
	
	% right boundary 3
	\node[shift={(-0.3,-0.15)}] at (A8) {\small $\beta$};
	\node[shift={(-0.2,0.35)}] at (A8) {\small $\beta$};
	
	% right boundary 4
	\node[shift={(198:0.2)}] at (A9) {\small $\alpha$};
	
	% right boundary 5
	\node[shift={(0.2,-0.65)}] at (A10) {\small $\delta$};
	\node[shift={(-0.2,-0.5)}] at (A10) {\small $\delta$};
	\node[shift={(-0.4,-0.08)}] at (A10) {\small $\alpha$};
	
	%\draw[fill,black] (A1) circle (0.1cm);
	%\draw[fill,black] (A6) circle (0.1cm);

\begin{scope}[xshift=8 cm, yshift=0 cm]

	\foreach \a in {0,1,2,3,4,5,6,7,8,9}
	\draw[rotate=36*\a]
	(90:3) -- (126:3);
	
	\coordinate (O) at (0,0);
	\coordinate (A) at (90:0.8);
	\coordinate (B) at (270:0.8);
	\coordinate (L) at (162:1.8);
	\coordinate (R) at (342:1.8);
	
	\coordinate (A1) at (90:3);
	\coordinate (A2) at (126:3);
	\coordinate (A3) at (162:3);
	\coordinate (A4) at (198:3);
	\coordinate (A5) at (234:3);
	\coordinate (A6) at (270:3);
	\coordinate (A7) at (306:3);
	\coordinate (A8) at (342:3);
	\coordinate (A9) at (18:3);
	\coordinate (A10) at (54:3);

	\coordinate[shift={(234:0.9)}] (A10R) at (A10);
	\coordinate[shift={(127:0.9)}] (A8R) at (A8);
	\coordinate[shift={(173:0.9)}] (A8B) at (A8);
	
	\coordinate[shift={(54:0.9)}] (A5L) at (A5);
	\coordinate[shift={(307:0.9)}] (A3L) at (A3);
	\coordinate[shift={(353:0.9)}] (A3A) at (A3);

	%\draw 
	%(54:3) -- (A)--(B) -- (234:3)
	%(L) -- (90:3)
	%(L) -- (162:3)
	%(L) -- (234:3)
	%(L) -- (A)
	%(R) -- (270:3)
	%(R) -- (342:3)
	%(R) -- (54:3)
	%(R) -- (B);

	\draw
	(A8) -- (A8B)
	(A3) -- (A3A);

	% b edge 
	
	\draw[double, line width=0.6] 
	%(A3) -- (L) -- (A)
	%(A8) -- (R) -- (B)
	(A10) -- (A10R)
	(A5) -- (A5L);
	
	% c edge 
	
	\draw[line width=2]
	%(A1) -- (L) -- (A5)
	%(A10) -- (R) -- (A6)
	(A8) -- (A8R)
	(A3) -- (A3L);
	
	% left region

	% left boundary start from A3 anticlockwise
	
	% left boundary 1
	\node[shift={(0.4,0.2)}] at (A3) {\small $\alpha$};
	\node[shift={(0.45,-0.26)}] at (A3) {\small $\delta$};
	\node[shift={(0.18,-0.55)}] at (A3) {\small $\delta$};
	
	% left boundary 2
	\node[shift={(18:0.2)}] at (A4) {\small $\alpha$};
	
	% left boundary 3
	\node[shift={(0,0.4)}] at (A5) {\small $\beta$};
	\node[shift={(0.4,0.1)}] at (A5) {\small $\beta$};
	
	% left boundary 4
	\node[shift={(90:0.2)}] at (A6) {\small $\alpha$};
	
	% left boundary 5
	\node[shift={(-0.2,0.3)}] at (A7) {\small $\delta^{\frac{f-8}{8}}$};

	% right region
	
	% right boundary start from A8 anticlockwise
	
	% right boundary 1
	\node[shift={(-0.4,-0.2)}] at (A8) {\small $\alpha$};
	\node[shift={(-0.45,0.26)}] at (A8) {\small $\delta$};
	\node[shift={(-0.18,0.55)}] at (A8) {\small $\delta$};
	
	% right boundary 2
	\node[shift={(198:0.2)}] at (A9) {\small $\alpha$};
	
	% right boundary 3
	\node[shift={(0,-0.4)}] at (A10) {\small $\beta$};
	\node[shift={(-0.4,-0.1)}] at (A10) {\small $\beta$};
	
	% right boundary 4
	\node[shift={(270:0.2)}] at (A1) {\small $\alpha$};
	
	% right boundary 5
	\node[shift={(0.25,-0.3)}] at (A2) {\small $\delta^{\frac{f-8}{8}}$};
	
	%\draw[fill,black] (A2) circle (0.1cm);
	%\draw[fill,black] (A7) circle (0.1cm);

\end{scope}

\end{tikzpicture}
\caption{Disk of $\frac{f-8}{16}$ Copies of Time Zones and Its Flip Modification}
\label{TZCopiesFlip}
\end{figure}
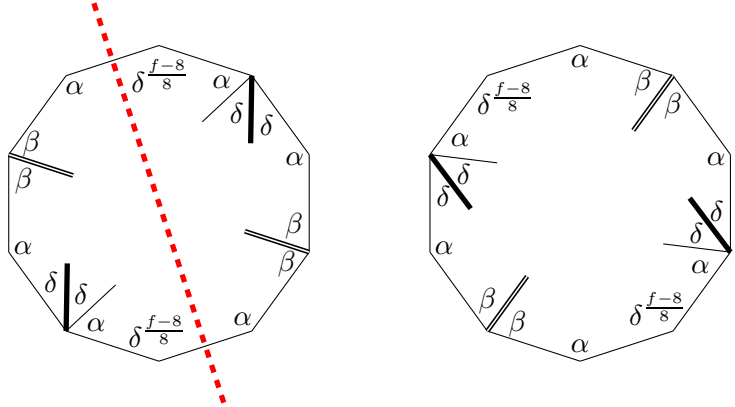

\begin{figure}[htp]
	\centering
	\begin{tikzpicture}[>=latex,scale=0.7]
	
	\foreach \b in {0,1,2}
	{
		\begin{scope}[xshift=0 cm, yshift=-7*\b cm]
		
		\foreach \a in {0,1,2,3,4,5,6,7,8,9}
		\draw[rotate=36*\a]
		(90:3) -- (126:3);
		
		\coordinate (O) at (0,0);
		\coordinate (A) at (90:0.8);
		\coordinate (B) at (270:0.8);
		\coordinate (L) at (162:1.8);
		\coordinate (R) at (342:1.8);
		
		\coordinate (A1) at (90:3);
		\coordinate (A2) at (126:3);
		\coordinate (A3) at (162:3);
		\coordinate (A4) at (198:3);
		\coordinate (A5) at (234:3);
		\coordinate (A6) at (270:3);
		\coordinate (A7) at (306:3);
		\coordinate (A8) at (342:3);
		\coordinate (A9) at (18:3);
		\coordinate (A10) at (54:3);
		
		\draw 
		(54:3) -- (A)--(B) -- (234:3)
		(L) -- (90:3)
		(L) -- (162:3)
		(L) -- (234:3)
		(L) -- (A)
		(R) -- (270:3)
		(R) -- (342:3)
		(R) -- (54:3)
		(R) -- (B);
		
		%boundary
		
		\draw[red]
		(A1) -- (A2) -- (A3) -- (A4) -- (A5) -- (A6);
		
		\draw[green]
		(A6) -- (A7) -- (A8) -- (A9) -- (A10) -- (A1);
		
		% b edge 
		
		\draw[double, line width=0.6] 
		(A3) -- (L) -- (A)
		(A8) -- (R) -- (B);
		
		% c edge 
		
		\draw[line width=2]
		(A1) -- (L) -- (A5)
		(A10) -- (R) -- (A6);
		
		%left region 
		
		% left gamma4 
		\node[shift={(0.45,0.25)}] at (L) {\small $\gamma$};
		\node[shift={(-0.1,0.25)}] at (L) {\small $\gamma$};
		\node[shift={(-0.2,-0.25)}] at (L) {\small $\gamma$};
		\node[shift={(0.2,-0.25)}] at (L) {\small $\gamma$};
		
		% left boundary start from north pole A1 anticlockwisely
		
		% left boundary 1
		\node[shift={(0.02,-0.3)}] at (A1) {\small $\delta$};
		\node[shift={(-0.45,-0.32)}] at (A1) {\small $\delta$};
		
		% left boundary 2
		\node[shift={(306:0.2)}] at (A2) {\small $\alpha$};
		
		% left boundary 3
		\node[shift={(0.3,0.15)}] at (A3) {\small $\beta$};
		\node[shift={(0.2,-0.35)}] at (A3) {\small $\beta$};
		
		% left boundary 4
		\node[shift={(18:0.2)}] at (A4) {\small $\alpha$};
		
		% left boundary 5
		\node[shift={(-0.2,0.65)}] at (A5) {\small $\delta$};
		\node[shift={(0.2,0.5)}] at (A5) {\small $\delta$};
		\node[shift={(0.4,0.08)}] at (A5) {\small $\alpha$};
		
		% upper inner Point A
		
		\node[shift={(0,0.22)}] at (A) {\small $\beta$};
		\node[shift={(-0.2,-0.3)}] at (A) {\small $\beta$};
		\node[shift={(0.2,-0.1)}] at (A) {\small $\alpha$};
		
		% right region
		
		% right gamma4 
		
		\node[shift={(-0.45,-0.3)}] at (R) {\small $\gamma$};
		\node[shift={(0.1,-0.3)}] at (R) {\small $\gamma$};
		\node[shift={(0.2,0.2)}] at (R) {\small $\gamma$};
		\node[shift={(-0.2,0.2)}] at (R) {\small $\gamma$};

		% right boundary start from south pole A6 anticlockwisely
		
		% right boundary 1
		\node[shift={(-0.02,0.3)}] at (A6) {\small $\delta$};
		\node[shift={(0.45,0.32)}] at (A6) {\small $\delta$};
		
		% right boundary 2
		\node[shift={(126:0.2)}] at (A7) {\small $\alpha$};
		
		% right boundary 3
		\node[shift={(-0.3,-0.15)}] at (A8) {\small $\beta$};
		\node[shift={(-0.2,0.35)}] at (A8) {\small $\beta$};
		
		% right boundary 4
		\node[shift={(198:0.2)}] at (A9) {\small $\alpha$};
		
		% right boundary 5
		\node[shift={(0.2,-0.65)}] at (A10) {\small $\delta$};
		\node[shift={(-0.2,-0.5)}] at (A10) {\small $\delta$};
		\node[shift={(-0.4,-0.08)}] at (A10) {\small $\alpha$};

		% lower inner Point B
		
		\node[shift={(0,-0.25)}] at (B) {\small $\beta$};
		\node[shift={(0.2,0.25)}] at (B) {\small $\beta$};
		\node[shift={(-0.2,0.1)}] at (B) {\small $\alpha$};

		%		\draw[fill,black] (A1) circle (0.1cm);
		%		\draw[fill,black] (A6) circle (0.1cm);
		\end{scope}
	}
	
	\begin{scope}[xshift=6cm, yshift=0 cm]
	\foreach \a in {0,1,2,3,4,5,6,7,8,9}
	\draw[rotate=36*\a]
	(90:3) -- (126:3);
	
	\coordinate (O) at (0,0);
	\coordinate (A) at (90:0.8);
	\coordinate (B) at (270:0.8);
	\coordinate (L) at (162:1.8);
	\coordinate (R) at (342:1.8);
	
	\coordinate (A1) at (90:3);
	\coordinate (A2) at (126:3);
	\coordinate (A3) at (162:3);
	\coordinate (A4) at (198:3);
	\coordinate (A5) at (234:3);
	\coordinate (A6) at (270:3);
	\coordinate (A7) at (306:3);
	\coordinate (A8) at (342:3);
	\coordinate (A9) at (18:3);
	\coordinate (A10) at (54:3);

	\coordinate[shift={(-18:0.9)}] (A3L) at (A3);
	\coordinate[shift={(89:0.9)}] (A5L) at (A5);
	\coordinate[shift={(43:0.9)}] (A5B) at (A5);
	
	\coordinate[shift={(162:0.9)}] (A8R) at (A8);
	\coordinate[shift={(269:0.9)}] (A10R) at (A10);
	\coordinate[shift={(223:0.9)}] (A10A) at (A10);

	%\draw 
	%(54:3) -- (A)--(B) -- (234:3)
	%(L) -- (90:3)
	%(L) -- (162:3)
	%(L) -- (234:3)
	%(L) -- (A)
	%(R) -- (270:3)
	%(R) -- (342:3)
	%(R) -- (54:3)
	%(R) -- (B);

	\draw
	(A5) -- (A5B)
	(A10) -- (A10A);

	%boundary
		
	\draw[green]
	(A1) -- (A2) -- (A3) -- (A4) -- (A5) -- (A6);
	
	\draw[blue]
	(A6) -- (A7) -- (A8) -- (A9) -- (A10) -- (A1);

	% b edge 
	
	\draw[double, line width=0.6] 
	%(A3) -- (L) -- (A)
	%(A8) -- (R) -- (B)
	(A3) -- (A3L)
	(A8) -- (A8R);
	
	% c edge 
	
	\draw[line width=2]
	%(A1) -- (L) -- (A5)
	%(A10) -- (R) -- (A6)
	(A5) -- (A5L)
	(A10) -- (A10R);

	%left region 
	
	% left boundary start from north pole A1 anticlockwisely
	
	% left boundary 1
	\node[shift={(0.02,-0.4)}] at (A1) {\small $\delta^{\frac{f-8}{8}}$};

	% left boundary 2
	\node[shift={(306:0.2)}] at (A2) {\small $\alpha$};
	
	% left boundary 3
	\node[shift={(0.3,0.15)}] at (A3) {\small $\beta$};
	\node[shift={(0.2,-0.35)}] at (A3) {\small $\beta$};
	
	% left boundary 4
	\node[shift={(18:0.2)}] at (A4) {\small $\alpha$};
	
	% left boundary 5
	\node[shift={(-0.2,0.65)}] at (A5) {\small $\delta$};
	\node[shift={(0.2,0.5)}] at (A5) {\small $\delta$};
	\node[shift={(0.4,0.08)}] at (A5) {\small $\alpha$};

	% right region

	% right boundary start from south pole A6 anticlockwisely
	
	% right boundary 1
	\node[shift={(-0.02,0.4)}] at (A6) {\small $\delta^{\frac{f-8}{8}}$};
	
	% right boundary 2
	\node[shift={(126:0.2)}] at (A7) {\small $\alpha$};
	
	% right boundary 3
	\node[shift={(-0.3,-0.15)}] at (A8) {\small $\beta$};
	\node[shift={(-0.2,0.35)}] at (A8) {\small $\beta$};
	
	% right boundary 4
	\node[shift={(198:0.2)}] at (A9) {\small $\alpha$};
	
	% right boundary 5
	\node[shift={(0.2,-0.65)}] at (A10) {\small $\delta$};
	\node[shift={(-0.2,-0.5)}] at (A10) {\small $\delta$};
	\node[shift={(-0.4,-0.08)}] at (A10) {\small $\alpha$};

	%	\draw[fill,black] (A1) circle (0.1cm);
	%	\draw[fill,black] (A6) circle (0.1cm);
	
	\end{scope}

	\foreach \b in {1,2}
	{
	\begin{scope}[xshift=6 cm, yshift=-7*\b cm]
	\foreach \a in {0,1,2,3,4,5,6,7,8,9}
	\draw[rotate=36*\a]
	(90:3) -- (126:3);
	
	\coordinate (O) at (0,0);
	\coordinate (A) at (90:0.8);
	\coordinate (B) at (270:0.8);
	\coordinate (L) at (162:1.8);
	\coordinate (R) at (342:1.8);
	
	\coordinate (A1) at (90:3);
	\coordinate (A2) at (126:3);
	\coordinate (A3) at (162:3);
	\coordinate (A4) at (198:3);
	\coordinate (A5) at (234:3);
	\coordinate (A6) at (270:3);
	\coordinate (A7) at (306:3);
	\coordinate (A8) at (342:3);
	\coordinate (A9) at (18:3);
	\coordinate (A10) at (54:3);

	\coordinate[shift={(234:0.9)}] (A10R) at (A10);
	\coordinate[shift={(127:0.9)}] (A8R) at (A8);
	\coordinate[shift={(173:0.9)}] (A8B) at (A8);
	
	\coordinate[shift={(54:0.9)}] (A5L) at (A5);
	\coordinate[shift={(307:0.9)}] (A3L) at (A3);
	\coordinate[shift={(353:0.9)}] (A3A) at (A3);

	%\draw 
	%(54:3) -- (A)--(B) -- (234:3)
	%(L) -- (90:3)
	%(L) -- (162:3)
	%(L) -- (234:3)
	%(L) -- (A)
	%(R) -- (270:3)
	%(R) -- (342:3)
	%(R) -- (54:3)
	%(R) -- (B);

	\draw
	(A8) -- (A8B)
	(A3) -- (A3A);
	
	% boundary
	
	\draw[green]
	(A1) -- (A2) -- (A3) -- (A4) -- (A5) -- (A6);
	
	\draw[blue]
	(A6) -- (A7) -- (A8) -- (A9) -- (A10) -- (A1);
	
	% b edge 
	
	\draw[double, line width=0.6] 
	%(A3) -- (L) -- (A)
	%(A8) -- (R) -- (B)
	(A10) -- (A10R)
	(A5) -- (A5L);
	
	% c edge 
	
	\draw[line width=2]
	%(A1) -- (L) -- (A5)
	%(A10) -- (R) -- (A6)
	(A8) -- (A8R)
	(A3) -- (A3L);
	
	% left region

	% left boundary start from A3 anticlockwise
	
	% left boundary 1
	\node[shift={(0.4,0.2)}] at (A3) {\small $\alpha$};
	\node[shift={(0.45,-0.26)}] at (A3) {\small $\delta$};
	\node[shift={(0.18,-0.55)}] at (A3) {\small $\delta$};
	
	% left boundary 2
	\node[shift={(18:0.2)}] at (A4) {\small $\alpha$};
	
	% left boundary 3
	\node[shift={(0,0.4)}] at (A5) {\small $\beta$};
	\node[shift={(0.4,0.1)}] at (A5) {\small $\beta$};
	
	% left boundary 4
	\node[shift={(90:0.2)}] at (A6) {\small $\alpha$};
	
	% left boundary 5
	\node[shift={(-0.2,0.3)}] at (A7) {\small $\delta^{\frac{f-8}{8}}$};

	% right region
	
	% right boundary start from A8 anticlockwise
	
	% right boundary 1
	\node[shift={(-0.4,-0.2)}] at (A8) {\small $\alpha$};
	\node[shift={(-0.45,0.26)}] at (A8) {\small $\delta$};
	\node[shift={(-0.18,0.55)}] at (A8) {\small $\delta$};
	
	% right boundary 2
	\node[shift={(198:0.2)}] at (A9) {\small $\alpha$};
	
	% right boundary 3
	\node[shift={(0,-0.4)}] at (A10) {\small $\beta$};
	\node[shift={(-0.4,-0.1)}] at (A10) {\small $\beta$};
	
	% right boundary 4
	\node[shift={(270:0.2)}] at (A1) {\small $\alpha$};
	
	% right boundary 5
	\node[shift={(0.25,-0.3)}] at (A2) {\small $\delta^{\frac{f-8}{8}}$};

	%	\draw[fill,black] (A1) circle (0.1cm);
	%	\draw[fill,black] (A6) circle (0.1cm);
	\end{scope}
	}

	\foreach \b in {0,1}
	{
	\begin{scope}[xshift=12 cm, yshift=-7*\b cm]
	\foreach \a in {0,1,2,3,4,5,6,7,8,9}
	\draw[rotate=36*\a]
	(90:3) -- (126:3);
	
	\coordinate (O) at (0,0);
	\coordinate (A) at (90:0.8);
	\coordinate (B) at (270:0.8);
	\coordinate (L) at (162:1.8);
	\coordinate (R) at (342:1.8);
	
	\coordinate (A1) at (90:3);
	\coordinate (A2) at (126:3);
	\coordinate (A3) at (162:3);
	\coordinate (A4) at (198:3);
	\coordinate (A5) at (234:3);
	\coordinate (A6) at (270:3);
	\coordinate (A7) at (306:3);
	\coordinate (A8) at (342:3);
	\coordinate (A9) at (18:3);
	\coordinate (A10) at (54:3);

	\coordinate[shift={(-18:0.9)}] (A3L) at (A3);
	\coordinate[shift={(89:0.9)}] (A5L) at (A5);
	\coordinate[shift={(43:0.9)}] (A5B) at (A5);
	
	\coordinate[shift={(162:0.9)}] (A8R) at (A8);
	\coordinate[shift={(269:0.9)}] (A10R) at (A10);
	\coordinate[shift={(223:0.9)}] (A10A) at (A10);

	%\draw 
	%(54:3) -- (A)--(B) -- (234:3)
	%(L) -- (90:3)
	%(L) -- (162:3)
	%(L) -- (234:3)
	%(L) -- (A)
	%(R) -- (270:3)
	%(R) -- (342:3)
	%(R) -- (54:3)
	%(R) -- (B);

	\draw
	(A5) -- (A5B)
	(A10) -- (A10A);

	%boundary
	
	\draw[red]
	(A6) -- (A7) -- (A8) -- (A9) -- (A10) -- (A1);
	
	\draw[blue]
	(A1) -- (A2) -- (A3) -- (A4) -- (A5) -- (A6);
	
	% b edge 
	
	\draw[double, line width=0.6] 
	%(A3) -- (L) -- (A)
	%(A8) -- (R) -- (B)
	(A3) -- (A3L)
	(A8) -- (A8R);
	
	% c edge 
	
	\draw[line width=2]
	%(A1) -- (L) -- (A5)
	%(A10) -- (R) -- (A6)
	(A5) -- (A5L)
	(A10) -- (A10R);

	%left region 
	
	% left boundary start from north pole A1 anticlockwisely
	
	% left boundary 1
	\node[shift={(0.02,-0.4)}] at (A1) {\small $\delta^{\frac{f-8}{8}}$};

	% left boundary 2
	\node[shift={(306:0.2)}] at (A2) {\small $\alpha$};
	
	% left boundary 3
	\node[shift={(0.3,0.15)}] at (A3) {\small $\beta$};
	\node[shift={(0.2,-0.35)}] at (A3) {\small $\beta$};
	
	% left boundary 4
	\node[shift={(18:0.2)}] at (A4) {\small $\alpha$};
	
	% left boundary 5
	\node[shift={(-0.2,0.65)}] at (A5) {\small $\delta$};
	\node[shift={(0.2,0.5)}] at (A5) {\small $\delta$};
	\node[shift={(0.4,0.08)}] at (A5) {\small $\alpha$};

	% right region

	% right boundary start from south pole A6 anticlockwisely
	
	% right boundary 1
	\node[shift={(-0.02,0.4)}] at (A6) {\small $\delta^{\frac{f-8}{8}}$};
	
	% right boundary 2
	\node[shift={(126:0.2)}] at (A7) {\small $\alpha$};
	
	% right boundary 3
	\node[shift={(-0.3,-0.15)}] at (A8) {\small $\beta$};
	\node[shift={(-0.2,0.35)}] at (A8) {\small $\beta$};
	
	% right boundary 4
	\node[shift={(198:0.2)}] at (A9) {\small $\alpha$};
	
	% right boundary 5
	\node[shift={(0.2,-0.65)}] at (A10) {\small $\delta$};
	\node[shift={(-0.2,-0.5)}] at (A10) {\small $\delta$};
	\node[shift={(-0.4,-0.08)}] at (A10) {\small $\alpha$};
	
	%	\draw[fill,black] (A1) circle (0.1cm);
	%	\draw[fill,black] (A6) circle (0.1cm);	
	\end{scope}
	}

	\begin{scope}[xshift=12 cm, yshift=-14 cm]
	\foreach \a in {0,1,2,3,4,5,6,7,8,9}
	\draw[rotate=36*\a]
	(90:3) -- (126:3);
	
	\coordinate (O) at (0,0);
	\coordinate (A) at (90:0.8);
	\coordinate (B) at (270:0.8);
	\coordinate (L) at (162:1.8);
	\coordinate (R) at (342:1.8);
	
	\coordinate (A1) at (90:3);
	\coordinate (A2) at (126:3);
	\coordinate (A3) at (162:3);
	\coordinate (A4) at (198:3);
	\coordinate (A5) at (234:3);
	\coordinate (A6) at (270:3);
	\coordinate (A7) at (306:3);
	\coordinate (A8) at (342:3);
	\coordinate (A9) at (18:3);
	\coordinate (A10) at (54:3);

	\coordinate[shift={(234:0.9)}] (A10R) at (A10);
	\coordinate[shift={(127:0.9)}] (A8R) at (A8);
	\coordinate[shift={(173:0.9)}] (A8B) at (A8);
	
	\coordinate[shift={(54:0.9)}] (A5L) at (A5);
	\coordinate[shift={(307:0.9)}] (A3L) at (A3);
	\coordinate[shift={(353:0.9)}] (A3A) at (A3);

	%\draw 
	%(54:3) -- (A)--(B) -- (234:3)
	%(L) -- (90:3)
	%(L) -- (162:3)
	%(L) -- (234:3)
	%(L) -- (A)
	%(R) -- (270:3)
	%(R) -- (342:3)
	%(R) -- (54:3)
	%(R) -- (B);

	\draw
	(A8) -- (A8B)
	(A3) -- (A3A);
	
	% boundary
	
	\draw[red]
	(A6) -- (A7) -- (A8) -- (A9) -- (A10) -- (A1);
	
	\draw[blue]
	(A1) -- (A2) -- (A3) -- (A4) -- (A5) -- (A6);
	
	% b edge 
	
	\draw[double, line width=0.6] 
	%(A3) -- (L) -- (A)
	%(A8) -- (R) -- (B)
	(A10) -- (A10R)
	(A5) -- (A5L);
	
	% c edge 
	
	\draw[line width=2]
	%(A1) -- (L) -- (A5)
	%(A10) -- (R) -- (A6)
	(A8) -- (A8R)
	(A3) -- (A3L);
	
	% left region

	% left boundary start from A3 anticlockwise
	
	% left boundary 1
	\node[shift={(0.4,0.2)}] at (A3) {\small $\alpha$};
	\node[shift={(0.45,-0.26)}] at (A3) {\small $\delta$};
	\node[shift={(0.18,-0.55)}] at (A3) {\small $\delta$};
	
	% left boundary 2
	\node[shift={(18:0.2)}] at (A4) {\small $\alpha$};
	
	% left boundary 3
	\node[shift={(0,0.4)}] at (A5) {\small $\beta$};
	\node[shift={(0.4,0.1)}] at (A5) {\small $\beta$};
	
	% left boundary 4
	\node[shift={(90:0.2)}] at (A6) {\small $\alpha$};
	
	% left boundary 5
	\node[shift={(-0.2,0.3)}] at (A7) {\small $\delta^{\frac{f-8}{8}}$};

	% right region
	
	% right boundary start from A8 anticlockwise
	
	% right boundary 1
	\node[shift={(-0.4,-0.2)}] at (A8) {\small $\alpha$};
	\node[shift={(-0.45,0.26)}] at (A8) {\small $\delta$};
	\node[shift={(-0.18,0.55)}] at (A8) {\small $\delta$};
	
	% right boundary 2
	\node[shift={(198:0.2)}] at (A9) {\small $\alpha$};
	
	% right boundary 3
	\node[shift={(0,-0.4)}] at (A10) {\small $\beta$};
	\node[shift={(-0.4,-0.1)}] at (A10) {\small $\beta$};
	
	% right boundary 4
	\node[shift={(270:0.2)}] at (A1) {\small $\alpha$};
	
	% right boundary 5
	\node[shift={(0.25,-0.3)}] at (A2) {\small $\delta^{\frac{f-8}{8}}$};

	%	\draw[fill,black] (A1) circle (0.1cm);
	%	\draw[fill,black] (A6) circle (0.1cm);
	\end{scope}
\end{tikzpicture}
\caption{Relation among Three Family Tilings.}
\label{RelnViaFlip}
\end{figure}
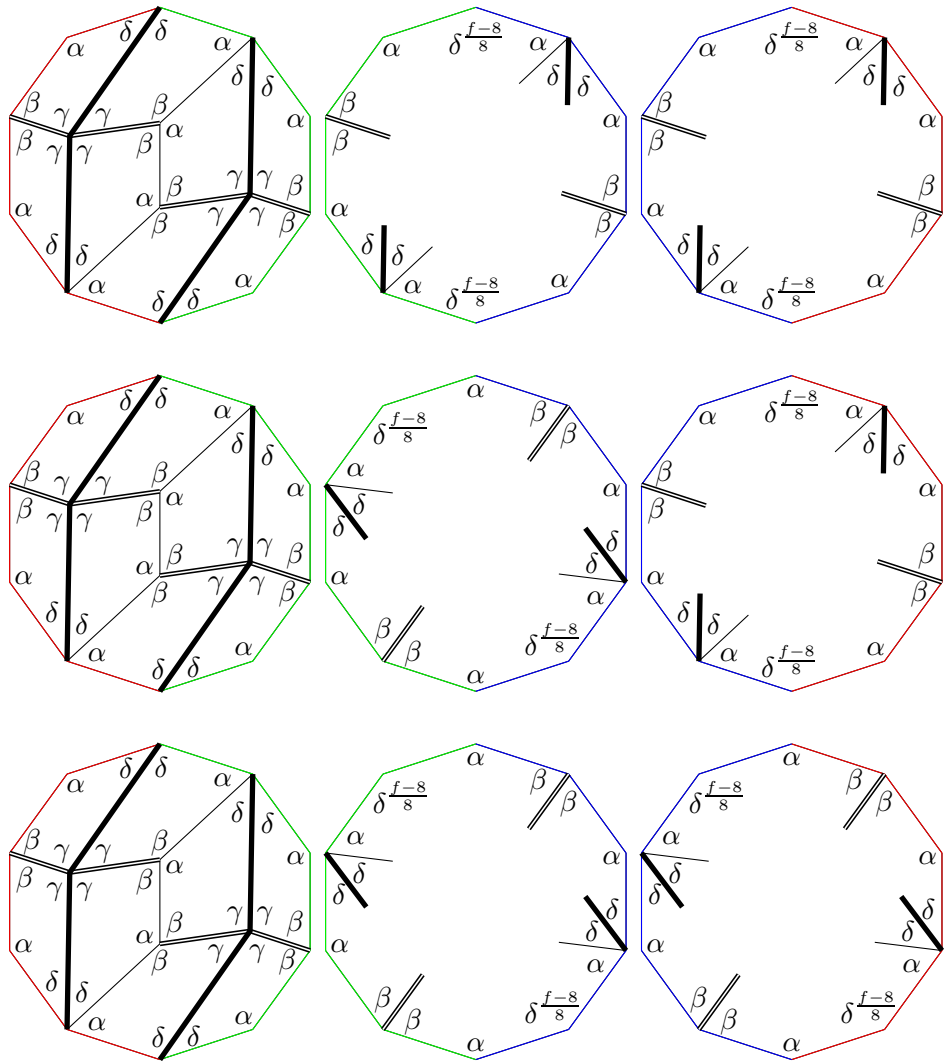

\section{Geometric Realisation}

In this section we show the geometric existence of the tile in Figure \ref{DefaultQuad} with respect to each of the $\AVC$s obtained in the previous sections. The well-known fact that the longer edge lying opposite to the bigger inner angle in a standard spherical triangle is used. That is, for inner angles $\theta_1, \theta_2$ and their corresponding opposite edges $e_1, e_2$, we have $\theta_1 \ge \theta_2$ if and only if $e_1 \ge e_2$.

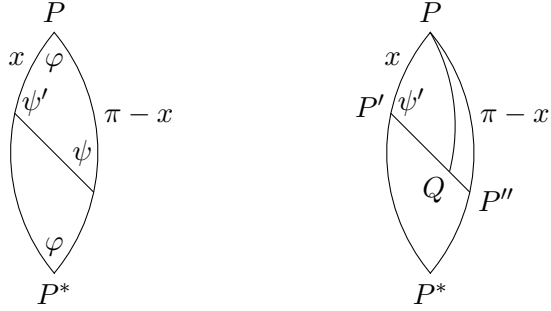
\begin{figure}[htp]
	\centering

\begin{tikzpicture}[>=latex,scale=2.5]

\draw[]
	([shift={(-40:1cm)}]-0.768,0) arc (-40:40:1cm);

\draw[]
	([shift={(-40+180:1cm)}]0.768,0) arc (-40+180:40+180:1cm);

\draw[]
	(-0.21,0.21) -- (0.21,-0.21);

\node at (0.0, 0.48) {\small $\varphi$};
\node at (0.0, -0.5) {\small $\varphi$};
\node at (-0.10, 0.27) {\small $\psi'$};
\node at (0.15, 0.0) {\small $\psi$};

\node at (0.45, 0.2) {\small $\pi - x$};
\node at (-0.2, 0.5) {\small $x$};

\node at (0.0, 0.75) {\small $P$};
\node at (0.0, -0.75) {\small $P^{\ast}$};

\begin{scope}[xshift=2cm, yshift=0cm] %2ND PIC

\draw[]
	([shift={(-40:1cm)}]-0.768,0) arc (-40:40:1cm);

\draw[]
	([shift={(-40+180:1cm)}]0.768,0) arc (-40+180:40+180:1cm);

\draw[]
	(-0.21,0.21) -- (0.21,-0.21);

\draw[]
	([shift={(40:1cm)}]-0.768,0) arc (30:-14:1cm) ;

\node at (0.0, 0.75) {\small $P$};
\node at (0.0, -0.75) {\small $P^{\ast}$};
\node at (0.02, -0.2) {\small $Q$};

\node at (-0.32, 0.25) {\small $P'$};
\node at (0.35, -0.25) {\small $P''$};

\node at (-0.10, 0.27) {\small $\psi'$};
\node at (0.45, 0.2) {\small $\pi - x$};
\node at (-0.2, 0.5) {\small $x$};

\end{scope}

\end{tikzpicture}

\caption{Half Lune Triangles}
\label{HalfLunTri}

\end{figure}

\begin{lem}\label{HalfLune} The area of a spherical triangle with edges $x, \pi-x$ and included angle $\varphi$ is $\varphi$. Let $\psi, \psi'$ be the angles opposite to $x, \pi-x$ respectively, then $\psi'=\pi - \psi$. When $\varphi < \pi$, we have $x \le \frac{\pi}{2} \le \pi - x$ if and only $\psi \le \frac{\pi}{2} \le \psi'$. One of the equalities holds if and only if another holds.
\end{lem}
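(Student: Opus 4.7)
The plan is to exploit the antipode $P^{\ast}$ of the vertex $P$ at which $\varphi$ sits, as foreshadowed by the right-hand picture in Figure \ref{HalfLunTri}. Label the triangle as $ABC$ with $A=P$, $AB = x$, $AC = \pi - x$, so that $\psi$ (opposite $x$) lies at $C$ and $\psi'$ (opposite $\pi-x$) lies at $B$. Let $A^{\ast}$ denote the antipode of $A$. Since $A, B, A^{\ast}$ lie on a common great circle with $AA^{\ast} = \pi$, the residual arc satisfies $A^{\ast}B = \pi - x$; similarly $A^{\ast}C = x$. The two great circles through $AB$ and $AC$ meet at $A$ with angle $\varphi$ and hence meet at $A^{\ast}$ with the same angle, and the lune of opening $\varphi$ with vertices $A$ and $A^{\ast}$ contains both $B$ and $C$ on its boundary arcs; so the interior angle of $A^{\ast}BC$ at $A^{\ast}$ equals $\varphi$.

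The triangle $A^{\ast}BC$ then has sides $A^{\ast}C = x$, $A^{\ast}B = \pi - x$ and included angle $\varphi$, so SAS yields a congruence $ABC \cong A^{\ast}CB$ under the correspondence $A \leftrightarrow A^{\ast}$, $B \leftrightarrow C$, $C \leftrightarrow B$. In particular the angle of $A^{\ast}BC$ at $B$ equals the angle of $ABC$ at $C$, which is $\psi$. But at the vertex $B$ the rays $BA$ and $BA^{\ast}$ are antipodal (they are the two halves of a great circle through $B$), so the interior angles $\angle ABC = \psi'$ and $\angle A^{\ast}BC = \psi$ are supplementary, giving $\psi + \psi' = \pi$. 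Girard's formula now produces
\begin{equation*}
\mathrm{area}(ABC) = \varphi + \psi + \psi' - \pi = \varphi,
\end{equation*}
which is equivalent to observing that $ABC$ and $A^{\ast}BC$ are two congruent halves of the lune of total area $2\varphi$.

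For the final clause, assume $\varphi < \pi$ so that $ABC$ is non-degenerate with all edges strictly less than $\pi$. The fact recalled at the start of this section (longer edge opposite larger angle), applied to the pair $x \leftrightarrow \psi$ and $\pi - x \leftrightarrow \psi'$, gives $x \le \pi - x \iff \psi \le \psi'$, with equality corresponding (also covered by Proposition \ref{EqAngEqEdge}). Combined with $\psi + \psi' = \pi$, this is exactly $x \le \tfrac{\pi}{2} \le \pi - x \iff \psi \le \tfrac{\pi}{2} \le \psi'$, and equality in one place forces equality throughout. The only delicate step in the whole argument is verifying that the interior angle of $A^{\ast}BC$ at $A^{\ast}$ is $\varphi$ rather than its supplement $\pi - \varphi$; this is the sole place where one must be careful with the picture, and is settled by locating both triangles within the same $\varphi$-lune, after which the result follows from SAS and the antipodal supplementarity at $B$.
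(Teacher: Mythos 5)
Your proof is correct and takes essentially the same route as the paper: the paper pieces two copies of the triangle along the side opposite $\varphi$ to form the $\varphi$-lune and reads off the area and the relation $\psi'=\pi-\psi$ from the rotational symmetry, which is exactly your congruence $ABC\cong A^{\ast}CB$ inside the lune together with the antipodal supplementarity at $B$. Your treatment is somewhat more explicit (in particular in justifying that the angle at $A^{\ast}$ is $\varphi$ and in invoking the edge--angle monotonicity for the final clause), but the underlying argument is the same.
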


\begin{proof} Piecing two copies of such triangle along the opposite edge to $\varphi$ we get a lune with antipodal angles $\varphi$ in Figure \ref{HalfLunTri} which has area $2\varphi$ and hence the area of the triangle is $\varphi$. By rotational symmetry, $\psi'$ has the same value as the supplementary angle of $\psi$ and hence $\psi'=\pi - \psi$. When $\varphi < \pi$, the longer edge lies opposite to the bigger inner angle, so $x \le \frac{\pi}{2} \le \pi - x$ if and only $\psi \le \frac{\pi}{2} \le \psi'$ and it is obvious that $x = \frac{\pi}{2}$ if and only if $\psi = \frac{\pi}{2}$.
\end{proof}

\begin{lem} \label{HalfLunTriIntEdge} In a standard spherical triangle with edges $x$, $\pi-x$ and one of their opposite angle $\psi' \ge \frac{\pi}{2}$, then any geodesic arc passing through the common vertex of these two edges dissecting the triangle into two triangles has length between $x$ and $\pi-x$. When $\psi' > \frac{\pi}{2}$, any such dissecting geodesic arc has length strightly larger than $\min (x, \pi - x)$ and smaller than $\max (x, \pi-x)$.
\end{lem}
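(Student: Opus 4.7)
The strategy is to reduce both bounds on the length of the dissecting arc to a single angle inequality, and then derive that inequality by a short contradiction that consumes the hypothesis $\psi' \geq \frac{\pi}{2}$. I denote the common vertex of the two edges by $P$ and write the triangle as $\triangle PAB$ with $PA = x$ and $PB = \pi - x$. By Lemma \ref{HalfLune}, the angles at $A$ and $B$ are $\psi'$ and $\psi = \pi - \psi'$ respectively, and $\psi' \geq \frac{\pi}{2}$ forces $x \leq \frac{\pi}{2} \leq \pi - x$. Let the dissecting geodesic meet $AB$ at $Q$, strictly between $A$ and $B$ (as ``dissecting into two triangles'' requires), and set $\eta := \angle AQP$, so that $\angle BQP = \pi - \eta$ by supplementarity along $AB$.

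The key step is to apply the ``longer edge lying opposite the bigger inner angle'' rule recalled at the start of the section to each of the two subtriangles $\triangle PAQ$ and $\triangle PQB$. In $\triangle PAQ$, the edge $PQ$ lies opposite the angle $\psi'$ at $A$ (inherited unchanged from the parent triangle because $Q \in AB$) while $PA = x$ lies opposite $\eta$; this yields
\[
PQ \geq x \iff \psi' \geq \eta.
\]
In $\triangle PQB$, the edge $PQ$ lies opposite $\psi$ while $PB = \pi - x$ lies opposite $\pi - \eta$, which yields
\[
PQ \leq \pi - x \iff \psi \leq \pi - \eta \iff \eta \leq \pi - \psi = \psi'.
\]
Both desired bounds have therefore collapsed to the single inequality $\eta \leq \psi'$.

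To close the argument I would suppose for contradiction that $\eta > \psi'$: the strict versions of the two equivalences above then simultaneously force $PQ < x$ and $PQ > \pi - x$, i.e.\ $\pi - x < PQ < x \leq \pi - x$, which is absurd. Hence $\eta \leq \psi'$ and $x \leq PQ \leq \pi - x$. For the strict statement I would assume $\psi' > \frac{\pi}{2}$ and rule out $\eta = \psi'$: if it held, the equality cases of the rule would give $PQ = x$ and $PQ = \pi - x$, forcing $x = \frac{\pi}{2}$ and then, by Lemma \ref{HalfLune}, $\psi' = \frac{\pi}{2}$, contradicting the hypothesis. Thus $\eta < \psi'$ strictly and both $PQ > x$ and $PQ < \pi - x$ follow strictly.

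The step that truly uses $\psi' \geq \frac{\pi}{2}$ is the final contradiction: without $x \leq \pi - x$, the chain $\pi - x < PQ < x$ would not be absurd and the bounds would genuinely fail. Beyond that, the only book-keeping obstacle is to verify that both subtriangles are standard (no interior angle equal to $\pi$) so that the angle-edge monotonicity rule applies, but this is immediate once $Q$ is strictly between $A$ and $B$: one has $\eta \in (0, \pi)$, the inherited angles $\psi'$ and $\psi$ lie in $(0, \pi)$ by the assumption on the parent triangle, and the two pieces of $\angle APB$ at $P$ are both positive. No spherical trigonometric computation is required.
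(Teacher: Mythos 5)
Your proof is correct and follows essentially the same route as the paper: split the triangle at $Q$, apply the side--angle monotonicity rule in each of the two subtriangles, and combine with the supplementarity of the two angles at $Q$ and the fact that $\psi'\ge\frac{\pi}{2}$ forces $x\le\pi-x$. Your reorganisation --- collapsing both bounds to the single inequality $\eta\le\psi'$ before running the contradiction --- is a slightly tidier bookkeeping of the same computation, and your equality-case argument ($\eta=\psi'$ would force $x=\pi-x$ and hence $\psi'=\frac{\pi}{2}$) is a clean substitute for the paper's two separate contradictions in the strict case.
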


\begin{proof} Suppose such spherical triangle is illustrated as $\triangle PP'P''$ in Figure \ref{HalfLunTri} and $PQ$ is any geodesic arc dissecting $\triangle PP'P''$. Then $\triangle PP'Q$ and $\triangle PP''Q$ are also standard triangles and $\angle PQP' + \angle PQP'' = \pi$. Without loss of generality, supppose $\pi - x \ge x$. Assume $PQ > \pi - x$. Then $\angle PQP' < \psi'$ and $\angle PQP'' < \pi - \psi'$ which implies $\angle PQP' + \angle PQP'' < \pi$, a contradiction. By the same token, $PQ < x$ implies $\angle PQP' + \angle PQP'' > \pi$, also a contradiction. Hence $\min (x, \pi - x) \le PQ \le \max (x, \pi-x)$. When $\psi' > \frac{\pi}{2}$, we have $\pi - x > x$. Assume $PQ = x$, then $\pi - x > PQ$ implies $\angle PQP'' > \pi - \psi'$ which gives $\angle PQP' < \psi'$ and hence $x < PQ$, a contradiction. So $PQ > x$. Assume $PQ = \pi - x$, then $PQ > x$ implies $\psi' > \angle PQP' = \pi - \angle PQP''$ which gives $\angle PQP'' > \pi - \psi'$ and hence $PQ < \pi - x$, a contradiction. So $PQ < \pi - x$. Hence we get $\min (x, \pi - x) < PQ < \max (x, \pi-x)$.
\end{proof}

\begin{lem}\label{ACLem} In a lune $L_{\alpha}$ defined by two geodesic arcs $\bm{a} \in (0, \pi)$ and $\alpha$ as the angle in between, let the antipodal vertices be $A, A^{\ast}$ and $AB, AD$ has lengths $=\bm{a} \in (0, \pi)$. For any $C$ inside $L_{\alpha}$, the quadrilateral $\square ABCD$ obtained has area $=\alpha$ if and only if $AC$ has length $=\pi - \bm{a}$. 
\end{lem}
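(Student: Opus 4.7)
The plan is to prove the biconditional by splitting the quadrilateral $\square ABCD$ along the diagonal $AC$ into two spherical triangles $\triangle ABC$ and $\triangle ACD$, and applying Lemma \ref{HalfLune} to each. Let $\theta_1=\angle BAC$ and $\theta_2=\angle CAD$. Since $B$ and $D$ lie on the two boundary arcs of $L_{\alpha}$ emanating from $A$, we have $\theta_1+\theta_2=\alpha$, and each sub-triangle has one side of length $\bm{a}$ at $A$ with included angle $\theta_1$ or $\theta_2$.

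For the forward direction, suppose $|AC|=\pi-\bm{a}$. Then $\triangle ABC$ and $\triangle ACD$ each have two sides of lengths $\bm{a}$ and $\pi-\bm{a}$ meeting at $A$. Lemma \ref{HalfLune} yields $\text{area}(\triangle ABC)=\theta_1$ and $\text{area}(\triangle ACD)=\theta_2$, so $\text{area}(\square ABCD)=\theta_1+\theta_2=\alpha$.

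For the reverse direction, my plan is to establish strict monotonicity of the area as the third vertex slides along the ray from $A$ through $C$. Parametrise points on this geodesic ray by their distance from $A$, writing $C(c)$ for the point at distance $c$, so $C(|AC|)=C$. Since $\theta_1$ and $\theta_2$ do not depend on $c$, the decomposition $\text{area}(\square ABC(c)D)=\text{area}(\triangle ABC(c))+\text{area}(\triangle AC(c)D)$ still holds. For $0<c<c'<\pi$, the point $C(c)$ lies strictly in the interior of the segment $AC(c')$, so $\triangle ABC(c)$ sits strictly inside $\triangle ABC(c')$ with complementary region $\triangle BC(c)C(c')$ of positive area, and similarly for the other sub-triangle. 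Hence $c\mapsto \text{area}(\square ABC(c)D)$ is strictly increasing on $(0,\pi)$. By the forward direction it attains the value $\alpha$ at $c=\pi-\bm{a}$, so that value of $c$ is unique, forcing $|AC|=\pi-\bm{a}$ whenever the area equals $\alpha$.

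The main obstacle is verifying that $C(c)$ remains inside $L_{\alpha}$ throughout $c\in(0,\pi)$, so that each $\square ABC(c)D$ is a simple quadrilateral to which the decomposition applies. For $\alpha\le\pi$ the lune is convex and this is automatic: any geodesic from $A$ into the interior reaches $A^{\ast}$ at distance $\pi$ without exiting. For $\alpha>\pi$ one must argue that since $C$ lies in the interior, the ray from $A$ through $C$ cannot cross either boundary arc before reaching $A^{\ast}$, because the boundary arcs are themselves geodesics from $A$ and two distinct geodesics from $A$ only meet again at $A^{\ast}$.
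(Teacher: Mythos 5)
Your proposal is correct and follows essentially the same route as the paper: split $\square ABCD$ along the diagonal $AC$, apply Lemma \ref{HalfLune} to the two sub-triangles for the forward direction, and for the converse use the containment of triangles as $C$ slides along the ray from $A$ (the paper phrases this directly as ``if $AC < \pi - \bm{a}$ then each sub-triangle has smaller area'', which is your monotonicity statement). Your additional care in checking that $C(c)$ stays inside $L_{\alpha}$ is a minor refinement of a point the paper treats as obvious.
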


\begin{proof} When $C$ is inside $L_{\alpha}$, it is obvious that the quadrilateral $\square ABCD$ is simple. Up to symmetry, let $\theta$ denote the angle between $AB$ and $AC$ be $\theta$. When $AC$ has length $\pi - \bm{a}$, Lemma \ref{HalfLune} implies the area of $\triangle ABC = \theta$ and area of $\triangle ADC = \alpha - \theta$. Then area of $\square ABCD$ is the sum of the area of $\triangle ABC$ and the area of $\triangle ADC$ which is $\alpha$. When $\square ABCD$ has area $\alpha$, assume $AC$ has length $<\pi - \bm{a}$, by Lemma \ref{HalfLune}, the area of $\triangle ABC < \theta$ and area of $\triangle ADC < \alpha - \theta$. Then the area of $\square ABCD < \theta +  \alpha - \theta = \alpha$, a contradiction. By the same token, if $AC$ has length $>\pi - \bm{a}$, we get the area of $\square ABCD > \alpha$, also a contradiction.
\end{proof}

\begin{proposition} \label{QuadDistabc} If $\alpha \ge \frac{4\pi}{f} $, for any $\bm{a} \in (0, \pi)$, there exist a $\theta$-family of simple default quadrilaterals in Figure \ref{DefaultQuad} with distinct edges $\bm{a},\bm{b},\bm{c}$ and area equals to $\frac{4\pi}{f}$.
\end{proposition}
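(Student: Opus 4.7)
The strategy is to fix the apex $A$ together with the two $\bm{a}$-edges, and then sweep the fourth vertex $C$ along a one-parameter family of geodesic rays emanating from $A$, tuning the distance $|AC|$ along each ray so that the quadrilateral area comes out to exactly $\tfrac{4\pi}{f}$.

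Concretely, I would place $A$ as the apex of the lune $L_\alpha$ and mark $B,D$ on its two bounding rays with $AB = AD = \bm{a}$, which fixes $\angle DAB = \alpha$. For each $\theta \in (0,\alpha)$ let $r_\theta$ be the geodesic ray from $A$ at angular offset $\theta$ from the ray towards $B$, and for $t \in (0,\pi)$ let $C = C(\theta,t)$ be its point at spherical distance $t$ from $A$. Splitting $\square ABCD$ along the diagonal $AC$ gives $\mathrm{Area}(\square ABCD) = \mathrm{Area}(\triangle ABC) + \mathrm{Area}(\triangle ACD) =: S(\theta,t)$. By Lemma~\ref{HalfLune} each summand equals its apex angle at $A$ precisely when $t = \pi - \bm{a}$, so $S(\theta,\pi-\bm{a}) = \theta + (\alpha - \theta) = \alpha$; meanwhile $S(\theta,t) \to 0$ as $t \to 0^{+}$. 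Continuity of $S$ together with the hypothesis $\alpha \ge \tfrac{4\pi}{f}$ and the intermediate value theorem then supply a continuous function $\theta \mapsto t(\theta) \in (0,\pi - \bm{a}]$ with $S(\theta,t(\theta)) = \tfrac{4\pi}{f}$, yielding a continuous $\theta$-family of quadrilaterals.

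For the distinctness of $\bm{a},\bm{b},\bm{c}$, the reflection across the bisector of $\angle DAB$ swaps $B \leftrightarrow D$ and $\bm{b} \leftrightarrow \bm{c}$; this symmetry fixes only the single value $\theta = \alpha/2$, so every other $\theta$ gives $\bm{b} \ne \bm{c}$. The coincidences $\bm{b} = \bm{a}$ and $\bm{c} = \bm{a}$ are each determined by one analytic equation in the single parameter $\theta$ and hence cut out only a discrete set, leaving an open sub-family of parameter values for which all three edge lengths differ. Simplicity of $\square ABCD$ follows provided $C$ lies on the far side of the chord $BD$ from $A$; I would verify this by comparing $\tfrac{4\pi}{f}$ with $\mathrm{Area}(\triangle ABD)$, since in the convex regime the IVT-solution $t(\theta)$ automatically lies past the crossing distance $t_{\ast}(\theta)$ at which $r_\theta$ meets the great circle through $B$ and $D$.

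The principal obstacle is the simplicity verification when $\mathrm{Area}(\triangle ABD) > \tfrac{4\pi}{f}$, which can occur for $\bm{a}$ close to $\pi$: the naive IVT sweep then places $C$ inside $\triangle ABD$ and the quadrilateral self-intersects. Producing a simple quadrilateral of the required small area in this regime requires allowing $C$ to leave $L_\alpha$ (so that the interior angle $\gamma$ at $C$ becomes reflex) and verifying that the non-adjacent edge pairs $\{AB,CD\}$ and $\{BC,DA\}$ remain disjoint throughout; organizing this case analysis cleanly into a single continuous $\theta$-family is where the technical difficulty lies.
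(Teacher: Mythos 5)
Your construction is the same as the paper's: fix $A,B,D$ with $AB=AD=\bm{a}$ and $\angle DAB=\alpha$, sweep $C$ along the ray at angle $\theta$ inside the lune $L_\alpha$, note via Lemma~\ref{HalfLune} (equivalently Lemma~\ref{ACLem}) that the area equals $\alpha\ge\frac{4\pi}{f}$ at $AC=\pi-\bm{a}$ and tends to $0$ as $AC\to0$, apply the intermediate value theorem to get area $\frac{4\pi}{f}$, and then discard $\theta=\alpha/2$ (which forces $\bm{b}=\bm{c}$ by SSS) together with the at most two further values of $\theta$ where the cosine law forces $\bm{a}=\bm{b}$ or $\bm{a}=\bm{c}$. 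Up to that point you match the paper step for step.

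The gap is in your final paragraph on simplicity, where you misdiagnose the geometry. If $C$ stays inside the lune $L_\alpha$ but happens to fall inside $\triangle ABD$ (which is what occurs when $\mathrm{Area}(\triangle ABD)>\frac{4\pi}{f}$, e.g.\ for $\bm{a}>\frac{\pi}{2}$), the quadrilateral $ABCD$ does \emph{not} self-intersect: the diagonal $AC$ still splits it into the two triangles $\triangle ABC$ and $\triangle ACD$, which lie in the disjoint sub-lunes of angles $\theta$ and $\alpha-\theta$, so their union is a simple quadrilateral --- merely a non-convex one with reflex angle $\gamma>\pi$ at $C$. This is exactly what the corollary following the proposition records ($\frac{\pi}{2}<\bm{a}<\pi$ if and only if $\pi<\gamma<2\pi$). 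Your proposed fix, letting $C$ leave $L_\alpha$, is both unnecessary and pointed the wrong way: moving $C$ outside the lune makes $\beta$ or $\delta$ reflex, not $\gamma$, and the paper reserves that configuration for the separate Proposition~\ref{Distinctabc}. Likewise your requirement that ``$C$ lies on the far side of the chord $BD$ from $A$'' is not needed for simplicity. Dropping the spurious case analysis and instead observing that every $C$ in the open lune yields a simple quadrilateral closes your argument.
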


\begin{proof} For every $\bm{a} \in (0, \pi)$ and $\alpha \ge \frac{4\pi}{f}$ where $f\ge6$, let $L_{\alpha}$ denote the lune defined by $\bm{a}$ and $\alpha$. We construct such $\theta$-family of quadrilaterals by locating vertex $C$ inside $L_{\alpha}$ and $\theta$ denotes the angle between $AB$ and $AC$. By Lemma \ref{ACLem}, when $AC$ has length $\pi - \bm{a}$, $\square ABCD$ is simple and has area $\alpha \ge \frac{4\pi}{f}$. Let $x$ denote the length of $AC$, the area of $\square ABCD$ is a function $\mathcal{A}(x, \theta)$. Then for every fixed $\theta$, $\mathcal{A}(x, \theta)$ is a continuous function in $x$ on $(0, \pi)$. In particular, $\mathcal{A}(0, \theta) = 0$ and $\mathcal{A}(\pi-\bm{a}, \theta) = \alpha \ge \frac{4\pi}{f}$. By Intermediate Value Theorem, there exists a value $l \in (0, \pi-\bm{a}]$ such that $\mathcal{A}(l, \theta) = \frac{4\pi}{f}$. In other words, there is a value $l \in (0, \pi-\bm{a}]$ such that when $AC$ has length $=l$, $\square ABCD$ has area $= \frac{4\pi}{f}$.  It remains to argue is that we can locate such a $C$ vertex so that $\bm{a},\bm{b},\bm{c}$ are distinct. This is done via a choice of $\theta \in [0, \alpha]$ as follows. We consider the $\bm{a}=\bm{b}, \bm{a}=\bm{c}, \bm{b}=\bm{c}$ respectively. 

If $\bm{b}=\bm{c}$, then triangles $\triangle ABC$ and $\triangle ACD$ are congruent $(SSS)$ and hence $\theta = \alpha - \theta = \frac{\alpha}{2}$. 

If $\bm{a}=\bm{b}$, then cosine law on $\triangle ABC$ gives 
\begin{align*}
\cos \bm{a} = \cos \bm{b} = \cos \bm{a} \cos l + \sin \bm{a} \sin l \cos \theta,
\end{align*}
which implies 
\begin{align*}
\cos \theta = \frac{\cos \bm{a} (1-\cos l)}{\sin \bm{a} \sin l }.
\end{align*}

If $\bm{a}=\bm{c}$, then cosine law on $\triangle ACD$ gives
\begin{align*}
\cos \bm{a} = \cos \bm{c} = \cos \bm{a} \cos l + \sin \bm{a} \sin l \cos(\alpha - \theta),
\end{align*}
which implies 
\begin{align*}
\cos( \alpha - \theta ) =  \frac{\cos \bm{a} (1-\cos l)}{\sin \bm{a} \sin l }.
\end{align*}

When $\left\vert  \frac{\cos a (1-\cos l)}{\sin a \sin l } \right\vert > 1$, there is no solution for $\theta$ in both of the discussion of $\bm{a}=\bm{b}$ and $\bm{a}=\bm{c}$ whereas when $\left\vert \frac{\cos a (1-\cos l)}{\sin a \sin l } \right\vert \le 1$ and $\theta \neq \cos^{-1} \left( \frac{\cos a (1-\cos l)}{\sin a \sin l } \right), \alpha - \cos^{-1} \left(  \frac{\cos a (1-\cos l)}{\sin a \sin l } \right)$, we have $\bm{a} \neq \bm{b}$ and $\bm{a} \neq \bm{c}$ respectively. Hence, for $a \in (0, \pi)$, when $\left\vert  \frac{\cos a (1-\cos l)}{\sin a \sin l } \right\vert > 1$, for any $\theta \in [0, \alpha] \backslash \{  \frac{\alpha}{2} \}$, there is a quadrilateral $\square ABCD$ with area $\frac{4\pi}{f}$ and distinct $\bm{a}, \bm{b}, \bm{c}$; when $\left\vert \frac{\cos a (1-\cos l)}{\sin a \sin l } \right\vert \le 1$, for any $\theta \in [0, \alpha] \backslash \{  \frac{\alpha}{2}, \cos^{-1} \left(  \frac{\cos a (1-\cos l)}{\sin a \sin l } \right), \alpha - \cos^{-1} \left(  \frac{\cos a (1-\cos l)}{\sin a \sin l } \right) \}$ there is a quadrilateral $\square ABCD$ with area $\frac{4\pi}{f}$ and distinct $\bm{a}, \bm{b}, \bm{c}$.
\end{proof}

\begin{cor} Moreover when $\alpha < \pi$, among these quadrilaterals constructed, we have
\begin{enumerate}
\item $0 < \bm{a} < \frac{\pi}{2}$ if and only if $0< \gamma < \pi$,
\item $\bm{a} = \frac{\pi}{2}$ if and only if $\gamma = \pi$,
\item $\frac{\pi}{2} < \bm{a} < \pi$ if and only if $\pi < \gamma < 2\pi$.
\end{enumerate}
\end{cor}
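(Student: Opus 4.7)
The plan is to split the quadrilateral $\square ABCD$ along the diagonal $AC$ into two sub-triangles $\triangle ABC$ and $\triangle ACD$ and to apply Lemma~\ref{HalfLune} to each. The construction of Proposition~\ref{QuadDistabc} places $C$ inside the lune $L_{\alpha}$, with $AB=AD=\bm{a}$ and $AC$ of length $l\in(0,\pi-\bm{a}]$ in direction $\theta$. The critical observation is that when $AC=\pi-\bm{a}$ (i.e.\ at the extremal value identified by Lemma~\ref{ACLem}), both sub-triangles have exactly the ``half-lune'' shape of Lemma~\ref{HalfLune}: the two edges at $A$ have lengths $\bm{a}$ and $\pi-\bm{a}$, with included angles $\theta$ and $\alpha-\theta$ respectively.

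Applying Lemma~\ref{HalfLune} to $\triangle ABC$, the angle $\gamma_1:=\angle BCA$ opposite to the edge $AB=\bm{a}$ satisfies $\bm{a}\le\tfrac{\pi}{2}\Leftrightarrow\gamma_1\le\tfrac{\pi}{2}$, with equalities simultaneous. The identical reasoning applied to $\triangle ACD$ gives $\bm{a}\le\tfrac{\pi}{2}\Leftrightarrow\gamma_2\le\tfrac{\pi}{2}$ for $\gamma_2:=\angle ACD$. Because $C$ lies inside $L_{\alpha}$, the diagonal $AC$ is interior to the quadrilateral, so the interior angle at $C$ is $\gamma=\gamma_1+\gamma_2$. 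Adding the two equivalences yields at once: $\bm{a}<\tfrac{\pi}{2}\Leftrightarrow\gamma_1,\gamma_2<\tfrac{\pi}{2}\Leftrightarrow 0<\gamma<\pi$; $\bm{a}=\tfrac{\pi}{2}\Leftrightarrow\gamma_1=\gamma_2=\tfrac{\pi}{2}\Leftrightarrow\gamma=\pi$; and $\bm{a}>\tfrac{\pi}{2}\Leftrightarrow\gamma_1,\gamma_2>\tfrac{\pi}{2}\Leftrightarrow\gamma>\pi$. The upper bound $\gamma<2\pi$ in the third case is automatic from the convexity constraint $\gamma_1,\gamma_2<\pi$ in each sub-triangle (no angle of a standard spherical triangle reaches $\pi$), and the hypothesis $\alpha<\pi$ keeps the two sub-triangles genuinely non-degenerate.

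The main obstacle will be to extend this argument from the extremal case $l=\pi-\bm{a}$ (where the half-lune structure is exact) to all quadrilaterals in the $\theta$-family when $\alpha>\tfrac{4\pi}{f}$ strictly, so the IVT value $l$ in Proposition~\ref{QuadDistabc} can be strictly less than $\pi-\bm{a}$. In that case Lemma~\ref{HalfLune} does not apply to $\triangle ABC$ verbatim. I expect to handle this via continuity/monotonicity in $l$: show $\gamma(l)$ is monotone on $(0,\pi-\bm{a}]$ (using the spherical cosine law on $\triangle ABC$) and that its boundary values straddle the predicted regime, so that the sign of $\gamma-\pi$ is controlled throughout by the sign of $\bm{a}-\tfrac{\pi}{2}$; the equality case then pins down $\bm{a}=\tfrac{\pi}{2}$ exactly when the boundary $l=\pi-\bm{a}$ is attained, which is consistent with the area equation $\mathcal{A}(l,\theta)=\tfrac{4\pi}{f}$ only if $\alpha=\tfrac{4\pi}{f}$. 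The verification that the two endpoint regimes of $\gamma(l)$ agree with Lemma~\ref{HalfLune} at $l=\pi-\bm{a}$ and with the degenerate limit $l\to 0^{+}$ (where $\gamma\to 2\pi-\alpha>\pi$) is where the bulk of the technical work lies.
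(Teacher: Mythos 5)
Your decomposition along the diagonal $AC$ is genuinely different from the paper's. The paper cuts along the other diagonal $BD$: since $\triangle ABD$ is isosceles with legs $\bm{a}$ and apex angle $\alpha$, its area is less than, equal to, or greater than $\alpha$ (half the area of the lune $L_{\alpha}$) according as $\bm{a}$ is less than, equal to, or greater than $\frac{\pi}{2}$, and the position of $C$ relative to $\triangle ABD$ (outside, on $BD$, inside) then dictates $\gamma<\pi$, $\gamma=\pi$, or $\gamma>\pi$. Your route --- reading $\gamma_1,\gamma_2$ against $\frac{\pi}{2}$ via Lemma \ref{HalfLune} applied to the two half-lune triangles $\triangle ABC$, $\triangle ACD$ and adding --- is correct and arguably cleaner, but it is only available when $AC=\pi-\bm{a}$, which by Lemma \ref{ACLem} is exactly the case where the quadrilateral's area equals $\alpha$, i.e.\ $\alpha=\frac{4\pi}{f}$.

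The obstacle you flag for $l<\pi-\bm{a}$ is real, but the continuity/monotonicity plan you sketch cannot close it, because off the extremal locus the sign of $\gamma-\pi$ is not governed by the sign of $\bm{a}-\frac{\pi}{2}$. Since $\mathcal{A}(\cdot,\theta)$ is increasing, whether $C$ lands inside or outside $\triangle ABD$ --- hence whether $\gamma>\pi$ or $\gamma<\pi$ --- is decided by comparing the quadrilateral's area $\frac{4\pi}{f}$ with the area of $\triangle ABD$, whereas $\bm{a}\lessgtr\frac{\pi}{2}$ compares that same triangle's area with $\alpha$. When $\frac{4\pi}{f}<\alpha$ these two comparisons can disagree: taking $\bm{a}$ slightly below $\frac{\pi}{2}$ puts the area of $\triangle ABD$ strictly between $\frac{4\pi}{f}$ and $\alpha$, which forces $C$ into the interior of $\triangle ABD$ and gives $\gamma>\pi$ despite $\bm{a}<\frac{\pi}{2}$ --- consistent with your own observation that $\gamma\to 2\pi-\alpha>\pi$ as $l\to 0^{+}$. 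Only the implication $\bm{a}\ge\frac{\pi}{2}\Rightarrow\gamma\ge\pi$ (equivalently $\gamma<\pi\Rightarrow\bm{a}<\frac{\pi}{2}$, the one direction the paper actually invokes in the geometric realisation section) is robust for all $l$. To be fair, the paper's own proof of the converse direction also quietly needs the area of $\triangle ABD$ to be at most the area of the quadrilateral rather than at most $\alpha$, so your argument is on equal footing with it on the extremal locus; but the extension to general $l$ as you describe it is a step that would fail rather than a routine verification.
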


\begin{proof} When $0 < \bm{a} \le \frac{\pi}{2}$, $\triangle ABD$ is contained in half of $L_{\alpha}$ thus the area of $\triangle ABD$ is no more than half of the area of $L_{\alpha}$, which is $\alpha$. So $C$ must lie either outside of $\triangle ABD$ or on $BD$ and hence $0 < \gamma \le \pi$ such that $\bm{a} = \frac{\pi}{2}$ holds if and only if $\gamma = \pi$. When $\frac{\pi}{2} < \bm{a} < \pi$, the area of $\triangle ABD$ is bigger than $\alpha$ so $C$ must lie inside the triangle and hence $\gamma$ must be $>\pi$. 
\end{proof}

As proven by Cheung in \cite{ch}, the default quadrilateral in Figure \ref{DefaultQuad} satisfies the following matrix equation
\begin{align}\label{MatrixEq}
Y(\bm{b})Z(\pi - \beta)Y(\bm{a})Z(\pi - \alpha)Y(\bm{a})Z(\pi - \delta)Y(\bm{c})Z(\pi - \gamma) = I_3 
\end{align}
where 
\begin{align*}
Y(x) = 
\begin{bmatrix}
\cos x & 0 & \sin x \\
0 & 1 & 0 \\
-\sin x & 0 & \cos x
\end{bmatrix}, \ 
Z(\pi - \theta) = 
\begin{bmatrix}
-\cos \theta & -\sin \theta & 0 \\
\sin \theta & -\cos \theta & 0 \\
0 & 0 & 1
\end{bmatrix}, \ 
I_3 = 
\begin{bmatrix}
1 & 0 & 0 \\
0& 1 & 0 \\
0 & 0 & 1
\end{bmatrix},
\end{align*}
such that $x = \bm{a}, \bm{b}, \bm{c}$ and $\theta = \alpha, \beta, \gamma$. The following equations are obtained from \eqref{MatrixEq}. 

\begin{lem} In a tiling of the sphere by congruent quadrilaterals in Figure \ref{DefaultQuad}, we have the following trigonometry equations,
\begin{align}
\label{TrigEqca}&(\cos \alpha - 1) \sin \beta\sin\delta \cos^2 \bm{a}  + \sin\alpha(\cos\beta\sin\delta+\cos\delta\sin\beta)\cos \bm{a} \\ \notag
&+ \sin\beta\sin\delta+\cos\gamma-\cos\alpha\cos\beta\cos\delta = 0; \\
\label{TrigEqcb}&\cos \bm{b} \sin \beta \sin \gamma - \cos \bm{a} \sin \alpha \sin \delta - \cos \beta \cos \gamma+\cos \alpha \cos \delta = 0; \\
&\label{TrigEqcc} \cos \delta \sin \beta( \cos \alpha - 1)\cos^2 \bm{a} + \sin \alpha (  \cos \beta \cos \delta - \sin \beta \sin \delta )\cos \bm{a}  \\ \notag 
&+ \sin \gamma \cos \bm{c} +  \cos \alpha \cos \beta \sin \delta + \cos \delta \sin \beta = 0.
\end{align}
\end{lem}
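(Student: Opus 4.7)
My plan is to derive each of the three trigonometric identities by rearranging the matrix equation \eqref{MatrixEq} into three different equivalent forms, then reading off a single entry from both sides of each rearranged equation. The key observation is that the matrices $Y(x)$ and $Z(\pi-\theta)$ have simple block structure (a bottom-right $1$ in $Y$, a top-left $2\times 2$ rotation-like block in $Z$), so isolating the right pair of matrices produces an entry where the desired equation appears with minimal clutter.

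First, for \eqref{TrigEqca}, I would rewrite \eqref{MatrixEq} as
\[
Z(\pi-\alpha)Y(\bm{a})Z(\pi-\delta)Y(\bm{c})Z(\pi-\gamma)Y(\bm{b})Z(\pi-\beta) = Y(\bm{a})^{-1} = Y(-\bm{a}),
\]
or, more usefully, bring everything involving $\bm{b}$ and $\bm{c}$ to one side and the $\alpha$-$\bm{a}$-$\alpha$ block to the other. Specifically, I would multiply on the left by $Z(\pi-\alpha)^{-1}Y(\bm{a})^{-1}$ and on the right by $Z(\pi-\beta)^{-1}Y(\bm{b})^{-1}$ so as to obtain
\[
Y(\bm{a})Z(\pi-\delta)Y(\bm{c})Z(\pi-\gamma) = Z(\alpha-\pi)Y(-\bm{a})Z(\beta-\pi)Y(-\bm{b}).
\]
Now I would look at the $(3,3)$-entry, because $Y(\bm{b})$ has $(3,3)$-entry $\cos\bm{b}$ and contributes no $\sin\bm{b}$ off that row/column when the adjacent $Z$-matrix fixes the third coordinate --- so the $\bm{b}$ dependence collapses and only $\cos\bm{c}$ appears on the left through $Y(\bm{c})$. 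Actually, the cleanest choice is to isolate the entry that kills $\bm{b}$ and $\bm{c}$ altogether and leaves $\cos\gamma$ as the only trigonometric function of the unknown edge pair; expanding the $(3,3)$-entry of the appropriate rearrangement yields \eqref{TrigEqca}.

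For \eqref{TrigEqcb}, I would move $Y(\bm{b})$ alone to the right-hand side, writing
\[
Z(\pi-\beta)Y(\bm{a})Z(\pi-\alpha)Y(\bm{a})Z(\pi-\delta)Y(\bm{c})Z(\pi-\gamma) = Y(\bm{b})^{-1} = Y(-\bm{b}),
\]
and then read the entry where the right-hand side contributes $\cos\bm{b}$ times a single factor (again the $(3,3)$- or $(1,3)$-type entry, depending on sign bookkeeping). This produces a bilinear relation in $\cos\bm{a}$ and $\cos\bm{b}$ without $\cos^2\bm{a}$, matching the form of \eqref{TrigEqcb}. For \eqref{TrigEqcc}, I symmetrically isolate $Y(\bm{c})$ as
\[
Y(\bm{c})Z(\pi-\gamma)Y(\bm{b})Z(\pi-\beta)Y(\bm{a})Z(\pi-\alpha)Y(\bm{a}) = Z(\pi-\delta)^{-1},
\]
so that $\cos\bm{c}$ enters linearly on the left and the right-hand side has no edge dependence at all; the surviving $\cos^2\bm{a}$ terms come from the two $Y(\bm{a})$ factors flanking $Z(\pi-\alpha)$.

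The main obstacle will be purely computational: matrix products of the form $Y(\bm{a})Z(\pi-\alpha)Y(\bm{a})$ expand into expressions with $\cos^2\bm{a}$, $\sin\bm{a}\cos\bm{a}$, and $\sin^2\bm{a}$ terms, and one must repeatedly use $\sin^2\bm{a} = 1-\cos^2\bm{a}$ to reduce. I would therefore organise the calculation by computing $M := Y(\bm{a})Z(\pi-\alpha)Y(\bm{a})$ once explicitly, then forming the products $Z(\pi-\beta)M Z(\pi-\delta)$ on the left side and $Y(\bm{b})^{-1}Z(\pi-\gamma)^{-1}Y(\bm{c})^{-1}$ (or the corresponding rearrangement) on the right, and equating the single entry of interest in each of the three cases. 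The identities \eqref{TrigEqca}--\eqref{TrigEqcc} then follow directly, with no further geometric input needed beyond \eqref{MatrixEq}.
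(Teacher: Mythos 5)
Your overall strategy is the paper's: treat \eqref{MatrixEq} as an identity of orthogonal matrices, rearrange it by multiplying through by inverses (which are transposes, i.e.\ rotations by negated angles), and read off a single matrix entry chosen so that the unwanted edge lengths disappear. That is the only real idea in this lemma, and you have it.

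However, the concrete entry choices you specify would fail, and the failure traces to a misstatement of the block structure at the start: $Y(x)$ does \emph{not} have a bottom-right $1$; its $1$ sits in the $[2,2]$ position (it is a rotation in the $1$--$3$ plane), while it is $Z(\pi-\theta)$ that has the $1$ in the $[3,3]$ position. Consequently the $(3,3)$-entry annihilates the \emph{$Z$}-factors, not the $Y$-factors, and in your first rearrangement the $(3,3)$-entry of $Y(\bm{a})Z(\pi-\delta)Y(\bm{c})Z(\pi-\gamma)$ equals $\sin\bm{a}\cos\delta\sin\bm{c}+\cos\bm{a}\cos\bm{c}$, which retains $\sin\bm{c}$ and $\cos\bm{c}$ (and the right-hand side retains $\sin\bm{b},\cos\bm{b}$), so it cannot produce \eqref{TrigEqca}, which is free of $\bm{b}$ and $\bm{c}$. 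Similarly, your second and third rearrangements leave a full $Y(\bm{c})$, respectively $Y(\bm{b})$, buried in the middle of one side, and no single entry of those products eliminates that edge. The entry that does the job is the second row/column: in the rearrangement $Z(\pi-\beta)Y(\bm{a})Z(\pi-\alpha)Y(\bm{a})Z(\pi-\delta)=Y^t(\bm{b})Z^t(\pi-\gamma)Y^t(\bm{c})$, the row index $2$ strips off $Y^t(\bm{b})$ and the column index $2$ (resp.\ $1$) reduces the right-hand side to $-\cos\gamma$ (resp.\ $-\sin\gamma\cos\bm{c}$), giving \eqref{TrigEqca} and \eqref{TrigEqcc}; and the $[2,2]$-entry of $Z(\pi-\alpha)Y(\bm{a})Z(\pi-\delta)Y(\bm{c})=Y^t(\bm{a})Z^t(\pi-\beta)Y^t(\bm{b})Z^t(\pi-\gamma)$ gives \eqref{TrigEqcb}. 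This is a repairable bookkeeping error rather than a missing idea, but as written your computation would not terminate in the stated identities.
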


\begin{proof} Rewriting \eqref{MatrixEq} as 
\begin{align}\label{MatrixEqDiff}
Z(\pi - \beta)Y(\bm{a})Z(\pi - \alpha)Y(\bm{a})Z(\pi - \delta) - Y^t(\bm{b}) Z^t(\pi - \gamma)Y^t(\bm{c}) = 0,
\end{align}
simply the $[2,1]$-entry and $[2,2]$-entry to give \eqref{TrigEqcc} and \eqref{TrigEqca} respectively. Rewriting \eqref{MatrixEq} as 
\begin{align*}
Z(\pi - \alpha)Y(\bm{a})Z(\pi - \delta)Y(\bm{c}) - Y^t(\bm{a}) Z^t(\pi - \beta)Y^t(\bm{b})Z^t(\pi - \gamma) = 0,
\end{align*}
simply the $[2,2]$-entry to gives \eqref{TrigEqcb}.
\end{proof}

\subsection{$\AVC \equiv \{ \beta\gamma\delta, \alpha^{\frac{f}{2}}\}$}

The existence of quadrilaterals $\square ABCD$ in Figure \ref{DefaultQuad} with area $\alpha = \frac{4\pi}{f}$ and distinct edges $\bm{a},\bm{b},\bm{c}$ is ensured in Proposition \ref{Distinctabc}. Note that by the quadrilateral angle sum, $\alpha = \frac{4\pi}{f}$ is equivalent to $\beta+ \gamma + \delta = 2\pi$.

\begin{proposition}\label{Distinctabc} For every $\bm{a} \in (0, \pi)$ and $\alpha = \frac{4\pi}{f}$, there exist a $\theta$-family of default quadrilaterals in Figure \ref{DefaultQuad} with distinct edges $\bm{a},\bm{b},\bm{c}$ which give tilings of $\AVC \equiv \{ \beta\gamma\delta, \alpha^{\frac{f}{2}}\}$. 
\end{proposition}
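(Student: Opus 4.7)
The plan is to invoke Proposition~\ref{QuadDistabc} directly. With $\alpha=\frac{4\pi}{f}$ the hypothesis $\alpha\ge\frac{4\pi}{f}$ holds as equality, so Proposition~\ref{QuadDistabc} provides, for every $\bm{a}\in(0,\pi)$, a $\theta$-family of simple quadrilaterals of the form in Figure~\ref{DefaultQuad} with distinct edges $\bm{a},\bm{b},\bm{c}$ and area $\frac{4\pi}{f}$. The remaining task is to show that each such quadrilateral is the tile of an actual tiling with $\AVC\equiv\{\beta\gamma\delta,\alpha^{f/2}\}$.

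First I would verify the required vertex angle sums. The quadrilateral angle-sum formula~\eqref{QuadSum} applied with area $\frac{4\pi}{f}$ gives $\alpha+\beta+\gamma+\delta=2\pi+\frac{4\pi}{f}$, so substituting $\alpha=\frac{4\pi}{f}$ yields $\beta+\gamma+\delta=2\pi$, which is precisely the vertex equation at $\beta\gamma\delta$. The vertex equation at $\alpha^{f/2}$, namely $(f/2)\alpha=2\pi$, is immediate.

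Next I would realize the tiling combinatorially following Figure~\ref{begadeEMT}. Place $f/2$ copies of the tile in succession around a point $N$, each with its $\alpha$-corner at $N$, so that consecutive tiles share one of the two $\bm{a}$-edges adjacent to $\alpha$. The shared edges match automatically in length, and the angle sum at $N$ is $(f/2)\alpha=2\pi$. The result is a polygonal polar disk around $N$: each tile contributes one $\gamma$-corner on the boundary, and between consecutive tiles there is a boundary vertex where $\delta$ of one tile meets $\beta$ of the next, all joined by alternating $\bm{b}$- and $\bm{c}$-edges. The analogous construction at the antipodal point $S$ produces a second disk. Identifying the two boundaries so that each $\gamma$-vertex of one disk is paired with a $(\delta,\beta)$-vertex of the other yields an edge-to-edge tiling; at every such glued equatorial vertex exactly three tiles meet with angles $\beta,\gamma,\delta$ summing to $2\pi$, matching the vertex type $\beta\gamma\delta$.

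The delicate step is confirming that the two polar-disk boundaries admit the required identification, matching $\bm{b}$-edges with $\bm{c}$-edges. This follows from the congruence of the constituent tiles and the cyclic symmetry of the construction, which forces the two boundaries to be congruent spherical polygons interchanged by an orientation-reversing isometry that swaps $\bm{b}$ and $\bm{c}$. Gauss--Bonnet confirms the total area as $f\cdot\frac{4\pi}{f}=4\pi$, equal to the sphere's area, so the two disks cover the sphere without overlap. Hence every member of the $\theta$-family produced by Proposition~\ref{QuadDistabc} realizes a tiling with the claimed $\AVC$.
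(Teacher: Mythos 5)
Your reduction to Proposition \ref{QuadDistabc} is legitimate and coincides with the first case of the paper's own proof (vertex $C$ inside the lune $L_{\alpha}$): with $\alpha=\frac{4\pi}{f}$ the Intermediate Value argument there forces $AC=\pi-\bm{a}$, the excluded values of $\theta$ specialise to $\cos\theta=\frac{\cos\bm{a}}{1-\cos\bm{a}}$ and its companions, so a nonempty $\theta$-family with distinct $\bm{a},\bm{b},\bm{c}$ exists for every $\bm{a}\in(0,\pi)$, and your verification of the vertex angle sums is exactly as in the paper. What you omit is the paper's second case, in which $C$ is placed \emph{outside} $L_{\alpha}$, so that one of $\beta,\delta$ exceeds $\pi$ and the tile is non-convex; for $\bm{a}\in(0,\frac{\pi}{2})$ this produces a further family of admissible tiles, and proving their simplicity (via Lemmas \ref{HalfLune} and \ref{HalfLunTriIntEdge}) is where most of the work of the paper's proof lies. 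Since the statement only asserts the existence of a family, this omission narrows the result rather than invalidating it.

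The genuine defect is in your gluing argument. At a vertex $\beta\gamma\delta$ the Parity Lemma forces the arrangement $\vert \beta \Vert \gamma \, \bvert \, \delta \vert$, so along the equator a $\bm{b}$-edge of one polar disk is identified with a $\bm{b}$-edge of the other and a $\bm{c}$-edge with a $\bm{c}$-edge; an identification ``matching $\bm{b}$-edges with $\bm{c}$-edges'' is impossible in an edge-to-edge tiling with $\bm{b}\neq\bm{c}$. Likewise the tile admits no orientation-reversing isometry swapping $\bm{b}$ and $\bm{c}$ precisely because those edges are required to be distinct (the symmetry noted after Figure \ref{DefaultQuad} is a relabelling of notation, not an isometry of the tile), and equality of total areas does not by itself exclude overlap of the two disks. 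None of this improvisation is needed: the paper separates the two tasks, constructing the tiling from the $\AVC$ in Figure \ref{begadeEMT} and reserving the present proposition for the geometric existence of the tile, so the correct course is to cite that construction rather than re-derive it.
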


\begin{proof} For every $\bm{a} \in (0, \pi)$ and $\alpha = \frac{4\pi}{f}$ where $f\ge6$, let $L_{\alpha}$ denote the lune defined by $\bm{a}$ and $\alpha$. We provide the construction of such $\theta$-family of tiles in two cases, $C$ inside $L_{\alpha}$ and $C$ outside $L_{\alpha}$ where in the latter the range of values for $\bm{a}$ will be further specified when $\square ABCD$ is required to be simple. 

\begin{case*}[$C$ in $L_{\alpha}$] By Proposition \ref{QuadDistabc}, there is a $\theta$-family of default quadrilaterals with distinct edges $\bm{a}, \bm{b}, \bm{c}$ and when $l=\pi - \bm{a}$, the area is $\frac{4\pi}{f}$. It remains to discuss the range of $\theta$. 

If $\bm{b}=\bm{a}$, then by $l = \pi - \bm{a}$
\begin{align*}
\cos \theta = \frac{\cos \bm{a} }{1 - \cos \bm{a} }.
\end{align*}

If $\bm{c}=\bm{a}$, then by $l = \pi - \bm{a}$
\begin{align*}
\cos( \alpha - \theta ) = \frac{\cos  \bm{a} }{1 - \cos \bm{a} }.
\end{align*}

Hence, for $\bm{a} \in (0, \pi)$, when $\left\vert \frac{\cos \bm{a} }{1 - \cos \bm{a} } \right\vert > 1$, for any $\theta \in [0, \alpha] \backslash \{  \frac{\alpha}{2} \}$, there is a quadrilateral $\square ABCD$ with area $\alpha$; when $\left\vert \frac{\cos \bm{a} }{1 - \cos \bm{a} } \right\vert \le 1$, for any $\theta \in [0, \alpha] \backslash \{  \frac{\alpha}{2}, \cos^{-1} \left(  \frac{\cos \bm{a}}{1 - \cos \bm{a}} \right), \alpha - \cos^{-1} \left(  \frac{\cos \bm{a}}{1 - \cos \bm{a}} \right) \}$ there is a quadrilateral $\square ABCD$ with area $\alpha$.
\end{case*}

\begin{case*}[$C$ outside of $L_{\alpha}$] This implies one of $\beta, \delta$ has value $> \pi$. Up to symmetry, suppose $\beta > \pi$. By $\beta > \pi$ and we require $\beta\gamma\delta$ being a vertex, then $\gamma < \pi$. As a consequence of $\alpha, \gamma < \pi$, $BD$ is contained in $\square ABCD$ we are about to construct and hence $\triangle ABD$ is contained this $\square ABCD$. Since we require $\square ABCD$ to be simple, if $\pi - \bm{a} \le \bm{a}$, then we have $\bm{a}\ge \frac{\pi}{2}$ which implies the area of $\triangle ABD$ is bigger than half of that of $L_{\alpha}$. That is, the area of $\triangle ABD$ is bigger than $\alpha$, contradicting the area of $\square ABCD$ having to be $\alpha$ and hence the quadrilateral cannot be simple for $\bm{a} \ge \pi - \bm{a}$. So it remains to discuss $\pi - \bm{a} > \bm{a}$ which implies $\bm{a} \in (0, \frac{\pi}{2})$.

\begin{figure}[htp]
\centering
\begin{tikzpicture}[>=latex,scale=1.2]

\draw
	(-0.5,-1) -- (0,0) -- (0.25,-1.5)
	(0,0) -- (1,-2)
	(-0.5,-1) -- (1,-2);

\node at (0.0,0.2) {$A$};
\node at (1.1,-2.2) {$C$};
\node at (-0.7,-1.1) {$D$};
\node at  (0.2,-1.78) {$E$};

\node at (-0.06,-0.47) {\small $\alpha$};
\node at (0.25,-0.8) {\small $\theta$};

\node at (-0.5,-0.4) {$\bm{a}$};
\node at (1.1,-1.0) {\small $\pi - \bm{a}$};

\end{tikzpicture}
\caption{Triangles $\triangle ACD$ and $\triangle ACE$}
\label{LunExtC}
\end{figure}
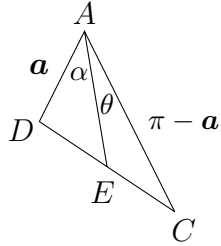

When $\theta \in ( 0, \pi - \alpha )$, by $\pi - \bm{a} > \bm{a}$, Lemma \ref{HalfLune} implies $\angle D > \frac{\pi}{2}$ in standard triangle $\triangle ACD$. In Figure \ref{LunExtC}, let $E$ be the intersection point of the geodesic arc inside $\triangle ACD$ at $CD$ of $\triangle ACD$ defined by $A, B$ and $\theta$. Then $AE$ is contained in $\triangle ACD$. Lemma \ref{HalfLunTriIntEdge} implies that $AE$ has length $>\bm{a}$ and hence $AB$ is contained in $\triangle ACD$. So $\square ABCD$ is contained in $\triangle ACD$ and hence $\square ABCD$ is simple. By Lemma \ref{HalfLune}, $\triangle ABC$ has area $\theta$ and $\triangle ACD$ has area $\alpha+\theta$ when $AC$ has length $\pi - \bm{a}$ and we obtain a quadrilateral $\square ABCD$ with area $\alpha+\theta - \theta = \alpha$.  

When $\theta \in [ \pi - \alpha, \pi )$, the complementary angle $2\pi - (\alpha + \theta) < \pi$ and the triangle $\triangle'$ with vertices $A, C, D$, $2\pi - (\alpha + \theta)$ as interior angle and the complementary geodesic arc of $CD$ is standard. Let $E^{\ast}$ be the antipodal point of $E$, then $\alpha, \theta < \pi$ implies that $E^{\ast}$ is on the complementary geodesic arc of $CD$ and hence is contained on the boundary of $\triangle'$. Meanwhile, $AE^{\ast} = \pi - AE$. Lemma \ref{HalfLunTriIntEdge} implies $\bm{a} < AE^{\ast} < \pi - \bm{a}$. The first inequality implies $AE < \pi - \bm{a}$ whereas the second implies $\bm{a} < AE$ and hence we get $\bm{a} < AE < \pi - \bm{a}$. So $AB$ is contained in $\triangle ACD$. So $\square ABCD$ is contained in $\triangle ACD$ and hence $\square ABCD$ is simple. Again by Lemma \ref{HalfLune}, $\triangle ABC$ has area $\theta$ and $\triangle ACD$ has area $\alpha+\theta$ when $AC$ has length $\pi - \bm{a}$ and we obtain a quadrilateral $\square ABCD$ with area $\alpha+\theta - \theta = \alpha$. Now it remains to argue whether we can locate such a $C$ vertex so that $\bm{a},\bm{b},\bm{c}$ are distinct.  

If $\bm{b}=\bm{a}$, then cosine law on $\triangle ABC$ gives 
\begin{align*}
\cos \bm{b} = \cos \bm{a} = \cos \bm{a} \cos ( \pi - \bm{a} ) + \sin \bm{a} \sin( \pi - \bm{a} ) \cos \theta = - \cos^2 \bm{a} + \sin^2 \bm{a} \cos \theta,
\end{align*}
which implies 
\begin{align*}
\cos \theta = \frac{\cos \bm{a} }{1 - \cos \bm{a} }.
\end{align*}

If $\bm{c}=\bm{a}$, then cosine law on $\triangle ACD$ gives
\begin{align*}
\cos \bm{c} = \cos \bm{a} = \cos \bm{a} \cos ( \pi - \bm{a} ) + \sin \bm{a} \sin( \pi - \bm{a} ) \cos( \alpha + \theta ) = - \cos^2 \bm{a} + \sin^2 \bm{a} \cos( \alpha + \theta ),
\end{align*}
which implies
\begin{align*}
\cos( \alpha + \theta ) = \frac{\cos \bm{a} }{1 - \cos \bm{a} }.
\end{align*}

If $\bm{b}=\bm{c}$, then combining formulae in the above we get
\begin{align*}
- \cos^2 \bm{a} + \sin^2 \bm{a} \cos \theta = \cos \bm{b} = \cos \bm{c} = - \cos^2 \bm{a} + \sin^2 \bm{a} \cos( \alpha + \theta )
\end{align*}
which, by $\sin^2 \bm{a} > 0$ for $\bm{a} \in (0, \pi)$, implies
\begin{align*}
\cos( \alpha + \theta ) = \cos \theta.
\end{align*}
Then we have
\begin{align*}
\alpha + \theta = \pm \theta + 2k\pi, \quad k \in \mathbb{Z}.
\end{align*}
For $f\ge6$, $0 <\alpha < \pi$, and we consider $\theta \in (0, 2\pi)$. When $\theta \ge \pi$, in $\triangle ABC$ and $\triangle ACE$ respectively, we have $BC$ and $CE$ having lengths $\ge \pi$. This implies $BC$ and $CE$ intersects at both of the antipodal points $C, C^{\ast}$ so $\square ABCD$ cannot be simple. Then we retrict our scope to  $\theta \in (0, \pi)$. As $C$ is outside of $L_{\alpha}$, we get
\begin{align*}
\theta = \pi - \frac{\alpha}{2}.
\end{align*}

Then when $\left\vert \frac{\cos \bm{a} }{1 - \cos \bm{a} } \right\vert > 1$, there is no solution for $\theta$ when $\bm{a}=\bm{b}$ and $\bm{a}=\bm{c}$ respectively whereas when $\left\vert \frac{\cos \bm{a} }{1 - \cos \bm{a} } \right\vert \le 1$ and $\theta \neq \cos^{-1} \left(  \frac{\cos \bm{a} }{1 - \cos \bm{a} } \right),  \cos^{-1} \left(  \frac{\cos \bm{a} }{1 - \cos \bm{a} } \right) - \alpha$, we have $\bm{a} \neq \bm{b}$ and $\bm{a} \neq \bm{c}$ respectively. Hence, for $\bm{a} \in (0, \frac{\pi}{2})$, when $\left\vert \frac{\cos \bm{a} }{1 - \cos \bm{a} } \right\vert > 1$, for any $\theta \in (0, \pi) \backslash \{  \pi - \frac{\alpha}{2} \}$, there is a quadrilateral $\square ABCD$ with area $\alpha$; when $\left\vert \frac{\cos \bm{a} }{1 - \cos \bm{a} } \right\vert \le 1$, for any $\theta \in (0, \pi) \backslash \{ \pi - \frac{\alpha}{2}, \cos^{-1} \left(  \frac{\cos \bm{a} }{1 - \cos \bm{a} } \right), \cos^{-1} \left(  \frac{\cos \bm{a} }{1 - \cos \bm{a} } \right) - \alpha \}$ there is a quadrilateral $\square ABCD$ with area $\alpha$.
\end{case*}
Therefore we complete the proof.
\end{proof}

\subsection{$\AVC = \{ \alpha\beta^2, \alpha^2\delta^2, \gamma^4, \alpha\delta^{\frac{f+8}{8}}, \beta^2\delta^{\frac{f-8}{8}}, \delta^{\frac{f}{4}} \}$}

By $\alpha=\pi - \frac{8\pi}{f}, \beta=\frac{\pi}{2}+\frac{4\pi}{f}, \gamma=\frac{\pi}{2}, \delta=\frac{8\pi}{f}$ and $f\ge16$, the quadrilateral is convex and hence it contains $BD$. Then Corollary to Proposition \ref{QuadDistabc} implies $0 < \bm{a} < \frac{\pi}{2}$. So we have $0 < \cos \bm{a} < 1$. 

Let $\mathcal{A} = \frac{4\pi}{f}$, by $\alpha=\pi - \frac{8\pi}{f}, \beta=\frac{\pi}{2}+\frac{4\pi}{f}, \gamma=\frac{\pi}{2}, \delta=\frac{8\pi}{f}$, we have
\begin{align*} 
\alpha=\pi - 2\mathcal{A}, \qquad 
\beta=\frac{\pi}{2}+\mathcal{A}, \qquad 
\gamma=\frac{\pi}{2}, \qquad 
\delta=2\mathcal{A}. 
\end{align*}
Substituting the above into \eqref{TrigEqca}, we obtain
\begin{align*} 
&(-4\sin^5 \mathcal{A} + 8\sin^3 \mathcal{A}- 4 \sin \mathcal{A} )\cos^2 \bm{a}+
(8\sin^5 \mathcal{A} -10 \sin^3 \mathcal{A}+2 \sin \mathcal{A}) \cos \bm{a} \\ 
&- 4\sin^5 \mathcal{A} + 2\sin^3 \mathcal{A} +\sin \mathcal{A} = 0,
\end{align*}
solving which we get 
\begin{align*}
\cos \bm{a} = \frac{4\sin^2 \mathcal{A} - \sqrt{5} - 1}{ 4\sin^2 \mathcal{A} - 4 }, \ \frac{4\sin^2 \mathcal{A} + \sqrt{5} - 1}{ 4\sin^2 \mathcal{A} - 4 },
\end{align*}
which is equivalent to 
\begin{align*}
\cos \bm{a} = \frac{4\cos^2 \mathcal{A} + \sqrt{5} - 3 }{4\cos^2 \mathcal{A}}, \ \frac{4\cos^2 \mathcal{A} - \sqrt{5} - 3 }{4\cos^2 \mathcal{A}}.
\end{align*}
As $\cos^2 \mathcal{A} \le 1$, 
\begin{align*}
\frac{4\cos^2 \mathcal{A} - \sqrt{5} - 3 }{4\cos^2 \mathcal{A}} = 1 - \frac{1}{4}\left( \frac{\sqrt{5} + 3}{\cos^2 \mathcal{A}} \right) < 0,
\end{align*}
so we have 
\begin{align*}
\cos \bm{a} = \frac{4\cos^2 \mathcal{A} + \sqrt{5} - 3 }{4\cos^2 \mathcal{A}}.
\end{align*}
Then by \eqref{TrigEqcb}, we get 
\begin{align*}
\cos \bm{b} = \frac{-(\sqrt{5}-3)\cos^2 \mathcal{A}+ \sqrt{5}-2}{ \cos \mathcal{A}}.
\end{align*}
Then by \eqref{TrigEqcc}, we get 
\begin{align*}
\cos \bm{c} = \frac{\sqrt{5}-1}{4\cos \mathcal{A}}.
\end{align*}

When $\bm{a} = \bm{b}$, then $\cos \bm{a}= \cos \bm{b}$ which implies one of the following,
\begin{align*}
 \frac{4\cos^2 \mathcal{A} + \sqrt{5} - 3 }{4\cos^2 \mathcal{A}} &= \frac{-(\sqrt{5}-3)\cos^2 \mathcal{A}+ \sqrt{5}-2}{ \cos \mathcal{A}}.
\end{align*}
Since $0 < \mathcal{A} < \frac{\pi}{2}$, we have $0 < \cos \mathcal{A} < 1$. Solving the above equations, the real solutions to $\mathcal{A}$ can only be 
\begin{align*}
\frac{4\pi}{f} = \mathcal{A} = 1.88495695649716,
\end{align*}
which implies
\begin{align*}
f = 6.666661841292876.
\end{align*}

When $\bm{a}=\bm{c}$, then $\cos \bm{a}= \cos \bm{c}$ which implies one of the following,
\begin{align*}
\frac{4\cos^2 \mathcal{A} + \sqrt{5} - 3 }{4\cos^2 \mathcal{A}} &= \frac{\sqrt{5}-1}{4\cos \mathcal{A}}.
\end{align*}
Since $0 < \mathcal{A} < \frac{\pi}{2}$, we have $0 < \cos \mathcal{A} < 1$. Solving the above equations, taking the positive solution to $\cos \mathcal{A}$, the real solutions to $\mathcal{A}$ can only be 
\begin{align*}
\frac{4\pi}{f} = \mathcal{A} = 0.9045568943023813,
\end{align*}
which implies
\begin{align*}
f = 13.89229433053042,
\end{align*}
which is not a valid value for $f$. 

When $\bm{b}=\bm{c}$, then $\cos \bm{b}= \cos \bm{c}$ which implies one of the following,
\begin{align*}
\frac{-(\sqrt{5}-3)\cos^2 \mathcal{A}+ \sqrt{5}-2}{ \cos \mathcal{A}} &= \frac{\sqrt{5}-1}{4\cos \mathcal{A}}.
\end{align*}
Since $\cos \mathcal{A} > 0$, solving the above we get 
\begin{align*}
\frac{4\pi}{f} = \mathcal{A} = \frac{2\pi}{5},
\end{align*}
which implies
\begin{align*}
f = 10.
\end{align*}
For every $f \ge 16$, as $\mathcal{A} = \frac{4\pi}{f}$, the following give solutions to distinct $\bm{a}, \bm{b}, \bm{c}$,
\begin{align*}
\bm{a} &= \cos^{-1} \left( \frac{4\sin^2  \frac{4\pi}{f} - \sqrt{5} - 1}{ 4\sin^2 \frac{4\pi}{f} - 4 } \right), \\
\bm{b} &= \cos^{-1} \left(  \frac{-(\sqrt{5}-3)\cos^2 \frac{4\pi}{f} + \sqrt{5}-2}{ \cos \frac{4\pi}{f} }  \right), \\
\bm{c} &= \cos^{-1} \left( \frac{\sqrt{5}-1}{4\cos \frac{4\pi}{f} }  \right).
\end{align*}
Lastly we substitute $\cos \bm{a}, \cos \bm{b}, \cos\bm{c}$ by the above and check whether the above solutions satisfy \ref{MatrixEq} which is equivalent to \ref{MatrixEqDiff}. Indeed we check the solutions with \ref{MatrixEqDiff}. Let
\begin{align*}
M := Z(\pi - \beta)Y(\bm{a})Z(\pi - \alpha)Y(\bm{a})Z(\pi - \delta) - Y^t(\bm{b}) Z^t(\pi - \gamma)Y^t(\bm{c}). 
\end{align*}
It is obvious that the $M[2,2]=0$ and we also get $M[1,2], M[2,1], M[3,3] = 0$ and 
\begin{align*}
M[1,3] &= \frac{(2\sqrt{5}-6)\sin \bm{a}\cos \mathcal{A} \sin \mathcal{A}+(\sqrt{5}-1)\sin \bm{b}}{4\cos \mathcal{A}}, \\
M[2,3] &= \frac{(\sqrt{5}+1)\sin \bm{a}\cos \mathcal{A}}{2} - \sin \bm{c}.
\end{align*}

Multiply $M[1,3]$ by $\frac{(2\sqrt{5}-6)\sin \bm{a}\cos \mathcal{A} \sin \mathcal{A}-(\sqrt{5}-1)\sin \bm{b}}{4\cos \mathcal{A}}$, we get 
\begin{align*}
\frac{(2\sqrt{5}-6)^2\sin^2 \bm{a}\cos^2 \mathcal{A} \sin^2 \mathcal{A}-(\sqrt{5}-1)^2 \sin^2 \bm{b}}{16\cos^2 \mathcal{A}} = 0
\end{align*}
when substituting $\sin^2 \mathcal{A} = 1 - \cos^2 \mathcal{A}$, $\sin^2 \bm{a} = 1 - \cos^2 \bm{a} = 1 - \left( \frac{ 4\cos^2 \mathcal{A} + \sqrt{5} - 3 }{4\cos^2 \mathcal{A}} \right)^2$, and $\sin^2 \bm{b} = 1 - \cos^2 \bm{b} = 1 - \left( \frac{-(\sqrt{5}-3)\cos^2 \mathcal{A}+ \sqrt{5}-2}{ \cos \mathcal{A}}  \right)^2$. Meanwhile, if $(2\sqrt{5}-6)\sin \bm{a}\cos \mathcal{A} \sin \mathcal{A}-(\sqrt{5}-1)\sin \bm{b} = 0$, we have
\begin{align*}
(2\sqrt{5}-6)\sin \bm{a}\cos \mathcal{A} \sin \mathcal{A} = (\sqrt{5}-1)\sin \bm{b}.
\end{align*}
However, $\sin \bm{a}, \cos \mathcal{A}, \sin \mathcal{A}, (\sqrt{5}-1), \sin \bm{b} > 0$ but $(2\sqrt{5}-6) < 0$, which implies the LHS $< 0$ while the RHS $>0$, a contradiction. So $\frac{(2\sqrt{5}-6)\sin \bm{a}\cos \mathcal{A} \sin \mathcal{A}-(\sqrt{5}-1)\sin \bm{b}}{4\cos \mathcal{A}} \neq 0$ which implies $M[1,3] = 0$. As a consequence, 
\begin{align*}
\sin \bm{b} = (\sqrt{5}-1)\sin \bm{a}\cos \mathcal{A} \sin \mathcal{A}.
\end{align*}

Multiply $M[2,3]$ by $\frac{(\sqrt{5}+1)\sin \bm{a}\cos \mathcal{A}}{2} + \sin \bm{c}$, we get 
\begin{align*}
\frac{(\sqrt{5}+1)^2\sin^2 \bm{a}\cos^2 \mathcal{A}}{4} - \sin^2 \bm{c} = 0
\end{align*}
when substituting $\sin^2 \bm{a} = 1 - \cos^2 \bm{a} = 1 - \left( \frac{ 4\cos^2 \mathcal{A} + \sqrt{5} - 3 }{4\cos^2 \mathcal{A}} \right)^2$, and $\sin^2 \bm{c} = 1 - \cos^2 \bm{c} = 1 - \left( \frac{\sqrt{5}-1}{4\cos \mathcal{A}} \right)^2$. Meanwhile, by $\sqrt{5}+1, \sin \bm{a}, \cos \mathcal{A}, \sin \bm{c} > 0$, then $\frac{(\sqrt{5}+1)\sin \bm{a}\cos \mathcal{A}}{2} + \sin \bm{c} > 0$ which implies $M[2,3]=0$. As a consequence, 
\begin{align*}
\sin \bm{c} = \frac{(\sqrt{5}+1)\sin \bm{a}\cos \mathcal{A}}{2}. 
\end{align*}

By $\sin \bm{b} = (\sqrt{5}-1)\sin \bm{a}\cos \mathcal{A} \sin \mathcal{A}, \sin \bm{c} = \frac{(\sqrt{5}+1)\sin \bm{a}\cos \mathcal{A}}{2}$ and $\cos^2 \mathcal{A} =1 - \sin^2 \mathcal{A}$, we get 
\begin{align*}
M[1,1] =\sin \bm{b} \sin \bm{c} + \frac{4(\sqrt{5} - 3) \sin^3 \mathcal{A} + (5 - \sqrt{5})\sin\mathcal{A}}{4\sin^2 \mathcal{A} - 4} = 0.
\end{align*}
By the same token, we have $M[3,1] = M[3,2] = 0$. Hence the solutions satisfy \eqref{MatrixEq} and the quadrilateral indeed exists.  

\subsection{$\AVC \supset \{ \alpha^3, \beta^2\delta^2, \gamma^4 \}$}

The discussion here includes $\AVC \equiv \{  \alpha^3, \beta^2\delta^2, \gamma^4 \}$ in Figure \ref{Tf24a3-1}, Figure \ref{Tf24CubeSD} and $\AVC \equiv \{ \alpha^3, \alpha\beta^2, \alpha^2\delta^2, \beta^2\delta^2, \gamma^4 \}$ in Figure \ref{Tf24a3-2}. By $\alpha=\frac{2\pi}{3}, \beta + \delta=\pi , \gamma=\frac{\pi}{2}$, the quadrilateral is convex and hence it contains $BD$. Then Corollary to Proposition \ref{QuadDistabc} implies $0 < \bm{a} < \frac{\pi}{2}$. So we have $0 < \cos \bm{a} < 1$. 

Substituting $\alpha=\frac{2\pi}{3}, \beta=\pi - \delta, \gamma=\frac{\pi}{2}$ into \eqref{TrigEqca}, we have
\begin{align*}
-\frac{3\sin^2 \delta \cos^2 \bm{a} }{2}+ \sin^2 \delta - \frac{\cos^2 \delta}{2} = 0.
\end{align*}
by $0 < \cos \bm{a} < 1$, solving the above equation we get
\begin{align*}
\cos \bm{a} = \frac{\sqrt{3\sin^2 \delta - 1}}{\sqrt{3}\sin \delta }.
\end{align*}
For $\bm{a} \in (0, \frac{\pi}{2})$, $\cos \bm{a} > 0$ so we must have $3\sin^2 \delta - 1 > 0$, that is $\sin^2 \delta > \frac{1}{3}$. Then $\cos^2\delta < \frac{2}{3}$. 

Next by \eqref{TrigEqcb}, we get 
\begin{align*}
\cos \bm{b}= \frac{\sqrt{3\sin^2 \delta-1}+\cos \delta }{2\sin \delta}.
\end{align*}
For $\bm{b} \in (0, \pi)$, likewise we have $\sin^2 \delta > \frac{1}{3}$ and $\cos^2\delta < \frac{2}{3}$. 

Next by \eqref{TrigEqcc}, we get 
\begin{align*}
\cos \bm{c} = \frac{\sqrt{3\sin^2 \delta - 1}-\cos \delta}{2\sin \delta}.
\end{align*}
For $\bm{c} \in (0, \pi)$, likewise we have $\sin^2 \delta > \frac{1}{3}$ and $\cos^2\delta < \frac{2}{3}$. 

The geometric existence of the tile requires $M=0$, in particular $M[1,1]=0$. By the above, we have $M[1,1]$ as follows,
\begin{align*}
M[1,1] = \frac{2\cos \delta^2\sin \bm{b}\sin \bm{c}-2\sin \bm{b}\sin \bm{c}-2\cos \delta^2+1}{2(\cos \delta-1)(\cos \delta+1)} = 0, 
\end{align*}
which implies $\sin \bm{b}\sin \bm{c} = \frac{ 2\cos^2\delta - 1}{2(\cos^2\delta - 1)}$. Since $\sin \bm{b}\sin \bm{c} > 0$ and $2(\cos^2\delta - 1) < 0$, we get $\cos^2 \delta < \frac{1}{2}$ which implies $-\frac{1}{\sqrt{2}} < \cos\delta< \frac{1}{\sqrt{2}}$ and hence we have $\frac{\pi}{4} < \delta < \frac{3\pi}{4}$.

When $\bm{a} = \bm{b}$, then $\cos \bm{a} =\cos \bm{b}$ which implies one of the following,
\begin{align*}
\frac{\sqrt{3\sin^2 \delta - 1}}{\sqrt{3}\sin \delta } &=  \frac{\sqrt{3\sin^2 \delta-1}+\cos \delta }{2\sin \delta}.
\end{align*}
The solutions to the above equations are 
\begin{align*}
\sin \delta =\pm \frac{\sqrt{5-2\sqrt{3}}}{\sqrt{2}\sqrt{ 6-3^{\frac{3}{2}} } }.
\end{align*}
As $\delta \in (0, \pi)$, we have $\sin \delta > 0$ so that
\begin{align*}
\delta &= \sin^{-1} \left( \frac{ \sqrt{ 5 - 2\sqrt{3} } }{ \sqrt{2} \sqrt{ 6- 3^{\frac{3}{2}} } } \right), \ \pi - \sin^{-1} \left( \frac{ \sqrt{ 5 - 2\sqrt{3} } }{ \sqrt{2} \sqrt{ 6- 3^{\frac{3}{2}} } } \right) \\ 
&= 0.4322221997677038\pi, \ 0.5677778002322962\pi.
\end{align*}

When $\bm{a} = \bm{c}$, then $\cos \bm{a} =\cos \bm{c}$ which implies one of the following,
\begin{align*}
\frac{\sqrt{3\sin^2 \delta - 1}}{\sqrt{3}\sin \delta } &= \frac{\sqrt{3\sin^2 \delta - 1}-\cos \delta}{2\sin \delta}.
\end{align*}
The solutions to the above equations are 
\begin{align*}
\sin \delta = \pm \frac{ \sqrt{ 5 - 2\sqrt{3} } }{ \sqrt{2} \sqrt{ 6- 3^{\frac{3}{2}} } }, 
\end{align*}
As $\delta \in (0, \pi)$, we have $\sin \delta > 0$ so that
\begin{align*}
\delta &=\sin^{-1} \left( \frac{ \sqrt{ 5 - 2\sqrt{3} } }{ \sqrt{2} \sqrt{ 6- 3^{\frac{3}{2}} } } \right), \ \pi - \sin^{-1} \left( \frac{ \sqrt{ 5 - 2\sqrt{3} } }{ \sqrt{2} \sqrt{ 6- 3^{\frac{3}{2}} } } \right) \\ 
&= 0.4322221997677038\pi, \ 0.5677778002322962\pi.
\end{align*}

When $\bm{b} = \bm{c}$, then $\cos \bm{b} =\cos \bm{c}$ which implies one of the following,
\begin{align*}
\frac{\sqrt{3\sin^2 \delta-1}+\cos \delta }{2\sin \delta} &= \frac{\sqrt{3\sin^2 \delta - 1}-\cos \delta}{2\sin \delta}.
\end{align*}
Solving the above equations, we get 
\begin{align*}
\cos \delta = 0,
\end{align*}
then we get $\delta = \frac{\pi}{2} = \beta$. Hence for $f=24$, the following give solutions to distinct $\bm{a}, \bm{b}, \bm{c}$ 
\begin{align*}
\bm{a} &=  \cos^{-1} \left(  \frac{\sqrt{3\sin^2 \delta - 1}}{\sqrt{3}\sin \delta } \right), \\
\bm{b} &= \cos^{-1} \left( \frac{  \sqrt{3\sin^2 \delta-1} + \cos \delta }{2\sin \delta} \right), \\
\bm{c} &=  \cos^{-1} \left( \frac{ \sqrt{3\sin^2 \delta - 1} - \cos \delta }{2\sin \delta} \right),
\end{align*}
when $\delta \in (\frac{\pi}{4}, \frac{3\pi}{4})$ such that 
\begin{align*}
\delta \neq \frac{\pi}{2}, \ \sin^{-1}  \left( \frac{ \sqrt{ 5 - 2\sqrt{3} } }{ \sqrt{2} \sqrt{ 6- 3^{\frac{3}{2}} } } \right), \ \pi - \sin^{-1} \left( \frac{ \sqrt{ 5 - 2\sqrt{3} } }{ \sqrt{2} \sqrt{ 6- 3^{\frac{3}{2}} } } \right).
\end{align*}

In $\AVC \equiv \{ \alpha^3, \alpha\beta^2, \alpha^2\delta^2, \beta^2\delta^2, \gamma^4 \}$, we have $\alpha = \beta = \frac{2\pi}{3}$ and $\delta = \frac{\pi}{3}$. Then
\begin{align*}
\cos \bm{a} = \frac{\sqrt{5}}{3}, \ \cos \bm{b} =  \frac{\sqrt{5}+1}{2\sqrt{3}}, \ \cos \bm{c} = \frac{\sqrt{5}-1}{2\sqrt{3}},
\end{align*}
which implies 
\begin{align*}
\bm{a} &= \cos^{-1} \left( \frac{\sqrt{5}}{3} \right), \\
\bm{b} &= \cos^{-1} \left(  \frac{\sqrt{5}+1}{2\sqrt{3}} \right), \\
\bm{c} &=  \cos^{-1} \left( \frac{\sqrt{5}-1}{2\sqrt{3}} \right).
\end{align*}
Lastly we check whether the $\cos \bm{a} =  \frac{\sqrt{3\sin^2 \delta - 1}}{\sqrt{3}\sin \delta }, 
\cos \bm{b} = \frac{ \sqrt{3\sin^2 \delta-1}  +  \cos \delta  }{2\sin \delta}, 
\cos \bm{c} =  \frac{ \sqrt{3\sin^2 \delta - 1} - \cos \delta }{2\sin \delta}$ satisfy \eqref{MatrixEqDiff}. It is obvious that the $M[2,2]=0$ and we also get $M[1,2], M[2,1], M[3,3] = 0$ and 
\begin{align*}
M[2,3] &= \frac{(3\cos \bm{a} \sin \delta+\sqrt{3}\cos\delta)\sin\bm{a}}{2} - \sin \bm{c}, \\
M[3,2] &= \frac{ ( 3\cos \bm{a} \sin \delta - \sqrt{3}\cos \delta )\sin \bm{a}  }{2} - \sin \bm{b}.
\end{align*}

Multiple $M[3,2]$ by $\frac{ ( 3\cos \bm{a} \sin \delta - \sqrt{3}\cos \delta )\sin \bm{a}  }{2} + \sin \bm{b}$, we get 
\begin{align*}
 \frac{ ( 3\cos \bm{a} \sin \delta - \sqrt{3}\cos \delta )^2 \sin^2 \bm{a}  }{4} - \sin^2 \bm{b} = 0
\end{align*}
when substituting  $\sin^2 \delta = 1 - \cos^2 \delta$, $\sin^2 \bm{a} = 1 - \cos^2 \bm{a} = 1 -  \left( \frac{\sqrt{3\sin^2 \delta - 1}}{\sqrt{3}\sin \delta } \right)^2$, $\sin^2 \bm{b} = 1 - \cos^2 \bm{b} = 1 - \left( \frac{\sqrt{3\sin^2 \delta-1}+\cos \delta }{2\sin \delta} \right)^2$. Meanwhile,
\begin{align*}
 3\cos \bm{a} \sin \delta - \sqrt{3}\cos \delta = \sqrt{3} \left( \sqrt{3\sin^2\delta - 1}-\cos\delta \right).
\end{align*}
When $\sqrt{3\sin^2\delta - 1}-\cos\delta \le 0$, we get $\cos^2\delta \ge \frac{1}{2}$, contradicting $\cos^2\delta < \frac{1}{2}$. So $\sqrt{3\sin^2 \delta-1}-\cos \delta > 0$ and $\cos \bm{c}> 0$. Combining with $\sin \bm{a}, \sin \bm{b} > 0$, we have $\frac{ ( 3\cos \bm{a} \sin \delta - \sqrt{3}\cos \delta )\sin \bm{a} }{2} + \sin \bm{b} > 0$, which implies $M[3,2]=0$. As a consequence, 
\begin{align*}
\sin \bm{b} = \frac{\sqrt{3}(\sqrt{2-3\cos^2 \delta} - \cos \delta)\sin \bm{a}}{2}.
\end{align*}

Multiple $M[2,3]$ by $\frac{(3\cos \bm{a} \sin \delta+\sqrt{3}\cos\delta)\sin\bm{a}}{2} + \sin \bm{c}$, we get 
\begin{align*}
\frac{(3\cos \bm{a} \sin \delta+\sqrt{3}\cos\delta)^2\sin^2\bm{a}}{4} - \sin^2 \bm{c} = 0
\end{align*}
when substituting $\sin^2 \delta = 1 - \cos^2 \delta$, $\sin^2 \bm{a} = 1 - \cos^2 \bm{a} = 1 -  \left( \frac{\sqrt{3\sin^2 \delta - 1}}{\sqrt{3}\sin \delta } \right)^2$, $\sin^2 \bm{c} = 1 - \cos^2 \bm{c} = 1 - \left( \frac{\sqrt{3\sin^2 \delta - 1}-\cos \delta}{2\sin \delta} \right)^2$. Meanwhile,
\begin{align*}
3\cos \bm{a} \sin \delta+\sqrt{3}\cos\delta = \sqrt{3} ( \sqrt{3\sin^2 \delta - 1 }+\cos \delta ).
\end{align*}
When $\sqrt{3\sin \delta^2  -1}+\cos \delta \le 0$, we have $\sqrt{2-3\cos \delta^2} \le \cos( \pi - \delta )$ which implies $\cos^2\delta \ge \frac{1}{2}$, contradicting $\cos^2\delta < \frac{1}{2}$. So $\sqrt{3\sin \delta^2 - 1}+\cos \delta > 0$ and $\cos \bm{b}> 0$. Combining with $\sin \bm{a}, \sin \bm{c} > 0$, we have $\frac{(3\cos \bm{a} \sin \delta+\sqrt{3}\cos\delta)\sin\bm{a}}{2} + \sin \bm{c} > 0$, which implies $M[2,3]=0$. As a consequence, 
\begin{align*}
\sin \bm{c} = \frac{\sqrt{3}(\sqrt{2-3\cos^2 \delta} + \cos \delta)\sin \bm{a}}{2}.
\end{align*}

By the solutions to $\cos \bm{a}, \cos \bm{b}, \cos\bm{c}$ and solutions to $\sin \bm{b}, \sin\bm{c}$, we get $M[1,1]=M[1,3]=M[3,1]=0$. Hence the solutions satisfy \eqref{MatrixEq} and the quadrilateral indeed exists.

\section{Symmetries}

We conclude our main theorem by presenting the symmetry group of each tiling. It is well known that the symmetry group of each tiling is a discrete subgroup of $O(3)$. What we need to do is simply to determine which discrete subgroup it is for each tiling. To do so we follow the decision tree on p313 of Cromwell \cite{cr}. Before applying the decision tree, we first establish some general facts about the reflectional and rotational symmetries which will be shown to be closely related the all the tiling vertices, namely $\alpha^3, \alpha\beta^2, \beta\gamma\delta, \alpha^2\delta^2, \gamma^4, \alpha^{\frac{f}{2}}, \delta^{\frac{f}{4}}, \alpha\delta^{\frac{f+8}{8}}, \beta^2\delta^{\frac{f-8}{8}}$. 

Reflectional symmetries of the tilings can only occur when their mirror planes do not cut through the interior of the tile since it does not have any reflectional symmetry. In other words, the mirror plane of any reflectional symmetry must cut along some edges and bisect the angle combinations at the end point vertices of each edge. We call a path \textit{vertex bisecting path} if it consists of edges bisecting vertices and if such path is also a cycle we call it a \textit{vertex bisecting cycle}. Then a mirror plane must contain vertex bisecting cycle. Only $\gamma^4, \alpha^2\delta^2, \alpha^{\frac{f}{2}}, \beta^2\delta^{\frac{f-8}{8}}, \delta^{\frac{f}{4}}$ can be on a vertex bisecting path or a vertex bisecting cycle.

Rotational symmetries of the tilings can only occur when their axes do not pass through the interior of the tile since it does not have any rotational symmetry. In other words, the axis of any rotational symmetry passes through the midpoint of some edge or some vertex. Moreover, $n$-fold rotations, where $n\ge3$, can only happen at a vertex. Meanwhile, by the AAD the vertices $\alpha\beta^2, \beta\gamma\delta, \alpha^2\delta^2, \beta^2\delta^{\frac{f-8}{8}}, \alpha\delta^{\frac{f+8}{8}}$ do not have rotational symmetry. Hence if a rotational axis passes through a vertex, then it must be one of $\alpha^3, \gamma^4,  \alpha^{\frac{f}{2}}, \delta^{\frac{f}{4}}$.

\begin{case*}[$\AVC \equiv \{ \beta\gamma\delta, \alpha^{\frac{f}{2}} \}$] There are $2$-fold symmetry axes between the midpoints of $\bm{b}$-edges and their antipodal points, likewise for $\bm{c}$-edges. There is also a $\frac{f}{2}$-fold ($\frac{f}{2}>2$) rotational symmetry with axis through the $\alpha^{\frac{f}{2}}$-pair and it is the only $n$-fold rotation symmetry with $n > 2$. Since every $\alpha^{\frac{f}{2}}$ is adjacent to some $\beta\gamma\delta$, there is no vertex bisecting cycle containing $\alpha^{\frac{f}{2}}$. Then there is no mirror plane. Hence the symmtry group is $D_{\frac{f}{2}}$. 
\end{case*}

\begin{case*}[$\AVC \equiv \{ \alpha^3, \beta^2\delta^2, \gamma^4 \}$] Since the tiling is a quadrilateral subdivision of the cube (or octahedron), there are more than one $3$-fold axes through the origin and the antipodal $\alpha^3$-vertices. The rotational axis of every $\gamma^4$ is at most $2$-fold. So there is no $4,5$-fold axis. Meanwhile, the origin is a point of inversion. Hence the symmetry group is $T_h$.
\end{case*}

\begin{case*}[$\AVC \equiv \{ \alpha^3, \alpha\beta^2, \alpha^2\delta^2, \beta^2\delta^2, \gamma^4  \}$] Since the tiling is a quadrilateral subdivision of the triangular prism, there are one $3$-fold axis and three $2$-fold axes. The $3$-fold axis passes through the origin and the pair of $\alpha^3$. Each of the $2$-fold axes passes through a midpoint of the common edge ($\bm{a}$-edge) between the pair of tiles in the middle of the side face of the prism and the midpoint of the middle side edge ($\bm{a}$-edge) on the opposite side. 

Since every $\alpha^2\delta^2, \beta^2\delta^2$ is adjacent to some $\gamma^4$ along a $\bm{b}$-edge or a $\bm{c}$-edge, it suffices to show that there is no vertex bisecting cycle containing $\gamma^4$. Up to the rotational symmetry in Figure \ref{Tf24a3-2}, it suffices to discuss one $\gamma^4$, for instance $\gamma^4$ at $T_1, T_4, T_9, T_{10}$. The vertex bisecting path of $\gamma^4$ along $\bm{b}$-edges terminates before $\alpha\beta^2$ and the vertex bisecting path along $\bm{c}$-edges terminates at the neighbours of $\alpha^2\delta^2$ ($\beta^2\delta^2, \alpha^2\delta^2, \alpha\beta^2$). Then there is no vertex bisecting cycle containing any of $\alpha^2\delta^2, \beta^2\delta^2, \gamma^4$. So there is no mirror plane and hence the symmetry group is $D_3$ (the rotational group of the triangular prism).
\end{case*}

\begin{case*}[$\AVC \equiv \{ \alpha\beta^2, \alpha^2\delta^2, \gamma^4, \delta^{\frac{f}{4}} \}$] Since $\frac{f}{4}$ is even, we have $f \equiv 0 \mod 8$. There is a $\frac{f}{8}$-fold rotational symmetry with axis through the $\delta^{\frac{f}{4}}$-pair. For $f\ge24$, this is the only $n$-fold rotational symmetry with $n>2$. Meanwhile, as $\frac{f}{4}$ is even, viewing the tiling as a collection of time zones in Figure \ref{pqEMT}, there is a $2$-fold rotational symmetry with axis through the midpoint of the time zones at the $\bm{a}$-edge between the $\alpha\beta^2$-pair and its antipodal point (also at the $\bm{a}$-edge between the $\alpha\beta^2$-pair counterpart). 

Up to symmetry, every vertex bisecting cycle cutting through both $\delta^{\frac{f}{4}}$ and the $\alpha^2\delta^2, \gamma^4$ in between correponds to a mirror plane of reflectional symmetry. For example, a vertex bisecting cycle is given by a vertex sequence $\delta^{\frac{f}{4}}, \gamma^4,  \alpha^2\delta^2,  \delta^{\frac{f}{4}}, \alpha^2\delta^2, \gamma^4$ in the first picture of Figure \ref{pqEMT1624} and a vertex sequence $\delta^{\frac{f}{4}}, \alpha^2\delta^2, \gamma^4, \delta^{\frac{f}{4}}, \alpha^2\delta^2, \gamma^4$ in the second picture of Figure \ref{pqEMT1624}.  However, there is no mirror plane perpendicular to $\frac{f}{8}$-fold rotational axis. So for $f\ge16$, the symmetry group is $D_{\frac{f}{8}v}$.
\end{case*}

\begin{case*}[$\AVC \equiv \{ \alpha\beta^2, \alpha^2\delta^2, \gamma^4, \beta^2\delta^{\frac{f-8}{8}},  \alpha\delta^{\frac{f+8}{8}} \}$] In Figure \ref{b2dFamily}, we can see that there is an axis of $2$-fold rotational symmetry through the midpoint of the $\bm{a}$-edge between $T_{11}, T_{16}$. This axis is indeed the only axis of rotational symmetry. It is because any rotation either fixes $\beta^2\delta^{\frac{f-8}{8}}, \alpha\delta^{\frac{f+8}{8}}$, which is not possible by the AAD of these vertices, or moves one pair of $\beta^2\delta^{\frac{f-8}{8}}, \alpha\delta^{\frac{f+8}{8}}$ to their counterpart, which has the same image as the $2$-fold rotation.  

Since every $\alpha^2\delta^2, \beta^2\delta^{\frac{f-8}{8}}$ is adjacent to some $\gamma^4$ along a $\bm{b}$-edge or a $\bm{c}$-edge, it suffices to show that there is no vertex bisecting cycle containing $\gamma^4$. The vertex bisecting path of $\gamma^4$ along $\bm{b}$-edges terminates before $\alpha\beta^2$ and the vertex bisecting path along $\bm{c}$-edges terminates before $\alpha\delta^{\frac{f+8}{8}}$. Then there is no vertex bisecting cycle containing any of $\alpha^2\delta^2, \beta^2\delta^{\frac{f-8}{8}}, \gamma^4$. So there is no mirror plane. 

Since the rotational symmetry is unique, a reflection in the plane perpendicular to the axis sends its end point, the midpoint of the $\alpha\beta^2$-pair and the midpoint of $\alpha^2\delta^2$-pair to each other. No rotation about the same axis preceding or succeeding the reflection is a symmetry. So there is no rotation-reflection. Hence the symmetry group is $C_2$.
\end{case*}

\begin{case*}[$\AVC\equiv\{\alpha\beta^2,\gamma^4,\alpha^2\delta^2,\alpha\delta^{\frac{f+8}{8}}\}$] From the $\AVC$, there is no $n$-fold rotations with $n>2$. The regions $R_1, R_2$ in the tiling in Figure \ref{Tf24a2d2ad4} can be expressed in as the two disks in Figure \ref{2FoldRot}. The $2$-fold rotation with axis passing through the midpoint of the $\bm{a}$-edge between $T_1, T_2$ is indicated by circles in Figure \ref{2FoldRot} on the boundary of two disks. Since $\frac{f-8}{8}, \frac{f+8}{8}$ are even, there is another $2$-fold rotation with axis passing through the midpoint of $\bm{a}$-edge, indicated as a dot in Figure \ref{2FoldRot}, between the $\alpha\beta^2$-pair at the centre of each disk. 

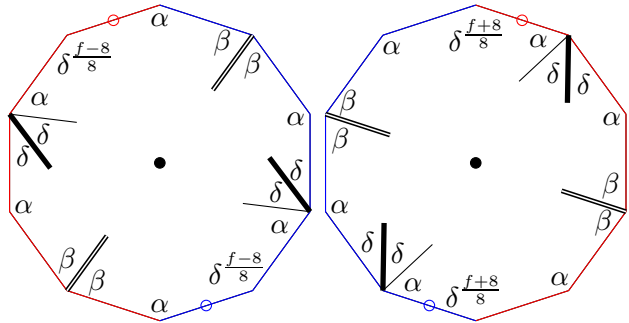
\begin{figure}[htp]
	\centering
	\begin{tikzpicture}[>=latex,scale=0.7]
	
	\begin{scope}[xshift=0 cm, yshift=0 cm]
	\foreach \a in {0,1,2,3,4,5,6,7,8,9}
	\draw[rotate=36*\a]
	(90:3) -- (126:3);
	
	\coordinate (O) at (0,0);
	\coordinate (A) at (90:0.8);
	\coordinate (B) at (270:0.8);
	\coordinate (L) at (162:1.8);
	\coordinate (R) at (342:1.8);
	
	\coordinate (A1) at (90:3);
	\coordinate (A2) at (126:3);
	\coordinate (A3) at (162:3);
	\coordinate (A4) at (198:3);
	\coordinate (A5) at (234:3);
	\coordinate (A6) at (270:3);
	\coordinate (A7) at (306:3);
	\coordinate (A8) at (342:3);
	\coordinate (A9) at (18:3);
	\coordinate (A10) at (54:3);

	\coordinate[shift={(234:0.9)}] (A10R) at (A10);
	\coordinate[shift={(127:0.9)}] (A8R) at (A8);
	\coordinate[shift={(173:0.9)}] (A8B) at (A8);
	
	\coordinate[shift={(54:0.9)}] (A5L) at (A5);
	\coordinate[shift={(307:0.9)}] (A3L) at (A3);
	\coordinate[shift={(353:0.9)}] (A3A) at (A3);

	%\draw 
	%(54:3) -- (A)--(B) -- (234:3)
	%(L) -- (90:3)
	%(L) -- (162:3)
	%(L) -- (234:3)
	%(L) -- (A)
	%(R) -- (270:3)
	%(R) -- (342:3)
	%(R) -- (54:3)
	%(R) -- (B);

	\draw
	(A8) -- (A8B)
	(A3) -- (A3A);
	
	% boundary
	
	\draw[red]
	(A1) -- (A2) -- (A3) -- (A4) -- (A5) -- (A6);
	
	\draw[blue]
	(A6) -- (A7) -- (A8) -- (A9) -- (A10) -- (A1);

	% b edge 
	
	\draw[double, line width=0.6] 
	%(A3) -- (L) -- (A)
	%(A8) -- (R) -- (B)
	(A10) -- (A10R)
	(A5) -- (A5L);
	
	% c edge 
	
	\draw[line width=2]
	%(A1) -- (L) -- (A5)
	%(A10) -- (R) -- (A6)
	(A8) -- (A8R)
	(A3) -- (A3L);
	
	% left region

	% left boundary start from A3 anticlockwise
	
	% left boundary 1
	\node[shift={(0.4,0.2)}] at (A3) {\small $\alpha$};
	\node[shift={(0.45,-0.26)}] at (A3) {\small $\delta$};
	\node[shift={(0.18,-0.55)}] at (A3) {\small $\delta$};
	
	% left boundary 2
	\node[shift={(18:0.2)}] at (A4) {\small $\alpha$};
	
	% left boundary 3
	\node[shift={(0,0.4)}] at (A5) {\small $\beta$};
	\node[shift={(0.4,0.1)}] at (A5) {\small $\beta$};
	
	% left boundary 4
	\node[shift={(90:0.2)}] at (A6) {\small $\alpha$};
	
	% left boundary 5
	\node[shift={(-0.2,0.3)}] at (A7) {\small $\delta^{\frac{f-8}{8}}$};

	% right region
	
	% right boundary start from A8 anticlockwise
	
	% right boundary 1
	\node[shift={(-0.4,-0.2)}] at (A8) {\small $\alpha$};
	\node[shift={(-0.45,0.26)}] at (A8) {\small $\delta$};
	\node[shift={(-0.18,0.55)}] at (A8) {\small $\delta$};
	
	% right boundary 2
	\node[shift={(198:0.2)}] at (A9) {\small $\alpha$};
	
	% right boundary 3
	\node[shift={(0,-0.4)}] at (A10) {\small $\beta$};
	\node[shift={(-0.4,-0.1)}] at (A10) {\small $\beta$};
	
	% right boundary 4
	\node[shift={(270:0.2)}] at (A1) {\small $\alpha$};
	
	% right boundary 5
	\node[shift={(0.25,-0.3)}] at (A2) {\small $\delta^{\frac{f-8}{8}}$};
	
	\draw[red] (108:2.85) circle (0.1cm);
	\draw[fill,black] (0,0) circle (0.1cm);
	\draw[blue] (288:2.85) circle (0.1cm);
	%\draw[blue] (0.95,-2.7) circle (0.1cm);
	
	%	\draw[fill,black] (A1) circle (0.1cm);
	%	\draw[fill,black] (A6) circle (0.1cm);

	\end{scope}

	\begin{scope}[xshift=6 cm, yshift=0 cm]
	\foreach \a in {0,1,2,3,4,5,6,7,8,9}
	\draw[rotate=36*\a]
	(90:3) -- (126:3);
	
	\coordinate (O) at (0,0);
	\coordinate (A) at (90:0.8);
	\coordinate (B) at (270:0.8);
	\coordinate (L) at (162:1.8);
	\coordinate (R) at (342:1.8);
	
	\coordinate (A1) at (90:3);
	\coordinate (A2) at (126:3);
	\coordinate (A3) at (162:3);
	\coordinate (A4) at (198:3);
	\coordinate (A5) at (234:3);
	\coordinate (A6) at (270:3);
	\coordinate (A7) at (306:3);
	\coordinate (A8) at (342:3);
	\coordinate (A9) at (18:3);
	\coordinate (A10) at (54:3);

	\coordinate[shift={(-18:0.9)}] (A3L) at (A3);
	\coordinate[shift={(89:0.9)}] (A5L) at (A5);
	\coordinate[shift={(43:0.9)}] (A5B) at (A5);
	
	\coordinate[shift={(162:0.9)}] (A8R) at (A8);
	\coordinate[shift={(269:0.9)}] (A10R) at (A10);
	\coordinate[shift={(223:0.9)}] (A10A) at (A10);

	%\draw 
	%(54:3) -- (A)--(B) -- (234:3)
	%(L) -- (90:3)
	%(L) -- (162:3)
	%(L) -- (234:3)
	%(L) -- (A)
	%(R) -- (270:3)
	%(R) -- (342:3)
	%(R) -- (54:3)
	%(R) -- (B);

	\draw
	(A5) -- (A5B)
	(A10) -- (A10A);

	%boundary
	
	\draw[red]
	(A6) -- (A7) -- (A8) -- (A9) -- (A10) -- (A1);
	
	\draw[blue]
	(A1) -- (A2) -- (A3) -- (A4) -- (A5) -- (A6);
	
	% b edge 
	
	\draw[double, line width=0.6] 
	%(A3) -- (L) -- (A)
	%(A8) -- (R) -- (B)
	(A3) -- (A3L)
	(A8) -- (A8R);
	
	% c edge 
	
	\draw[line width=2]
	%(A1) -- (L) -- (A5)
	%(A10) -- (R) -- (A6)
	(A5) -- (A5L)
	(A10) -- (A10R);

	%left region 
	
	% left boundary start from north pole A1 anticlockwisely
	
	% left boundary 1
	\node[shift={(0.02,-0.4)}] at (A1) {\small $\delta^{\frac{f+8}{8}}$};

	% left boundary 2
	\node[shift={(306:0.2)}] at (A2) {\small $\alpha$};
	
	% left boundary 3
	\node[shift={(0.3,0.15)}] at (A3) {\small $\beta$};
	\node[shift={(0.2,-0.35)}] at (A3) {\small $\beta$};
	
	% left boundary 4
	\node[shift={(18:0.2)}] at (A4) {\small $\alpha$};
	
	% left boundary 5
	\node[shift={(-0.2,0.65)}] at (A5) {\small $\delta$};
	\node[shift={(0.2,0.5)}] at (A5) {\small $\delta$};
	\node[shift={(0.4,0.08)}] at (A5) {\small $\alpha$};

	% right region

	% right boundary start from south pole A6 anticlockwisely
	
	% right boundary 1
	\node[shift={(-0.02,0.4)}] at (A6) {\small $\delta^{\frac{f+8}{8}}$};
	
	% right boundary 2
	\node[shift={(126:0.2)}] at (A7) {\small $\alpha$};
	
	% right boundary 3
	\node[shift={(-0.3,-0.15)}] at (A8) {\small $\beta$};
	\node[shift={(-0.2,0.35)}] at (A8) {\small $\beta$};
	
	% right boundary 4
	\node[shift={(198:0.2)}] at (A9) {\small $\alpha$};
	
	% right boundary 5
	\node[shift={(0.2,-0.65)}] at (A10) {\small $\delta$};
	\node[shift={(-0.2,-0.5)}] at (A10) {\small $\delta$};
	\node[shift={(-0.4,-0.08)}] at (A10) {\small $\alpha$};

	\draw[red] (72:2.85) circle (0.1cm);
	\draw[fill,black] (0,0) circle (0.1cm);
	\draw[blue] (252:2.85) circle (0.1cm);

	%	\draw[fill,black] (A1) circle (0.1cm);	
	%	\draw[fill,black] (A6) circle (0.1cm);	
	\end{scope}
	\end{tikzpicture}
	\caption{The $2$-fold Rotations of the Tiling of $\AVC \equiv \{\alpha\beta^2,\gamma^4,\alpha^2\delta^2,\alpha\delta^{\frac{f+8}{8}}\}$.}
	\label{2FoldRot}
\end{figure}

Since every $\alpha^2\delta^2$ is adjacent to some $\gamma^4$ along a $\bm{c}$-edge, it suffices to show that there is no vertex bisecting cycle containing $\gamma^4$. The vertex bisecting path of $\gamma^4$ along $\bm{b}$-edges terminates before $\alpha\beta^2$ and the vertex bisecting path along $\bm{c}$-edges terminates before $\alpha\delta^{\frac{f+8}{8}}$. Then there is no vertex bisecting cycle containing any of $\alpha^2\delta^2,\gamma^4$. So there is no mirror plane and hence the symmetry group is $D_2$.
\end{case*}

\end{document}